\documentclass[a4paper]{amsart}
\usepackage[textwidth=15cm,top=3cm, bottom=3cm, hcentering]{geometry}
\usepackage[colorlinks=true,linkcolor=red,citecolor=blue]{hyperref}

\usepackage{amsmath,amscd,amsthm,amssymb}
\usepackage{color}
\usepackage[utf8]{inputenc}
\normalfont
\usepackage[T1]{fontenc}
\usepackage{enumerate}

\usepackage{tikz,tikz-cd}
\usetikzlibrary{matrix,calc,positioning,arrows,decorations.pathreplacing,patterns,arrows}

 \usepackage[all]{xy}

\newtheorem{theorem}{Theorem}[section]
\newtheorem{lemma}[theorem]{Lemma}
\newtheorem*{theorem*}{Theorem}
\newtheorem{proposition}[theorem]{Proposition}
\newtheorem{corollary}[theorem]{Corollary}
\newtheorem*{corollary*}{Corollary}

\newtheorem{thmintro}{Theorem}

\theoremstyle{definition}
\newtheorem{definition}[theorem]{Definition}

\theoremstyle{remark}
\newtheorem{example}[theorem]{Example}

\newtheorem{remark}[theorem]{Remark}

\newcommand{\CC}{\mathbb{C}}

\newcommand{\FF}{\mathbb{F}}

\newcommand{\KK}{\mathbb{K}}

\newcommand{\NN}{\mathbb{N}}

\newcommand{\PP}{\mathbb{P}}
\newcommand{\QQ}{\mathbb{Q}}
\newcommand{\RR}{\mathbb{R}}

\newcommand{\ZZ}{\mathbb{Z}}
\newcommand{\Aa}{\mathcal{A}}

\newcommand{\Ff}{\mathcal{F}}

\newcommand{\Ll}{\mathcal{L}}
\newcommand{\Mm}{\mathcal{M}}

\newcommand{\Oo}{\mathcal{O}}

\newcommand{\Uu}{\mathcal{U}}

\newcommand{\Ww}{\mathcal{W}}
\newcommand{\Xx}{\mathcal{X}}

\newcommand{\Zz}{\mathcal{Z}}

\newcommand{\st}{^{s\textrm{-tame}}}
\newcommand{\spt}{^{(s+1)\textrm{-tame}}}
\newcommand{\smt}{^{(s-1)\textrm{-tame}}}

\newcommand{\Sym}{\operatorname{Sym}}

\newcommand{\cat}[1]{{\mathsf{#1}}}

\renewcommand{\rm}[1]{{\mathrm{#1}}}
\newcommand{\lra}{\longrightarrow}

\newcommand{\op}{\rm{op}}

\newcommand{\Map}{\rm{Map}}

\newcommand{\Hom}{\rm{Hom}}

\newcommand{\id}{\rm{id}}
\newcommand{\kk}{\mathbf{k}}

\newcommand{\Ker}{\mathrm{Ker}}
\newcommand{\Dec}{\mathrm{Dec}}

\newcommand{\TW}{\text{\Tiny{$\mathrm{TW}$}}}

\title[{O\lowercase{n the $\ell$-adic homotopy type of configuration spaces}}]{{\Large O\lowercase{n the $\ell$-adic homotopy type of configuration spaces}}}

\author{Joana Cirici}
\author{Geoffroy Horel}

\address[J. Cirici]{Departament de Matemàtiques i Informàtica, Universitat de Barcelona and
Centre de Recerca Matemàtica (Barcelona, Spain)
}
\email{jcirici@ub.edu}

\address[G. Horel]{Université Sorbonne Paris Nord, Laboratoire Analyse, Géométrie et Applications, CNRS (UMR 7539), and Institut Universitaire de France}
\email{horel@math.univ-paris13.fr}

\thanks{
J. Cirici acknowledges financial support from
Govern de Catalunya (Serra H\'{u}nter Program) and the Spanish State Research Agency (CEX2020-001084-M and PID2024-155646NB-I00).
G. Horel acknowledges financial support from Agence Nationale pour la Recherche through project ANR-25-CE40-28616 Kash.
}

\begin{document}
	
\begin{abstract}We give algebraic models for the tame homotopy type of the configuration spaces
 of certain algebraic varieties of Tate type. Such tame models carry information on the $\ell$-adic homotopy type. Our method uses the theory of weights in étale cohomology, and also produces models for more general arrangement complements, both in the tame sense and over the rationals.
\end{abstract}
	
\maketitle
\tableofcontents
\section{Introduction}
Understanding the homotopy type of configuration spaces of manifolds is a long-standing problem in algebraic topology. The main conjecture in this area is that the homotopy type of $\mathrm{Conf}_n(X)$, the configuration space of $n$ points in a manifold $X$, only depends on the homotopy type of the manifold, if $X$ is closed and simply-connected. There is some evidence for this conjecture. It holds for the stable homotopy type of the configuration spaces as well as for the homotopy type of the loop space of the configuration spaces by work of Aouina--Klein and Levitt respectively (see \cite{aouinahomotopy} and \cite{levittspaces}). It follows that both homology  and homotopy groups of configuration spaces are homotopy invariants in the case of simply-connected closed manifolds.

If we restrict to the rational homotopy type, there are stronger known results. First, 
for complex algebraic varieties, and building on the works of Fulton--MacPherson \cite{FMP} and Totaro \cite{Totaro}, Kriz \cite{Kriz} gave an explicit rational model for their configuration spaces, given by the penultimate page of the Leray spectral sequence for the
constant sheaf $\underline{\QQ}$ relative to the inclusion
 $\mathrm{Conf}_n(X)\hookrightarrow X^n$.
This model also coincides with the Lambrechts--Stanley model (see \cite{lambrechtsremarkable}) which is conjectured to be a model for all simply-connected closed oriented manifolds. Using a generalization of Kontsevich integration, Idrissi  \cite{idrissilambrechts} showed that this conjecture is true when extending scalars to $\RR$. In particular, the real homotopy type of the configuration space depends only on the real homotopy type of the manifold (see also  \cite{CaWill}).

For integral homotopy types, such results seem out of reach, starting from the fact that there are no reasonable algebraic models in the purely integral setting.
Tame homotopy theory, initiated by Dwyer \cite{Dwyer}  and Anick \cite{Anick} among others,
provides an intermediate framework between integral and rational homotopy theory.
It captures the part of the integral homotopy type of a topological space that is insensitive to Steenrod operations while allowing for a manageable theory of algebraic models.
In this work, we study tame homotopy types for configuration spaces of algebraic varieties, using the theory of weights in étale cohomology. The results provide new evidence for the validity of the configuration space conjecture, for integral homotopy types.

\medskip

Classically, tame homotopy theory is done with $\ZZ$ coefficients. There is a Sullivan-type theory
based on a filtered commutative dg-algebra constructed by Cenkl and Porter \cite{CePo} and further studed by Scheerer in \cite{Sch1}. These works are not sufficiently flexible for our purposes as we will work with étale cochains over $\ZZ_\ell$. We propose an alternative approach to tame homotopy theory using $E_\infty$-algebras and prove a Mandell-type Theorem in this setting.

Let us briefly outline this result more precisely. We complete at the prime $\ell$ and
 also fix integers $r$ and $s$,
accounting for the connectivity and a tame parameter respectively.
We introduce
$(r,s)$-tame homotopy theory as the study of $(r-1)$-connected $\ell$-complete spaces up to the notion of \textit{$(r,s)$-tame weak equivalence} defined as follows: we say that a map
$f:X\to Y$ between based $(r-1)$-connected $\ell$-complete spaces
is an \textit{$(r,s)$-tame weak equivalence} if the map $\pi_{r+k}(f)$ is an isomorphism for $k< s$ and if $\pi_{r+k}(f)\otimes\QQ$ is an isomorphism for all values of $k$. When $s=0$ the theory reduces to rational homotopy, and increasing $s$ restores more of the integral homotopy type of the space.
In order to provide models for tame homotopy types we introduce a notion of \textit{$(r,s)$-tame quasi-isomorphism} for filtered $E_\infty$-algebras, asking for isomorphisms up to weight $r+s$ as well as a rational isomorphism in every weight (see Definition
\ref{defi : s-tame weak equivalence}).
We prove:

\begin{thmintro}[Theorem \ref{theo : tame mandell}]
Let $r\geq 3$ and $s\leq 2\ell-3$ or $r= 2$ and $s\leq \ell-2$. Let $X$ and $Y$ be two $(r-1)$-connected $\ell$-complete based spaces with finite type cohomology. If there exists an $(r,s+\lfloor\frac{s+1}{r-1}\rfloor)$-tame quasi-isomorphism of $E_\infty$-algebras
\[(\tilde{C}^*(X,\ZZ_\ell),\tau)\simeq (\tilde{C}^*(Y,\ZZ_\ell),\tau)\]
then $X$ and $Y$ have the same $(r,s)$-tame homotopy type.
\end{thmintro}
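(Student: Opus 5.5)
The plan is to run an obstruction-theoretic induction up the (tame) Postnikov tower of $X$. The cochain algebra $\tilde{C}^*(-,\ZZ_\ell)$, together with its filtration $\tau$, will serve as a substitute for Mandell's functor in a range of degrees where the $E_\infty$-structure is indistinguishable from a strictly commutative one. Concretely, I would first reduce to a statement about mapping spaces. Write $X \to \cdots \to X_k \to X_{k-1} \to \cdots$ for the $\ell$-complete Postnikov tower, truncated so that $X_{s}$ captures $\pi_{r+k}$ integrally for $k<s$ and only rationally above. Then $X$ and $Y$ have the same $(r,s)$-tame homotopy type iff these tame truncations are weakly equivalent. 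Each stage is a principal fibration classified by a $k$-invariant in $H^{r+k+1}(X_{k-1};\pi_{r+k})$. The rational part (all $k\ge s$) would be handled at the end by the rational statement: tensoring the given tame quasi-isomorphism with $\QQ$ yields a quasi-isomorphism of $E_\infty$-algebras over $\QQ_\ell$. These are equivalent to commutative dg algebras, so Sullivan theory (in its $\ell$-adic form) identifies the rational homotopy types compatibly with the integral pieces.

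The key algebraic input is a comparison, in the tame range, between $E_\infty$-algebra maps and maps of spaces. For an Eilenberg--MacLane space $K(A,n)$ with $A$ a finitely generated $\ZZ_\ell$-module and $n\ge r$, I would show the following. The natural map from the free $E_\infty$-$\ZZ_\ell$-algebra on a cochain model of $A^\vee[-n]$ to $\tilde{C}^*(K(A,n),\ZZ_\ell)$ is a $\tau$-filtered equivalence through weight about $n+2\ell-3$. Moreover, in that range the free $E_\infty$-algebra agrees with the free graded-commutative one. The reason is that the first nontrivial operation beyond the Bockstein and cup products is $P^1$, which raises degree by $2(\ell-1)$; likewise the homology of $\Sigma_m$ with $\ZZ_\ell$-coefficients is torsion-free, in fact trivial in positive degrees, for $m<\ell$. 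Consequently $\Map_{E_\infty}(\tilde{C}^*(K(A,n)),B)$ is computed, in the relevant range, by $H^n(B;A)$. This lets me lift each $k$-invariant of $X$ along the given tame quasi-isomorphism to a $k$-invariant of $Y$, and build a compatible map of towers $Y_k\to X_k$ inductively using Mandell's adjunction between spaces and $E_\infty$-algebras. At each stage the five lemma on homotopy fibres gives isomorphisms on $\pi_{r+k}$ for $k<s$.

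The degree bookkeeping explains the hypotheses. In an $(r-1)$-connected space, decomposable classes in degree $r+s$ are products of classes of degree $\ge r$. Detecting the $k$-invariant for $\pi_{r+s-1}$ requires control of cohomology in degree $r+s$. Controlling the iterated extensions, i.e.\ the products feeding into that degree, forces one additional weight for every $r-1$ degrees, which yields the shift $s+\lfloor\frac{s+1}{r-1}\rfloor$. The bounds $s\le 2\ell-3$ for $r\ge 3$, and $s\le\ell-2$ for $r=2$, are exactly what keep every degree encountered below the first $P^1$ (or the first $\ell$-th power, i.e.\ the first nonfree symmetric-group contribution), starting from a class of degree $r$. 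When $r=2$ one also needs the products of degree-$2$ classes, and the relevant $\ell$-fold product $x^\ell$ sits in degree $2\ell$; this halves the usable range.

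I expect the main obstacle to be the middle step: making precise the statement that free $E_\infty$-$\ZZ_\ell$-algebras and $\tilde{C}^*(K(A,n),\ZZ_\ell)$ agree with free commutative algebras up to the stated weight in the $\tau$-filtration, with the sharp constant. My first attempt would be to compute $H_*(\Sigma_m; (A^\vee[-n])^{\otimes m})$ degreewise and use the known structure of $H^*(K(\ZZ/\ell^j,n);\FF_\ell)$ (Cartan--Serre) to check that no operation appears below degree $n+2\ell-2$. I would then propagate this to $\ZZ_\ell$ by a Bockstein/Nakayama argument using finite type.
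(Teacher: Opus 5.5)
\paragraph{The gap is in the inductive step.}
What you actually propose to prove is the Eilenberg--MacLane computation. That is the analogue of Lemma \ref{lemm : case of EM spaces} and Proposition \ref{coro : Mandell EM}, and it is the easy part. Your step ``lift each $k$-invariant of $X$ along the given tame quasi-isomorphism and build a compatible map of towers $Y_k\to X_k$ using Mandell's adjunction'' is not justified, for two reasons.

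First, the $k$-invariants live in $H^*(X_{k-1};\pi_{r+k}X)$, i.e.\ in cochains of Postnikov stages. The hypothesis says nothing directly about these.

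Second, no adjunction with usable unit is available for filtered $E_\infty$-algebras up to tame quasi-isomorphism. Even classically, the unit of Mandell's adjunction is an equivalence only with $\overline{\FF}_\ell$-coefficients; over $\ZZ_\ell$ it is the inclusion of constant loops. So producing maps $Y\to X_k$ from cochain data requires full faithfulness of $(\tilde{C}^*(-),\tau)$ on $X_k$, which is the statement being proved.

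A Mandell-style induction would need a filtered Eilenberg--Moore theorem identifying $(\tilde{C}^*(X_k),\tau)$, up to tame quasi-isomorphism, with the pushout of $(\tilde{C}^*(X_{k-1}),\tau)\leftarrow(\tilde{C}^*(K(\pi,n+1)),\tau)\to\ZZ_\ell$. This is false as stated. The filtration that $\tau$ induces on the relative bar construction is the d\'ecalage of the skeletal filtration (Lemma \ref{lemm : two filtrations}), not $\tau$. Comparing the two costs tame parameter (Propositions \ref{prop : different filtration} and \ref{prop : bar construction}). Your plan neither contains this step nor shows that the losses do not accumulate over the stages. The exchange rate $\lfloor\frac{s+1}{r-1}\rfloor$ is supplied by a heuristic rather than derived.

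Likewise, handling the rational part separately by Sullivan theory ``compatibly with the integral pieces'' leaves the gluing datum $\alpha$ in the pullback defining $\cat{S}_r\st$ unaddressed. In the paper this is automatic, because full faithfulness is proved directly on $\cat{EM}_r\st$.

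\paragraph{How the paper proceeds.}
The paper never touches Postnikov stages of $X$.
\begin{enumerate}
\item A single application of Proposition \ref{prop : bar construction}, together with Eilenberg--Moore, turns the $(r,s+\lfloor\frac{s+1}{r-1}\rfloor)$-tame quasi-isomorphism into an $(r-1,s)$-tame quasi-isomorphism $\tilde{C}^*(\Omega X)\simeq\tilde{C}^*(\Omega Y)$ of comonoids in commutative algebras. This is exactly where the exchange rate comes from: the speed-$\frac{r}{r-1}$ comparison between $\tau$ and the d\'ecalage of the skeletal filtration on $\mathrm{Bar}$.
\item Proposition \ref{prop : product of EM} shows that in the stated range $\Omega X$ has no tame $k$-invariants. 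They are primitive and decomposable, hence $\ell$-th powers, which cannot occur on $\Omega X$ below degree $\ell(r-1)$. This, not $x^\ell$ in degree $2\ell$ on $X$, is the source of the bound $s\leq\ell-2$ when $r=2$. Rationally they vanish because the relevant polynomial Hopf algebra has no primitives in that degree.
\item So $\Omega X$ and $\Omega Y$ are tamely products of Eilenberg--MacLane spaces, where full faithfulness holds. The comonoid structure recovers the loop-space structure, and one deloops.
\end{enumerate}
The missing idea in your proposal is this passage to $\Omega X$. It replaces an uncontrolled induction over nontrivial $k$-invariants by one filtered Eilenberg--Moore step plus the Eilenberg--MacLane case.
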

Here, $\tau$ denotes the canonical filtration. Note that the tame parameter in the algebraic side is higher than in the space side, so we are paying an exchange rate which depends on the connectivity of the spaces. The bounds on $s$ above are imposed by the fact that $2\ell-3$ is the lowest degree Steenrod operation at the prime $\ell$.
It is also the lowest dimension in which the homotopy groups of spheres have $\ell$-torsion.
One novelty of this result in contrast with the existing literature on tame homotopy theory is the validity of the result for $r=2$. In fact, with our approach, in the non-simply-connected case, we can also recover tame information on the fundamental group (see Section \ref{subsec: fundgp}).

Let $\Xx$ be a smooth variety defined over a $p$-adic field $K$
 (a finite extension of $\QQ_p$) with residue field $\FF_q$, so that 
 $q=p^k$.
 Assume that a choice of an embedding of $K$ in $\CC$ has been made and let $X:=\Xx\otimes_{K}\CC$ denote the base-change of $\Xx$. As a complex algebraic variety, $X$ has an underlying complex analytic space which we denote by $X$ as well. 
 For a fixed prime $\ell\neq p$,
 the $E_\infty$-algebra of étale cochains 
  $C^*_{et}(\Xx;\ZZ_\ell)$ is naturally quasi-isomorphic to the singular cochains $C^*(X,\ZZ_\ell)$ of $X$.
  It has the advantage of being equipped
 with a Frobenius endomorphism $\varphi$ arising from the absolute Galois group $\mathrm{Gal}(\overline{K}/K)$. This will be used to prove splitting results.

For our main results below, we restrict to those varieties for which
the eigenvalues of the Frobenius endomorphism acting on étale cohomology satisfy a purity property that depends on the tame parameters $r$ and $s$.
Namely,
we say that a cohomologically $(r-1)$-connected variety $\Xx$ is \textit{$(r,s)$-tamely pure} if
the eigenvalues of $\varphi$ acting on $H^i_{et}(\Xx;\FF_\ell)$ are in $\{(\pm\sqrt{q})^i\}$ for $i< s+r$ and the eigenvalues of $\varphi$ acting on $H^i_{et}(\Xx;\QQ_\ell)$ are Weil numbers of weight $i$ for all $i$.
It follows from Deligne's proof of the Weil conjectures that smooth and proper varieties with good reduction are always $(r,0)$-tamely pure.
There are also many examples of varieties that are tamely pure for any $s\geq 0$, which we call of \textit{Tate type}. These include $\CC\PP^n$, $\overline{\Mm}_{0,n}$ and, more generally, blow-ups of Tate type varieties.

The tame homotopy type admits a natural notion of formality, and we extend the principle that ``purity implies formality'' (see \cite{DGMS}, \cite{GNPR}, \cite{Petersen}, \cite{CiHo1}, and \cite{CiHo2}) to the tame setting. As noted by Mandell, the $E_\infty$-algebra of cochains of a space in positive characteristic cannot be formal unless the space has contractible components.
Tame formality provides a meaningful weakening of $E_\infty$-formality, distinct from the theory of $E_n$-formality proposed by Mandell.

For the purpose of readibility, in this introduction we will not state explicitly the tame parameters.
We prove:
\begin{thmintro}[Theorem \ref{theo : tameprojective} and Corollary \ref{coro: pureformaltate}]
Assume that $\Xx$ is of Tate type.  If $X$ is simply-connected, then the tame homotopy type of $X$ is a formal consequence of the cohomology ring $H^*(X,\ZZ_\ell)$.
\end{thmintro}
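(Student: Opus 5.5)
The plan is to combine the tame Mandell theorem (Theorem~\ref{theo : tame mandell}) with a weight-splitting argument driven by Frobenius. Choose the tame parameters so that the hypotheses of that theorem hold: $X$ is simply connected, so $r\geq 2$, and we need the bound $s\leq 2\ell-3$ (or $s\leq \ell-2$ when $r=2$). Set $s'=s+\lfloor\frac{s+1}{r-1}\rfloor$ and assume $\Xx$ is $(r,s')$-tamely pure; Tate type guarantees this for every $s'$. By Theorem~\ref{theo : tame mandell}, it then suffices to build an $(r,s')$-tame quasi-isomorphism of filtered $E_\infty$-algebras between $(C^*_{et}(\Xx;\ZZ_\ell),\tau)\simeq (C^*(X,\ZZ_\ell),\tau)$ and a model depending only on $H^*(X,\ZZ_\ell)$. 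Here the model is $H^*(X,\ZZ_\ell)$, viewed as a strictly commutative algebra, turned into an $E_\infty$-algebra, and given its canonical filtration.

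The heart of the argument is a weight decomposition. Since $\varphi$ acts on $C^*_{et}(\Xx;\ZZ_\ell)$ as an $E_\infty$-endomorphism, I will replace the cochains by a model on which $\varphi$ acts degreewise with finite-dimensional generalized eigenspaces. Purity says that the eigenvalues of $\varphi$ on $H^i(\Xx;\FF_\ell)$ lie in $\{(\pm\sqrt q)^i\}$ for $i<r+s'$. Choose $\ell$ large relative to $s'$, or use the hypothesis directly, so that the residues $q^{i/2}$ mod $\ell$ for distinct $i$ in the relevant range are distinct; then the eigenvalue sets for different $i$ are disjoint mod $\ell$. Using generalized eigenspace projectors, which are polynomials in $\varphi$ and hence exist over $\ZZ_\ell$ once the eigenvalues are separated mod $\ell$, split the truncated cochains into weight pieces. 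In the range below $r+s'$, the weight-$i$ piece has cohomology only in degree $i$.

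In all degrees, the rational Weil-number purity gives the same conclusion after tensoring with $\QQ$, which is precisely the rational half of an $(r,s')$-tame quasi-isomorphism. A multiplicativity argument then turns "cohomology concentrated on the diagonal degree $=$ weight" into a zig-zag of filtered quasi-isomorphisms to cohomology. This is the analogue of the Deligne--Griffiths--Morgan--Sullivan or Cirici--Horel argument: pass through the subalgebra of the weight-graded truncation $\tau$ that is compatible with the weight grading, and then quotient.

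The main obstacle I expect is making the weight grading compatible with the $E_\infty$ structure integrally. Eigenspaces of $\varphi$ are multiplicative only up to homotopy, and $\ZZ_\ell$-coefficients prevent simply invoking Koszul or strictification arguments. My first attempt would be to work in the category of $E_\infty$-algebras with a $\ZZ$-action, or with a $\ZZ_\ell^\times$-action obtained by $\ell$-adic interpolation of the eigenvalues. I would then produce a genuinely graded $E_\infty$-algebra via the splitting lemma for tame filtered objects from earlier in the paper, and control the failure of formality, namely the $E_\infty$ operations of positive arity-degree such as Steenrod-type operations. These operations shift weight incompatibly with degree, and in the tame range $<2\ell-3$ they must vanish. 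That vanishing is exactly where the bound on $s$ enters.
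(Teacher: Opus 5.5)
Your overall architecture (use Frobenius weights to show that $(\tilde{C}^*_{et}(\Xx;\ZZ_\ell),\tau)$ is tamely formal, then invoke Theorem \ref{theo : tame mandell}) is the paper's. However, the step that carries all the content is left as an acknowledged obstacle: an equivalence of $E_\infty$-algebras $(\tilde{C}^*,\tau)\simeq(H^*,\tau)$ up to tame quasi-isomorphism. The mechanism you sketch for it does not work, for the following reasons.

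You propose to pass to a model on which $\varphi$ acts with finite-dimensional generalized eigenspaces and to use eigenspace projectors. But you give no construction of a model carrying both a strict $E_\infty$-structure and a strict $\varphi$-action with controlled chain-level eigenvalues. The small models one can actually produce (minimal complexes, homotopy transfer) carry the multiplication and $\varphi$ only up to coherent homotopy, so the projectors are not multiplicative.

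Nor is there a global integral weight grading to project onto. Since $q^h\equiv 1 \bmod \ell$, the mod $\ell$ eigenvalue sets in weights $i$ and $i+2h$ coincide, and weights are only guaranteed to be separated when they differ by less than $h$. So one must glue an integral splitting in weights $<r+s$ to a rational splitting in all weights, and you do not say how. For fixed $\ell$, the honest outcome is that the tame parameter of the formality step is capped by $h$ (or $2h$ for even cohomology).

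Finally, Steenrod operations do not ``shift weight incompatibly with degree.'' They commute with $\varphi$ and $q^{\ell-1}\equiv 1\bmod \ell$, so for instance $P^1u=u^\ell$ in $H^*(\CC\PP^n;\FF_\ell)$ is perfectly compatible with the Frobenius eigenvalues. They are irrelevant in the tame range purely for degree reasons. The bounds $2\ell-3$ and $\ell-2$ enter only through the Mandell-type theorem, not through formality.

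The missing idea is to avoid any strict splitting and argue in the $\infty$-category of filtered $\ZZ_\ell[\varphi]$-complexes up to $(r,s)$-tame quasi-isomorphism. On $(r,s)$-Weil complexes (eigenvalues on $H^*(Gr_i\otimes\FF_\ell)$ in $\{(\pm\sqrt{q})^i\}$ for $i<r+s$, and Weil numbers of weight $i$ on $H^*(Gr_i\otimes\QQ_\ell)$ for all $i$), the associated graded functor is fully faithful. After rationalizing weights $\geq r+s$ and a dévissage along the filtration, this reduces to $\RR\Hom_{\ZZ_\ell[\varphi]}(Gr_iC,Gr_kD)\simeq 0$ for $r\leq k<i$. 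That vanishing is checked after base change to the global-dimension-one rings $\FF_\ell[\varphi]$ and $\QQ_\ell[\varphi]$, using coprimality of characteristic polynomials (Proposition \ref{prop : Hom is zero}). If $i-k<h$, the mod $\ell$ eigenvalues are disjoint. If $i-k\geq h\geq s$, then $i\geq r+s$, so $Gr_iC$ is rational and the Weil weights suffice.

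Since $Gr\circ E_0\simeq Gr$ tautologically, $E_0\simeq\id$ as non-unital symmetric monoidal $\infty$-functors (Theorem \ref{e0formality}). A symmetric monoidal equivalence automatically gives an equivalence of $E_\infty$-algebras between any algebra in Weil complexes and its associated graded (Corollary \ref{coro : formality}). No strictification and no analysis of individual $E_\infty$-operations is needed. Tate type (plus a $K$-point, to work with reduced cochains) makes $(\tilde{C}^*_{et}(\Xx;\ZZ_\ell),\tau)$ such an algebra, with associated graded $(H^*,\tau)$. This gives $(r,\min(s,h))$-tame formality, and the tame Mandell theorem concludes.
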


The above theorem is also valid for a more general notion of purity
as in \cite{CiHo1}, which includes complements of hyperplane and toric arrangements, and the uncompactified moduli spaces $\Mm_{0,n}$. Also, in the non-simply-connected case, we can extract fundamental group information
from the cohomology (see Section \ref{subsection : fundamental group}).
To the best of our knowledge, this is the first investigation of tame formality in the literature. It extends previous formality results for associative algebras
with positive characteristic coefficients,
to the $E_\infty$-setting.

The proof of the above theorem relies on a splitting theorem for Weil algebras. In non-pure situations, this splitting theorem is also used to produce finite dimensional models for complements of arrangements.
We next explain the particular case of configuration spaces.

Denote by $i:\mathrm{Conf}_n(X)\hookrightarrow X^n$ the natural inclusion. The  Leray spectral sequence for the constant sheaf $\underline{\ZZ}_\ell$ relative to this inclusion has a single page  $({}^{\Ll}E_{2m}^{*,*}(\ZZ_\ell),d_{2m})$ with non-trivial differential, and this page
is a strictly commutative dg-algebra computing the cohomology ring of the configuration space. Here $m$ is the dimension of $X$.
We prove:

\begin{thmintro}[Theorem \ref{theo: mainconftame} and Corollary \ref{coro: mainconftate}]
Assume that $\Xx$ is of Tate type.  Then the tame homotopy type of $\mathrm{Conf}_n(X)$ is determined by the commutative dg-algebra
$({}^{\Ll}E_{2m}^{*,*}(\ZZ_\ell),d_{2m})$ up to tame quasi-isomorphism.
\end{thmintro}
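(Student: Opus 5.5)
The plan is to transport the Kriz--Totaro strategy to the $\ell$-adic étale setting, using weights to split the relevant filtered $E_\infty$-algebra, and then to apply the tame Mandell theorem (Theorem \ref{theo : tame mandell}).

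\emph{Step 1 (a Frobenius-equivariant Leray model).} Take a model $\Xx$ over $K$ with good reduction, so that $\mathrm{Conf}_n(\Xx)\hookrightarrow \Xx^n$ is defined over $K$. The derived pushforward $Ri_*\underline{\ZZ}_\ell$, equipped with its canonical (Postnikov/Deligne) filtration, gives a filtered $E_\infty$-algebra
\[
R\Gamma(\Xx^n,\tau_{\le\bullet}Ri_*\underline{\ZZ}_\ell).
\]
This filtered algebra carries a Frobenius action $\varphi$ and computes $C^*_{et}(\mathrm{Conf}_n(\Xx);\ZZ_\ell)\simeq C^*(\mathrm{Conf}_n(X),\ZZ_\ell)$. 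Its associated spectral sequence is the Leray spectral sequence. I would then use Totaro's computation of the sheaves $R^qi_*\ZZ_\ell$, generated by the classes of the diagonals $\Delta_{ab}$, which carry Tate twists $\ZZ_\ell(-m)$. From this, ${}^{\Ll}E_2^{p,q}$ is torsion-free (since $X$ has torsion-free cohomology in the Tate type case) and is built from $H^p$ of partial diagonals twisted by $\ZZ_\ell(-qm/(2m-1))$, i.e. has Frobenius weight $p+2mq/(2m-1)$. The upshot is that on ${}^{\Ll}E_2^{p,q}$, Frobenius acts by eigenvalues $(\pm\sqrt q)^{p}q^{mk}$, where $q$ contributes in multiples of $2m-1$ rows. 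So weight and total degree determine each other only up to the index $p+q$ versus $p+2mk$; in other words, weight equals degree plus $k$ for $q=(2m-1)k$.

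\emph{Step 2 (splitting by weights).} Apply the splitting theorem for Weil algebras (the one used in the proof of Theorem \ref{theo : tameprojective}) to the filtered Frobenius $E_\infty$-algebra above. Eigenvalue separation modulo $\ell$ in the tame range, which is guaranteed by the Tate type assumption through the tame purity on each stratum, lets us decompose the algebra into generalized eigenspaces compatibly up to the relevant degree. Since each weight piece sits in a single Leray row-shift, the decomposition identifies the algebra, up to $(r,s')$-tame quasi-isomorphism, with its ``décalage'': the $E_1$ page of the décalée filtration, which equals $({}^{\Ll}E_{2m},d_{2m})$. Concretely, I would argue as in Deligne's weight-degeneration argument. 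The filtered algebra is tamely quasi-isomorphic to the direct sum of its weight-graded pieces, and on that graded object the only surviving differential is $d_{2m}$. Moreover, ${}^{\Ll}E_{2m}$ with $d_{2m}$ is a strictly commutative dga, because the generators $G_{ab}$ and the cohomology of $X^n$ are strict.

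\emph{Step 3 (conclusion).} Any tame quasi-isomorphism of cdgas $({}^{\Ll}E_{2m},d_{2m})\simeq A$ then gives a tame quasi-isomorphism of $E_\infty$-algebras with the cochains of $\mathrm{Conf}_n(X)$, carrying the canonical filtrations. Theorem \ref{theo : tame mandell} then yields the statement, with the exchange rate $s+\lfloor\frac{s+1}{r-1}\rfloor$ absorbed into the tamely pure hypothesis; Tate type allows all $s$ within the $\ell$-bounds. In the non-simply-connected case, I would invoke the fundamental-group variant of Section \ref{subsec: fundgp}.

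The main obstacle is Step 2 integrally. Modulo $\ell$, the eigenvalues $q^{j}$ can coincide for different $j$ when the order of $q$ in $\FF_\ell^\times$ is small. I expect one must choose $K$, equivalently $q$, so that $q$ has large multiplicative order mod $\ell$; a finite extension changing $q$ to a power should suffice. One must then check that the eigenvalue collisions occur only beyond degree $r+s$, so that the splitting holds only tamely; this is precisely where the tame parameters enter.
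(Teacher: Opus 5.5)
There are two genuine gaps, both in the passage from Step 2 to Step 3.

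\textbf{The filtration.} The Postnikov filtration $\tau=\tau^{(1)}$ on $Ri_*\underline{\ZZ}_\ell$ is not a Weil filtration, and for $m\geq 2$ neither is its décalage. By Lemma \ref{dece0e1}, $Gr_i^{\Dec\tau}$ is built from the groups $H^p(X^n,R^{(2m-1)k}i_*\ZZ_\ell)$ with $p+2(2m-1)k=i$. These groups have weight $p+2mk$, which differs from $i$ as soon as $k>0$. Worse, $(E_0(\Dec\tau),d_0)\simeq(E_1(\tau),d_1)\cong({}^{\Ll}E_2,d_2)$, and $d_2=0$ for $m\geq 2$. So splitting this filtration would return ${}^{\Ll}E_2$ with zero differential. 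Likewise, the $E_1$ page of $\Dec\tau$ is ${}^{\Ll}E_3$ with $d_3=0$, not $({}^{\Ll}E_{2m},d_{2m})$ as you claim.

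Your grading ``weight $=$ degree $+\,k$'' is the correct one. To realize it by a multiplicative filtration on cochains, you must first speed up the canonical filtration to match the row spacing $2m-1$ of the Leray spectral sequence. The paper uses $W=\Dec\,\tau^{(2m-1)}$ (Definition \ref{Ciricifiltration}). Since $Gr_k^{\tau^{(2m-1)}}Ri_*\simeq R^{(2m-1)k}i_*[-(2m-1)k]$, the Leray $d_{2m}$ becomes $d_1$ of $\tau^{(2m-1)}$, hence $d_0$ of $W$. The weight-$i$ part of $E_0(W)$ in degree $n$ is ${}^{\Ll}E_{2m}^{n-(2m-1)k,(2m-1)k}$ with $k=i-n$, which is pure of weight $i$. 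This is what makes $(\tilde\Aa_{\ZZ_\ell}(\Uu),W)$ an algebra in Weil complexes with associated graded $({}^{\Ll}\tilde E_{2m},d_{2m})$ (Proposition \ref{Weilalgtame}).

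\textbf{Passing back to the canonical filtration.} This is the step your Step 3 simply asserts, when it says one obtains a tame quasi-isomorphism ``carrying the canonical filtrations''. Corollary \ref{coro : formality} only gives an $(r',s')$-tame quasi-isomorphism $(\tilde\Aa_{\ZZ_\ell}(\Uu),W)\simeq({}^{\Ll}\tilde E_{2m},W)$ for the weight filtration. Tame homotopy types, and Theorem \ref{theo : tame mandell}, are phrased with $\tau$, and $W\neq\tau$ on both sides: a class of degree $n$ can have weight anywhere in $[n,\lfloor 2mn/(2m-1)\rfloor]$. So isomorphisms on $H^*(W_i)$ for $i<r'+s'$ say nothing directly about $H^*(\tau_i)$.

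The paper closes this with Proposition \ref{prop : different filtration}. Because ${}^{\Ll}E_{2m}^{p,*}=0$ for $p<0$, the inclusion $W_{\sigma(n)}A\to A$ is a cohomology isomorphism in degrees $\leq n$, where $\sigma(n)=\lfloor 2mn/(2m-1)\rfloor$. The lax monoidal functor $(A,W)\mapsto(A,\tau\sigma^{-1}W)$ then converts the $(r',s')$-tame equivalence into an $(r,s)$-tame equivalence between the canonically filtered algebras, with $r=\lceil(2m-1)r'/2m\rceil$ and $s=\lfloor(2m-1)s'/2m\rfloor$. Together with the constraint $s'<h$ (or $s'<2h$ for even cohomology) from Theorem \ref{e0formality}, this is where the parameters of Corollary \ref{coro: mainconftate} come from.

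\textbf{Smaller points.} Tate type does not give ``all $s$'': the range is governed by $h$. A finite extension cannot enlarge $h$, since $q^f$ has order $h/\gcd(h,f)$ in $\FF_\ell^\times$; one must change the place instead, via Chebotarev. Finally, Theorem \ref{theo : tame mandell} is not needed for this statement, which concerns the $E_\infty$-algebra of cochains up to tame quasi-isomorphism. It enters only in Corollary \ref{coro: tatetameconfeladic}, under simple connectivity. The $\pi_1$ results of Section \ref{subsec: fundgp} do not recover a tame homotopy type.
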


The precise values of the tame parameters for $(r,s)$-tamely pure varieties are given in Theorem \ref{theo: mainconftame}.
In the case $s=0$ this recovers the rational model of Totaro--Kriz for configuration spaces of smooth complex projective varieties. We actually prove, in Theorem \ref{maintame}, a more general statement for arrangement complements.
This in particular gives a tame version of Morgan's model for smooth quasi-projective varieties.

Note that our proof is functorial. In particular, for an inclusion of smooth tamely pure varieties it gives a model for the induced map of configuration spaces.
For $\ell$-complete homotopy types, the above theorem gives:

\begin{corollary*}[Corollary \ref{coro: tatetameconfeladic}]
Let $\Xx$ be a simply-connected smooth variety of Tate type over a number field $K$, and of dimension $m\geq 2$.
Then for a large enough $\ell$, the $\ell$-complete homotopy type of $\mathrm{Conf}_n(X)$ depends only on the cohomology ring $H^*(X;\ZZ_\ell)$ together with the diagonal class $\Delta\in H^{2m}(X\times X;\ZZ_\ell)$.
\end{corollary*}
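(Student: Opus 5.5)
The plan is to derive the corollary from the Theorem on configuration spaces (Theorem \ref{theo: mainconftame}, Corollary \ref{coro: mainconftate}) together with the tame Mandell theorem (Theorem \ref{theo : tame mandell}), letting the tame parameter $s$ go to infinity by taking $\ell$ large.

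\textbf{Step 1: choose $\ell$ so that tame information is all the integral information.} Spread $\Xx$ out over $\Oo_K[1/N]$ so that it has good reduction at all primes away from $N$. Fix a prime $\ell\nmid N$ and a place $v$ of $K$ with residue characteristic $p\neq \ell$ and good reduction. Then $\Xx\otimes K_v$ is of Tate type over a $p$-adic field, and the cohomology $H^*(X;\ZZ_\ell)$ is finitely generated. For $\ell$ large it is also torsion free, since only finitely many primes divide the torsion of $H^*(X;\ZZ)$ and of $H^*(\mathrm{Conf}_n(X);\ZZ)$.

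Set $d=\dim_\RR \mathrm{Conf}_n(X)=2mn$. Since $\mathrm{Conf}_n(X)$ is a (noncompact) manifold of dimension $d$, its $\ell$-complete homotopy type is determined by its $d'$-type for some $d'$ depending only on $d$. Beyond that range, $\ell$-adic torsion in the homotopy groups is controlled by rational data, because the minimal model is finite. So I choose $s$ of order $d$ and require $\ell$ with $2\ell-3\geq s+\lfloor\frac{s+1}{r-1}\rfloor$ and $\ell-2\ge s$. Then an $(r,s)$-tame equivalence is an $\ell$-adic equivalence in all relevant degrees.

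\textbf{Step 2: simple connectivity.} Since $m\geq 2$ and $X$ is simply connected, $\mathrm{Conf}_n(X)$ is simply connected (complex codimension of the diagonals is at least $2$), so $r=2$ applies. This is where $r=2$ in Theorem \ref{theo : tame mandell} is used.

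\textbf{Step 3: identify the input.} By Theorem \ref{theo: mainconftame}, for the chosen parameters the tame homotopy type of $\mathrm{Conf}_n(X)$ is modelled by $({}^{\Ll}E_{2m}^{*,*}(\ZZ_\ell),d_{2m})$. This is the Kriz--Totaro algebra: the quotient of $H^*(X^n;\ZZ_\ell)[G_{ab}]$ by Arnold relations and $p_a^*(x)G_{ab}=p_b^*(x)G_{ab}$, with $d G_{ab}=p_{ab}^*\Delta$. It is therefore built functorially from the ring $H^*(X;\ZZ_\ell)$ and the class $\Delta$.

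The Künneth isomorphism needs torsion-freeness, which Step 1 provides. The Leray page identification needs the $E_2$-term computation from Totaro over $\ZZ_\ell$, which is part of the theorem.

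\textbf{Step 4: conclude.} Let $X'$ be another such variety with an isomorphism of rings $H^*(X)\cong H^*(X')$ matching the diagonal classes. Then the two dg-algebras are isomorphic. Hence the $E_\infty$-cochains are tamely quasi-isomorphic, and Theorem \ref{theo : tame mandell} gives an equivalence of $(2,s)$-tame types, hence of $\ell$-complete types by Step 1.

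\textbf{Main obstacle.} The delicate point is Step 1: justifying that a sufficiently large tame parameter recovers the full $\ell$-complete homotopy type. The homotopy groups are infinite in number, and tameness only controls a bounded range. My first attempt is to use that for $\ell$ larger than about $d/2$, an $\ell$-complete simply connected finite complex of dimension $d$ is determined by rational data plus data up to degree $d$. This is the classical tame-range statement: below $2\ell-3$ no Steenrod operations act on its cohomology. If that fails, I would restrict the statement to a fixed range of homotopy groups.
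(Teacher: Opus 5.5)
Your overall architecture matches the paper's: the model of Corollary \ref{coro: mainconftate}, then Corollary \ref{coro : tame mandell simply-connected} with $r=2$, then a truncation argument. Steps 2--4 are fine. But Steps 1 and 3 have a genuine gap. You treat the tame parameter as limited only by the Mandell-type bound $s\leq \ell-2$, and you fix an arbitrary place $v$ of good reduction.

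The range of Theorem \ref{theo: mainconftame} is governed instead by $h$, the order of $q$ (the cardinality of the residue field of $K_v$) in $\FF_\ell^\times$. The theorem requires $s'<h$ (or $s'<2h$ in the even case) and outputs the parameter $\lfloor\frac{(2m-1)s'}{2m}\rfloor$. Corollary \ref{coro : tame mandell simply-connected} consumes a $(2,2t+1)$-tame quasi-isomorphism to produce the $(2,t)$-tame type. So you only reach $t$ of size about $h/2$, which is exactly the bound in Corollary \ref{coro: tatetameconfeladic}. This is intrinsic to the method: by Proposition \ref{prop : Hom is zero}, weights $i$ and $j$ are separated mod $\ell$ only when $h\nmid j-i$.

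For an arbitrary $v$, nothing bounds $h$ from below. If $q\equiv 1\pmod{\ell}$, then $h=1$ and you get only rational information, however large $\ell$ is. So ``for the chosen parameters'' in Step 3 is unjustified.

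The missing idea is to choose the place depending on $\ell$, so that $h$ grows with $\ell$. As in the remark following Example \ref{Example : projective space}, Chebotarev gives, for all but finitely many $\ell$, a place with residue field $\FF_p$ and $p$ of order $\ell-1$ in $\FF_\ell^\times$, so $h=\ell-1$. Dirichlet's theorem also suffices: take a good-reduction prime $p$ that is a primitive root mod $\ell$; then any $v\mid p$ has $h\geq(\ell-1)/[K:\QQ]$. Since infinitely many such $p$ exist, you can also take $p$ large enough that $\mathrm{Conf}_n(\Xx)$ acquires a $K_v$-point (Lang--Weil plus Hensel). The theorem needs this point, and you never arrange it.

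Separately, your justification of the passage from tame to $\ell$-complete types is wrong as written. The $\ell$-torsion in high homotopy groups of a finite complex is not ``controlled by rational data,'' since finite complexes have $\ell$-torsion in infinitely many degrees. The Steenrod remark is also not the relevant point.

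What is needed is only Lemma \ref{CWtruncate}. The space $\mathrm{Conf}_n(X)$ is homotopy equivalent to a simply connected finite CW complex of dimension at most $2mn$. Use the universal property of $\ell$-completion together with the same obstruction theory. The obstructions and indeterminacies lie in $H^{k+2}$ and $H^{k+1}$ of this complex, with coefficients in $\pi_{k+1}$ of the $\ell$-complete target, and these vanish for $k\geq 2mn$. Hence two such spaces have equivalent $\ell$-completions as soon as the $t_{\leq 2mn}$-truncations of their $\ell$-completions agree.

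So it suffices to have $t+1\geq 2mn$. This holds once $h$, and hence $\ell$, is large compared to $mn$. No fallback to a restricted range of homotopy groups is needed.
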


\subsection*{Perspectives}
Our purity implies formality statements should imply tame formality of the little disks operad, improving the partial formality results proven in \cite{BoHo1}.
Likewise, one should obtain tame formality of the framed version,
following the techniques of \cite{BoCiHo}.
These formality results and the implications in manifold calculus will be explored by Nicolas Guès in his forthcoming PhD thesis.
More generally, one could expect that our tame models for configuration spaces can be assembled into a tame model for the configuration category \cite{BdBW}, giving applications to manifold calculus.

\subsection*{Organization of the paper}
In Section \ref{sec: rationale} we give an overview of the main results of the paper, purely in the context of rational homotopy theory. The main theorem is Theorem \ref{maintheorational} which gives a model for complements of arrangements of algebraic varieties via a certain page of the Leray spectral sequence that depends on the codimension of the arrangements. This specializes to configuration spaces, recovering the results of Totaro--Kriz and to smooth varieties, recovering the results of Morgan.
In Section \ref{sectiontame} we develop tame homotopy theory, proving the Mandell-type Theorem \ref{theo : tame mandell}. We also study the notion of tame formality and the relation with truncated algebras.
These first two sections may be read independently.
Lastly, Section \ref{tamevars} combines the material of the previous sections to prove the main results of this paper: Theorem \ref{theo : tameprojective} on purity implies formality, Theorem \ref{maintame} for tame models of arrangement complements and Theorem \ref{theo: mainconftame} on configuration spaces.

\section{Rational models of algebraic varieties}\label{sec: rationale}
In this section we use sheaf theory and the theory of weights to describe the rational homotopy type of certain arrangement complements of algebraic varieties. The description we obtain specializes to both smooth varieties, giving an $\ell$-adic proof of Morgan's result \cite{Morgan}, and to configuration spaces, recovering the Totaro--Kriz  model \cite{Kriz} purely using weight arguments.
Additionally, it serves as a prelude to the (more involved) tame models developed later.

\subsection{Rationale}

Let $X$ be a smooth complex algebraic variety. By Hironaka's resolution of singularities one may find a good compactification $g:X\hookrightarrow \overline{X}$, so that the complement $D:=\overline{X}-X$ is a simple normal crossings divisor. 
As shown by Deligne \cite{DeHII}, 
the Leray spectral sequence for the constant sheaf $\underline{\QQ}_X$ relative to this inclusion 
\[^{\Ll}E_2^{i,j}(g,\QQ):=H^i(\overline{X},R^jg_*\underline{\QQ}_X)\Longrightarrow H^{i+j}(X;\QQ)\]
always degenerates at the third page. Moreover, the second page admits a simple description in terms of the cohomology
\[^{\Ll}E_2^{i,j}(g,\QQ)\cong H^{i}(D^{(j)},\QQ)\]
of the canonical simplicial hyperresolution $D^{(\bullet)}\to D$.
The differential $d_2$ is given by the alternating sum of Gysin maps 
associated to the distinct inclusions in the simplicial hyperresolution of $D$
and the algebra structure is induced by the alternating sum of restriction morphisms together with the ring structure of $H^*(D^{(\bullet)};\QQ)$.
Morgan showed in \cite{Morgan} that the cdga $({}^{\Ll}E_2^{*,*}(g,\QQ),d_2)$ is a model for the $\QQ$-homotopy type of $X$.
This has several important consequences. First, as it is given by cohomologies of smooth projective varieties, it gives in particular a finite dimensional model. Moreover, it shows that rational models for smooth varieties carry positive weights and so in particular one obtains restrictions on homotopy types of smooth varieties. For instance, the real Malcev Lie algebra of the fundamental group is determined by its quotient by the
fifth order commutators.

In \cite{FMP}, Fulton and MacPherson 
gave an explicit description for a good compactification of
\[\mathrm{Conf}_n(X):=\{(x_1,\cdots,x_n)\in X^n; x_i\neq x_j\},\]
the unordered configuration space of $n$ points in a smooth complex projective variety $X$.
Together with Morgan's result, this gives a model for the $\QQ$-homotopy type of $\mathrm{Conf}_n(X)$ which depends only on the homotopy type of $X$.

There is another (non-good) compactification $f:\mathrm{Conf}_n(X)\hookrightarrow X^n$ given by the obvious inclusion.
Totaro \cite{Totaro} studied the Leray spectral sequence  
for the constant sheaf relative to this inclusion and showed that the only non-trivial differential is $d_{2m}$, where $m$ is the dimension of $X$. Moreover, Totaro gave a description of the cdga 
$({}^{\Ll}E_{2m}^{*,*}(f,\QQ),d_{2m})$ in terms of the cohomology of $X^n$.
By work of Kriz \cite{Kriz}, this algebra is a $\QQ$-model for the configuration space, which is actually simpler than the model of Fulton and MacPherson. 
The proof of Kriz follows by building an explicit quasi-isomorphism 
from this algebra to the model obtained by Fulton and MacPherson. 

In this paper, we reprove the result of Kriz by generalizing Morgan's theory to include more general compactifications.
This gives a more conceptual proof, based on the theory of weights, which allows for an adaptation to the study of tame homotopy types. 
More precisely, we study the Leray spectral sequence for the obvious inclusion of complements of codimension-$c$ arrangements in a smooth projective variety, where $c\geq 1$. These are defined
inductively as follows:

\begin{definition}\label{defarrangement}Let $X$ be a smooth proper variety.
An arrangement of distinct subvarieties $Z=\{Z_i\}_{1\leq i\leq s}$ in $X$ is said to be \textit{admissible of codimension $c$ in $X$}
if 
\begin{enumerate}
 \item 
each $Z_i$ is smooth of codimension $c$ and 
\item for any finite set of indices $I\subseteq \{1,\cdots,s\}$ and $j\notin I$,  
 the set-theoretic
intersection
\[\left(\bigcup_{i\in I}Z_i\right)\cap Z_j\]
is empty or it is the union of an admissible codimension $c$ arrangement
in each $Z_j$.
\end{enumerate}
\end{definition}

Two particular situations of interest will be the following:
\begin{itemize}
 \item (Good compactifications). Let $g:X\to \overline{X}$ be a good compactification of a smooth variety $X$.
 Then $\overline{X}-X$ is an admissible arrangement of codimension 1 in $\overline{X}$.
 \item (Configuration spaces). Given a smooth complex projective variety $X$, let 
 \[Z_{ij}:=\{(x_1,\cdots,x_n)\in X^n; x_i=x_j\}.\] Then $Z=\{Z_{ij}\}$ is an admissible arrangement of codimension $m$ in $X^n$, where $m$ is the complex dimension of $X$, and 
 $U:=X^n-Z=\mathrm{Conf}_n(X)$.
\end{itemize}

Given an admissible arrangement $\{Z_i\}$ in $X$, denote by $U=X- \bigcup_i Z_i$ the complement and by 
$f:U\hookrightarrow X$ the natural inclusion.
As shown by Weber \cite{Weber}, the Leray spectral sequence 
\[^{\Ll}E_2^{i,j}(f,\QQ)=H^i(\overline{X},R^jf_*\underline{\QQ}_X)\Longrightarrow H^{i+j}(X;\QQ).\]
 for the constant sheaf $\underline{\QQ}_X$ relative to this inclusion 
 satisfies $d_i=0$ for all $i\neq 2c$. In particular, the cdga $(^{\Ll}E_{2c}^{*,*}(f,\QQ),d_{2c})$ computes the cohomology  of $U$ with $\QQ$-coefficients.

Weber's degeneration result is a generalization of both degeneration results due to Deligne and Totaro respectively. In the three cases, the main ingredient in the proof is the theory of weights, which is also essential in Morgan's rational models, through mixed Hodge theory. 
We will not be more original
here. Our strategy is to use Galois actions in étale cohomology to show that a certain sheaf-theoretic model for $U$, together with a certain canonically defined filtration, is homotopically split and, as a consequence, quasi-isomorphic to 
the commutative dg-algebra given by the penultimate page of the Leray spectral sequence. For that, we first review some preliminaries on filtrations and develop the abstract machinery of weights on algebras, through a theory of Weil algebras. Finally, we apply this to a sheaf-theoretic model for complements of arrangements.

\subsection{Preliminaries on filtrations}Throughout this paper we will consider filtered complexes $(A,W)$ with $W$ an increasing non-negative filtration.
We will be using the following family of canonical filtrations:

\begin{definition}\label{Ciricifiltration}
For any integer $k\geq 1$, denote by $\tau^{(k)}$ the \textit{canonical filtration of speed $k$}, defined on a cochain complex $A$ by letting
 \[
\tau^{(k)}_iA^n=\left\{ 
\begin{array}{ll}
0&,ki<n\\
\Ker(d:A^n\to A^{n+1})&,ki=n\\
A^n&,ki>n
\end{array}
\right..
\]
\end{definition}

Note that $\tau^{(1)}=\tau$ is the usual canonical filtration. 
For all $k\geq 1$, the filtration $\tau^{(k)}$ defines a symmetric monoidal functor from complexes to filtered complexes. The following is straightforward:
\begin{lemma}\label{quisspeed}Let $A$ be a cochain complex and $k\geq 1$. For all $i\in\ZZ$, the associated graded complex $Gr_i^{\tau^{(k)}}A$ of weight $i$ is given by
\[A^{ki-(k-1)}/\ker d\to A^{ki-(k-2)} \to\cdots\to A^{ki-1}\to\ker d\cap A^{ki}.\]
In particular,
\[Gr_i^{\tau^{(k)}}
A\simeq \bigoplus_{j=0}^{k-1}H^{ki-j}(A)[-ki+j].\]
\end{lemma}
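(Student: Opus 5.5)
The plan is to compute the graded pieces of $\tau^{(k)}$ directly from Definition \ref{Ciricifiltration}, and then to split the resulting short complex using the fact that we work over a field (or, more generally, whenever the relevant cohomology is free, as in the situations where the lemma is applied).

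\emph{Step 1: the graded piece.} Fix a weight $i$. By definition, $\tau^{(k)}_iA^n$ equals $A^n$ for $n<ki$, equals $\Ker d$ for $n=ki$, and is zero for $n>ki$. Likewise $\tau^{(k)}_{i-1}A^n$ equals $A^n$ for $n<k(i-1)$, equals $\Ker d$ for $n=ki-k$, and is zero for $n>ki-k$. Taking the quotient degreewise gives:
\begin{itemize}
\item zero in degrees $n<ki-k$ and $n>ki$;
\item $A^{ki-k}/\Ker d$ in degree $ki-k$;
\item $A^n$ for $ki-k<n<ki$;
\item $\Ker d\cap A^{ki}$ in degree $ki$.
\end{itemize}
The differential is the one induced from $A$. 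The displayed complex in the statement begins at $A^{ki-(k-1)}/\Ker d$, which appears to be an index shift, since $ki-(k-1)=k(i-1)+1$. So I will first check the boundary convention against the definition. With the definition as written, the bottom term is $A^{k(i-1)}/\Ker d$. This term is isomorphic to $dA^{k(i-1)}\subseteq A^{k(i-1)+1}$ via $d$, and contributes no cohomology. Either way, the cohomology of $Gr_i$ is what matters.

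\emph{Step 2: the cohomology of $Gr_i$.} In the interior degrees $ki-k<n<ki$ the complex agrees with $A$, so its cohomology there is $H^n(A)$. In the top degree $ki$, the cycles are all of $\Ker d\cap A^{ki}$ and the boundaries are $dA^{ki-1}$, so the cohomology is $H^{ki}(A)$. At the bottom, $A^{k(i-1)}/\Ker d\to A^{k(i-1)+1}$ is injective. Hence there is no cohomology in degree $k(i-1)$, and the cohomology in degree $k(i-1)+1$ is $\Ker d/dA^{k(i-1)}=H^{k(i-1)+1}(A)$. Altogether we obtain $H^{ki-j}(A)$ for $j=0,\dots,k-1$, which is exactly the claimed list.

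\emph{Step 3: formality.} It remains to see that $Gr_i$ is quasi-isomorphic to its cohomology. Over a field, every complex is quasi-isomorphic to its cohomology: choose splittings of cycles and boundaries to obtain a quasi-isomorphism $H\to Gr_i$. This yields $Gr_i^{\tau^{(k)}}A\simeq\bigoplus_{j}H^{ki-j}(A)[-ki+j]$.

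Over a general ring such as $\ZZ_\ell$, I would read the symbol $\simeq$ as an equality of cohomology. Alternatively, I would note that this is the sense in which the lemma is used, namely for computing the cohomology of the associated graded in spectral-sequence arguments.

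The main obstacle is this last point: an honest quasi-isomorphism to the direct sum needs either field coefficients or freeness and projectivity hypotheses. I would state the lemma over a field, or as an isomorphism on cohomology, and remark that it upgrades to a quasi-isomorphism whenever the cycles split off, for instance when $A$ is a bounded complex of free modules with free cohomology.
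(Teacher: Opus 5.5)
Your Steps 1 and 2 are the direct computation the paper has in mind; it gives no proof and calls the lemma straightforward. You are also right about the display. By Definition~\ref{Ciricifiltration}, the bottom term of $Gr_i^{\tau^{(k)}}A$ is $A^{ki-k}/\ker d$. With the display as printed, the complex would miss the summand $H^{ki-k+1}(A)$ claimed in the second assertion.

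The real problem is Step 3: the obstacle you identify over $\ZZ_\ell$ does not exist. $\ZZ_\ell$ is a principal ideal domain, hence hereditary. So every complex $C$ of $\ZZ_\ell$-modules, with no boundedness or freeness assumption, is connected by a zig-zag of quasi-isomorphisms to $\bigoplus_n H^n(C)[-n]$. Here is the construction:
\begin{enumerate}
\item For each $n$, choose a presentation $0\to F_1\to F_0\to H^n(C)\to 0$ with $F_0$ free; then $F_1$ is free as well.
\item Lift $F_0\to H^n(C)$ to a map $F_0\to \ker d\cap C^n$.
\item The induced map $F_1\to \ker d\cap C^n$ lands in $dC^{n-1}$; lift it along $C^{n-1}\to dC^{n-1}$ to a map $F_1\to C^{n-1}$.
\end{enumerate}
This gives a chain map from the two-term complex $F_1\to F_0$, placed in degrees $n-1,n$, to $C$, and it is an isomorphism on $H^n$. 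Summing over $n$ gives a quasi-isomorphism into $C$ from a complex that also maps quasi-isomorphically onto $\bigoplus_n H^n(C)[-n]$. So the second assertion holds over $\ZZ_\ell$ as stated, and you should not weaken it to an isomorphism on cohomology.

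That weakening would in fact be too weak for how the lemma is used in Proposition~\ref{Weilalg} and its $\ZZ_\ell$-version. There it is applied to complexes of sheaves, where an actual map is needed to compute hypercohomology, and neither field-splitting nor hereditariness is available. What makes that application work is the following. Since $R^qf_*$ vanishes unless $k=2c-1$ divides $q$, only the degree-$ki$ cohomology sheaf of $Gr_i$ survives. The natural map $\ker d\cap A^{ki}\to \mathcal{H}^{ki}(A)$ induces a chain map $Gr_i^{\tau^{(k)}}A\to \mathcal{H}^{ki}(A)[-ki]$, and by your Step 2 this chain map is a quasi-isomorphism.
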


We will also use the décalage filtration, which generalizes the passage from the trivial to the canonical filtration:

\begin{definition}\label{defidec}The \textit{décalage} of a filtered complex $(A,W)$ is the filtered complex $(A,\Dec W)$ defined by
 \[\Dec W_i A^n:=W_{i-n}A^n\cap d^{-1}(W_{i-n-1}A^{n+1}).\]
\end{definition}
Note that décalage  defines a symmetric monoidal endofunctor $\Dec$ on the category of filtered complexes.

The spectral sequence $\{E_r(A,W),d_r\}$ associated to a filtered complex $(A,W)$ is given by
\[E_r^{-i,j}(A,W):={W_iA^{j-i}\cap d^{-1}(W_{i-r}A^{j-i+1})\over
 W_{i-1}A^{j-i}\cap d^{-1}(W_{i-r}A^{j-i+1})+d(W_{i+r-1}A^{j-i-1})\cap W_iA^{j-i}}.\]
and the differential $d_r:E_r^{-i,j}(A,W)\to E_r^{-i+r,j-r+1}(A,W)$ is induced by the differential $d$ of $A$. We have
\[E_0^{-i,j}(A,W)=Gr_i^WA^{j-i}\] and $E_\infty^{-i,j}(A,W)$ is naturally identified with $Gr_i^WH^{j-i}(A)$.

 The spectral sequences of a filtered complex and its décalage are related by a shift on their pages:
 
 \begin{lemma}[\cite{DeHII}]\label{dece0e1}
 There is a natural quasi-isomorphism of bigraded complexes
 \[E_0^{-i,j}(A,\Dec W)\lra E_1^{j-2i,i}(A,W)\]
inducing isomorphisms at the later pages.
 \end{lemma}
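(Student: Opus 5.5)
The plan is to write the map down explicitly and check everything by direct manipulation of the defining formulas for $\Dec W$ and for the spectral sequence. We first match up the indices. By definition $E_0^{-i,j}(A,\Dec W)=Gr_i^{\Dec W}A^n$ with $n=j-i$. Put $p:=i-n=2i-j$. Then the target $E_1^{j-2i,i}(A,W)=E_1^{-p,p+n}(A,W)$ is the weight $p$, total degree $n$ part of $E_1(A,W)$, that is, $H^n(Gr_p^WA)$. Unwinding Definition \ref{defidec}, we have
\[Gr_i^{\Dec W}A^n=\frac{W_pA^n\cap d^{-1}(W_{p-1}A^{n+1})}{W_{p-1}A^n\cap d^{-1}(W_{p-2}A^{n+1})}.\]
An element $x$ of the numerator is a cycle in $Gr^W_pA^n$, so we send $x$ to its class $[x]\in H^n(Gr_p^WA)$. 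Elements of the denominator lie in $W_{p-1}$ and so go to $0$, hence the map is well defined. It is visibly natural in $(A,W)$. The differential of $Gr^{\Dec W}_i$ sends degree $n$ and $p$ to degree $n+1$ and weight $p-1$. Both $d_0$ on the source and $d_1$ on the target are induced by $d$, with $d_1[x]=[dx]\in H^{n+1}(Gr_{p-1}^WA)$. Hence the map commutes with differentials, and it is a map of bigraded complexes.

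Next we show the map is a quasi-isomorphism, by proving it is surjective with acyclic kernel. For surjectivity, a class in $H^n(Gr^W_pA)$ is represented by some $x\in W_pA^n$ with $dx\in W_{p-1}A^{n+1}$, and such an $x$ lies in $\Dec W_iA^n$ by definition. For the kernel, the classes mapping to zero are those of $x=y+dz$ with $y\in W_{p-1}A^n$ and $z\in W_pA^{n-1}$. Two observations handle this.
\begin{enumerate}
\item The element $z$ lies in $\Dec W_iA^{n-1}=W_{p+1}\cap d^{-1}(W_p)$, so $dz$ is a $d_0$-boundary in $Gr^{\Dec W}_i$.
\item Suppose $y$ represents a $d_0$-cycle modulo boundaries. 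Then $dy\in \Dec W_{i-1}A^{n+1}\subseteq W_{p-2}$. Together with $y\in W_{p-1}$, this forces $y\in \Dec W_{i-1}A^n$, so its class is already $0$.
\end{enumerate}
Hence every cycle of the kernel is a boundary in the source, and the long exact sequence gives the quasi-isomorphism. Some care is needed with the boundary term $dz$, which must be a boundary inside the kernel subcomplex and not merely in the whole complex; I would check this directly.

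Finally, we check that the induced isomorphism $E_1(A,\Dec W)\cong E_2(A,W)$ propagates to all later pages compatibly with the differentials $d_r\leftrightarrow d_{r+1}$. There are two ways to do this. The first is a direct comparison of the cycle and boundary subspaces $Z_r,B_r$ in the explicit formula for $E_r$. Substituting $\Dec W_i A^n=W_{i-n}\cap d^{-1}W_{i-n-1}$, one checks that $Z_r^{i}(\Dec W)$ and $B_r^i(\Dec W)$ in degree $n$ coincide, modulo the lower terms, with $Z_{r+1}^{p}(W)$ and $B_{r+1}^p(W)$. This should reduce to the identity $W_{i-n-r}\cap d^{-1}(\cdots)$ with the shift $i\mapsto i-n$ absorbing one extra step. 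The second and more economical way is to note that a morphism of spectral sequences which is an isomorphism at one page is an isomorphism at all subsequent pages. For this we only need the map to extend to a morphism of spectral sequences, i.e.\ to commute with all $d_r$. That follows since both $d_r$ and $d_{r+1}$ are induced by $d$ on the same representatives.

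I expect this last compatibility to be the main obstacle, namely keeping the indices straight in the $Z_r/B_r$ bookkeeping. I would handle it by the direct comparison of representatives, as in Deligne's original argument.
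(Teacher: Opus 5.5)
Your argument is correct and is the standard direct verification on representatives from Deligne's Hodge II. The paper cites that result rather than reproving it. The point you left open does go through: $z\in W_pA^{n-1}$ has zero image in $H^{n-1}(Gr^W_{p+1}A)$, so it lies in the kernel subcomplex. For the later pages with $r\geq 1$, substituting the definition of $\Dec W$ (using $d(dx)=0$) shows that the numerator and both denominator terms of $E_r^{-i,j}(A,\Dec W)$ coincide exactly with those of $E_{r+1}^{j-2i,i}(A,W)$ as subspaces of $A^{j-i}$, so the isomorphisms there are identities on subquotients and commute with the differentials on the nose.
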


We now consider filtered commutative dg-algebras (cdga's for short) over a field $\kk$ of characteristic zero. 
For the following definition, we will
 view the associated graded functor
$(A,W)\mapsto Gr^W_*A=\bigoplus_j Gr^W_jA$
as an endofunctor in the category of filtered cdga's, with
the column filtration:
\[W_iGr^W_*A^n:=\bigoplus_{j\leq i}Gr_j^WA^n.\]

\begin{definition}
A filtered algebra $(A,W)$ is said to be \textit{homotopically split} if there is a string of filtered quasi-isomorphisms from 
$(A,W)$ to $(Gr^W_*A,W)$.
\end{definition}

Note in particular that if $(A,W)$ is homotopically split, then its associated spectral sequence degenerates at $E_{1}$. The converse is not true. The following example relates splittings and formality (see also \cite{CiriciGuillen} for the more general notion of $E_r$-formality).

\begin{example}
Consider the canonical filtration $\tau$
on an algebra $A$.
Then $(A,\tau)$ is homotopically split if and only if $A$  is a formal algebra.
\end{example}

\begin{example}
Morgan's result \cite{Morgan} on rational homotopy types may
be rephrased as the décalage of the weight filtration on $\Aa_{pl}(X)$ being homotopically split for a smooth variety $X$.
By \cite{CiriciGuillen} this is also true for singular varieties (see also \cite{CiHo1} for a general functorial statement).
\end{example}

\subsection{Weil algebras}We next introduce Weil algebras
as a multiplicative version of the $\ell$-adic counterpart of the notion of mixed
Hodge complex introduced by Deligne. The main property is that Weil algebras are always homotopically split.

Let $\ZZ_\ell$ be the ring of $\ell$-adic integers and fix $q=p^k$ which is a power of a prime number $p$ different from $\ell$.

\begin{definition}\label{defweilnum}
A \textit{Weil number of pure weight $i$} is an algebraic integer $x$ over $\ZZ_\ell$ such that for any embedding $\iota$ of $\overline{\QQ}_\ell$ in $\CC$, we have
\[|\iota(x)|=q^{i/2}.\]
\end{definition}

Observe that the product of two Weil numbers is a Weil number and the weight is additive with respect
to this operation. Note as well that the definition of Weil number depends on the fixed prime $\ell$ as well as the fixed prime power $q$, which we omit from the notation.

\begin{definition}
 A \textit{Weil algebra} is a filtered algebra $(A,W)$ of $\QQ_\ell[\varphi]$-modules such that:
 \begin{enumerate}[(i)]
  \item The filtration $W$ is increasing and non-negative.
  \item For all $i\geq 0$, $H^{*}(Gr_i^WA)$ is finite-dimensional.
  \item The only eigenvalues of the automorphism $\varphi^*$ induced on $H^{*}(Gr_i^WA)$ are Weil numbers of pure weight $i$.
 \end{enumerate}
\end{definition}

Note that, by weight reasons, the spectral sequence associated to any Weil algebra degenerates at the $E_1$-page. In fact, we have:

\begin{proposition}\label{Weilisformal}
 Weil algebras are functorially homotopically split.
\end{proposition}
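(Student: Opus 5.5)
The plan is to split $A$ functorially into generalized eigenspaces of $\varphi$ sorted by weight, and to use this splitting both to compare with $A$ and with $Gr^W_*A$. Weil numbers of different weights are distinct, so for each $i$ the set of possible eigenvalues of weight $i$ is disjoint from those of any other weight. The difficulty is that $\varphi$ acts on the possibly infinite-dimensional $A$ itself, where eigenvalues need not be Weil numbers. So I will first replace $A$ by a functorial model on which $\varphi$ acts locally finitely. The most robust route avoids this and works with homotopy fixed points of $\varphi$ instead, as follows.

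For a Weil number $\alpha$, or more generally for the monic polynomial $P_i$ collecting the relevant eigenvalues, consider the functor $M\mapsto M^{(i)}$, the derived "generalized $P$-eigenspace". Concretely, take the homotopy limit over $n$ of the homotopy fibers of $P(\varphi)^n$, or equivalently work with the derived completion of $A$ as a module over $\QQ_\ell[\varphi]$ along the ideals of weight-$i$ eigenvalues. To keep multiplicativity, I would instead use the following construction. Let $A_i\subset A$ be the subcomplex of elements that are annihilated by some polynomial in $\varphi$ all of whose roots are Weil numbers of weight $i$. Then set $A_{\mathrm{fin}}=\bigoplus_i A_i$, which is the locally finite part of weight type. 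This is a sub-dg-algebra: the product of generalized eigenvectors for $\alpha$ and $\beta$ is a generalized eigenvector for $\alpha\beta$ (because $\varphi$ is multiplicative), and weights add. Hence $A_{\mathrm{fin}}$ is graded by weight compatibly with the product.

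Next I would define the filtered algebra $B=\bigoplus_i (W_iA)_{i}$ with the filtration induced from $W$, together with the grading. I would then show that the inclusion $(B,W)\to(A,W)$ is a filtered quasi-isomorphism and that $(B,W)\to(Gr^W_*A,W)$ is one as well, the latter via the projection $(W_iA)_i\to Gr^W_iA$, which is multiplicative since weights add. Both statements reduce, through the graded pieces, to the claim that $H^*(Gr^W_jA)$ consists only of generalized eigenvectors of weight $j$, which is hypothesis (iii), and that the weight-$i$ part of $Gr^W_jA$ is acyclic for $i\neq j$. By finite-dimensionality (ii), eigenspace decomposition of cohomology is exact and commutes with taking cohomology, provided that "take the weight-$i$ locally finite part" is exact on complexes.

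The main obstacle is exactly that exactness. Taking the locally finite generalized-eigen part is left exact but not exact on arbitrary $\QQ_\ell[\varphi]$-modules, and $A$ itself may not be locally finite. To get around this, I would first functorially replace $(A,W)$ by a filtered quasi-isomorphic algebra that is locally finite. Alternatively, I would use the derived version: $R\Gamma_{(i)}$, the derived torsion along the ideal generated by the product of $(\varphi-\alpha)$ over weight-$i$ eigenvalues, computed by a Koszul/Čech-type cdga construction that is multiplicative (in characteristic zero one can rectify to cdgas). On complexes with finite-dimensional cohomology whose eigenvalues are all of weight $i$, this derived functor is the identity, and it kills those with all eigenvalues of other weights. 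The induction on the weight filtration then gives the two filtered quasi-isomorphisms. Functoriality is clear, since every construction is natural in $(A,W)$.
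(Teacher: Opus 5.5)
\textbf{There is a genuine gap: the proposal never produces a multiplicative, homotopy-invariant weight decomposition, which is the actual content of the proposition.}

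Your strict construction is correct as far as it goes. $A_{\mathrm{fin}}=\bigoplus_i A_i$ is a sub-dg-algebra graded by weight, and $B\to Gr^W_*A$ is multiplicative. But the construction is not homotopy invariant. For instance, let $P$ be the minimal polynomial of a Weil number of weight $i>0$. Let $A=\QQ_\ell\oplus V$ be the square-zero extension by $V=\bigl(\QQ_\ell[\varphi]\xrightarrow{P(\varphi)}\QQ_\ell[\varphi]\bigr)$ in degrees $0,1$, with $W_jA=\QQ_\ell$ for $j<i$ and $W_jA=A$ for $j\geq i$. This is a Weil algebra with $H^1(A)\cong\QQ_\ell[\varphi]/(P)\neq 0$. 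Yet $A_{\mathrm{fin}}=\QQ_\ell$, because $\QQ_\ell[\varphi]$ has no nonzero $\varphi$-locally finite vectors. So both of your filtered quasi-isomorphisms fail.

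Repair (a), a functorial locally finite replacement, merely restates what has to be proved.

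Repair (b) has two problems.
\begin{enumerate}
\item The ``ideal generated by the product of $(\varphi-\alpha)$ over weight-$i$ eigenvalues'' does not exist, since $\Ww_i$ is infinite (all roots of unity lie in $\Ww_0$). You must instead use torsion for the multiplicative set $S_i$ of polynomials with roots in $\Ww_i$, that is, $\Gamma_{(i)}M=\mathrm{fib}(M\to S_i^{-1}M)$. With this, your weight induction works additively.
\item This \v{C}ech construction is natural for $\otimes_{\QQ_\ell[\varphi]}$, whereas the product of $A$ lives in $\otimes_{\QQ_\ell}$ with $\varphi$ acting diagonally. What you need is a coherent lax structure $\Gamma_{(i)}M\otimes\Gamma_{(j)}N\to\Gamma_{(i+j)}(M\otimes N)$. ``A Koszul/\v{C}ech-type cdga construction that is multiplicative'' does not provide one, since localizing one tensor factor at $S_i$ bears no direct relation to localizing the diagonal module at $S_{i+j}$. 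You assert this multiplicativity rather than prove it.
\end{enumerate}

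\textbf{The missing idea.} The paper (Theorem~\ref{e0formality} with $s=0$) never decomposes $A$ itself; it works with symmetric monoidal $\infty$-functors instead. It shows that $Gr$, from Weil complexes up to filtered quasi-isomorphism to graded complexes of $\varphi$-modules, is fully faithful. Writing the source as a colimit of its finite stages and using the cofiber sequences $W_{i-1}\to W_i\to Gr_i$, this reduces to $\RR\Hom_{\QQ_\ell[\varphi]}(Gr_iC,Gr_kD)\simeq 0$ for $k<i$. That vanishing holds because $\QQ_\ell[\varphi]$ has homological dimension one and $\RR\Hom_R(R/f,R/g)\simeq 0$ for coprime $f,g$ (Proposition~\ref{prop : Hom is zero}). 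Since $Gr$ is symmetric monoidal and $Gr\circ E_0\simeq Gr$ tautologically, $E_0\simeq\id$ as symmetric monoidal functors. Hence the equivalence holds naturally on commutative algebra objects, and strictification in characteristic zero yields the cdga zigzag.

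Your route (b) can be rescued in the same spirit. Complexes whose cohomology is locally finite with Weil eigenvalues form a full subcategory that contains the unit and is closed under the diagonal tensor product. So the right adjoint to its inclusion is lax symmetric monoidal by doctrinal adjunction, and its splitting by weight comes from the same $\RR\Hom$-vanishing. But each $W_kA$ of a Weil algebra already lies in that subcategory. The torsion functor is therefore superfluous, and what remains is exactly the paper's argument.
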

\begin{proof}
 This will be proven later in a more general setting (Theorem \ref{e0formality} in the case $s=0$). Morally, this result boils down to elementary linear algebra: the splitting is simply given by writing the algebra as the direct sum of the generalized eigenspaces with respect to all the Weil numbers of a given weight.
\end{proof}

\begin{remark}
In fact, in Theorem \ref{e0formality}, the statement is infinity-categorical and is a priori weaker than what we are stating here. But it is in fact equivalent thanks to strictification results. We refer the reader to \cite[Remark 2.3]{BoCiHo} for details on this.
\end{remark}

\subsection{Sheaves in rational homotopy}
We next describe the 
$\kk$-homotopy type of any reasonable topological space, such as manifolds or analytic spaces, using sheaf theory, for $\kk$ a field of characteristic zero.

Let $f:X\to Y$ be a continuous map of topological spaces. Using the Thom-Whitney simple functor together with Godement's canonical sheaf resolution, Navarro-Aznar defined in \cite{Navarro} a derived direct image functor
$R_{\TW}f_*$ in the category of sheaves of cdga's over $\kk$. In the particular case of the map $\rho:X\to *$ to the point, this defines a derived global sections functor 
$R_{\TW}\Gamma(X,-)$ from sheaves of cdga's to cdga's, by letting $R_{\TW}\Gamma(X,-):=R_{\TW}\rho_*$. In order to lighten the notations, we will simply write $R$ instead of $R_{\TW}$ in what follows.

\begin{remark}\label{godemetale}
If instead of sheaves on a topological space, we consider sheaves in a topos with enough points, the Godement resolution gives a fibrant replacement in the category of sheaves of algebras (see \cite{Agusti}, \cite{Agusti2}). In particular, the derived direct image
$Rf_*$ and derived global sections $R\Gamma(X,-)$ are also defined multiplicatively in this case.
\end{remark}

Assume that $X$ is Hausdorff, paracompact and locally contractible and consider the 
 constant sheaf $\underline{\kk}_X$ on $X$. Then the cdga over $\kk$ given by
\[R\Gamma(X,\underline{\kk}_X)\]
is a model for the $\kk$-homotopy type of $X$ (see Theorem 5.5 of \cite{Navarro}). 
More generally:
\begin{proposition}\label{relativemodel}Let $X$ be a Hausdorff, paracompact and locally contractible topological space and
 $f:X\to Y$ a continuous map. Then the cdga 
\[ \Gamma(Y,Rf_*\underline{\kk}_X)\]
is  a model for the $\kk$-homotopy type of $X$.
\end{proposition}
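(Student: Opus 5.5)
The argument reduces to the absolute case just recalled. In the absolute case, for $X$ Hausdorff, paracompact and locally contractible, $R\Gamma(X,\underline{\kk}_X)$ models the $\kk$-homotopy type of $X$ (Navarro, Theorem 5.5 of \cite{Navarro}). So it suffices to produce a natural quasi-isomorphism of cdga's
\[R\Gamma(Y,Rf_*\underline{\kk}_X)\simeq R\Gamma(X,\underline{\kk}_X),\]
which is the multiplicative form of the Leray composition $R\Gamma_Y\circ Rf_*\simeq R\Gamma_X$. Here we read $\Gamma(Y,Rf_*\underline{\kk}_X)$ in the statement as global sections of the Thom--Whitney--Godement model $Rf_*\underline{\kk}_X$. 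We then compare $\Gamma(Y,Rf_*\underline{\kk}_X)$ with $R\Gamma(Y,Rf_*\underline{\kk}_X)$.

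\textbf{Step 1.} The sheaf $Rf_*\underline{\kk}_X$ is built as $f_*$ applied to the Thom--Whitney simple of the cosimplicial Godement resolution of $\underline{\kk}_X$. Godement sheaves are flasque, and so is their Thom--Whitney simple (it is a subsheaf of products of flasque sheaves, tensored with polynomial forms on simplices; Navarro verifies this kind of acyclicity). Direct images of flasque sheaves are flasque. Hence $Rf_*\underline{\kk}_X$ is a sheaf of cdga's with $\Gamma$-acyclic components. The natural coaugmentation
\[Rf_*\underline{\kk}_X\to R_{\TW}\Gamma\text{-resolution}\]
then induces a quasi-isomorphism $\Gamma(Y,Rf_*\underline{\kk}_X)\to R\Gamma(Y,Rf_*\underline{\kk}_X)$. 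This is a map of cdga's because the TW simple is multiplicative.

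\textbf{Step 2.} We need $\Gamma(Y,f_*\mathcal{F}^\bullet)=\Gamma(X,\mathcal{F}^\bullet)$ for the flasque resolution $\mathcal{F}^\bullet$ of $\underline{\kk}_X$ given by the TW–Godement construction. This identity holds on the nose, so
\[\Gamma(Y,Rf_*\underline{\kk}_X)\cong\Gamma(X,\mathcal{F}^\bullet)\simeq R\Gamma(X,\underline{\kk}_X),\]
where the last quasi-isomorphism again uses flasqueness of $\mathcal{F}^\bullet$. All maps are cdga maps, and we conclude by Navarro's theorem.

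The main obstacle is Step 1, together with the bookkeeping ensuring that every comparison map is multiplicative rather than merely a map of complexes. I would handle it by invoking Navarro's result that $R_{\TW}f_*$ computes the derived direct image and that $R_{\TW}(g\circ f)_*\simeq R_{\TW}g_*\circ R_{\TW}f_*$ as functors to sheaves of cdga's (\cite{Navarro}). I would apply this with $g:Y\to *$, and use that underlying complexes agree with the classical derived functors to detect quasi-isomorphisms.
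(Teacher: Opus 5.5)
Your proposal is correct and follows the paper's argument. The paper also reduces to Navarro's absolute theorem via the cdga quasi-isomorphism $R\Gamma(X,\Ff)\to\Gamma(Y,Rf_*\Ff)$, which it merely asserts and which your Step~2 supplies, since $\Gamma(Y,f_*\mathcal{G})=\Gamma(X,\mathcal{G})$ for any sheaf $\mathcal{G}$.

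With your reading of $Rf_*$ as $f_*$ of the Thom--Whitney--Godement resolution, Step~2 alone suffices, so Step~1 and the composition formula can be dropped. That is just as well, since being a subsheaf of a product of flasque sheaves does not by itself give flasqueness (every sheaf embeds in its Godement sheaf).
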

This follows from the fact that, given a sheaf $\Ff$ of cdga's on $X$, there is a quasi-isomorphism of cdga's
\[R\Gamma(X,\Ff)\stackrel{\sim}{\lra}\Gamma(Y,Rf_*\Ff).\]

\subsection{Rational models for arrangement  complements}\label{filteredalgebrarational}

Let $Z=\{Z_i\}$ be an admissible codimension-$c$ arrangement in a smooth complex projective variety $X$, as in Definition \ref{defarrangement}.
Denote by $U=X- \bigcup_i Z_i$ the complement and by 
$f:U\hookrightarrow X$ the natural inclusion.

By Proposition \ref{relativemodel}, a model for the $\kk$-homotopy type of $U$ is given by 
\[\Aa_{\kk}(U):=\Gamma(X,Rf_*\underline{\kk}_U).\]
We will show that $\Aa_{\QQ_\ell}(U)$ is naturally quasi-isomorphic to 
the $E_{2c}$-page of the Leray spectral sequence for the constant sheaf over $\QQ_\ell$, relative to the inclusion $f$.
First, we make 
 the sheaf of algebras $Rf_*\underline{\QQ}_\ell$ on $X$ into a sheaf of filtered algebras 
 by considering the following filtration:
 
\[(\Aa_{\QQ_\ell}(U),W):=\Dec\, \Gamma(X,(Rf_*\underline{\QQ}_\ell,\tau^{(2c-1)})).
\]
Here $\tau^{(k)}$ denotes the speed-$k$ canonical filtration of Definition \ref{Ciricifiltration} and $\Dec$ is Deligne's décalage (see Definition \ref{defidec}).

Next, we lift this construction to the category of Weil algebras,
by reducing to $\QQ_\ell$-sheaves on the pro-étale site of $X$, as follows.
The arrangement $Z$ in $X$ is defined by a finite number of polynomial equations so by standard spreading out arguments there is a $p$-adic field $K$ embedded in $\CC$, with residue field $\FF_q$ over which the arrangement $Z$ is defined, and which has  good reduction.
Denote $f:\mathcal{U}\to \mathcal{X}$ the inclusion of the complement defined over $K$, so that 
$\mathcal{U}\otimes_{K}\CC\cong U$ and $\mathcal{X}\otimes_{K}\CC\cong X$.
By Remark \ref{godemetale} 
we may define
\[(\Aa_{\QQ_\ell}(\mathcal{U}),W):=\Dec\, \Gamma(\mathcal{X}_{\text{pro-ét}},(Rf_*\underline{\QQ}_\ell,\tau^{(2c-1)})).
\]
Now, this filtered algebra is equipped with a Frobenius endomorphism arising from the action of $\mathrm{Gal}(\overline{K}/K)$. When forgetting this action, it is quasi-isomorphic to $\Aa_{\QQ_\ell}(U)$, where $U$ should be understood as the complex analytic space underlying the algebraic variety $U$.

\begin{proposition}\label{Weilalg}
The filtered cdga $(\Aa_{\QQ_\ell}(\mathcal{U}),W)$ satisfies:
\begin{enumerate}
\item Its associated graded is quasi-isomorphic as a cdga to the $E_{2c}$-term of the Leray spectral sequence for $\underline{\QQ}_\ell$ relative to the inclusion $f:\Uu\hookrightarrow \Xx$.
 \item Together with its Frobenius endomorphism, $(\Aa_{\QQ_\ell}(\Uu),W)$ is a Weil algebra.
\end{enumerate} 
\end{proposition}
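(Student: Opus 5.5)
The plan is to compute the associated graded of the décalage, using Lemma \ref{dece0e1}, and then to identify the relevant spectral sequence with the Leray spectral sequence.

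\emph{Part (1).} Write $\Ff:=Rf_*\underline{\QQ}_\ell$ with its speed-$(2c-1)$ canonical filtration $\tau^{(2c-1)}$.

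First, I determine the cohomology sheaves. Purity (Gabber's absolute purity, with the admissibility of the arrangement used inductively) gives $R^jf_*\underline{\QQ}_\ell=0$ unless $j$ is a multiple of $2c-1$. Indeed, locally near a point of an intersection of arrangement strata, $\mathcal U$ looks like the complement of an arrangement of codimension-$c$ linear subspaces. Such a complement has cohomology concentrated in degrees divisible by $2c-1$, since it is generated by classes of degree $2c-1$ (the Goresky--MacPherson formula or induction on the arrangement). By Lemma \ref{quisspeed}, it follows that $Gr_i^{\tau^{(2c-1)}}\Ff\simeq R^{(2c-1)i}f_*\underline{\QQ}_\ell[-(2c-1)i]$.

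Next, I run the spectral sequence of the filtered complex $\Gamma(\Xx,(\Ff,\tau^{(2c-1)}))$. Its $E_1$ term is $H^{*}(\Xx,R^{(2c-1)i}f_*)$, so it is the Leray spectral sequence reindexed: the Leray differentials $d_2,\dots,d_{2c-1}$ vanish for degree reasons, and the Leray differential $d_{2c}$ becomes the $d_1$ of this filtered spectral sequence. By Lemma \ref{dece0e1}, $E_0$ of the décalage is quasi-isomorphic to $E_1$ of the original filtration, and these are multiplicative bigraded cdga's. Hence $Gr^W\Aa_{\QQ_\ell}(\Uu)$ is quasi-isomorphic, as a cdga, to $({}^{\Ll}E_{2c},d_{2c})$.

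The main technical point here is the multiplicativity of the map in Lemma \ref{dece0e1}. I would argue it via the symmetric monoidality of $\Dec$ and of $\tau^{(k)}$, together with the Godement/Thom--Whitney multiplicative model of Remark \ref{godemetale}.

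\emph{Part (2).} Finiteness follows because $H^*(Gr_i^W)$ is ${}^{\Ll}E_{2c}$ in a given weight, which is a finite-dimensional étale cohomology. Non-negativity is clear from the construction.

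For the weights, I compute $R^{(2c-1)k}f_*\underline{\QQ}_\ell$ by purity. It is a direct sum, over strata $Z_S$ (intersections of arrangement components), of $\iota_{S*}\underline{\QQ}_\ell(-ck)$ twisted by local combinatorial data. The reason is that the degree-$(2c-1)$ generators come from $H^{2c}_{Z_i}$, which is $\QQ_\ell(-c)$ and hence has Frobenius weight $2c$.

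It follows that $H^p(\Xx,R^{(2c-1)k}f_*)\cong\bigoplus_S H^p(Z_S)(-ck)$. Each $Z_S$ is smooth and proper over $K$ with good reduction, so by Deligne its cohomology is pure of weight $p$, and the twisted piece is pure of weight $p+2ck$.

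It remains to match this with the décalage weight. The term $E_1^{-k,\,p+(2c-1)k+k}$ of the original filtration sits, after décalage, in weight $p+(2c-1)k+k=p+2ck$, using total degree plus filtration index via Lemma \ref{dece0e1}. This matches exactly, so the only eigenvalues of $\varphi$ on $H^*(Gr^W_i)$ are Weil numbers of weight $i$.

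The main obstacle I anticipate is establishing, Galois-equivariantly, both the vanishing of $R^jf_*$ outside degrees divisible by $2c-1$ and the Tate twist $(-ck)$ on $R^{(2c-1)k}f_*$. I would do this by induction on the number of components, following Weber's argument, using the Gysin/purity triangle $\iota_*\iota^!\to\mathrm{id}\to Rj_*j^*$ for removing one smooth codimension-$c$ component $Z_j$. Admissibility guarantees that the trace of the remaining arrangement on $Z_j$ is again an admissible arrangement, which is what makes the induction work.
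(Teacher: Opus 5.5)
Your proposal is correct and follows essentially the same route as the paper. The paper's steps are: Weber's vanishing and decomposition of $R^{k}f_*\underline{\QQ}_\ell$; Lemma \ref{quisspeed} to identify the $\tau^{(2c-1)}$-spectral sequence with the Leray spectral sequence, with $d_1=d_{2c}$; Lemma \ref{dece0e1} for the d\'ecalage, with multiplicativity from monoidality of $\tau^{(k)}$; and Deligne purity on the strata plus the twist $(-ck)$, giving weight $p+2ck$ = total degree plus filtration index. The only real difference is that you sketch Weber's input via the Gysin triangle and admissibility, which the paper simply cites; that induction is the right justification, whereas the local ``linear subspace arrangement'' model you invoke first is not implied by Definition \ref{defarrangement}, which only constrains set-theoretic intersections, and is not needed.
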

\begin{proof}
As shown by Weber in \cite{Weber},  the sheaf  $R^kf_*\underline{\QQ}_\ell$ is 0 when $k$ is not divisible by $2c-1$ and $R^{i(2c-1)}f_*\underline{\QQ}_\ell$ decomposes as a direct sum of 
constant sheaves supported by smooth subvarieties of
codimension $ci$. Therefore, by \cite{DeWeil2}, the $\QQ_\ell$-module $H^k(X,R^{i(2c-1)}f_*\underline{\QQ}_\ell)$ is pure of weight $k+2ic$. Since the differentials $d_i$ of the Leray spectral sequence strictly preserve weights, they must vanish for all $i\neq 2c$. In particular, we have \[{}^{\Ll}E_{2}^{*,*}(f,\QQ_\ell)\cong {}^{\Ll}E_{2c}^{*,*}(f,\QQ_\ell).\]
Let $\widetilde{W}:=\Dec^{-1}(W)$.
The quasi-isomorphism of complexes of sheaves
\[Gr^{\tau^{(2c-1)}}_iRf_*\underline{\QQ}_\ell\stackrel{\sim}{\longrightarrow} R^{i(2c-1)}f_*\underline{\QQ}_\ell[-i(2c-1)]\]
(see Lemma \ref{quisspeed}) gives
\[E_1^{-i,j}(\Aa_{\QQ_\ell}(\mathcal{U}),\widetilde W):=H^{j-i}(Gr_{i}^{\widetilde W}\Aa_{\QQ_\ell}(U))\cong {}^{\Ll}E_{2c}^{j-2ic,i(2c-1)}(f,\QQ_\ell)\text{ and }d_1^{\widetilde W}=d_{2c}^{\Ll}.\]
This is just a generalization from $\tau^{(1)}$ to $\tau^{(k)}$ of Example 1.4.8 of \cite{DeHII} relating the hypercohomology spectral sequence of $Rf_*\underline{\QQ}_\ell$ with the filtration $\tau^{(1)}$ relative to the global sections functor, with the Leray spectral sequence.
Since the canonical filtration $\tau^{(k)}$ is a symmetric monoidal functor, this isomorphism is compatible with the algebra structures as well as the Frobenius automorphism.
Lemma \ref{dece0e1} gives a quasi-isomorphism of bigraded complexes
 \[E_0^{-i,j}(\Aa_{\QQ_\ell}(\mathcal{U}),W)\stackrel{\sim}{\lra} E_1^{j-2i,i}(\Aa_{\QQ_\ell}(\mathcal{U}),\widetilde W).\]
All together, we have 
\[E_0^{-i,j+i}(\Aa_{\QQ_\ell}(\mathcal{U}),W)\simeq 
{}^{\Ll}E_{2c}^{i-2c(i-j),(i-j)(2c-1)}(f,\QQ_\ell).
\]
It only remains to note that, since $H^k(X,R^{i(2c-1)}f_*\underline{\QQ}_\ell)$ is pure of weight $k+2ic$, it follows that
$E_1^{-i,j}(\Aa_{\QQ_\ell}(\mathcal{U}),\widetilde W)$ is pure of weight $j$ and hence $E_1^{-i,j}(\Aa_{\QQ_\ell}(\mathcal{U}),W)$ is pure of weight $i$.

\end{proof}

We now prove the main result of this section.

\begin{theorem}\label{maintheorational}
Let $X$ be a smooth complex projective variety and $Z=\{Z_i\}_{1\leq i\leq s}$ an admissible arrangement of codimension $c$ in $X$. Denote 
$U:=X-Z$ and by $f:U\to X$ the inclusion. 
A $\QQ_\ell$-model for $U$ is given by
the cdga  $({}^{\Ll}E_{2c}^{*,*}(f,\QQ_\ell),d_{2c})$.
\end{theorem}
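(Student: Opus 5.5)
The plan is to chain together Proposition \ref{relativemodel}, Proposition \ref{Weilalg} and Proposition \ref{Weilisformal}. First, by Proposition \ref{relativemodel} applied to the inclusion $f:U\hookrightarrow X$, the cdga $\Aa_{\QQ_\ell}(U)=\Gamma(X,Rf_*\underline{\QQ}_\ell)$ is a $\QQ_\ell$-model for $U$. Forgetting the filtration and the Frobenius, the étale construction $\Aa_{\QQ_\ell}(\Uu)$ attached to a model $f:\Uu\to\Xx$ over a $p$-adic field $K$ with good reduction is quasi-isomorphic as a cdga to $\Aa_{\QQ_\ell}(U)$. This uses the comparison between pro-étale cohomology of $\Xx_{\overline K}$ and singular cohomology of the analytic space, applied to $Rf_*$; base change to $\overline{K}$ and then to $\CC$ does not change $\ell$-adic cohomology. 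So it suffices to produce a zigzag of cdga quasi-isomorphisms between $\Aa_{\QQ_\ell}(\Uu)$ and $({}^{\Ll}E_{2c}^{*,*}(f,\QQ_\ell),d_{2c})$.

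Next, Proposition \ref{Weilalg}(2) tells us that $(\Aa_{\QQ_\ell}(\Uu),W)$, with its Frobenius, is a Weil algebra. Proposition \ref{Weilisformal} then gives a string of filtered quasi-isomorphisms
\[(\Aa_{\QQ_\ell}(\Uu),W)\simeq (Gr^W_*\Aa_{\QQ_\ell}(\Uu),W).\]
Forgetting filtrations, we obtain a zigzag of cdga quasi-isomorphisms $\Aa_{\QQ_\ell}(\Uu)\simeq Gr^W_*\Aa_{\QQ_\ell}(\Uu)$. This is legitimate because a filtered quasi-isomorphism of bounded-below exhaustive filtrations, meaning one inducing a quasi-isomorphism on all graded pieces, is an underlying quasi-isomorphism. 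The filtrations here are non-negative and exhaustive, and we should check that no convergence issue arises in each fixed cohomological degree.

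Finally, Proposition \ref{Weilalg}(1) identifies $Gr^W_*\Aa_{\QQ_\ell}(\Uu)$, up to cdga quasi-isomorphism, with the $E_{2c}$-term of the Leray spectral sequence together with the differential $d_{2c}$. Concretely, $E_0(\Aa,W)$ with $d_0$ is quasi-isomorphic to $E_1(\Aa,\widetilde W)$ with $d_1^{\widetilde W}=d_{2c}^{\Ll}$. Composing the three zigzags proves the theorem.

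The main obstacle is this last identification, which must hold multiplicatively. Lemma \ref{dece0e1} is stated only as a quasi-isomorphism of bigraded complexes, so we need to check that the décalage comparison map $E_0(A,\Dec W)\to E_1(A,W)$ is compatible with products. I would argue this from the symmetric monoidality of $\Dec$ and of $\tau^{(2c-1)}$, as the paper indicates: the map is induced by the lax monoidal structure on $E_0$ and $E_1$.

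A subtler point is whether $Gr^W$ itself, as a cdga, is quasi-isomorphic to its cohomology $E_1(\Aa,W)$ with its induced product. Note that $E_1(\Aa,W)$ is ${}^{\Ll}E_{2c}$ with differential $d_{2c}$, while $E_1(\Aa,\widetilde W)$ also gives ${}^{\Ll}E_{2c}$ as a dga. Here I would use that $E_0(\Aa,W)\to E_1(\Aa,\widetilde W)$ is an honest map of dg-algebras, given by projecting cycles and applying the $Gr$ of décalage, so no formality of $Gr$ is needed. If this fails strictly, I would instead invoke Theorem \ref{e0formality}, which is stated $\infty$-categorically, together with rectification of $E_\infty$-algebras over $\QQ_\ell$ to cdgas.
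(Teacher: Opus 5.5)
Your proposal is correct and follows the paper's own proof. That proof says $\Aa_{\QQ_\ell}(\Uu)$ models $U$; it is a Weil algebra by Proposition \ref{Weilalg}(2), hence homotopically split by Proposition \ref{Weilisformal}; and its associated graded is identified multiplicatively with $({}^{\Ll}E_{2c}^{*,*},d_{2c})$ via the décalage comparison $E_0(\Aa,W)\to E_1(\Aa,\widetilde W)$ of Proposition \ref{Weilalg}(1). One small slip in your last paragraph: it is $E_1(\Aa,\widetilde W)$ that is identified with $({}^{\Ll}E_{2c},d_{2c})$, whereas $E_1(\Aa,W)\cong E_2(\Aa,\widetilde W)$ is only its cohomology; as you note, only the multiplicative comparison map is needed, so the argument is unaffected.
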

\begin{proof}
A $\QQ_\ell$-model for $U$ is given by $\Aa_{\QQ_\ell}(\Uu)$. By (2) of Proposition \ref{Weilalg}, this algebra carries a Weil structure, so by Proposition \ref{Weilisformal},
 it is homotopically split. This means it is quasi-isomorphic to its associated graded which, by (1) of Proposition \ref{Weilalg}, is given by the $E_{2c}$-page of the Leray spectral sequence.
\end{proof}

\begin{remark}
Theorem \ref{maintheorational} is related to various results in the literature. The case of codimension 1 arrangements is done in \cite{dupontorlik}.
Also, let us remark that Zakharov \cite{zakharovrational} proves more general results on the rational homotopy type of complements of any reasonable arrangement of subvarieties, using mixed Hodge structures.
Our approach through weights in étale cohomology allows us to go beyond the rational case.
\end{remark}

\begin{remark}
A version of Theorem \ref{maintheorational}
over  $\QQ$ instead of $\QQ_\ell$ is also true, and can be proved using a descent argument for homotopy splittings of filtered algebras as in \cite{Morgan} and \cite{CiriciGuillen}.
\end{remark}

\subsection{Examples}
We end this section by reviewing two particular situations where this theorem applies: smooth quasi-projective varieties and configuration spaces. The first example 
recovers Morgan's model \cite{Morgan} for smooth varieties  over $\QQ_\ell$.
The second example recovers the Totaro--Kriz model for configuration spaces of smooth projective varieties \cite{Kriz}. In both cases, our results are functorial.

\begin{example}[Smooth varieties]\label{smoothrational}
Let $U$ be a smooth variety. Consider a good compactification $g:U\hookrightarrow X$ in such a way that the complement $D=X-U$ is a simple normal crossings divisor. 
By Theorem \ref{maintheorational}, a rational model for $U$ is given by the cdga $({}^{\Ll}E_{2}^{*,*},d_{2})$, which we next describe.
We may write
$D=D_1\cup\cdots \cup D_N$ as the union of irreducible smooth varieties meeting transversally. Let $D^{(0)}=X$ and for all $i>0$, denote by $D^{(i)}=\bigsqcup_{|I|=i}D_I$
the disjoint union of all 
$i$-fold intersections 
$D_I:=D_{j_1}\cap\cdots \cap D_{j_i}$ where $I=\{j_1,\cdots,j_i\}$ denotes an ordered subset of $\{1,\cdots,N\}$.
Note that $D^{(i)}$ is smooth projective of dimension $n-i$. We have 
\[{}^{\Ll}E_2^{i,j}=H^{i}(D^{(j)};\QQ_\ell).\]
For $1\leq k\leq i$, let $g_{I,k}:D_I\hookrightarrow D_{I\setminus \{i_k\}}$ denote the inclusion 
and let \[g_{i,k}:=\bigoplus_{|I|=i} g_{I,k}:D^{(i)}\hookrightarrow D^{(i-1)}.\]
We have Gysin maps 
\[(g_{i,k})_!:H^*(D^{(i)},\QQ_\ell)\lra H^{*+2}( D^{(i-1)},\QQ_\ell)\]
and the differential 
\[d_2:{}^{\Ll}E_2^{i,j}=H^{i}(D^{(j)};\QQ_\ell)\lra {}^{\Ll}E_2^{i+2,j-1}=H^{i+2}(D^{(j-1)};\QQ_\ell)\]
is given by
\[d_2:=\sum_{k=1}^i (-1)^{k} (g_{i,k})_{!}.\]
The algebra structure on ${}^{\Ll}E_2^{*,*}$ is induced by the combinatorial restriction morphisms
\[g_{i}^*=\sum_{k=1}^i (-1)^{k-1} (g_{i,k})^{*}\] together with the cup product of $H^*(D^{(i)};\ZZ_\ell)$, for $i>0$.

 Note that if $X$ is a smooth projective variety, we may take $U=X$ and $D=\emptyset$.
 Therefore in this case ${}^{\Ll}E_{2}^{*,*}$ is just the cohomology ring of $X$ with $\QQ_\ell$-coefficients, with trivial differential, recovering formality over $\QQ_\ell$ for smooth projective varieties (see \cite{DeWeil2}).
Note that such models are compatible with maps of smooth quasi-projective varieties. Indeed, given a map $f:U\to U'$ one may always find compactifications of $U$ and $U'$ and an extension of $f$  to a regular map between the compactifications, so one gets an induced map of Leray spectral sequences.
\end{example}

\begin{example}[Configuration spaces]\label{rationalconfexample}
Given a smooth complex projective variety $X$ of complex dimension $m$, let 
 \[Z_{ij}:=\{(x_1,\cdots,x_n)\in X^n; x_i=x_j\}.\] Then $Z=\{Z_{ij}\}$ is an admissible arrangement of codimension $m$ in $X^n$, and  \[U:=X^n-Z=\mathrm{Conf}_n(X).\] Denote by $f:U\hookrightarrow X^n$ the inclusion. By Theorem \ref{maintheorational}, a model for $\mathrm{Conf}_n(X)$ is given by the cdga $({}^{\Ll}E_{2m}^{*,*},d_{2m})$. This is described in \cite{Totaro} as follows.
Consider the bigraded commutative $\QQ_\ell$-algebra 
\[H^*(X^n,\QQ_\ell)[x_{ij}]\,;\, 1\leq i,j\leq n,\, i\neq j\]
where $H^i(X^n,\QQ_\ell)$ has bidegree $(i,0)$ and $x_{ij}$ has bidegree $(0,2m-1)$. Then ${}^{\Ll}E_{2m}^{*,*}$ is identified with the quotient of this algebra by the relations 
\begin{align*}
 x_{ij}&=x_{ji}\\
 x_{ij}x_{jk}+x_{jk}x_{ki}+x_{ki}x_{ij}&=0\text{ for }i,j,k\text{ distinct}\\
 p_i^*(x) x_{ij}&=p_j^*(x)x_{ij}\text{ for }i\neq j, x\in H^*(X)
\end{align*}
Here $p_i:X^n\to X$ denotes the projection to the $i$-th component. The differential $d_{2m}$ is given by
\[d(x_{ij})=p_{ij}^* \Delta,\]
where $p_{ij}:X^n\to X\times X$ is the projection to the $i$th and $j$th components and $\Delta\in H^{2m}(X^2)$ denotes the class of the diagonal.

Observe that this model is functorial with respect to injective maps of algebraic varieties. Such maps induce maps at the level of configuration spaces. In particular, we obtain equivariance of configuration space models with respect to any discrete group of automorphisms of the variety $X$.
\end{example}

\section{Tame homotopy theory}\label{sectiontame}

Tame homotopy theory is a refinement of rational homotopy theory introduced by Dwyer in \cite{Dwyer}. It captures the ``easy'' part of integral homotopy theory, namely the part that is insensitive to Steenrod operations. Precisely, a map $f:X\to Y$ between based $(r-1)$-connected spaces is called a \textit{tame weak equivalence} if it induces an isomorphism
\[\pi_{r+k}(X)\otimes R_k\cong\pi_{r+k}(Y)\otimes R_k\]
where $R_k$ denotes a subring of $\QQ$ in which the prime numbers $p$ such that $2p-3\leq k$ are invertible. This notion depends on the choice of the sequence of rings $R_k$. The main theorem of \cite{Dwyer} is that the homotopy theory of spaces up to tame weak equivalences can be encoded fully faithfully into the homotopy theory of differential graded Lie algebras over the integers whose cohomology satisfies some divisibility condition.
Alternatively, Anick in \cite{Anick} showed that tame homotopy types can be modeled algebraically by Hopf algebras up to homotopy. A refinement of this approach was developed by Hess \cite{Hess}, introducing mild homotopy types.
The recent PhD thesis of Haoqing Wu \cite{haoqinghopf} contains an $\infty$‑categorical upgrade of Anick's approach.
A dual way of encoding tame homotopy types was introduced by Cenkl-Porter \cite{CePo},
in terms of filtered commutative dg-algebras (see also \cite{Sch1}, \cite{scheererhomotopie}).
A more recent and streamlined exposition of this dual approach appears in \cite{Hanke}, where Hanke uses tame algebraic models in the context of transformation group theory.

The goal of this section is to revisit some of the foundations of tame homotopy theory, adapted to our situation of interest. We work with coefficients in $\ZZ_\ell$, for a prime number $\ell$, and develop the theory in terms of two integer parameters $r\geq 1$ and $s\geq 0$ accounting for the connectivity and a tame parameter respectively.
When $s=0$ the theory reduces to rational homotopy, and increasing $s$ restores more of the integral homotopy type of the space. In the simply-connected case, we prove that the tame homotopy type of a space is encoded faithfully in its $E_\infty$-algebra of cochains over $\ZZ_\ell$, considered up to a notion of \textit{tame quasi-isomorphism} (see Theorem \ref{theo : tame mandell} below). In contrast with Dwyer's tame homotopy theory, our approach can also be applied to non-simply-connected spaces, giving in this case information on the fundamental group.

\subsection{Tame homotopy types}
Fix $\ell$ a prime number. An \textit{$\ell$-complete space} is a space that is local with respect to homology with coefficients in $\mathbb{F}_\ell$. Any space $X$ admits an \textit{$\ell$-completion} $X^\wedge_\ell$ with a map
\[X\to X^\wedge_\ell\]
which is homotopy initial among maps from $X$ to an $\ell$-complete space. If $X$ is simply-connected and of finite type then the above map is simply given by algebraic $\ell$-completion on homotopy groups. We refer to \cite{BoKa} for details about this construction.

For an integer $n\geq 0$, we shall denote by $t_{\leq n}$ the truncation functor from spaces to $n$-truncated spaces (i.e. spaces with trivial homotopy groups above degree $n$). It is defined so that there is a natural map
\[X\to t_{\leq n} X\]
which induces an isomorphism on homotopy groups of degree $\leq n$ and the zero map in degree $>n$.

\begin{lemma}\label{CWtruncate}
Let $X$ and $Y$ be two simply-connected CW-complexes of dimension $\leq n$. Then, there is a weak equivalence $t_{\leq n}X\simeq t_{\leq n}Y$ if and only if $X\simeq Y$.
\end{lemma}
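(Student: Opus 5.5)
One direction is immediate: if $X\simeq Y$, applying the functor $t_{\leq n}$ gives $t_{\leq n}X\simeq t_{\leq n}Y$. The content is the converse. Given a weak equivalence $\phi: t_{\leq n}X\xrightarrow{\sim} t_{\leq n}Y$, the plan is to lift the composite $X\to t_{\leq n}X\xrightarrow{\phi} t_{\leq n}Y$ through the truncation map $Y\to t_{\leq n}Y$ to a map $f:X\to Y$. We then show that $f$ is a weak equivalence, which suffices by Whitehead's theorem since both are CW-complexes.

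\emph{Step 1: the lift.} We may model $t_{\leq n}Y$ as obtained from $Y$ by attaching cells of dimension $\geq n+2$ to kill the higher homotopy groups. The inclusion $Y\hookrightarrow t_{\leq n}Y$ is then a relative CW-complex with cells only in dimensions $\geq n+2$. By cellular approximation, any map from an $n$-dimensional CW-complex into $t_{\leq n}Y$ is homotopic to one landing in the $(n+1)$-skeleton, hence in $Y$. This produces $f:X\to Y$ with $\tau_Y\circ f\simeq \phi\circ\tau_X$. Consequently $\pi_k(f)$ is an isomorphism for $k\leq n$, because $\tau_X,\tau_Y$ are isomorphisms there and $\phi$ is an equivalence.

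\emph{Step 2: homology.} Since $X$ and $Y$ are simply connected, the relative Hurewicz theorem applied to the mapping cylinder shows that $H_k(f)$ is an isomorphism for $k<n$ and surjective for $k=n$. Because $X$ and $Y$ have dimension $\leq n$, all homology vanishes above $n$, and $H_n(X)$, $H_n(Y)$ are the top cycle groups, hence free abelian. It remains to see that $H_n(f)$ is injective.

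\emph{Step 3: the main obstacle, injectivity in degree $n$.} The first approach to try is symmetry. Run Step 1 with $\phi^{-1}$ to get $g:Y\to X$, so that $g\circ f$ induces an isomorphism on $\pi_{\leq n}$ and therefore a surjection on $H_n(X)$. If $H_n(X)$ is finitely generated, a surjective endomorphism of a finitely generated free abelian group is an isomorphism, so $H_n(gf)$ is bijective. Then $H_n(f)$ is injective, and combined with surjectivity from Step 2 it is an isomorphism.

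For the infinite case, the same symmetry argument should be replaced with a Hurewicz-level argument. The map $\pi_n(X)\to H_n(X)$ is surjective, $H_n(f)$ fits in the commutative square with $\pi_n(f)$, and the kernel of $\pi_n\to H_n$ is determined functorially by $\pi_{\leq n}$ (Whitehead's $\Gamma$-sequence). From this one shows the induced map on cokernels is injective. In practice the finite case suffices for our purposes, since the spaces in the paper are of finite type.

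With $H_*(f)$ an isomorphism in all degrees and both spaces simply connected, Whitehead's theorem gives that $f$ is a weak equivalence, and hence a homotopy equivalence of CW-complexes.
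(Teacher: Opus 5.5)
Your Steps 1 and 2 are correct, and so is the symmetry argument in Step 3 when $H_n(X)$ is finitely generated. The gap is that the lemma assumes no finiteness, and your sketch of the general case rests on a false claim. For a simply-connected complex of dimension $\leq n$, the Hurewicz map $\pi_n(X)\to H_n(X)$ need not be surjective; that requires $(n-1)$-connectedness. For example, take $X=\CC\PP^2$ and $n=4$: then $\pi_4(\CC\PP^2)\cong\pi_4(S^5)=0$ while $H_4(\CC\PP^2)=\ZZ$. As written, you have therefore proved only the finite-type case.

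The homological route can be rescued. Applying relative Hurewicz to $X\to t_{\leq n-1}X$, and using $H_{n+1}(X)=0$, gives a natural exact sequence
\[0\to H_{n+1}(t_{\leq n-1}X)\to\pi_n(X)\to H_n(X)\to H_n(t_{\leq n-1}X)\to 0.\]
Your map $f$ induces an equivalence $t_{\leq n-1}X\simeq t_{\leq n-1}Y$ and an isomorphism on $\pi_n$. The five lemma then shows that $H_n(f)$ is an isomorphism.

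The homology detour is, however, unnecessary. You used only the existence half of your cellular-approximation lifting, and the same argument also gives uniqueness. The pair $(t_{\leq n}Y,Y)$ is $(n+1)$-connected, and $X\times[0,1]$ has dimension $\leq n+1$. So any homotopy in $t_{\leq n}Y$ between the images of two maps $X\to Y$ can be compressed into $Y$ relative to $X\times\{0,1\}$. Hence $[X,Y]\to[X,t_{\leq n}Y]$ is a bijection.

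Now $\tau_X\circ g\circ f\simeq\phi^{-1}\circ\tau_Y\circ f\simeq\phi^{-1}\circ\phi\circ\tau_X\simeq\tau_X$. So $g\circ f$ and $\id_X$ are both lifts of $\tau_X$, which gives $g\circ f\simeq\id_X$. Symmetrically $f\circ g\simeq\id_Y$, and no Whitehead theorem or finiteness hypothesis is needed.

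This is exactly the paper's proof. The paper obtains the same bijection by climbing the Postnikov tower of $Y$. For $k\geq n$, the obstruction to lifting lies in $H^{k+2}(X;\pi_{k+1}Y)$, and lifts differ by elements of $H^{k+1}(X;\pi_{k+1}Y)$; both groups vanish because $\dim X\leq n$.
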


\begin{proof}
The reverse implication is obvious. In order to prove the direct implication, we shall first prove the more general fact that a map $f:t_{\leq n}X\to t_{\leq n}Y$ admits a unique lift up to homotopy to a map $\tilde{f}:X\to Y$. Assuming this for the moment, our weak equivalence $f:t_{\leq n}X\xrightarrow{\simeq} t_{\leq n}Y$ and its  homotopy inverse $g:t_{\leq n}Y\xrightarrow{\simeq} t_{\leq n}X$ admit lifts $\tilde{f}:X\to Y$ and $\tilde{g}:Y\to X$. Now, the composite $\tilde{f}\circ\tilde{g}$ is a lift of the identity map of $t_{\leq n}Y$ and so must be homotopic to $\id_Y$ and similarly $\tilde{g}\circ\tilde{f}$ is homotopic to $\id_X$.

It remains to prove the claim. We can proceed by induction on the Postnikov tower of $Y$. Assume that a map $X\to t_{\leq k}Y$ lifting the composite
\[X\to t_{\leq n}X\xrightarrow{f}t_{\leq n}Y\]
has been constructed for some $k\geq n$. Then the obstruction to lifting it further to $t_{\leq k+1}Y$ lives in $H^{k+2}(X,\pi_{k+1}Y)$ and the set of all lifts is acted on transitively by the group $H^{k+1}(X,\pi_{k+1}Y)$. Both of these groups are zero by the dimension assumption of $X$ which shows that the lift exists and is unique.
\end{proof}

Throughout, $r\geq 1$  and $s\geq 0$ are integers, where $r$ specifies a connectivity bound and $s$ serves as a tame parameter.

\begin{definition}
Let $f:X\to Y$ be a map between based $(r-1)$-connected $\ell$-complete spaces. We say that $f$ is an \textit{$(r,s)$-tame weak equivalence} if the map $\pi_{r+k}(f)$ is an isomorphism for $k< s$ and if $\pi_{r+k}(f)\otimes\QQ$ is an isomorphism for all values of $k$.
\end{definition}

Let $\cat{S}_r$ be the category of based $(r-1)$-connected $\ell$-complete spaces. We consider the category $\cat{S}_{r}\st$ defined by the following pullback square of $\infty$-categories
\[\xymatrix{
\cat{S}_r\st\ar[r]\ar[d]&t_{\leq r+s-1}\cat{S}_r\ar[d]\\
L_{\QQ}\cat{S}_r\ar[r]&t_{\leq r+s-1}L_{\QQ}\cat{S}_r
}
\] 
where the two horizontal maps are
localization functors given by truncation and the vertical maps are rationalization. Note that truncation and rationalization commute.
Concretely, an object of this category is a triple $(X,Y,\alpha)$ with $X$ a $t_{\leq r+s-1}$-truncated space, $Y$ a rational space and $\alpha$ the data of a weak equivalence $L_{\QQ}X\simeq t_{\leq r+s-1}Y$.

\begin{definition}
We call $\cat{S}_r\st$ the $\infty$-category of \textit{$(r,s)$-tame homotopy types}. We denote by $X\mapsto X\st$ the functor
\[\cat{S}_r\to\cat{S}_r\st\]
given by
\[X\mapsto (t_{\leq r+s-1}X,X_{\QQ},\id),\]
and refer to it as the \textit{$s$-tamification functor}.
\end{definition}

Note that this category and the tamification functor depend on the fixed prime number $\ell$ although we do not include it in the notation. Note also that there are forgetful functors
\[\cat{S}_r\st\to \cat{S}_r\smt\]
and for $s=0$ we recover rationalization, so
$\cat{S}_r^{0-\text{tame}}\simeq L_{\QQ}\cat{S}_r$. Observe also that a map $f:X\to Y$ between $(r-1)$-connected $\ell$-complete homotopy types is an $(r,s)$-tame weak equivalence if and only if its image in $\cat{S}_r\st$ is a weak equivalence.

\medskip

We will use the following result about the loop space construction.

\begin{proposition}\label{prop : product of EM}
Let $r\geq 2$. Let $s$ be a non-negative integer satisfying $s\leq 2\ell-3$ if $r\geq 3$ or $s\leq \ell-2$ if $r=2$. Let $X$ be an $(r-1)$-connected pointed space with finite type $\ZZ_\ell$-cohomology. Then, $\Omega X$ is $(r-1,s)$-tame weakly equivalent to a product of Eilenberg--MacLane spaces.
\end{proposition}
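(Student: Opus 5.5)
The plan is to work inside the pullback description of $\cat{S}_{r-1}\st$. An object there is a truncated space, a rational space and an identification between them, so it suffices to produce two splittings and check they are compatible. The first is a weak equivalence
\[t_{\leq r+s-2}\Omega X\simeq \prod_{n=r-1}^{r+s-2}K(\pi_n\Omega X,n).\]
The second is a splitting of $(\Omega X)_{\QQ}$ as a product of rational Eilenberg--MacLane spaces whose truncation is the rationalization of the first. The rational statement is classical: $\Omega X$ is an $H$-group, and rational $H$-spaces of finite type are products of Eilenberg--MacLane spaces. Concretely, we may choose classes $u_n\in H^n((\Omega X)_\QQ;\pi_n\Omega X\otimes\QQ)$ that are dual to the rational Hurewicz image. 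We then arrange that for $n\leq r+s-2$ they are the rationalizations of the integral classes coming from the truncated splitting, and for larger $n$ we choose them freely. This gives the required compatibility datum $\alpha$. (If $X$ is not already $\ell$-complete we first replace it by $X^\wedge_\ell$, which does not change the finite type $\ZZ_\ell$-cohomology, so all homotopy groups in the range are finitely generated $\ZZ_\ell$-modules.)

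The main work is the truncated splitting. I would proceed by induction up the Postnikov tower of $\Omega X$ in degrees $r-1,\dots,r+s-2$. Suppose the stage $P_n:=t_{\leq n}\Omega X$ has already been split as a product $\prod_{m\leq n}K(\pi_m,m)$. The next stage is classified by a $k$-invariant
\[k_{n+1}\in H^{n+2}(P_n;\pi_{n+1}\Omega X).\]
Since $P_n=\Omega\, t_{\leq n+1}X$ is itself a loop space, $k_{n+1}$ is the cohomology suspension (looping) of the corresponding $k$-invariant of $X$. In particular it is primitive, and it lies in the image of $\sigma^*$ from the cohomology of a product of Eilenberg--MacLane spaces that are $(r-1)$-connected. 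Because all the $\pi_m$ involved have $m\geq r-1$ and $n+2-m\leq s+1$, the class only involves operations of degree at most $s+1$. Showing that every such class vanishes proves $k_{n+1}=0$, so $P_{n+1}\simeq P_n\times K(\pi_{n+1},n+1)$. Compatibility with the rational data is automatic, because the splitting maps are just chosen lifts of the projections.

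The vanishing of these suspended primitive classes is the step I expect to be the main obstacle. I would argue as follows.

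For $r\geq 3$, $X$ is simply connected and the relevant classes on $K(A,m)$ with $m\geq r-1\geq 2$ that come from a further delooping are stable in the range considered. So they are stable cohomology operations $HA\to \Sigma^{j}HB$ with $1\leq j\leq s+1\leq 2\ell-2$ between $\ell$-complete Eilenberg--MacLane spectra of finitely generated $\ZZ_\ell$-modules. Using $H\ZZ_{\ell}{}_*H\ZZ_\ell$, which is zero in positive degrees below $2\ell-2$, together with short exact sequences $0\to\ZZ_\ell\to\ZZ_\ell\to\ZZ/\ell^k\to 0$ to handle the torsion summands, such operations of positive degree less than $2\ell-1$ vanish. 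The degree-one Bockstein-type contributions must be checked to be absorbed as extension data in $\pi_m$ rather than as genuine $k$-invariants. This is where I would be most careful, since it is the only low-degree stable class.

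For $r=2$ we only know that $X$ is simply connected. The classes then live on $K(A,2)$-type factors of $X$ and are not in the stable range. The first unstable primitive obstruction is the $\ell$-th power $x\mapsto x^\ell$ in degree $2\ell$, which after looping gives a class in degree $2\ell-1$ on $K(A,1)$. This explains the sharper bound $s\leq \ell-2$: it keeps $n+2\leq r+s\leq \ell$, below any such power or $P^1$ operation. In that range one checks directly from the known mod $\ell$ and $\ell$-adic cohomology of $K(A,m)$ (Cartan) that the suspensions of all classes of the required degrees vanish.
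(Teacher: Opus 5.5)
You have the right reduction, and it is the paper's: show that the $k$-invariants $k_{n+1}\in H^{n+2}(P_n;\pi_{n+1}\Omega X)$ vanish for $n+1\le r+s-2$ and vanish rationally in general, using that they are primitive because they are loops of $k$-invariants of $X$. The gap is that you give no valid argument for this vanishing.

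\begin{itemize}
\item The class $k_{n+1}$ is the suspension of the $k$-invariant of $X$, which lives on $t_{\le n+1}X$. That space is not a product of Eilenberg--MacLane spaces, so nothing makes $k_{n+1}$ a stable operation.
\item The range is not stable either. For $r=3$, $P_n$ contains a factor $K(\pi_2\Omega X,2)$, whose cohomology is stable only below degree $4$, while $n+2$ goes up to $s+2\le 2\ell-1$. Moreover $H^{n+2}(P_n)$ contains products such as $\iota_2^2$ or $\iota_2\iota_3$, which are not operations on a single factor at all.
\item Splitting a primitive class into single-factor pieces would require the loop $H$-structure on $P_n$ to be the product one. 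Your induction does not establish this.
\item Your claim that stable operations of positive degree $<2\ell-1$ vanish is false with torsion coefficients: there are Bocksteins in degree $1$ and the reduced $P^1$ in degree $2\ell-2$. Degree $1$ never occurs, since $n+2-m\ge 2$, so your Bockstein worry is moot. The remaining degrees still need a reason.
\item For $r=2$ you do not give an argument. Suspension kills decomposables such as $x^\ell$; it does not create obstructions. What must vanish are primitive classes of degree $n+2$ on $P_n$, not ``the suspensions of all classes''.
\end{itemize}

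The missing idea is the Hopf-algebra argument the paper uses, following Soul\'e.

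\begin{itemize}
\item By induction and the computation of $H^*(K(A,m);\FF_\ell)$ up to degree $m+2\ell-3$, the algebra $H^*(P_n;\FF_\ell)$ is free graded-commutative in degrees $\le (r-1)+2\ell-3$. Its generators are the classes $\iota_m$ and, for torsion summands, $\beta\iota_m$, all of degree $\le n+1$.
\item So it has no indecomposables in degree $n+2\le r+s-1$, and mod $\ell$ the class $k_{n+1}$ is both primitive and decomposable.
\item By Milnor--Moore, $k_{n+1}$ is then an $\ell$-th power. This holds for any commutative Hopf algebra structure, so the particular loop structure on $P_n$ is irrelevant.
\item $\ell$-th powers occur only in degrees $\ge \ell(r-1)$, and the bounds on $s$ guarantee $r+s-1<\ell(r-1)$, a contradiction.
\item Rationally, primitives in a free graded-commutative Hopf algebra are indecomposable, so $k_{n+1}\otimes\QQ=0$ in every degree.
\end{itemize}

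This handles $r=2$ and $r\ge 3$ uniformly and replaces your stable-operations analysis entirely.
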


\begin{proof}
What we mean precisely is that $t_{\leq r+s-2}\Omega X$ and $(\Omega X)_{\QQ}$ are products of Eilenberg--MacLane spaces and that the weak equivalence
\[(t_{\leq r+s-2}\Omega X)_{\QQ}\simeq t_{\leq r+s-2}( (\Omega X)_{\QQ})\]
is compatible with these decompositions. This can be expressed in terms of $k$-invariants. We are claiming that the $n$-th $k$-invariant of $\Omega X$
\[\alpha_n\in H^{r+n}(t_{\leq r+n-2}\Omega X,\pi_{r+n}(X))\]
is zero when $n<s$ and that
\[\alpha_n\otimes\QQ\in H^{r+n}(t_{\leq r+n-2}\Omega X,\pi_{r+n}(X)\otimes\QQ)\]
is always zero. This statement is essentially the content of the proof of \cite[Proposition 3]{soule} except that Soulé does not treat the case $r=2$.

Let $n<s$. As in \cite[Proposition 3]{soule}, the class $\alpha_n$ must be primitive and decomposable by the induction hypothesis. If this class is non-zero and $n<s$, this implies that this class is an $\ell$-th power which means that
\[r+n\geq \ell (r-1)\]
since $\ell (r-1)$ is the lowest degree in which $\ell$-th powers can occur. Therefore
\[s> (\ell-1)r-r.\]
This contradicts our assumption on $s$.

If $n\geq s$, then by the induction hypothesis there is an isomorphism
\[H^*(t_{\leq r+n-2}\Omega X,\pi_{r+n}(X)\otimes\QQ)\cong P\otimes \pi_{r+n}(X)\]
where $P=H^*(t_{\leq r+n-2}\Omega X,\QQ_\ell)$ is a graded polynomial algebra whose generators have degree in the interval $[r-1,r+n-2]$. Since $\alpha_n$ is a primitive class of degree $r+n$ in this Hopf algebra it must be zero.
\end{proof}

\subsection{Filtered tame cochains}We next introduce the $\infty$-category $\cat{FCh}_r\st$ of tame filtered complexes.
Denote by $\cat{FCh}$ the $\infty$-category of filtered cochain complexes. This is the $\infty$-category of diagrams from the poset $\mathbb{N}$ to the $\infty$-category of cochain complexes. Note that, since our filtrations are bounded below, a map in this $\infty$-category is a weak equivalence if and only if it induces an isomorphism on the $E_1$-page of the associated spectral sequence.

We write an object of $\cat{FCh}$ as a pair $(C,W)$. The value of our functor at $i\in\NN$ is denoted $W_iC$, so that $(C,W)$ is a short-hand notation for the diagram
\[W_0C\to W_1C\to\ldots\]

\begin{definition}
A filtered cochain complex $(C,W)$ is said to be \textit{$(r-1)$-connected} if $W_i C$ is acyclic when $i<r$. We denote by $\cat{FCh}_r$ the $\infty$-category of $(r-1)$-connected filtered cochain complexes.
\end{definition}

\begin{definition}\label{defi : s-tame weak equivalence}
A map $f:C\to D$ of $(r-1)$-connected filtered cochain complexes over $\ZZ_\ell$
is called an \textit{$(r,s)$-tame quasi-isomorphism} if it induces
\begin{enumerate}
\item an isomorphism
\[H^{*}(W_iC)\cong H^{*}(W_iD)\]
for $i<r+s$,
\item an isomorphism
\[H^*(W_iC)\otimes\QQ_\ell\to H^*(W_iD)\otimes\QQ_\ell\]
for all values of $i$.
\end{enumerate}
\end{definition}

We denote by $\cat{FCh}_r\st$ the localization of the $\infty$-category of $(r-1)$-connected filtered cochain complexes at the $(r,s)$-tame quasi-isomorphisms. This is a non-unital symmetric monoidal $\infty$-category.

\begin{remark}\label{rem: tamification}
The $\infty$-category $\cat{FCh}\st_r$ is a localization of the $\infty$-category  of  $(r-1)$-connected filtered complexes with respect to filtered quasi-isomorphisms. The localization functor is very explicit, it is given by \[C\mapsto T_{(r,s)}C,\] where
\[
W_kT_{s}C:=\left\{
\begin{array}{ll}
W_kC\;&\text{ if }\,k <  r+s\\
     W_kC\otimes\QQ_\ell \;&\text{ if }\,k\geq  r+s
\end{array}\right..
\]
The $\infty$-category $\cat{FCh}\st_r$ is thus equivalent to the essential image of $T_{(r,s)}$ in the usual $\infty$-category of filtered complexes.
\end{remark}

\begin{proposition}\label{prop : third definition} A map of $(r-1)$-connected filtered cochain complexes
$f:C\to D$
is an $(r,s)$-tame quasi-isomorphism if and only if the induced map
\[\FF_\ell\otimes Gr_i^W(f):\FF_\ell\otimes Gr_i^WC\to \FF_\ell\otimes Gr_i^WD\]
induces an isomorphism on cohomology for $i< r+s$ and the map
\[\QQ_\ell\otimes Gr_i^W(f):\QQ_\ell\otimes Gr_i^WC\to \QQ_\ell\otimes Gr_i^WD\]
induces an isomorphism on cohomology for any $i$.
\end{proposition}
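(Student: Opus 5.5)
We prove the two directions separately, using induction on the filtration degree together with the long exact sequences attached to the short exact sequences
\[0\to W_{i-1}C\to W_iC\to Gr_i^WC\to 0.\]
The filtrations are non-negative, so $W_{-1}=0$. The complexes are $(r-1)$-connected, so $W_iC$ and $W_iD$ are acyclic for $i<r$. We work throughout over $\ZZ_\ell$, with complexes of $\ZZ_\ell$-modules, implicitly treated as derived objects so that $\FF_\ell\otimes -$ means the derived tensor product. Equivalently, the complexes may be taken to be termwise flat, as in any cofibrant model.

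\textbf{The rational part.} Since $\QQ_\ell$ is flat over $\ZZ_\ell$, cohomology commutes with $\QQ_\ell\otimes-$. The five lemma applied to the long exact sequences shows that $H^*(W_iC)\otimes\QQ_\ell\cong H^*(W_iD)\otimes\QQ_\ell$ for all $i$ if and only if $H^*(Gr_iC)\otimes\QQ_\ell\cong H^*(Gr_iD)\otimes\QQ_\ell$ for all $i$. For the direction from graded pieces to filtration pieces, we induct on $i$ starting from $W_{-1}=0$. For the converse, the five lemma on the same sequence gives the graded pieces directly. This settles condition (2) of Definition \ref{defi : s-tame weak equivalence} against the second condition of the proposition.

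\textbf{The integral part in filtration degrees $i<r+s$.} Suppose first that $f$ is an $(r,s)$-tame quasi-isomorphism. For $i<r+s$, both $W_{i-1}$ and $W_i$ induce isomorphisms on cohomology, so by the five lemma $Gr_i(f)$ is a quasi-isomorphism of $\ZZ_\ell$-complexes. Tensoring with $\FF_\ell$ then preserves this, which is immediate in the derived sense.

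Conversely, suppose $\FF_\ell\otimes Gr_i(f)$ is a quasi-isomorphism for $i<r+s$ and $\QQ_\ell\otimes Gr_i(f)$ is one for all $i$. Let $K_i$ be the cofiber of $Gr_i(f)$. Then $K_i\otimes^L\FF_\ell\simeq0$ and $K_i\otimes\QQ_\ell\simeq 0$. We want to deduce $K_i\simeq0$.

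\textbf{The main obstacle.} This last deduction needs a finiteness or completeness hypothesis. A complex such as $\QQ_\ell/\ZZ_\ell$ shifted, or $\QQ_\ell$ itself, has vanishing mod-$\ell$ reduction, and we need the rational vanishing to kill it. The argument runs as follows.
\begin{enumerate}
\item From $K\otimes^L\FF_\ell\simeq0$, multiplication by $\ell$ is an isomorphism on $H^*(K)$. Hence each $H^n(K)$ is a $\QQ_\ell$-vector space, in particular torsion-free and uniquely divisible.
\item Therefore $H^*(K)\cong H^*(K)\otimes\QQ_\ell=H^*(K\otimes\QQ_\ell)=0$.
\end{enumerate}
So no finiteness assumption is needed after all: the rational condition handles exactly the uniquely $\ell$-divisible part left over by the mod-$\ell$ condition.

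With $Gr_i(f)$ a quasi-isomorphism for $i<r+s$, induction on $i$ from $W_{-1}=0$ and the five lemma show that $W_i(f)$ is a quasi-isomorphism for all $i<r+s$. Combined with the rational part, this shows that $f$ is an $(r,s)$-tame quasi-isomorphism.
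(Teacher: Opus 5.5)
Your proof is correct and follows the paper's route. The paper cites the standard fact that a map of $\ZZ_\ell$-complexes is a quasi-isomorphism if and only if it is one after $\FF_\ell\otimes-$ and $\QQ_\ell\otimes-$, and leaves implicit the passage between $W_i$ and $Gr_i^W$ through the cofiber sequences. You supply both steps, including a correct proof of the standard fact that needs no finiteness hypothesis.

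One small slip in your motivating remark: in the derived sense $\QQ_\ell/\ZZ_\ell\otimes^L\FF_\ell\simeq\FF_\ell[1]\neq 0$, so $\QQ_\ell/\ZZ_\ell$ is not an example of vanishing mod-$\ell$ reduction. Only its underived reduction vanishes, which is exactly why the derived tensor is needed. This does not affect your argument.
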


\begin{proof}
This follows easily from the standard fact that, a map $f:V\to W$ of complexes of $\ZZ_\ell$-modules is a quasi-isomorphism if and only if $f\otimes \FF_\ell$ and $f\otimes\QQ_\ell$ are quasi-isomorphisms.
\end{proof}

Denote by $\tau$ the canonical filtration of a cochain complex. This corresponds to the filtration $\tau^{(1)}$ of Definition \ref{Ciricifiltration}.

\begin{proposition}\label{prop: notused}
Let $X$ and $Y$ be two $\ell$-complete $(r-1)$-connected  based spaces. Let $f:X\to Y$ be an $(r,s)$-tame weak equivalence. Then the induced map
\[(\tilde{C}^*(X,\ZZ_\ell),\tau)\to (\tilde{C}^*(Y,\ZZ_\ell),\tau)\]
is an $(r,s)$-tame quasi-isomorphism.
\end{proposition}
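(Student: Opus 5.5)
The idea is to unwind what the canonical filtration sees on cohomology and reduce to two standard facts about maps of spaces. Since $\tau_i\tilde C^*$ has cohomology $\tilde H^j$ for $j\leq i$ and $0$ above, condition (1) of Definition \ref{defi : s-tame weak equivalence} amounts to $\tilde H^j(Y,\ZZ_\ell)\to\tilde H^j(X,\ZZ_\ell)$ being an isomorphism for $j<r+s$. Condition (2) amounts to this map becoming an isomorphism after $-\otimes\QQ_\ell$ for all $j$. Equivalently, by Proposition \ref{prop : third definition}, I would check it on $Gr^\tau_i$, which is $\tilde H^i[-i]$. Note also that the filtered complexes are $(r-1)$-connected, since $\tilde H^j$ vanishes for $j<r$ by connectivity. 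So there are two claims to prove.

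\textbf{Low degrees.} Consider the commutative square formed by $f$, $t_{\leq r+s-1}f$ and the truncation maps $X\to t_{\leq r+s-1}X$, $Y\to t_{\leq r+s-1}Y$. The truncation maps are $(r+s)$-connected, so by the relative Hurewicz theorem and universal coefficients they induce isomorphisms on $H^j(-,\ZZ_\ell)$ for $j<r+s$. By hypothesis $\pi_{r+k}(f)$ is an isomorphism for $k<s$, so $t_{\leq r+s-1}f$ is a weak equivalence. Hence $H^j(f,\ZZ_\ell)$ is an isomorphism for $j<r+s$.

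\textbf{Rational statement.} Let $F$ be the homotopy fibre of $f$. In the simply connected range, the homotopy groups of $F$ become zero after $\otimes\QQ$ (using the long exact sequence and the rational isomorphism on all $\pi_{r+k}$). Being $\ell$-complete, they are $\ell$-power torsion, at least under finite type hypotheses, where they are finitely generated $\ZZ_\ell$-modules. The mod-$\mathcal{C}$ Hurewicz theorem for the Serre class $\mathcal{C}$ of torsion groups then gives $\tilde H_*(F,\ZZ)\otimes\QQ=0$. The Serre spectral sequence with $\QQ_\ell$-coefficients, or the mod-$\mathcal{C}$ Whitehead theorem, shows that $f$ is a $\QQ_\ell$-cohomology isomorphism. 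Finally, I would identify $H^*(X,\ZZ_\ell)\otimes\QQ_\ell$ with $H^*(X,\QQ_\ell)$, which again needs a finite type input, and conclude.

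\textbf{Main obstacle.} I expect the rational part to be the crux. Passing from $\pi_*\otimes\QQ$ to $H^*\otimes\QQ_\ell$ requires some control: nilpotence, or a simply connected fibre when $r\geq 2$, plus enough finiteness to commute $\otimes\QQ$ with cohomology. The case $r=1$ with a non-nilpotent fundamental group needs separate care. There I would first apply the truncation argument in degree $1$, and then work with the universal cover, or argue through the Postnikov tower of $f$ stage by stage. Each stage is a principal fibration with fibre $K(A,n)$ for a group $A$ satisfying $A\otimes\QQ=0$, and rationally such fibres are acyclic. I would add finite type assumptions, as in Theorem \ref{theo : tame mandell}, if needed.
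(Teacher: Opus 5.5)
Your proposal is correct and follows the paper's proof. Condition (1) comes from $t_{\leq r+s-1}f$ being an equivalence, together with the fact that $X\to t_{\leq r+s-1}X$ induces an isomorphism on $\ZZ_\ell$-(co)homology in degrees $<r+s$; the paper packages this as a commuting square of left adjoints, you via the connectivity of the truncation map. Condition (2) comes from $f$ being a rational equivalence.

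The paper handles the rational step in one line and implicitly identifies $H^*(-,\ZZ_\ell)\otimes\QQ_\ell$ with $H^*(-,\QQ_\ell)$, so your fibre argument and your finite-type caveat are more careful than the source. For $r\geq 2$ the caveat can in fact be dropped: the homotopy groups of the fibre are then torsion and derived $\ell$-complete, hence of bounded exponent. This makes the kernel and cokernel of $f^*$ on each $H^j(-,\ZZ_\ell)$ bounded torsion, which is all condition (2) needs.
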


\begin{proof}
Since $f$ is an $(r,s)$-tame weak equivalence, we know that the following two facts hold:
\begin{enumerate}
\item The map $f$ is a rational weak equivalence.
\item The map $\tau_{r+s-1}(f)$ is a weak equivalence.
\end{enumerate}
The first fact implies that $f$ induces an isomorphism in $H^*(-,\QQ_\ell)$. 

The second fact implies that $\tilde{H}_*(f,\ZZ_\ell)$ is an isomorphism in degrees $*<r+s$. In order to see this, consider the following diagram
\[
\xymatrix{
\cat{S}_r\ar[r]^-{t_{\leq r+s-1}}\ar[d]_{\tilde{C}_*(-,\ZZ_\ell)}&t_{\leq r+s-1}\cat{S}_{r}\ar[d]^{t_{\leq r+s-1}\tilde{C}_*(-\ZZ_\ell)}\\
\cat{Ch}_{r}\ar[r]_-{t_{\leq r+s-1}}&t_{\leq r+s-1}\cat{Ch}_{r}
}
\]
where the $r$-subscript indicates that we restrict to $(r-1)$-connected objects. This is a diagram of left adjoints. It commutes since the corresponding diagram of right adjoints commutes. To see this, observe that the right adjoint of the two horizontal functors are simply the inclusions and the right adjoint of the vertical functors are the composite
\[\cat{Ch}\simeq \cat{Mod}_{\ZZ_\ell}(\cat{Sp})\xrightarrow{\text{forget}}\cat{Sp}\xrightarrow{\Omega^\infty}\cat{S}.\]
But the commutativity of this diagram together with fact (2) above implies that $\tilde{C}_*(f,\ZZ_\ell)$ is an isomorphism in homology in degree $<r+s$ as desired. 

Finally, by the universal coefficient theorem, if $\tilde{H}_*(f,\ZZ_\ell)$ is an isomorphism in degrees $*<r+s$ so it $\tilde{H}^*(f,\ZZ_\ell)$.
\end{proof}

\begin{remark}
For $r\geq 2$, we will prove, in Theorem \ref{theo : tame mandell}, a partial converse of the above result.
\end{remark}

\subsection{Canonization of filtered complexes}\label{subsection : different filtration}

This short section can be skipped on a first read. Its point is to prove Proposition \ref{prop : different filtration} that will be useful in later sections of this paper. As we shall see, the tame homotopy type of a space is determined, up to tame quasi-isomorphism, by its cochain algebra equipped with the canonical filtration. In practice, certain constructions of interest produce a filtered algebra carrying a non-canonical filtration. Proposition \ref{prop : different filtration} establishes conditions under which we can still recover the tame homotopy type from such an algebra.

Let $(C,W)$ be a filtered complex over $\ZZ_\ell$. Then we may form a new filtered complex $(C,\tau W)$ defined by
\[\tau W_kC:=\tau_{k}W_k C\]

\begin{lemma}The functor $(C,W)\mapsto (C,\tau W)$ descends to a symmetric monoidal endofunctor of $\cat{FCh}_r\st$.
\end{lemma}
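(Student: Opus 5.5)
To prove the lemma, I would check two things: first, that $(C,W)\mapsto(C,\tau W)$ is a well-defined functor on filtered complexes; second, that it sends $(r,s)$-tame quasi-isomorphisms to $(r,s)$-tame quasi-isomorphisms and is (lax, then strong after localization) symmetric monoidal. Here $\tau_k$ is the usual canonical truncation, so $\tau_kW_kC$ agrees with $W_kC$ in degrees $<k$, is $Z^k(W_kC)$ in degree $k$, and is $0$ above. The inclusion $\tau_kW_kC\subset\tau_{k+1}W_{k+1}C$ holds because $W_k\subset W_{k+1}$ and cocycles are cycles in any larger complex. Functoriality is clear, since filtered maps preserve cocycles.

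For invariance under tame quasi-isomorphisms, I would use the formula
\[H^n(\tau W_kC)=H^n(W_kC)\quad\text{for } n\leq k,\qquad H^n(\tau W_kC)=0\quad\text{for } n>k.\]
So if $f$ induces isomorphisms on $H^*(W_iC)$ for $i<r+s$, it induces isomorphisms on $H^*(\tau W_iC)$ for the same $i$. The rational condition is handled the same way, because $\tau$ commutes with the exact functor $-\otimes\QQ_\ell$: cocycles of $W_kC\otimes\QQ_\ell$ are $Z^k(W_kC)\otimes\QQ_\ell$ by flatness. We also need $(r-1)$-connectivity to be preserved, and it is: for $i<r$, $H^*(\tau W_iC)$ is a truncation of $H^*(W_iC)=0$. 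Hence the functor descends along the localization to $\cat{FCh}_r\st$. Equivalently, one can check that $\tau$ commutes with the explicit localization $T_{(r,s)}$ of Remark \ref{rem: tamification}.

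For the monoidal structure, the tensor product of filtered complexes has
\[W_k(C\otimes D)=\operatorname{colim}_{i+j=k}W_iC\otimes W_jD.\]
There is a natural map $\tau W(C)\otimes\tau W(D)\to\tau W(C\otimes D)$ because $\tau_iA\otimes\tau_jB\to\tau_{i+j}(A\otimes B)$: degrees add, and a tensor of a degree-$i$ cocycle with a degree-$j$ cocycle is a degree-$(i+j)$ cocycle. This makes the functor lax symmetric monoidal on the point-set level, with associativity and symmetry inherited from those of $\otimes$.

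The main obstacle is showing that this lax structure map becomes an equivalence in $\cat{FCh}_r\st$, that is, that it is an $(r,s)$-tame quasi-isomorphism. Over $\ZZ_\ell$ with torsion, the tensor product is derived, so I would first restrict to cofibrant objects, i.e. degreewise free filtered complexes with free graded pieces. Even then, $H^n$ of $\tau_k(A\otimes B)$ for $n\le k$ includes contributions from $\tau_{>i}A\otimes\tau_{<j}B$, and these are not visibly killed.

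I would compare on associated gradeds, using Proposition \ref{prop : third definition}: after tensoring with $\FF_\ell$ or $\QQ_\ell$ we are over a field. Then, by a Künneth argument, $Gr$ of each side reduces to sums over $i+j=k$ of the pieces $H^{a}(Gr_iC)\otimes H^b(Gr_jD)$ with $a\le i$, $b\le j$, versus $a+b\le k$. Connectivity ($Gr_i=0$ for $i<r$) limits which terms appear. If the degree bookkeeping leaves a discrepancy, I would weaken the claim to an equivalence only on the subcategory of objects actually used, such as those whose graded pieces are concentrated in degree $\le$ weight. On that subcategory $\tau$ acts as the identity, which trivially gives strong monoidality.
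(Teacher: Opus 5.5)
Your first three paragraphs are correct and already prove the lemma, and they follow the paper's argument. The paper's proof is only the observation that $\tau_k$ commutes with $-\otimes\QQ_\ell$. Combined with $H^n(\tau_kA)=H^n(A)$ for $n\le k$ and $H^n(\tau_kA)=0$ for $n>k$, this is what you use to show that $(r,s)$-tame quasi-isomorphisms and $(r-1)$-connectivity are preserved. Your maps $\tau_iA\otimes\tau_jB\to\tau_{i+j}(A\otimes B)$ then supply the monoidal structure. That is the whole content of the statement: ``symmetric monoidal'' here means \emph{lax} symmetric monoidal. The paper makes this explicit when it uses the lemma in the proof of Proposition \ref{prop : different filtration} (``It is clearly lax monoidal''), and lax monoidality is all that is needed to transport $E_\infty$-structures.

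Your last paragraph tries to prove a claim that is false, so it is a step to delete rather than a gap to fill. The lax structure map is not an equivalence in $\cat{FCh}_r\st$, not even rationally. Take $C$ to be $\ZZ_\ell$ in cohomological degree $0$ placed in weight $r$, so $W_iC=0$ for $i<r$ and $W_iC=\ZZ_\ell$ for $i\geq r$. Take $D$ to be $\ZZ_\ell[-2r]$ placed in weight $r$.
\begin{itemize}
\item Since $r\geq 1$, we have $\tau WC=WC$.
\item $\tau_iW_iD\neq 0$ only for $i\geq 2r$, so $\tau WC\otimes\tau WD$ vanishes in all weights $<3r$.
\item $C\otimes D$ is $\ZZ_\ell[-2r]$ from weight $2r$ on, and $\tau_{2r}$ leaves it untouched.
\end{itemize}
So in weight $2r$ the structure map is $0\to\ZZ_\ell[-2r]$, which is not a rational quasi-isomorphism. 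This is exactly the mixed contribution you noticed, where one factor sits above its truncation degree and the other below it, so the K\"unneth bookkeeping cannot close.

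Your fallback of restricting to objects whose graded pieces have cohomology in degrees at most the weight would change the statement and empty it, since there $\tau W\simeq W$. The application in Proposition \ref{prop : different filtration} is precisely to filtrations such as $\sigma^{-1}W$, whose pieces carry cohomology above the weight that $\tau$ is meant to cut off.
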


\begin{proof}
This follows from the obvious natural quasi-isomorphism
\[\tau_{k}(C\otimes \QQ_\ell)\simeq (\tau_{k}C)\otimes\QQ_\ell\]
for any cochain complex of $\ZZ_\ell$-modules $C$.
\end{proof}

For any functor $\sigma:(\mathbb{N},\leq)\to(\mathbb{N},\leq)$ (i.e. a non-decreasing map), we get an induced endofunctor on the category of filtered complexes
\[(C,W)\mapsto (C,\sigma^{-1}W)\]
where the filtration $\sigma^{-1}W$ is simply the composition
\[(\mathbb{N},\leq)\to (\mathbb{N},\leq)\xrightarrow{W}\cat{Ch}.\]
If $\sigma$ is moreover lax monoidal, then the resulting endofunctor is also lax monoidal.

\begin{remark}
Observe that a lax monoidal map from $(\mathbb{N},\leq)$ to itself is simply a non-decreasing superadditive map, i.e. a map satisfying
\[\sigma(m+n)\geq \sigma(m)+\sigma(n)\]
for any $m$ and $n$ in $\mathbb{N}$. Our main example of such a map is
\[\sigma(m)=\lfloor \alpha m\rfloor\]
for $\alpha$ a positive rational number.
\end{remark}

\begin{proposition}
Let $A$ be a complex equipped with a filtration $W$ such that $(A,W)$ is an $(r-1)$-connected filtered complex. Let $\sigma:\mathbb{N}\to\mathbb{N}$ be a non-decreasing superadditive map. Assume that for each $i$, the canonical map
\[W_{\sigma(i)} A\to A\]
induces an isomorphism in cohomology in degree $\leq i$. Then, the filtered complex $(A,W)$ up to $(r,s)$-tame quasi-isomorphism determines the $(r',s')$-tame homotopy type of $(A,\tau)$ with
\[s'=\mathrm{max}\{n\in\mathbb{N},\sigma(n)\leq s\}\quad\text{ and }\quad r'=\mathrm{min}\{n\in\mathbb{N},\sigma(n)\geq r\}.\]
\end{proposition}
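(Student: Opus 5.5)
The plan is to build an explicit zig-zag of filtered maps connecting $(A,\tau)$ to a filtered complex built functorially out of $(A,W)$, and then to check that this construction sends $(r,s)$-tame quasi-isomorphisms to $(r',s')$-tame quasi-isomorphisms. The natural candidate is the filtration $\sigma^{-1}(\tau W)$, i.e.
\[
F_i A:=\tau_i W_{\sigma(i)}A .
\]
This is an increasing filtration, because $\sigma$ is non-decreasing. Since $W_{\sigma(i)}A\subseteq A$, there is an obvious inclusion of filtered complexes
\[
(A,F)\to (A,\tau),\qquad \tau_iW_{\sigma(i)}A\to \tau_iA .
\]
By hypothesis, $H^k(W_{\sigma(i)}A)\to H^k(A)$ is an isomorphism for $k\leq i$. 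Since $\tau_i$ only sees cohomology in degrees $\leq i$, the map $\tau_iW_{\sigma(i)}A\to\tau_iA$ is a quasi-isomorphism for every $i$. So this inclusion is a filtered quasi-isomorphism, and $(A,\tau)\simeq(A,F)$ in $\cat{FCh}$.

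Next I show that $(A,W)\mapsto(A,F)$ sends $(r,s)$-tame quasi-isomorphisms to $(r',s')$-tame quasi-isomorphisms.

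\emph{Integral condition.} Let $f:(A,W)\to(B,W)$ be an $(r,s)$-tame quasi-isomorphism, and take $i<r'+s'$. The goal is to arrange $\sigma(i)<r+s$, so that $W_{\sigma(i)}f$ is a quasi-isomorphism; then $\tau_i$ of it is one as well. Here the precise bookkeeping of the indices $r',s'$ enters. I expect to use superadditivity in the form $\sigma(r'+s')\ge \sigma(r')+\sigma(s')\ge r+\sigma(s')$, together with maximality of $s'$, which gives $\sigma(s'+1)>s$. For indices $i$ below $r'$, I instead use connectivity: $\sigma(i)<r$, so $W_{\sigma(i)}$ is acyclic and $F_i$ is acyclic on both sides. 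This also gives the $(r'-1)$-connectivity of $(A,F)$.

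\emph{Rational condition.} For all $i$, the rational statement follows from the natural quasi-isomorphism $\tau_i(C\otimes\QQ_\ell)\simeq(\tau_iC)\otimes\QQ_\ell$ (as in the lemma above) applied to the rational quasi-isomorphism $W_{\sigma(i)}f\otimes\QQ_\ell$.

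Combining these steps, an $(r,s)$-tame equivalence class of $(A,W)$ determines $(A,F)$ up to $(r',s')$-tame quasi-isomorphism, and hence determines $(A,\tau)$. Both $F$ and the comparison map are natural and compatible with the symmetric monoidal structures. Compatibility of $F$ with products uses that $\sigma$ is superadditive and that $\tau$ is monoidal, so the same argument applies to algebras.

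The main obstacle is the index arithmetic in the integral condition: showing that $i<r'+s'$ forces $\sigma(i)<r+s$, or else that $F_i$ is forced to be acyclic. If the direct inequality fails for general superadditive $\sigma$, I would use the hypothesis more cleverly. The cohomology of $F_i$ lives in degrees $\ge r'$ and $\le i$, so what actually matters is $H^k(W_{\sigma(i)})$ only for $k\le i$. I would then replace $W_{\sigma(i)}$ by $W_{\sigma(k)}$ with $k\le i$, reducing to the weaker requirement $\sigma(k)<r+s$ for the relevant degrees $k$. If needed, I would adjust the convention for $s'$ so that it is exactly this inequality, which is encoded by $\sigma(n)\le s$ together with $\sigma(r')\ge r$.
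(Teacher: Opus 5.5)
Your filtration $F_iA=\tau_iW_{\sigma(i)}A$ is exactly the paper's functor $(A,W)\mapsto(A,\tau\sigma^{-1}W)$ from the proof of Proposition~\ref{prop : different filtration}. Your filtered quasi-isomorphism $(A,F)\simeq(A,\tau)$, the connectivity check and the rational check are all correct.

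The gap is the step you flag yourself, and it is a real one. For $r'\le i<r'+s'$ you need $H^{\le i}(W_{\sigma(i)}f)$ to be an integral isomorphism. Since $H^i(\tau_iW_{\sigma(i)}A)=H^i(W_{\sigma(i)}A)$, your fallback of passing to $W_{\sigma(k)}$ for $k\le i$ changes nothing in the top degree $k=i$. What is needed is the upper bound $\sigma(r'+s'-1)<r+s$ (when $s'\ge 1$). The inequalities you list are all lower bounds on $\sigma$. The upper bounds actually available, $\sigma(r'-1)<r$ and $\sigma(s')\le s$, cannot be added, because superadditivity goes the wrong way. Even for $\sigma(n)=\lfloor\alpha n\rfloor$ one only gets $\sigma(r'+s'-1)\le\sigma(r'-1)+\sigma(s')+1\le r+s$, and equality occurs.

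When equality occurs, the statement itself is false, so no rearrangement of the argument can close the gap. Here is a counterexample.
Take $\sigma(n)=\lfloor 3n/2\rfloor$ and $(r,s)=(2,1)$. Then $(r',s')=(2,1)$, but $\sigma(2)=3=r+s$.
Let $A$ be $\ZZ/\ell$ in cohomological degree $2$, with $W_kA=0$ for $k\le 2$ and $W_kA=A$ for $k\ge 3$.
This is $1$-connected and satisfies the hypothesis: $H^{\le 1}(A)=0$, and $W_{\sigma(i)}A=A$ for $i\ge 2$.
The map $A\to 0$ is a $(2,1)$-tame quasi-isomorphism, since both sides vanish in weights $\le 2$ and $A\otimes\QQ_\ell\simeq 0$.
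Yet $H^2(\tau_2A)=\ZZ/\ell\neq 0$ with $2<r'+s'$, so $(A,\tau)$ and $(0,\tau)$ are not $(2,1)$-tame equivalent.
More generally, placing $\ZZ/\ell[-(r'+s'-1)]$ in weights $\ge\sigma(r'+s'-1)$ gives a counterexample whenever $s'\ge 1$ and $\sigma(r'+s'-1)\ge r+s$.

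What your argument proves verbatim is the statement with $s'$ replaced by $\min\{n:\sigma(n)\ge r+s\}-r'$. Equivalently, it holds under the extra hypothesis $\sigma(r'+s'-1)<r+s$. This is the ``adjusted convention'' you hint at, but it is not encoded by $\sigma(s')\le s$ and $\sigma(r')\ge r$.

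The paper's own proof asserts the preservation of tame quasi-isomorphisms ``by the previous lemma'' and skips this same check. In its two applications the required inequality does hold:
in Proposition~\ref{prop : bar construction}, with $\sigma(k)=\lfloor rk/(r-1)\rfloor$ and $r'=r-1$, one has $\sigma(r-2+s')=r+\sigma(s'-1)\le r+s-1$;
in Theorem~\ref{maintame}, the closed formulas stated there also satisfy it.
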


\begin{proposition}\label{prop : different filtration}
Let $r$ and $s$ be two non-negative integers. Let $\sigma:\mathbb{N}\to\mathbb{N}$ be a non-decreasing superadditive map. Let $r'$ and $s'$ be defined by
\[s'=\mathrm{max}\{n\in\mathbb{N},\sigma(n)\leq s\}\quad\text{ and }\quad r'=\mathrm{min}\{n\in\mathbb{N},\sigma(n)\geq r\}.\]
Then there exists a lax symmetric monoidal functor
\[\cat{FCh}_r\st\to \cat{FCh}_{r'}^{s'\textrm{-tame}}\]
which is naturally equivalent to $(A,W)\mapsto (A,\tau)$ when we apply it to a complex $A$ equipped with a filtration $W$ which is such that the canonical map
\[W_{\sigma(i)} A\to A\]
induces an isomorphism in cohomology in degree $\leq i$.
\end{proposition}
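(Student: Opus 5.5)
The functor will be the composite
\[(A,W)\longmapsto (A,\sigma^{-1}\tau W)=:(A,F),\qquad F_iA:=\tau_{\sigma(i)}W_{\sigma(i)}A,\]
that is, first apply the canonization $W\mapsto \tau W$, then reindex along $\sigma$. The plan is to check three things. The composite is lax symmetric monoidal. It carries $(r,s)$-tame quasi-isomorphisms to $(r',s')$-tame quasi-isomorphisms, so it descends to the localizations. On filtrations $W$ satisfying the hypothesis, it agrees naturally with $\tau$.

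For monoidality, the lemma above shows that $\tau W$ descends to a symmetric monoidal endofunctor of $\cat{FCh}_r\st$. Precomposition with $\sigma^{-1}$ is lax symmetric monoidal because $\sigma$ is superadditive: the product $F_i\otimes F_j=\tau W_{\sigma(i)}\otimes \tau W_{\sigma(j)}$ lands in $\tau W_{\sigma(i)+\sigma(j)}$, and this is contained in $\tau W_{\sigma(i+j)}$. For connectivity, if $i<r'$ then $\sigma(i)<r$ by the definition of $r'$. Hence $W_{\sigma(i)}A$ is acyclic, and so is its truncation. The output is therefore $(r'-1)$-connected.

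For invariance, let $f$ be an $(r,s)$-tame quasi-isomorphism. If $i<r'+s'$, I want $\sigma(i)<r+s$. Write $i=a+b$ with $a<r'$ and $b\le s'$; by superadditivity it suffices that $\sigma(a)<r$ and $\sigma(b)\le s$, and then $H^*(W_{\sigma(i)}f)$ is an integral isomorphism. Truncation $\tau_{\sigma(i)}$ preserves integral quasi-isomorphisms, so $F_if$ is one. In all degrees, $\tau_k$ commutes with $\otimes\QQ_\ell$, so rational isomorphisms are preserved as well. \emph{This index bookkeeping is the step I expect to be the main obstacle.} The decomposition $i=a+b$ gives, a priori, $\sigma(i)\ge\sigma(a)+\sigma(b)$, which is an inequality in the wrong direction. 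I would first try to handle it directly for $\sigma(m)=\lfloor\alpha m\rfloor$, the case used later. Failing that, I would weaken the claim to the bound that superadditivity actually yields, or use Remark~\ref{rem: tamification} to define the functor on $T_{(r,s)}C$. In that formulation $F_i$ is $\tau_{\sigma(i)}W_{\sigma(i)}$ with rational coefficients whenever $\sigma(i)\ge r+s$, and well-definedness becomes formal, with the degree condition entering only through the identification in the next step.

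For the identification with $\tau$, suppose $W_{\sigma(i)}A\to A$ is a cohomology isomorphism in degrees $\le i$. Since $\sigma(i)\ge i$ in the relevant range, I expect $\tau_{\sigma(i)}W_{\sigma(i)}A\to\tau_{\sigma(i)}A$ to be a cohomology isomorphism in degrees $\le i$. Consider the further truncation to $\tau_iA$, that is, the zigzag
\[\tau_i\tau_{\sigma(i)}W_{\sigma(i)}A\to \tau_{\sigma(i)}W_{\sigma(i)}A,\qquad \tau_i\tau_{\sigma(i)}W_{\sigma(i)}A\xrightarrow{\ \sim\ }\tau_iA.\]
This gives a natural zigzag of filtered maps from $(A,F)$ to $(A,\tau)$. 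It remains to check that this zigzag is an $(r',s')$-tame equivalence, by comparing graded pieces via Proposition~\ref{prop : third definition}; the residual discrepancy sits in degrees between $i$ and $\sigma(i)$ and has to be shown to vanish in the tame range.
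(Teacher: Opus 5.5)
There are two gaps. The first is that you compose the two operations in the wrong order. Your $F_i=\tau_{\sigma(i)}W_{\sigma(i)}A$ truncates at $\sigma(i)$ instead of at $i$. On a filtration satisfying the hypothesis it therefore keeps the cohomology of $W_{\sigma(i)}A$ in degrees $i<k\le\sigma(i)$, which $\tau_iA$ does not have. This discrepancy cannot be made to vanish in any tame range, because tame quasi-isomorphisms must be rational isomorphisms in every weight. For example, with $W=\tau$, $\sigma(m)=2m$ and $A=\ZZ_\ell[-2]$ one has $F_1=\tau_2A=A$ but $\tau_1A=0$.

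The paper's functor reindexes first and canonizes afterwards: $(A,W)\mapsto(A,\tau\sigma^{-1}W)$, with $i$-th piece $\tau_iW_{\sigma(i)}A$. For $\sigma(i)\ge i$ this is exactly the middle term $\tau_i\tau_{\sigma(i)}W_{\sigma(i)}A$ of your own zigzag. Lax monoidality follows from superadditivity as you argue. The hypothesis shows directly that $\tau_iW_{\sigma(i)}A\to\tau_iA$ is a quasi-isomorphism for every $i$, since both sides carry $H^{\le i}(A)$ and nothing above. So $(A,\tau\sigma^{-1}W)\to(A,\tau)$ is a filtered quasi-isomorphism, and you need neither a comparison of graded pieces nor the inequality $\sigma(i)\ge i$.

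The second gap is the index bookkeeping. Your worry is justified, and your proposal leaves it unresolved. Descent of $\tau\sigma^{-1}W$ needs $\sigma(i)<r+s$ for all $i<r'+s'$, that is, $\sigma(r'+s'-1)\le r+s-1$. Given that inequality, you are done, since $\tau_i$ preserves quasi-isomorphisms and commutes with $-\otimes\QQ_\ell$. But the inequality does not follow from the stated definition of $s'$, and the statement actually fails as written.

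Take $\sigma(n)=\lfloor 3n/2\rfloor$, $r=2$, $s=1$. Then $r'=2$ and $s'=1$, while $\sigma(2)=3$. Let $A=\ZZ_\ell[-2]$ and $B=\ZZ_\ell[-2]\oplus\ZZ/\ell[-2]$, both with $W_j=0$ for $j\le 2$ and $W_j$ the whole complex for $j\ge 3$. Both satisfy the hypothesis, and $A\hookrightarrow B$ is a $(2,1)$-tame quasi-isomorphism. Yet $H^2(\tau_2A)\not\cong H^2(\tau_2B)$, so $(A,\tau)$ and $(B,\tau)$ are not $(2,1)$-tame equivalent, and no functor as in the statement exists for these parameters.

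The paper's proof passes over this point with ``by the previous lemma''. The fix is to choose $s'$ so that $\sigma(r'+s'-1)\le r+s-1$. This does hold for the explicit parameters used in Theorem~\ref{maintame}. Your fallback through $T_{(r,s)}$ does not help, because it only moves the same inequality into the identification step.
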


\begin{remark}
This proposition is likely to be hard to parse at first read. It is simply saying that, for a complex $A$ equipped with a filtration $W$ which is ``faster'' than the canonical filtration up to rescaling by $\sigma$, the $(r,s)$-tame homotopy type of $(A,W)$ determines the $(r',s')$-tame homotopy type of $(A,\tau)$. Moreover any kind of algebraic structure is preserved in the process.
\end{remark}

\begin{proof}
This functor is given by
\[(A,W)\mapsto (A,\tau\sigma^{-1}W).\]
It is clearly lax monoidal and sends $(r,s)$-tame quasi-isomorphisms to $(r',s')$-tame quasi-isomorphisms by the previous lemma. Finally, when $(A,W)$ satisfies the condition that the canonical map
\[W_{\sigma(i)} A\to A\]
induces an isomorphism in cohomology in degree $\leq i$, there is a filtered quasi-isomorphism (and hence an $(r',s')$-tame quasi-isomorphism)
\[(A,\tau\sigma^{-1}W)\simeq (A,\tau).\qedhere\]
\end{proof}

\subsection{Filtered bar construction}
We relate two filtrations on the bar construction of an algebra: the one induced by the canonical filtration and the skeletal filtration. This is then used to characterize the behaviour of tame quasi-isomorphisms under the bar construction.

Given an augmented dg-algebra $A$ over a commutative ring $R$, one can form its bar construction $\mathrm{Bar}(A)$. This is the realization of the simplicial object
\[[n]\mapsto A^{\otimes n}\]
which computes the relative derived tensor product $R\otimes_AR$. If $A$ happens to be an augmented $E_\infty$-algebra over $R$, then the whole simplicial diagram is a diagram of $E_\infty$-algebras and the bar construction inherits an $E_\infty$-structure.

Now assume that $A$ is equipped with a compatible filtration $W$. Then the simplicial object becomes a simplicial object in filtered complexes. So the bar construction itself inherits a filtration. Moreover, since the associated graded functor is strong monoidal and commutes with colimits, we see that the $E_1$-page of the resulting spectral sequence is given by
\[E_1^{-*,j}=H^{j-*}(\mathrm{Bar}(Gr^W_*A)).\]

More generally, let $C_\bullet$ be a simplicial object in connective cochain complexes (i.e., for each $n$, the complex $C_n$ does not have cohomology in negative degree). Consider the following two filtrations on 
\[|C_\bullet|:=\mathrm{hocolim}_{\Delta^\op}C_\bullet.\]

\begin{enumerate}
\item The filtration induced by the canonical filtration
\[W_p|C_\bullet|:= |\tau_p C_{\bullet}|\]
\item The \textit{skeletal filtration}
\[Sk_p|C_\bullet|:=\mathrm{hocolim}_{\Delta_{\leq p}^{\op}}C_\bullet\]
\end{enumerate}
We may also form these two filtrations if $C_\bullet$ is merely a semi-simplicial object in connective cochain complexes, i.e. a contravariant diagram on $\Delta_{inj}$ the category of non-empty finite totally ordered sets and injective maps. The following shows that the two filtrations defined above are related by the décalage functor defined in \ref{defidec} (see also \cite[Section 6]{Levine} for a cosimplicial version).

\begin{lemma}\label{lemm : two filtrations}
Let $C_\bullet$ be a simplicial or semi-simplicial object in connective cochain complexes. Then we have  $\Dec (Sk)=W$.
\end{lemma}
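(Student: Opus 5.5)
The plan is to work with an explicit strict model of the homotopy colimit in which both filtrations can be computed on the nose, and then compare them degreewise. For a semi-simplicial object $C_\bullet$ in cochain complexes, I will model $|C_\bullet|$ by the total complex
\[|C_\bullet|^k=\bigoplus_{n\geq 0} C_n^{k+n},\qquad D(x)=d_C(x)+\sum_{j=0}^{n}(-1)^{j}\partial_j(x)\ \ (\text{up to the usual sign twist}),\]
where the $\partial_j:C_n\to C_{n-1}$ are the face maps. In the simplicial case I will use the normalized version, quotienting by degenerate elements; this is quasi-isomorphic to the semi-simplicial one and is compatible with both constructions below. In this model, $\mathrm{hocolim}_{\Delta^{\op}_{\leq p}}$ is the subcomplex $Sk_p=\bigoplus_{n\leq p}C_n[n]$. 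Moreover, because $\tau_p$ is a functor on complexes, $|\tau_pC_\bullet|$ is the subcomplex of elements whose $n$-th component lies in $\tau_pC_n$. Connectivity of each $C_n$ guarantees that these strict models compute the homotopy colimits and that the filtrations are exhaustive and bounded below.

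Next I will unwind Definition \ref{defidec} in total degree $k$, where the filtration index $p$ of $\Dec Sk$ is paired with the skeletal index $p-k$. An element $x=(x_n)_n$ of total degree $k$ lies in $\Dec Sk_p$ if and only if two conditions hold. First, $x\in Sk_{p-k}$, i.e. $x_n=0$ for $n>p-k$. Second, $Dx\in Sk_{p-k-1}$. The only component of $Dx$ in simplicial level $p-k$ is $d_C(x_{p-k})$, because the face maps strictly lower $n$, so the second condition says exactly $d_C x_{p-k}=0$.

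On the other side, the component $x_n$ sits in cohomological degree $k+n$ of $C_n$. So $x_n\in\tau_pC_n$ means the following. For $k+n<p$ there is no condition. For $k+n=p$, i.e. $n=p-k$, the component $x_n$ must be a $d_C$-cocycle. For $k+n>p$, the component must vanish. This is literally the same condition as before, which gives the equality $\Dec(Sk)_p=W_p$ as subcomplexes of $|C_\bullet|$ for every $p$.

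As a sanity check, and as a backup if the strict model is awkward in the simplicial case, I will compare associated gradeds. By Lemma \ref{quisspeed}, $Gr^W_p|C_\bullet|\simeq |H^p(C_\bullet)[-p]|$. On the other hand, Lemma \ref{dece0e1} identifies $E_0(\Dec Sk)$ with $E_1(Sk)$, which is the complex $H^*(C_\bullet)$ with the alternating face differential. These two match row by row. Consequently, the inclusion of one filtration in the other is a filtered quasi-isomorphism, since our filtrations are bounded below.

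The main obstacle I expect is bookkeeping rather than anything conceptual. First, the indexing and sign conventions must be matched: the paper's $\Dec$ uses $W_{i-n}$ in degree $n$, the shift $C_n[n]$ must be placed correctly, and the alternating signs must be chosen so that the face part of $D$ strictly lowers simplicial level. Second, in the simplicial case one must check that normalization commutes with $\tau_p$ and with skeleta. I will fix conventions first on the semi-simplicial total complex, where the identification is an honest equality. Then I will deduce the simplicial case from the fact that the inclusion of the normalized complex is a filtered quasi-isomorphism for both filtrations.
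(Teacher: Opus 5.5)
Your argument is correct in substance and takes a genuinely different route from the paper. The paper argues $\infty$-categorically, following Antieau. It forms the bifiltered object $D_{a,b}=Sk_a|\tau_bC_\bullet|$ and uses the cube description of $Gr^{Sk}_a$ to see that $D_{a,b}/D_{a-1,b}$ has cohomology in degrees $\leq b-a$. From this it deduces that $D_{*,b}\to Sk_*$ factors through the Beilinson truncation $\tau^B_bSk_*$ and is an equivalence onto it. It then relies on the identification of $\Dec$ with Beilinson truncation.

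You instead take the total complex as a strict model in which $Sk_p$ and $W_p=|\tau_pC_\bullet|$ are literal subcomplexes, and unwind Definition \ref{defidec}. In total degree $k$ both conditions read ``$x_n=0$ for $n>p-k$ and $d_Cx_{p-k}=0$'', so $\Dec(Sk)_p=W_p$ on the nose. Your approach is more elementary, gives a natural equality rather than an equivalence, and needs no Beilinson t-structure. It only uses that $\Dec$ preserves filtered quasi-isomorphisms, which follows from Lemma \ref{dece0e1}. The paper's proof is model-free and treats the simplicial and semi-simplicial cases uniformly. Incidentally, connectivity is not needed for the direct-sum total complex to compute the homotopy colimit; it only makes the filtrations non-negative.

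One step of your plan fails as written. In the simplicial case, the comparison between $\mathrm{Tot}(NC)$ and $\mathrm{Tot}(C)$ is \emph{not} a filtered quasi-isomorphism for the naive skeletal filtrations. On $p$-th graded pieces it compares $N_pC[p]$ with $C_p[p]$, and for a constant simplicial object the first vanishes for $p>0$ while the second does not. Indeed, $\bigoplus_{n\leq p}C_n[n]$ is the skeleton of the underlying semi-simplicial object, not $\mathrm{hocolim}_{\Delta^{\op}_{\leq p}}C_\bullet$.

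The fix is to run your computation directly on $\mathrm{Tot}(NC)$:
\begin{itemize}
\item By Dold--Kan, $\bigoplus_{n\leq p}N_nC[n]$ models $\mathrm{hocolim}_{\Delta^{\op}_{\leq p}}C_\bullet$; its graded pieces are the total cofibers of the degeneracy cubes.
\item One has $N_n(\tau_pC_\bullet)=\tau_p(N_nC_\bullet)$, because $N_n=\bigcap_{j\geq 1}\ker\partial_j$ and $\tau_p$ is defined degreewise by kernels.
\item The face part of the differential still strictly lowers the simplicial level, so the same equality holds.
\end{itemize}

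If you prefer to deduce the simplicial case from the semi-simplicial one, use the projection $\mathrm{Tot}(C)\to\mathrm{Tot}(C/D)$ instead. It preserves both filtrations and is a filtered quasi-isomorphism for $W$. For $Sk$ it is only an $E_2$-isomorphism, by the normalization theorem for the simplicial abelian groups $H^q(C_\bullet)$. That becomes a filtered quasi-isomorphism after d\'ecalage.
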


\begin{proof}
Before going into the proof, we recall a classical computation of the associated graded of the skeletal filtration. In the simplicial case, then the homotopy cofiber of $Sk_{p-1}|C_\bullet|\to Sk_{p}|C_\bullet|$ is given by the $p$-fold suspension of the total homotopy cofiber of a $p$-dimensional cube  built from the values $C_k$ with $k\leq p$ and the codegeneracy maps. In the semi-simplicial case, this homotopy cofiber is simply the $p$-fold suspension of $C_p$.

Now, we follow closely the proof of \cite[Proposition 9.2]{antieauspectral} in cohomological grading conventions. We do the proof in the simplicial case. The semi-simplicial case being completely similar. Consider the bifiltered complex
\[D_{a,b}=Sk_a|\tau_b C|\]
then the associated graded piece $D_{a,b}/D_{a-1,b}$ is the $a$-fold suspension of the total cofiber of a cube of cochain complexes whose cohomology is concentrated in the interval $[0,b]$. It follows that the cohomology of $D_{a,b}/D_{a-1,b}$ is concentrated in degrees at most $b-a$. Therefore the obvious map
\[D_{*,b}\to Sk_*\]
factors through $\tau^{B}_{b}Sk_*$ where $\tau^B$ denotes the filtration corresponding to the Beilinson t-structure. The induced map $D_{*,b}\to \tau^{B}_{b}Sk_*$  is an equivalence, since it is one on associated graded as shown by the following computation
\[D_{a,b}/D_{a-1,b}\simeq \mathrm{tcofib}_{s\leq a}(s\mapsto \tau_{b}C_s[a])\simeq (\tau_{b-a}\mathrm{tcofib}_{s\leq a}(s\mapsto C_s))[a]\]
\[\simeq \tau_{b-a}Gr_a^{Sk}|C|\simeq Gr_a\tau_{b}^{B}Sk_*\]
It follows that $D_{*,b}\simeq \tau_{b}^BSk_*$ which implies that $\Dec (Sk)\simeq W$.
\end{proof}

As a corollary, we obtain the following result.

\begin{corollary}\label{coro : E2 page of W filtration}
For $k\geq 1$, there is an isomorphism
\[E_{k}^{-i,j}(|X_{\bullet}|,W)\cong E_{k+1}^{j-2i,i}(|X_\bullet|,Sk). \]

In particular, in the semi-simplicial case, we get an isomorphism
\[E_1^{-i,j}(|X_\bullet|,W)\cong  H^j(X_i)\]

and in the simplicial case, we get an isomorphism
\[E_1^{-i,j}(|X_\bullet|,W)\cong H^j(X_i)/D(H^j(X_i))\]
where $D(H^j(X_i))$ is the subgroup generated by the images of the degeneracy maps $H^j(X_{i-1})\to H^j(X_i)$.

\end{corollary}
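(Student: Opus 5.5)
The plan is to deduce the corollary from Lemma \ref{lemm : two filtrations}, $\Dec(Sk)=W$, combined with Deligne's comparison of the spectral sequence of a filtered complex with that of its décalage (Lemma \ref{dece0e1}). The latter gives a natural quasi-isomorphism $E_0^{-i,j}(A,\Dec F)\to E_1^{j-2i,i}(A,F)$ which induces isomorphisms on all later pages. Taking $F=Sk$ and $A=|X_\bullet|$, we get isomorphisms
\[E_k^{-i,j}(|X_\bullet|,W)\cong E_{k+1}^{j-2i,i}(|X_\bullet|,Sk)\]
for all $k\geq 1$. These are compatible with differentials because Deligne's map is a map of spectral sequences from the page $E_1$ (resp. $E_2$) onward. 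One caveat needs checking. Lemma \ref{lemm : two filtrations} identifies $\Dec(Sk)$ and $W$ only up to natural filtered quasi-isomorphism, in the $\infty$-categorical sense. So I would note that spectral sequences from $E_1$ on are invariants of filtered quasi-isomorphism, which gives the first statement. I would also double-check the indexing conventions: $E_0^{-i,j}=Gr_i A^{j-i}$ for $W$ versus the bigrading used for $Sk$.

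For the explicit descriptions, I would compute $E_2(|X_\bullet|,Sk)$ directly, using the description of $Gr^{Sk}_p$ recalled at the start of the proof of Lemma \ref{lemm : two filtrations}.

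In the semi-simplicial case, $Gr^{Sk}_p|X_\bullet|\simeq X_p[p]$, up to sign conventions on the shift. Hence the $E_1$-page of the skeletal filtration is $H^*(X_p)$, with $d_1$ the alternating sum of the face maps $H^j(X_{p-1})\to H^j(X_p)$. Its $E_2$-page is therefore the cohomology of the cochain complex $p\mapsto H^j(X_p)$. This does not yet look like the claimed formula $E_1^{-i,j}(|X_\bullet|,W)\cong H^j(X_i)$. The point is that $E_1(W)$ corresponds to $E_1(Sk)$ through the quasi-isomorphism $E_0(\Dec Sk)\to E_1(Sk)$, before passing to cohomology. So $E_1^{-i,j}(W)=H(E_0^{-i,j}(W))\cong H(E_1(Sk))=E_2(Sk)$. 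Rather than chase this indexing, I would compute $E_1(W)$ directly. By definition $W_p|X_\bullet|=|\tau_pX_\bullet|$, and geometric realization commutes with cofibers, so
\[Gr_p^W|X_\bullet|\simeq |H^p(X_\bullet)[-p]|.\]
This is the realization of a semi-simplicial abelian group placed in degree $p$. Its cohomology is computed by the alternating-face-map complex $i\mapsto H^p(X_i)$ placed in degree $p-i$. Hence $E_1^{-i,j}(|X_\bullet|,W)=H^{j-i}(Gr^W_i)$, which after reindexing gives the chain groups $H^j(X_i)$. I expect the correct statement to identify $E_1(W)$ as this complex at the chain level, with $d_1$ the alternating face maps; I would read the formula in the corollary in that sense, as the $E_1$-page being the normalized complex.

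In the simplicial case, the same computation uses the normalized (Moore) complex of the simplicial abelian group $H^p(X_\bullet)$, by Dold--Kan. This quotients out the degenerate part and yields $H^j(X_i)/D(H^j(X_i))$.

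The main obstacle is the bookkeeping of bigradings and the precise sense of the identification. The first statement is formal from Lemma \ref{lemm : two filtrations} and Lemma \ref{dece0e1}; the second needs the direct associated-graded computation above, and care with the sign of the shift in $|A[-p]|$.
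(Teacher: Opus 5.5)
Your first part is exactly the argument the paper has in mind. By Lemma \ref{lemm : two filtrations}, $W=\Dec(Sk)$, and Lemma \ref{dece0e1} then gives $E_k^{-i,j}(W)\cong E_{k+1}^{j-2i,i}(Sk)$ for $k\geq 1$. Your remark that pages from $E_1$ on are invariant under filtered quasi-isomorphism settles the $\infty$-categorical caveat.

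The gap is in the second part. Your computation $Gr^W_i|X_\bullet|\simeq|H^i(X_\bullet)[-i]|$ is correct. But it shows that $E_1^{-i,j}(W)=H^{j-i}(Gr^W_i)$ is the \emph{homology}, in simplicial degree $2i-j$, of the alternating-face complex $k\mapsto H^i(X_k)$. That is, it is $E_2^{j-2i,i}(Sk)$, as you had correctly noted a few lines earlier. It is not a chain group, and the chain group in that slot would be $H^i(X_{2i-j})$, not $H^j(X_i)$. Nor can you ``read'' $E_1(W)$ as that chain complex. The complex $\bigl(H^j(X_i),\sum_t(-1)^t d_t\bigr)$ is $(E_1(Sk),d_1)$, which by Lemma \ref{dece0e1} is a quasi-isomorphic model of $(E_0(W),d_0)$, one page earlier.

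In fact the displayed formulas for $E_1(W)$ are false as written. Take the semi-simplicial object with $X_0=X_1=\ZZ_\ell$ in degree $0$ and $X_k=0$ for $k\geq 2$. Then $W_0=W_1=|X_\bullet|$, so $E_1^{-1,0}(W)=H^{-1}(Gr^W_1)=0$, whereas $H^0(X_1)=\ZZ_\ell$.

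What is true is the ``in particular'' clause with $Sk$ in place of $W$. This is also what the paper actually uses in the proof of Proposition \ref{prop : bar construction}, where it combines $E_1^{-i,j}(W)=E_2^{j-2i,i}(Sk)$ with $E_1^{-i,j}(Sk)=H^j(A^{\otimes i})$. The corrected clause follows directly from the description of the skeletal graded pieces recalled in the proof of Lemma \ref{lemm : two filtrations}:
\begin{itemize}
\item[--] In the semi-simplicial case, $Gr^{Sk}_i\simeq X_i[i]$, so $E_1^{-i,j}(Sk)=H^{j-i}(X_i[i])=H^j(X_i)$.
\item[--] In the simplicial case, $Gr^{Sk}_i$ is the $i$-fold suspension of the total cofiber of the degeneracy cube. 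Its cohomology is $H^j(X_i)$ modulo degenerate classes, and this is where your Dold--Kan/normalization remark belongs.
\end{itemize}
Feeding this into the first part gives the correct description of $E_1^{-i,j}(W)$: it is $H_{2i-j}$ of the (normalized) face complex of $H^i(X_\bullet)$.

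So instead of bending the meaning of $E_1(W)$, you should prove the explicit formulas for the skeletal spectral sequence. You should also flag the $W$ in them as a misprint for $Sk$.
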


\begin{proposition}\label{prop : bar construction}
Let $r\geq 2$. Let $f:A\to B$ be a map between two non-unital filtered $(r-1)$-connected algebras over $\ZZ_\ell$. If the induced map $(A,\tau)\to (B,\tau)$ is an $(r,s)$-tame quasi-isomorphism, then the map
\[\mathrm{Bar}(f):(\mathrm{Bar}(A),\tau)\to (\mathrm{Bar}(B),\tau)\]
is an $(r-1,s-\lfloor\frac{s+1}{r}\rfloor)$-tame quasi-isomorphism.
\end{proposition}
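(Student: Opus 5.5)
The plan is to compare the filtration induced on $\mathrm{Bar}(A)$ by the canonical filtration of $A$ with the canonical filtration of $\mathrm{Bar}(A)$ itself, and then argue through associated gradeds using Proposition \ref{prop : third definition}.

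\emph{Step 1: filter the bar construction.} Regard $\mathrm{Bar}(A)$ as the realization of the simplicial (or semi-simplicial, using the non-unital structure) object $[n]\mapsto A^{\otimes n}$. Since $f$ is an $(r,s)$-tame quasi-isomorphism for $\tau$, and tensor products preserve such maps (rationally, and mod $\ell$ on associated gradeds in the relevant range), each $A^{\otimes n}\to B^{\otimes n}$ with its tensor-product filtration is again an $(r,s)$-tame quasi-isomorphism. Because $A$ is $(r-1)$-connected, $A^{\otimes n}$ is $(nr-1)$-connected.

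\emph{Step 2: compute the associated graded.} By Lemma \ref{lemm : two filtrations} and Corollary \ref{coro : E2 page of W filtration}, the filtration $W$ on $|A^{\otimes\bullet}|$ induced by $\tau$ has graded pieces assembled from $Gr^\tau_j(A^{\otimes i})$, placed in total degree $j-i$. Consequently $\FF_\ell\otimes Gr$ and $\QQ_\ell\otimes Gr$ of $\mathrm{Bar}(f)$ are quasi-isomorphisms in the corresponding weights: for all weights rationally, and for weights $j<r+s$ mod $\ell$. The key bookkeeping is that a class of cohomological degree $d$ in $\mathrm{Bar}$ arising from $A^{\otimes i}$ has weight $j=d+i$ and satisfies $j\ge ir$. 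Hence $i\le d/(r-1)$ and $j\le d+\lfloor d/(r-1)\rfloor$.

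\emph{Step 3: pass to the canonical filtration.} Apply Proposition \ref{prop : different filtration} with $\sigma(n)=n+\lfloor n/(r-1)\rfloor$. This map is superadditive, and the estimate from Step 2 shows that $W_{\sigma(d)}\mathrm{Bar}(A)\to\mathrm{Bar}(A)$ is a cohomology isomorphism in degrees $\le d$. It follows that $(\mathrm{Bar}(A),\tau)$ and $(\mathrm{Bar}(B),\tau)$ are $(r',s')$-tame equivalent, with $r'=r-1$ (since $\sigma(r-1)=r$) and $s'=\max\{n:\sigma(n)\le s\}$. The final piece of arithmetic is to match this with the stated shift $s-\lfloor\frac{s+1}{r}\rfloor$, after re-indexing the tame window relative to $r'=r-1$.

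\emph{Main obstacle.} I expect the hardest part to be this last indexing step. One has to check precisely that $\sigma$ satisfies the hypothesis of Proposition \ref{prop : different filtration}, including the connectivity shift from $r$ to $r-1$. One also has to verify that the graded pieces of $\mathrm{Bar}$ below the cutoff only involve $Gr_j A$ with $j<r+s$, so that the mod-$\ell$ condition transfers. If the simplicial degeneracies cause trouble, I would work with the semi-simplicial model, using the non-unital hypothesis, where Corollary \ref{coro : E2 page of W filtration} gives the cleanest description.
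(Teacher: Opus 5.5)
Your proposal follows the paper's proof essentially step by step: the levelwise canonical filtration on the semi-simplicial bar construction, the vanishing $H^j(A^{\otimes i})=0$ for $j<ri$ read off through Corollary \ref{coro : E2 page of W filtration}, and then Proposition \ref{prop : different filtration} with $\sigma(n)=\lfloor\frac{r}{r-1}n\rfloor=n+\lfloor n/(r-1)\rfloor$, giving $r'=r-1$. The arithmetic you left open needs no re-indexing, since one checks directly that $\max\{n : n+\lfloor n/(r-1)\rfloor\le s\}=s-\lfloor\frac{s+1}{r}\rfloor$. In Step 1, filter each $A^{\otimes n}$ by its own canonical filtration $\tau(A^{\otimes n})$, as you already do in Step 2, and not by the tensor-product filtration: over $\ZZ_\ell$ the latter has Tor terms that break the bookkeeping $j=d+i$. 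The tame comparison then follows from Künneth and $(r-1)$-connectivity.
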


\begin{proof}
We know that $\mathrm{Bar}(A)$ is the colimit of the usual semi-simplicial diagram
\[[k]\mapsto A^{\otimes k}\]
where the face maps are induced by multiplication of two adjacent copies of $A$ or the zero map for the two extreme faces. We denote this semi-simplicial object $\mathrm{Bar}_{\bullet}(A)$ and we denote $\mathrm{Bar}(A)$ its homotopy colimit. Applying the canonical filtration levelwise, we get a filtered semi-simplicial object $\tau_*\mathrm{Bar}_{\bullet}(A)$. Taking colimit in the semi-simplicial direction, we obtain a filtration on $\mathrm{Bar}(A)$ that we denote $W$. Explicitly, we have
\[W_n\mathrm{Bar}(A):=\mathrm{colim}_k\tau_n\mathrm{Bar}_k(A).\]
This construction is of course functorial in $A$. Clearly, our map $f:A\to B$ induces an
$(r,s)$-tame quasi-isomorphism
\[(\mathrm{Bar}(A),W)\to(\mathrm{Bar}(B),W).\]

We consider the spectral sequence associated to $W$.
By Corollary \ref{coro : E2 page of W filtration}, it reads
\[E_1^{-i,j}(W)=E_2^{j-2i,i}(Sk)\implies H^{j-i}(\mathrm{Bar}(A)).\]
From the connectivity of $A$ and using $E_1^{-i,j}(Sk)=H^{j}(A^{\otimes i})$ we have that
$E_1^{-i,j}(Sk)=0$ when $j<ri$. Therefore we have that
\[E_1^{-i,k+i}(W)=E_2^{k-i,i}(Sk)=0\text{ for }i>{r\over r-1}k.\]
It follows that this is also the case for the same bidegrees at the $E_\infty$-page.
The cohomology group $H^k(\mathrm{Bar}(A))$ has a filtration induced by $W$ whose associated graded is $\oplus_iE_\infty^{-i,k+i}$.
From this we deduce that
\[W_{\lfloor{r\over r-1}k\rfloor}\mathrm{Bar}(A)\to \mathrm{Bar}(A)\]
induces an isomorphism in cohomology up to degree $k$. In order to conclude the proof we simply apply Proposition \ref{prop : different filtration} with $\sigma(k)=\lfloor{r\over r-1}k\rfloor$.
\end{proof}

\subsection{A Mandell-type Theorem in tame homotopy theory}

Recall that if $X$ and $Y$ are nilpotent spaces of finite type, Mandell's Theorem \cite{mandellcochains} shows that $X$ and $Y$ are weakly homotopy equivalent if and only if their singular cochains $C^*(X,\ZZ)$ and $C^*(Y, \ZZ)$ are quasi-isomorphic as $E_\infty$-algebras over $\ZZ_\ell$. In the tame setting, the easy implication is proven in Proposition \ref{prop: notused}. In this section we prove a converse implication, giving a Mandell-type Theorem in tame homotopy theory over $\ZZ_\ell$.

Let us first recall the following classical fact about cohomology of Eilenberg--MacLane spaces.

\begin{theorem}\label{theo : cohomology of EM spaces}
Let $n\geq 2$. Let us denote by $\iota_n$ the fundamental class of $K(\ZZ,n)$ in $H^n(K(\ZZ,n),\QQ)$, or in $H^n(K(\ZZ,n),\FF_\ell)$. Likewise, let us denote by $\iota_n$ the fundamental class of $K(\ZZ/\ell^k,n)$ in $H^n(K(\ZZ/\ell^k,n),\FF_\ell)$. Then
\begin{enumerate}
\item The canonical map
\[\FF_\ell[\iota_n]\to H^*(K(\ZZ,n),\FF_\ell)\]
is an isomorphism of graded commutative rings in degree $\leq n+2\ell-3$.
\item Denoting $\beta \iota_n$ the $k$-th Bockstein of $\iota_n$, the canonical map
\[\FF_\ell[\iota_n,\beta\iota_n]\to H^*(K(\ZZ/\ell^k,n),\FF_\ell)\]
is an isomorphism of graded commutative rings in degree $\leq n+2\ell-3$.
\item The canonical map
\[\QQ_\ell[\iota_n]\to H^*(K(\ZZ,n),\QQ_\ell)\]
is an isomorphism of graded commutative rings.
\end{enumerate}
\end{theorem}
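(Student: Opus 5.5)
The plan is to deduce all three statements from the classical computations of Cartan and Serre of the mod $\ell$ cohomology of Eilenberg--MacLane spaces, together with a direct rational argument.

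\emph{Part (3).} This is the rational statement, and I would prove it first, since it is purely formal. The space $K(\ZZ,n)_{\QQ}=K(\QQ,n)$ has minimal Sullivan model the free graded commutative algebra on a single generator of degree $n$. For $n$ even, this gives a polynomial algebra. For $n$ odd, it gives an exterior algebra, which is what ``$\QQ_\ell[\iota_n]$'' means for a graded commutative ring, where $\iota_n^2=0$. The result with $\QQ_\ell$-coefficients then follows by flat base change, since $\QQ_\ell$ is flat over $\QQ$ and the cohomology is of finite type. Alternatively, one can run the Serre spectral sequence of the path fibration $K(\ZZ,n-1)\to *\to K(\ZZ,n)$ by induction on $n$, starting from $K(\ZZ,1)=S^1$.

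\emph{Parts (1) and (2).} Here I would invoke Cartan's theorem. It states that $H^*(K(\ZZ/\ell^k,n),\FF_\ell)$, respectively $H^*(K(\ZZ,n),\FF_\ell)$, is the free graded commutative algebra over $\FF_\ell$ (for odd $\ell$) on the classes $\theta_I\iota_n$. These classes are obtained by applying admissible monomials $\theta_I$ in the Steenrod powers $P^i$, the Bockstein $\beta$ and, for $\ZZ/\ell^k$, the higher Bockstein $\beta_k$, subject to an excess condition. In the $K(\ZZ,n)$ case the monomials are not allowed to end in a Bockstein. The key step is a degree count.
\begin{itemize}
\item The operation $P^i$ raises degree by $2i(\ell-1)$, so any admissible monomial containing some $P^i$ with $i\geq 1$ has degree at least $2(\ell-1)$.
\item In the $K(\ZZ,n)$ case, the first operation applied to $\iota_n$ cannot be a Bockstein. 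Hence the only generator of degree $\leq n+2\ell-3$ is $\iota_n$ itself; the first one beyond it is $P^1\iota_n$, in degree $n+2\ell-2$.
\item In the $\ZZ/\ell^k$ case, the only generators of degree $\leq n+2\ell-3$ are $\iota_n$ and $\beta_k\iota_n$. The next candidates are $P^1\iota_n$ and $\beta P^1\beta_k\iota_n$ and similar classes, all of degree $\geq n+2\ell-2$.
\end{itemize}
It then remains to check that no relation appears in the free graded commutative algebra in degrees $\leq n+2\ell-3$. In particular, truncated-polynomial phenomena such as $x^\ell=P^{|x|/2}x$ only occur in degree $\ell|x|\geq \ell n$. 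This is at least $n+2\ell-3$ once $n\geq 2$, and it exceeds $n+2\ell-3$ except in the single case $n=2$, $\ell=3$, where equality holds.

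\emph{The case $\ell=2$.} For $\ell=2$, the bound $n+2\ell-3=n+1$ is covered directly by Serre's computation with $Sq^i$. The only relevant operations are then $Sq^1=\beta$ for $\ZZ/2^k$ with $k=1$, or the higher Bockstein for $k>1$.

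\emph{Main obstacle.} I expect the delicate point to be exactly these boundary cases, where $\ell n$ is close to $n+2\ell-3$, namely $n=2$ with small $\ell$. There one must make sure that ``free graded commutative'' correctly accounts for $\iota_n^\ell=P^{n/2}\iota_n$. The point is that this identity is not a relation killing $\iota_n^\ell$: Cartan's description identifies the two classes rather than imposing $\iota_n^\ell=0$, so the map from $\FF_\ell[\iota_n]$ remains injective in the stated range. I would also verify the low-degree Bockstein statement in part (2) directly from the Serre spectral sequence of $K(\ZZ/\ell^k,n-1)\to *\to K(\ZZ/\ell^k,n)$, to avoid relying on sign and indexing conventions.
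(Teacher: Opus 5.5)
Your proposal is correct and follows the paper's argument. The paper likewise cites Cartan's description of $H^*(K(\ZZ,n),\FF_\ell)$ and $H^*(K(\ZZ/\ell^k,n),\FF_\ell)$ (via Tamanoi) as free graded-commutative algebras on classes $Q\iota_n$, notes that apart from the ($k$-th) Bockstein the lowest-degree admissible operation is $P^1$, of degree $2\ell-2$, and calls the rational case classical.

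One small slip: $\ell n-(n+2\ell-3)=(\ell-1)(n-2)+1\geq 1$, so there is no equality case at $n=2$, $\ell=3$. The point is moot anyway, since $\iota_n^\ell=P^{n/2}\iota_n$ is not a relation in a free graded-commutative algebra, so nothing needs to be ruled out.
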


\begin{proof}
This comes from a more precise theorem, namely that $H^*(K(\ZZ,n),\FF_\ell)$ is a polynomial algebra with generators of the form $Q\iota_n$ with $Q$ an explicit composite of Steenrod operations (see for example \cite[Theorem 3.4]{tamanoi}). The important fact is that, in the first case, the smallest degree of $Q$ allowed besides $Q=1$ is $Q=P_1$ which is of degree $2\ell-2$ and in the second case, the only other $Q$ that is allowed is $Q_0$ which is the $k$-th Bockstein operator. The rational computation is very classical.
\end{proof}

For $m$ and $n$ positive integers and $A$ a finitely generated $\ZZ_\ell$-module we write $F(A,n)$ for the filtered  $E_\infty$-algebra given by
\[F(A,n)=\Sym((\RR\Hom(A[n],\ZZ_\ell),\tau)).\]

Now, we construct a map of filtered algebras
\[f:F(A,n)\to (\tilde{C}^*(K(A,n),\ZZ_\ell),\tau)\]
that is functorial in $A$. For this, we start from the canonical map of chain complexes
\[\tilde{C}_*(K(A,n),\ZZ_\ell)\to A[n]\]
that can be seen as truncation in degree $\leq n$. Dualizing this map, we get a map of cochain complexes
\[\RR\Hom(A[n],\ZZ_\ell)\to \tilde{C}^*(K(A,n),\ZZ_\ell).\]
Taking canonical filtration on both sides, we get a map
\[(\RR\Hom(A[n],\ZZ_\ell),\tau)\to (\tilde{C}^*(K(A,n),\ZZ_\ell),\tau).\]
Using the adjunction between filtered complexes and filtered algebras, such a map amounts to a map of filtered algebras
\[f:F(A,n)\to (\tilde{C}^*(K(A,n),\ZZ_\ell),\tau).\]

\begin{lemma}\label{lemm : case of EM spaces}
For $n\geq r\geq 2$ and $s\leq 2\ell-3$, the map
\[f:F(A,n)\to (\tilde{C}^{*}(K(A,n),\ZZ_\ell),\tau)\]
is an $(r,s+1)$-tame quasi-isomorphism. 
\end{lemma}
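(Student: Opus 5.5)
We use the criterion of Proposition \ref{prop : third definition}. It suffices to show that $\FF_\ell\otimes Gr^W_i(f)$ is a quasi-isomorphism for $i<r+s+1$, and that $\QQ_\ell\otimes Gr^W_i(f)$ is a quasi-isomorphism for all $i$. By Lemma \ref{quisspeed}, $Gr^\tau_i$ of any complex is equivalent to its cohomology in degree $i$ placed in degree $i$. So on the target side, $Gr_i^\tau\tilde C^*(K(A,n),\ZZ_\ell)\simeq \tilde H^i(K(A,n),\ZZ_\ell)[-i]$.

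On the source side, the associated graded functor is strong symmetric monoidal and commutes with colimits. Hence $Gr^W_*F(A,n)\simeq \Sym(Gr^\tau_*\RR\Hom(A[n],\ZZ_\ell))$, where $\Sym$ means the free $E_\infty$-algebra in graded complexes, and weight adds under products. Since $A$ is finitely generated over $\ZZ_\ell$, we reduce by additivity (both sides take direct sums to products of spaces and tensor products of algebras, via Künneth) to the cases $A=\ZZ_\ell$ and $A=\ZZ/\ell^k$.

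For $A=\ZZ_\ell$, the generating complex is $\ZZ_\ell[-n]$, concentrated in weight $n$. For $A=\ZZ/\ell^k$, the dual $\RR\Hom(A[n],\ZZ_\ell)$ has cohomology $\ZZ/\ell^k$ in degree $n+1$, so with $\tau$ it sits in weight $n+1$. The free $E_\infty$-algebra of weight $i$ is built from the pieces $(V^{\otimes m})_{h\Sigma_m}$ of weight $mn$ (respectively $m(n+1)$). The map $f$ sends the generator to the fundamental class.

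We now compare mod $\ell$ in weights $i< r+s+1\le n+2\ell-2$. Because $n\ge r$, any $i\le n+s\le n+2\ell-3$ lies in the range covered by Theorem \ref{theo : cohomology of EM spaces}. Pure weight $i$ means the weight-$i$ part of $\Sym$ with only the homotopy orbits of weight $i$. On the space side, $\FF_\ell\otimes\tilde H^i(K(A,n),\ZZ_\ell)$ is computed through the universal coefficient sequence, and we compare it with the weight-$i$ piece of $\FF_\ell\otimes\Sym$.

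In this range only $m=1$ contributes, and possibly $m=2$ when $2n\le n+2\ell-3$. The key input is the following. For $m<\ell$, $\Sigma_m$ has order invertible in $\FF_\ell$, so homotopy orbits agree with strict orbits. Also, $m\ge \ell$ would need weight $\ell n\ge n+(\ell-1)n>n+2\ell-3$, since $n\ge2$. Hence in weights $<n+2\ell-2$ the mod $\ell$ cohomology of $Gr_iF(A,n)$ is the degree-$i$ part of the honest free graded commutative algebra on the generator(s) of Theorem \ref{theo : cohomology of EM spaces}. For $\ZZ/\ell^k$ these generators are $\iota_n$ and $\beta\iota_n$, coming from mod $\ell$ reduction of the $\ZZ/\ell^k[-n-1]$ generator. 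Theorem \ref{theo : cohomology of EM spaces}(1),(2) then gives the isomorphism, once we check that $f$ sends generators to $\iota_n$ and $\beta\iota_n$. Rationally the statement is immediate from (3) and the fact that rational $\Sym$ is the free graded commutative algebra. For torsion $A$ both sides vanish rationally.

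We expect the main obstacle to be bookkeeping between mod $\ell$ reduction and the canonical filtration. The comparison of $\FF_\ell\otimes Gr^\tau_i$ with cohomology requires Tor terms: $\FF_\ell\otimes \tilde H^i[-i]$ has cohomology in degrees $i$ and $i-1$. The weight-$i$ pieces have to match in both degrees, and this must be tracked through the boundary of the range $i=r+s$, which explains the gain of $+1$ in the tame parameter. We would handle it by comparing integral cohomology of $Gr_i$ directly, since it is finitely generated, rather than working mod $\ell$ only.
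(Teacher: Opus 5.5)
You follow the paper's proof step for step. Both arguments use the criterion of Proposition~\ref{prop : third definition}, reduce to $A=\ZZ_\ell$ and $A=\ZZ/\ell^k$, note that only $\Sym^m$ with $m<\ell$ occurs in the relevant weights (so homotopy orbits are strict orbits mod $\ell$), and match generators with $\iota_n,\beta\iota_n$ via Theorem~\ref{theo : cohomology of EM spaces}.

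For $A=\ZZ_\ell$ your argument is complete. Weight equals degree on both sides, and every mod $\ell$ class in range is a power of $\iota_n$ lifting to a torsion-free integral class. Your remark that only $m\le 2$ contributes is wrong (for $n=2$ all $m\le\ell-1$ occur), but only $m<\ell$ is actually used.

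The torsion bookkeeping you defer, however, is a genuine gap, and it cannot be closed.
\begin{itemize}
\item On the target, $\FF_\ell\otimes Gr^\tau_i\simeq(\tilde H^i(K(A,n);\ZZ_\ell)\otimes^{L}\FF_\ell)[-i]$. So a mod $\ell$ class of degree $d$ has weight $d$ or $d+1$.
\item On the source, $\iota_n^a(\beta\iota_n)^b$ has weight $(a+b)(n+1)$ but degree $(a+b)(n+1)-a$.
\item For $n$ even, $\beta(\iota_n^2)=2\iota_n\beta\iota_n\neq0$ puts both $\iota_n^2$ and $\iota_n\beta\iota_n$ in target weight $2n+1$. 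But $Gr_{2n+1}F(A,n)=0$, because all source weights are multiples of $n+1$.
\end{itemize}
Concretely, take $\ell$ odd, $A=\ZZ/\ell$, $n=r=2$, $s=3$. The lemma requires $H^*(W_5)$ to agree. However, $H^*(W_5F(A,2))\cong\ZZ/\ell$ concentrated in degree $3$. Meanwhile $H^5(\tau_5\tilde C^*(K(\ZZ/\ell,2),\ZZ_\ell))=\tilde H^5(K(\ZZ/\ell,2);\ZZ_\ell)$ contains the nonzero integral Bockstein of $\iota_2^2$. Comparing integral $Gr_i$, as you suggest, would only confirm this mismatch.

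The underlying issue is that $\tau$ is only lax monoidal once Tor terms appear. This also invalidates the K\"unneth reduction that both you and the paper use. For $A=\ZZ/\ell\oplus\ZZ/\ell$ and any $n$, the mod $\ell$ class $\iota^{(1)}\iota^{(2)}$ has $\tau$-weight $2n+1$ in $\tilde C^*(K(A,n),\ZZ_\ell)$. In the tensor product of the two $\tau$-filtered factors it has weight $2n+2$.

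The paper's proof has exactly the same hole. It identifies the target $E_1$-page with $H^*(K(A,n),\FF_\ell)\cong\FF_\ell[\iota_n,\beta\iota_n]$, locates only the generators, and concludes from ``generators go to generators''. But the bigraded product on that $E_1$-page is the associated-graded one, and it is not free: for $n$ even, $\iota_n\cdot\iota_n$ lands in $Gr_{2n+2}$ and vanishes there.

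So the lemma as stated is false for torsion $A$, for example whenever $n$ is even and $r+s\geq 2n+1$. It does hold for free $A$. A correct treatment of $K(\ZZ/\ell^k,n)$ needs either a restricted range or a different model. This matters because Proposition~\ref{coro : Mandell EM} invokes the lemma in exactly this torsion case.
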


\begin{proof}
Since $A$ splits as a product of copies of $\ZZ_\ell$ and $\ZZ/\ell^k$ and since both the source and the target send finite products in the $A$ variable to tensor products, we may assume without loss of generality that $A$ is either $\ZZ_\ell$ or $\ZZ/\ell^k$. We do the case of $A=\ZZ/\ell^k$. The other case is similar and easier. According to Proposition \ref{prop : third definition}, it is enough to prove that $f$ induces an isomorphism
\[E_1^{-i,*}(F(A,n)\otimes \FF_\ell)\to E_1^{-i,*}(\tilde{C}^{*}(K(A,n),\FF_\ell))\]
for $i\leq r+s$, and an isomorphism
\[E_1^{-i,*}(F(A,n)\otimes \QQ_\ell)\to E_1^{-i,*}(\tilde{C}^{*}(K(A,n),\QQ_\ell))\]
for any value of $i$. The rational claim is trivial since both groups are actually zero. 

In order to deal with the $\FF_\ell$-case, we consider the commutative triangle of filtered complexes
\[\xymatrix{
((\RR\Hom(A[n],\ZZ_\ell),\tau)\ar[rd]\ar[r]&F(A,n)\ar[d]\\
 &(\tilde{C}^*(K(A,n),\ZZ_\ell),\tau)
}
\]
We can tensor it with $\FF_\ell$ and consider the resulting triangle of filtered complexes of $\FF_\ell$-modules
\[\xymatrix{
((\RR\Hom(A[n],\FF_\ell),\tau_\ell)\ar[rd]\ar[r]&(F(A,n)\otimes\FF_\ell,W)\ar[d]\\
 &(\tilde{C}^*(K(A,n),\FF_\ell),\tau_\ell)
}
\]
where $\tau_\ell$ is our notation for the filtration induced by the canonical filtration after tensoring with $\FF_\ell$ (beware that this is no longer necessarily the canonical filtration). We have $\RR\Hom(A[n],\ZZ_\ell)\simeq A[-(n+1)]$ and the filtration on $A[-(n+1)]$ is the canonical filtration (i.e. the generator is in filtration $n+1$). After tensoring with $\FF_\ell$, we get
\[\RR\Hom(A[n],\FF_\ell)\simeq \RR\Hom(A,\FF_\ell)[-n]\simeq \FF_\ell\iota_n\oplus\FF_\ell\beta\iota_n\]
where both generators are in filtration level $n+1$. It follows that 
\[E_1^{**}(\RR\Hom(A[n],\FF_\ell),\tau_\ell)\cong\FF_\ell\iota_n\oplus\FF_\ell\beta\iota_n\]
with $\iota_n$ in bidegree $(-(n+1),2n+1)$ and $\beta\iota_n$ in bidegree $(-(n+1),2n+2)$. 

On the other, hand $E_1(\tilde{C}^*(K(A,n),\FF_\ell),\tau_\ell)\cong H^*(K(A,n),\FF_\ell)$ and since the diagonal map in the triangle sends $\iota_n$ to $\iota_n$ and $\beta\iota_n$ to $\beta\iota_n$ (see notation of Theorem \ref{theo : cohomology of EM spaces}) we have 
\[E_1^{*,*}(\tilde{C}^*(K(A,n),\FF_\ell),\tau_\ell)\cong 
H^*(K(A,n),\FF_\ell).\]
with $\iota_n$ in bidegree $(-(n+1),2n+1)$ and $\beta\iota_n$ in bidegree $(-(n+1),2n+2)$. The bidegree of the other generators will not concern us here. 

Finally let us denote by $B$ the filtered complex $\FF_\ell\iota_n\oplus\FF_\ell\beta\iota_n$ with generators in cohomological degree $n$ and $n+1$ respectively and both in filtration level $(n+1)$. The filtered algebra $F(A,n)\otimes\FF_\ell$ is given by
\[\bigoplus_{k\geq 1}\mathrm{Sym}^k_{\FF_\ell}(B).\]
It follows that, if $i$ is not a multiple of $n+1$
\[E_1^{-i,*}(F(A,n)\otimes\FF_\ell))\cong 0\]
and if $i=-k(n+1)$ we have
\[E_1^{-k(n+1),*+k(n+1)}(F(A,n)\otimes\FF_\ell))\cong H^{*}(\mathrm{Sym}^k_{\FF_\ell}(B))\]
but we know that $i\leq r+2\ell-3$ and that $n\geq r\geq 2$, thus we have 
\[k\leq \frac{r+2\ell-3}{n+1}=\frac{r}{n+1}+\frac{2\ell-3}{n+1}< 1+\frac{2\ell-3}{2}<\ell\]
Therefore, in this range, taking $\Sigma_k$ orbits is exact and we have
\[E_1^{-k(n+1),*+k(n+1)}(F(A,n)\otimes\FF_\ell))\cong \mathrm{Sym}^k_{\FF_\ell}(H^*(B).\]
It follows that, in the range $i\leq r+s$, we have an isomorphism
\[E_1^{*,*}(F(A,n)\otimes\FF_\ell))\cong\FF_\ell[\iota_n,\beta\iota_n]\]
with $\iota_n$ in bidegree $(-(n+1),2n+1)$ and $\beta\iota_n$ in bidegree $(-(n+1),2n+2)$. Moreover the map induced by the vertical map in the above triangle sends $\iota_n$ to $\iota_n$ and $\beta\iota_n$ to $\beta\iota_n$ since this is the case for both the diagonal map and the horizontal map. It follows that this vertical map does indeed induce an isomorphism on $E_1$ in the desired range.
\end{proof}

Let $\cat{EM}_r\st$ denote the full subcategory of $\cat{S}_r\st$ on objects that split as products of Eilenberg--MacLane spaces. Denote by $\cat{CAlg}_r\st$ the $\infty$-category of $(r-1)$-connected non-unital commutative algebras up to $(r,s)$-tame quasi-isomorphisms. Recall that we are using an $\infty$-categorical terminology here so what we call a commutative algebra is typically modelled by an $E_\infty$-algebra at the level of cochain complexes.

\begin{proposition}\label{coro : Mandell EM}
Let $r\geq 1$. The functor $X\mapsto (\tilde{C}^*(X,\ZZ_\ell),\tau)$ from $\cat{EM}_r\st$ to $\cat{CAlg}_r\spt$ is a fully faithful symmetric monoidal functor.
\end{proposition}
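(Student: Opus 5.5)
The plan is to compute both mapping spaces explicitly and compare them, using Lemma \ref{lemm : case of EM spaces} to replace cochains on Eilenberg--MacLane spaces by free filtered algebras.

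\textbf{Symmetric monoidality.} First I check that the functor is symmetric monoidal. Products in $\cat{EM}_r\st$ are products of spaces. For finite type spaces the Künneth map $\tilde{C}^*(X)\otimes\tilde{C}^*(Y)\to\tilde{C}^*(X\times Y)$ is a quasi-isomorphism, here taken in the appropriate reduced/smash sense for the non-unital setting. I must check that it is a filtered quasi-isomorphism for $\tau$. On associated graded pieces this is the Künneth formula combined with the identification $Gr^\tau\simeq H^*$ after tensoring with $\FF_\ell$ and $\QQ_\ell$. I would verify this through Proposition \ref{prop : third definition}, being careful about Tor terms over $\ZZ_\ell$; these only shift filtration by one, which the extra tame index $s+1$ absorbs.

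\textbf{Mapping spaces.} For full faithfulness, write $X=\prod_i K(A_i,n_i)$. By the monoidality just established, it suffices to treat targets that are a single $K(A,n)$, since products in spaces correspond to coproducts, i.e.\ tensor products, of commutative algebras. The key step is to replace $(\tilde{C}^*(K(A,n)),\tau)$ by $F(A,n)$, using Lemma \ref{lemm : case of EM spaces}: the map $f$ is an $(r,s+1)$-tame quasi-isomorphism, hence an equivalence in $\cat{CAlg}_r\spt$. Because $F(A,n)$ is free on $(\RR\Hom(A[n],\ZZ_\ell),\tau)$, we obtain
\[
\Map_{\cat{CAlg}\spt}\bigl(F(A,n),(\tilde{C}^*(Y),\tau)\bigr)\simeq \Map_{\cat{FCh}_r\spt}\bigl((\RR\Hom(A[n],\ZZ_\ell),\tau),(\tilde{C}^*(Y),\tau)\bigr).
\]
This holds provided the localization is compatible with free algebras, which follows from the tamification $T_{(r,s+1)}$ of Remark \ref{rem: tamification} being symmetric monoidal. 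The source is concentrated in a single filtration weight, $n$ or $n+1$. So this mapping space computes maps into $T(\tau_{n+1}\tilde{C}^*(Y))$ from $A^\vee[-n]$. If $n+1< r+s+1$ this is integral cohomology, giving $H^n(Y;A)$ in $\pi_0$ and the expected higher homotopy. Otherwise it is the rationalized version, matching $\Map(Y_\QQ,K(A,n)_\QQ)$ in the tame category. This agrees with $\Map_{\cat{S}\st_r}(Y,K(A,n))$, computed from the pullback definition as the pullback of the truncated integral and rational mapping spaces. Matching the pieces with $n\le r+s-1$ against those with $n\ge r+s$ is bookkeeping.

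\textbf{Main obstacle.} The difficult step is the identification of mapping spaces in the localized category $\cat{FCh}\st_r$ with the explicit pullback. It requires checking that maps into $T_{(r,s+1)}C$ out of a complex concentrated in weight $w$ are the pullback of maps into $W_w C$ and $W_wC\otimes\QQ_\ell$ over the rational side. It also requires that the index shift $s$ versus $s+1$ lines up exactly with the truncation $t_{\le r+s-1}$, given that a class of cohomological degree $n$ sits in filtration $n+1$. I would verify this first on $Y=K(B,m)$, then extend to products.
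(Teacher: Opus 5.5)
Your route is the paper's. You reduce to a single Eilenberg--MacLane target by monoidality, replace $(\tilde C^*(K(A,n)),\tau)$ by $F(A,n)$ via Lemma~\ref{lemm : case of EM spaces}, pass to filtered complexes by freeness, and compute maps into the tamified target $T(\tilde C^*(X),\tau)$.

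The ``main obstacle'' you flag is lighter than you expect. The generator sits in weight $n$ or $n+1\le r+s$, where $T(\tilde C^*(X),\tau)$ is still integral. So the paper only uses that $\tau_w$ is right adjoint to the inclusion of complexes with cohomology in degrees $\le w$, then dualizes to get $\Map(X,K(A,n))$. This is already the tame mapping space, because $K(A,n)$ is $(r+s-1)$-truncated.

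The genuine problem is the one place you go beyond the paper: your justification of monoidality. Passing from $s$ to $s+1$ makes tame quasi-isomorphisms \emph{harder} to achieve, since integral isomorphisms are then demanded in more weights. So the extra index cannot absorb anything. In $(C,\tau)\otimes(D,\tau)$ a class of $\mathrm{Tor}(H^i(C),H^j(D))$, which has degree $i+j-1$, lies in weight $i+j$. In $(C\otimes D,\tau)$ the same class lies in weight $i+j-1$. Hence the K\"unneth comparison fails to be an isomorphism on $H^*(W_{i+j-1})$.

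Concretely, take $r=2$, $s=3$, $\ell\geq 5$ and $X=Y=K(\ZZ/\ell,2)$. The class $\mathrm{Tor}(H^3(X),H^3(Y))\cong\ZZ/\ell$ in $H^5(X\times Y;\ZZ_\ell)$ lies in weight $5<r+s+1$ for $\tau$. It is absent from $H^*(W_5)$ of the coproduct, because there the cross terms only start in weight $6$. So the K\"unneth map is not a $(2,4)$-tame quasi-isomorphism, and the reduction to a single target is unjustified.

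What is true is this. Integral torsion of an $(r-1)$-connected space starts in degree $r+1$, so the discrepancy lives in weights $\geq 2r+1$. The K\"unneth map is therefore an $(r,s+1)$-tame quasi-isomorphism when $s\leq r$. It is connectivity, not the index shift, that makes the step work.

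The paper only says that monoidality ``follows from the K\"unneth formula'', so you inherited this gap rather than introduced it. Your proposed fix does not close it, though.

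The same weight drop also occurs inside a single factor, which affects the lemma you rely on. The integral class in $\tilde H^5(K(\ZZ/\ell,2);\ZZ_\ell)\cong\ZZ/\ell$, which reduces to $\iota_2\beta\iota_2$, lies in weight $5$ for $\tau$. In $F(\ZZ/\ell,2)$ the corresponding class appears only in weight $6$, inside $\Sym^2$ of the generator. So for torsion $A$ with $n$ even and $2n+1\le r+s$, for instance $n=r=2$ and $s=3$, that lemma's torsion case also needs a different argument or a smaller range.
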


\begin{proof}
The fact that it is symmetric monoidal follows from the K\"unneth formula. Consider two products of Eilenberg--MacLane spaces $X$ and $Y$. We wish to show that the map
\[f:\Map_{\cat{S}_r}(X,Y)\to \Map_{\cat{CAlg}_r\spt}(\tilde{C}^*(Y),\tilde{C}^*(X)) \]
is a weak equivalence. Without loss of generality, we can assume that $Y$ is $K(\ZZ_\ell,n)$ or $K(\ZZ/\ell^k,n)$ with $n<r+s$ or $K(\QQ_\ell,n)$ with $n\geq r+s$. Indeed, both the source and target preserve products in the $Y$ variable. We do the first two cases as the last one is more classical. Let $A$ denote $\ZZ_\ell$ or $\ZZ/\ell^k$. By the previous lemma, the target reduces to
\[\Map_{\cat{FCh}_r}((A^\vee[-n],\tau),T(\tilde{C}^*(X),\tau))\]
where $A^\vee$ is our notation for the derived dual of $A$ and $T$ is the $(r,s+1)$-tamification functor (see Remark \ref{rem: tamification}).
Now, we distinguish two cases. If $A=\ZZ_\ell$, then $(A^\vee[-n],\tau)\simeq A(n)[-n]$ and if $A=\ZZ/\ell^k$, then $(A^\vee[-n],\tau)\simeq A(n+1)[-n-1]$ (in either case, the number between round brackets denotes the filtration level). Since $n\leq r+s-1$, we know that $n$ is smaller than the first weight in which the target has become rationalized. It follows that this mapping space reduces to
\[\Map_{\cat{FCh}_r}(A(n)[-n],(\tilde{C}^*(X),\tau))\]
in the first case or
\[\Map_{\cat{FCh}_r}(A(n+1)[-n-1],(\tilde{C}^*(X),\tau))\]
in the second case. These mapping spaces reduce further to 
\[\Map_{\cat{Ch}}(A[-n],\tau_n \tilde{C}^*(X))\]
or
\[\Map_{\cat{Ch}}(A[-n-1],\tau_{n+1}\tilde{C}^*(X)).\]
Finally, we can reduce to 
\[\Map_{\cat{Ch}}(A[-n],\tilde{C}^*(X))\]
or
\[\Map_{\cat{Ch}}(A[-n-1],\tilde{C}^*(X)).\]
using the fact that $\tau_n$ is right adjoint to the inclusion of $(-n)$-connective chain complexes into chain complexes. Now applying the functor $\RR\Hom(-,\ZZ_\ell)$ (which is an equivalence by our finite type condition), this becomes
\[\Map_{\cat{Ch}}(\tilde{C}_*(X),A[n])\]
in both cases. This last mapping space is weakly equivalent to $\Map_{\cat{S}_r}(X,K(A,n))$ and following through the chain of weak equivalences above, one checks easily that the map $f$ is homotopic to the identity map.
\end{proof}

We can finally prove our Mandell-type Theorem in tame homotopy theory. 

\begin{theorem}\label{theo : tame mandell}
Let $r\geq 3$ and $s\leq 2\ell-3$ or $r= 2$ and $s\leq \ell-2$. Let $X$ and $Y$ be two $(r-1)$-connected $\ell$-complete based spaces with finite type cohomology. If there exists an $(r,s+\lfloor\frac{s+1}{r-1}\rfloor)$-tame quasi-isomorphism of $E_\infty$-algebras
\[(\tilde{C}^*(X,\ZZ_\ell),\tau)\simeq (\tilde{C}^*(Y,\ZZ_\ell),\tau)\]
then $X$ and $Y$ have the same $(r,s)$-tame homotopy type.

\end{theorem}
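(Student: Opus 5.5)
We plan to pass to loop spaces, where Proposition \ref{prop : product of EM} and Proposition \ref{coro : Mandell EM} apply, and then deloop. Set $s_1:=s+\lfloor\frac{s+1}{r-1}\rfloor$. The exponent is chosen so that, applying Proposition \ref{prop : bar construction} with $r-1$ in place of $r$, an $(r,s_1)$-tame quasi-isomorphism $(\tilde C^*(X),\tau)\simeq(\tilde C^*(Y),\tau)$ yields an $(r-1,s')$-tame quasi-isomorphism of bar constructions with $s'=s_1-\lfloor\frac{s_1+1}{r}\rfloor$. We expect $s'\geq s+1$ for $s$ in the stated range, at least after adjusting indices; checking this elementary inequality is routine. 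The bar construction of a commutative algebra is again commutative, and the map is one of $E_\infty$-algebras. By the Eilenberg--Moore theorem, which applies since $X,Y$ are simply connected ($r\geq 2$) and $\ell$-complete of finite type, $\mathrm{Bar}(\tilde C^*(X,\ZZ_\ell))\simeq \tilde C^*(\Omega X,\ZZ_\ell)$ as $E_\infty$-algebras. We therefore obtain an $(r-1,s+1)$-tame quasi-isomorphism of $E_\infty$-algebras $(\tilde C^*(\Omega X),\tau)\simeq(\tilde C^*(\Omega Y),\tau)$.

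By Proposition \ref{prop : product of EM}, which uses exactly the hypotheses $s\leq 2\ell-3$ for $r\ge3$ and $s\leq\ell-2$ for $r=2$, the tamifications $(\Omega X)\st$ and $(\Omega Y)\st$ (as $(r-1,s)$-tame types) lie in $\cat{EM}_{r-1}\st$. Proposition \ref{coro : Mandell EM} then shows that $\tilde C^*$ is fully faithful from $\cat{EM}_{r-1}\st$ to $\cat{CAlg}_{r-1}\spt$. The $(r-1,s+1)$-tame quasi-isomorphism above is an equivalence in that target. Hence $\Omega X$ and $\Omega Y$ are $(r-1,s)$-tame equivalent. Moreover, the tame equivalence can be lifted to one compatible with the extra structure coming from the bar construction, namely the coalgebra/Hopf structure encoding the loop multiplication.

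To deloop, we use the simplicial (cobar/Bar) structure. The space $X$ is recovered as the realization $|B_\bullet\Omega X|$ with $B_n=(\Omega X)^n$. Dually, $\tilde C^*(X)$ is the totalization of the cosimplicial $E_\infty$-algebra $[n]\mapsto \tilde C^*((\Omega X)^n)$, and this totalization is also correct by Eilenberg--Moore convergence. Full faithfulness and symmetric monoidality in Proposition \ref{coro : Mandell EM} transport the whole cosimplicial diagram of algebras to a simplicial diagram in $\cat{EM}_{r-1}\st$. This gives a tame equivalence of group objects $\Omega X\simeq \Omega Y$ in $\cat{S}_{r-1}\st$, compatible with all the face maps. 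Taking classifying spaces, that is, geometric realization commuting suitably with truncation and rationalization in connected degrees, yields an $(r,s)$-tame equivalence $X\st\simeq Y\st$.

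We expect the main obstacle to be this last delooping step. First, we must check that the tame localizations commute with realizing the bar construction. The $k$-th skeleton contributes in connectivity about $k(r-1)$, which is presumably the source of the exchange rate $\lfloor\frac{s+1}{r-1}\rfloor$. Second, the $E_\infty$-map between cochains must produce a map of the full simplicial diagrams and not just of $\Omega X$. Our first attempt would be to apply the monoidal fully faithful functor levelwise to the $\Delta^{\op}$-diagram, using that its essential image is closed under products.
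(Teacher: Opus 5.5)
\textbf{The gap: the inequality you call routine is false.} Your route is the paper's route: bar construction, Eilenberg--Moore, Proposition \ref{prop : product of EM}, Proposition \ref{coro : Mandell EM}, then deloop. But the step you leave as an exercise does not hold. Put $q=\lfloor\frac{s+1}{r-1}\rfloor$ and $s_1=s+q$. From $q(r-1)\le s+1<(q+1)(r-1)$ you get $qr\le s_1+1<(q+1)r$. Hence $\lfloor\frac{s_1+1}{r}\rfloor=q$ and $s'=s_1-\lfloor\frac{s_1+1}{r}\rfloor=s$ exactly. The exchange rate was designed to make this an equality, so you never get $s'\geq s+1$.

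Consequently Proposition \ref{prop : bar construction} (applied with $r$ itself, not $r-1$) gives only an $(r-1,s)$-tame quasi-isomorphism $(\tilde C^*(\Omega X),\tau)\simeq(\tilde C^*(\Omega Y),\tau)$. Proposition \ref{coro : Mandell EM}, however, is full faithfulness into $\cat{CAlg}_{r-1}^{(s+1)\textrm{-tame}}$. From an $(r-1,s)$-tame quasi-isomorphism it yields only an $(r-1,s-1)$-tame equivalence of loop spaces.

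\textbf{Why no reindexing helps.} The missing information is genuinely lost. Take a factor $K(\ZZ/\ell^k,n)$ of $\Omega X$ with $n=r+s-2$, the top integral degree of its $(r-1,s)$-tame type. Its first nonzero $\ZZ_\ell$-cohomology is $\tilde H^{n+1}=\ZZ/\ell^k$, which sits in canonical filtration $n+1=(r-1)+s$. An $(r-1,s)$-tame quasi-isomorphism tests only filtrations $<(r-1)+s$ integrally, and the factor is rationally invisible.

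\textbf{The paper's proof has the same problem, and the statement fails as written.} The paper follows your route and feeds this $(r-1,s)$-tame quasi-isomorphism of comonoids directly into Proposition \ref{coro : Mandell EM}, passing over the same off-by-one. The statement fails for $r=4$, $s=1$, which is allowed for every $\ell$:
\begin{itemize}
\item Here $\lfloor\frac{s+1}{r-1}\rfloor=0$, so the hypothesis is a $(4,1)$-tame quasi-isomorphism.
\item Take $X=K(\ZZ/\ell,4)$ and $Y=*$. The basepoint induces a $(4,1)$-tame quasi-isomorphism $(\tilde C^*(X,\ZZ_\ell),\tau)\to 0$, because $\tilde H^{\le 4}(X;\ZZ_\ell)=0$ and $\tilde H^*(X;\QQ)=0$.
\item Yet $\pi_4X=\ZZ/\ell\neq 0$, so $X$ and $Y$ are not $(4,1)$-tame equivalent.
\end{itemize}
The route needs a hypothesis strong enough that the bar construction lands in $(r-1,s+1)$. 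One sufficient choice is an $(r,s+1+\lfloor\frac{s+2}{r-1}\rfloor)$-tame quasi-isomorphism.

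\textbf{The delooping step is formal.} Your last paragraphs worry about the wrong step. The bar construction of an augmented $E_\infty$-algebra is functorially a comonoid in $E_\infty$-algebras. So the induced equivalence is automatically one of comonoids, and no separate lifting of the cosimplicial diagram is needed. Full faithfulness and monoidality turn it into an equivalence of group objects $\Omega X\simeq\Omega Y$ in the tame category. Delooping there is formal, since truncation and rationalization commute with $\Omega$ and $B$ on connected objects. The exchange rate $\lfloor\frac{s+1}{r-1}\rfloor$ comes entirely from Proposition \ref{prop : bar construction}, not from the skeleta of $B_\bullet\Omega X$.
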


\begin{proof}
First, we observe that the solution of the equation in $x$
\[s=x-\lfloor\frac{x+1}{r}\rfloor\]
is $x=s+\lfloor\frac{s+1}{r-1}\rfloor$. Thus, using Proposition \ref{prop : bar construction}, the $(r,s+\lfloor\frac{s+1}{r-1}\rfloor)$-tame quasi-isomorphism $\tilde{C}^*(X)\simeq \tilde{C}^*(Y)$ induces a $(r-1,s)$-tame quasi-isomorphism of comonoids in $\cat{CAlg}_{r-1}$
\[\mathrm{Bar}(\tilde{C}^*(X))\simeq \mathrm{Bar}(\tilde{C}^*(X)).\]
Since $\tilde{C}^*(X)$ and $\tilde{C}^*(Y)$ are at least simply-connected, it follows that
\[\mathrm{Bar}(\tilde{C}^*(X))\simeq \tilde{C}^*(\Omega X)\;\mathrm{and}\;\mathrm{Bar}(\tilde{C}^*(Y))\simeq \tilde{C}^*(\Omega Y)\]
So we deduce that $\tilde{C}^*(\Omega X)$ and $\tilde{C}^*(\Omega Y)$ are $(r-1,s)$-tamely quasi-isomorphic as comonoids in commutative algebras. Therefore, by Corollary \ref{coro : Mandell EM} and Proposition \ref{prop : product of EM}, we know that $\Omega X$ and $\Omega Y$ are $(r-1,s)$-tame weakly equivalent as loop spaces. It follows that $X$ and $Y$ are $(r,s)$-tame weakly equivalent.
\end{proof}

\begin{remark}
Note that, in the above theorem, the tame parameter on the cochain side is higher than the tame parameter on the space side. We think of this as an ``exchange rate'' that needs to be paid when going from the algebra to the space. This exchange rate depends on the connectivity of the space.
\end{remark}

We isolate the simply-connected case ($r=2$) in the following corollary as it gives the least awkward bound.

\begin{corollary}\label{coro : tame mandell simply-connected}
Let $s\leq \ell-2$. Let $X$ and $Y$ be two simply-connected $\ell$-complete based spaces with finite type cohomology. If there exists an $(2,2s+1)$-tame quasi-isomorphism
\[(\tilde{C}^*(X,\ZZ_\ell),\tau)\simeq (\tilde{C}^*(Y,\ZZ_\ell),\tau)\]
then $X$ and $Y$ have the same $(2,s)$-tame homotopy type.
\end{corollary}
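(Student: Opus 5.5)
This is the specialization $r=2$ of Theorem \ref{theo : tame mandell}, so the plan is to check that the hypotheses match and that the tame parameter on the algebra side agrees. First, the constraints. In Theorem \ref{theo : tame mandell} the case $r=2$ requires $s\leq \ell-2$, which is exactly the hypothesis here. The spaces are required to be $(r-1)$-connected, that is simply-connected, $\ell$-complete, based and with finite type cohomology, and this is again what is assumed. So no extra input is needed there.

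Second, the exchange rate. For $r=2$ we have $r-1=1$, so
\[s+\Big\lfloor\frac{s+1}{r-1}\Big\rfloor = s+(s+1)=2s+1.\]
Thus an $(2,2s+1)$-tame quasi-isomorphism of $E_\infty$-algebras $(\tilde{C}^*(X,\ZZ_\ell),\tau)\simeq(\tilde{C}^*(Y,\ZZ_\ell),\tau)$ is precisely the hypothesis of Theorem \ref{theo : tame mandell}. It then yields that $X$ and $Y$ have the same $(2,s)$-tame homotopy type.

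For the reader's benefit I would also trace the mechanism in this case. Proposition \ref{prop : bar construction} with $r=2$ lowers the parameter $2s+1$ to $2s+1-\lfloor\frac{2s+2}{2}\rfloor=s$ and the connectivity to $1$. This gives a $(1,s)$-tame quasi-isomorphism $\tilde{C}^*(\Omega X)\simeq\tilde{C}^*(\Omega Y)$ of comonoids in commutative algebras. Proposition \ref{prop : product of EM} applies for $r=2$ because $s\leq\ell-2$, and shows that $\Omega X$ and $\Omega Y$ are tamely products of Eilenberg--MacLane spaces. Proposition \ref{coro : Mandell EM} then upgrades the algebraic equivalence to an equivalence of loop spaces, and delooping finishes the argument.

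There is no real obstacle. The only point to watch is that the floor computation and the bound $s\leq \ell-2$ (rather than $2\ell-3$) are the ones dictated by the $r=2$ branch of Theorem \ref{theo : tame mandell}.
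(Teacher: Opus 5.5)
Your argument is correct and is exactly how the paper obtains the corollary: it is the $r=2$ case of Theorem~\ref{theo : tame mandell}, where $s+\lfloor\frac{s+1}{r-1}\rfloor=2s+1$ and the hypothesis $s\leq \ell-2$ is the $r=2$ branch.

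One caveat concerns your optional trace, which mirrors the paper's proof of the theorem. Proposition~\ref{coro : Mandell EM} embeds $\cat{EM}_r\st$ into $\cat{CAlg}_r\spt$, so a $(1,s)$-tame equivalence of loop spaces requires a $(1,s+1)$-tame quasi-isomorphism of bar constructions, not the $(1,s)$ you quote from Proposition~\ref{prop : bar construction}. For $r=2$ this sharper bound does hold: $W_{2k}\mathrm{Bar}\to\mathrm{Bar}$ is an isomorphism on $H^{\leq k}$, and $W_{2k}$ is integrally controlled whenever $2k<2s+3$, i.e.\ $k\leq s+1$.
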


\begin{remark}
 As we will see in Example \ref{Example : projective space}, the statement in the above corollary gives an optimal formality result for the projective space.
\end{remark}

\begin{example}
Consider the space $X=BU(n)$. Then $H^*(X,\ZZ_\ell)\cong \ZZ_\ell[c_1,\ldots,c_n]$. Let $Y=\prod_{i=1}^n K(\ZZ_\ell,2i)$. Let $A=\mathrm{Sym}(\bigoplus_{i=1}^n\ZZ_\ell[-2i],\tau)$ where $\mathrm{Sym}$ denotes the free non-unital $E_\infty$-algebra monad on filtered complexes. Then proceeding as in the proof of Lemma \ref{lemm : case of EM spaces}, we can construct a $(2,2\ell-3)$-tame quasi-isomorphism from $A$ to $\tilde{C}^*(X,\ZZ_\ell)$ and from $A$ to $\tilde{C}^*(Y,\ZZ_\ell)$. It follows from the previous corollary that $X$ and $Y$ have the same $(2,\ell-2)$-tame homotopy type. Similarly, for $n\geq 3$ and $\ell\geq 3$, it can be shown that the space $BSO(n)$ is $(4,2\ell-3-\lfloor\frac{2\ell-1}{4}\rfloor)$-tame weakly equivalent to a product of Eilenberg--MacLane spaces. This reasoning generalizes to any space with polynomial cohomology.
\end{example}

\subsection{Tame formality}
In this section, we introduce the notion of tame formality for $E_\infty$-algebras.

\begin{definition}
An $(r-1)$-connected filtered non-unital algebra $(A,W)$ over $\ZZ_\ell$ is said to be \textit{$(r,s)$-tame homotopically split} if
there is a string of $(r,s)$-tame quasi-isomorphisms of algebras from
$(A,W)$ to $(Gr^W_*A,W)$, with
\[W_iGr^W_*A^n:=\bigoplus_{j\leq i}Gr_j^WA^n.\]
\end{definition}

\begin{remark}
Let $A$ be an $(r-1)$-connected algebra over $\ZZ_\ell$. Then $(A,\tau)$ is $(r,s)$-tame homotopically split if and only if it is $(r,s)$-tame equivalent to $(H^*(A),\tau)$, where $\tau_pH^n(A)=H^n(A)$ when $p\geq n$ and $0$ otherwise.
\end{remark}

\begin{definition}
 An $(r-1)$-connected algebra $A$ over $\ZZ_\ell$ is called \textit{$(r,s)$-tamely formal} if the filtered algebra $(A,\tau)$ is $(r,s)$-homotopically split.
\end{definition}

We record two consequences of tame formality for spaces  in the following proposition.

\begin{proposition}\label{prop : tame formality}
Let $r\geq 3$ and $s\leq 2\ell-3$ or $r= 2$ and $s\leq \ell-2$. Let $X$ be an $(r-1)$-connected based space with finite type integral cohomology, and assume that $\widetilde C^*(X,\ZZ_\ell)$ is $(r,s+\lfloor\frac{s+1}{r-1}\rfloor)$-tamely formal as an $E_\infty$-algebra. Then:
\begin{enumerate}
 \item The $(r,s)$-tame homotopy type of $X$ is a formal consequence of the cohomology ring $H^*(X,\ZZ_\ell)$.
\item If $X$ has the homotopy type of an $N$-dimensional CW complex, with $N\leq s$,
then the $\ell$-adic homotopy type of $X$ is a formal consequence of the cohomology ring $H^*(X,\ZZ_\ell)$.
\end{enumerate}
\end{proposition}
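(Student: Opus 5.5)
For part (1), the plan is to reduce to the tame Mandell Theorem \ref{theo : tame mandell}. Put $s_1:=s+\lfloor\frac{s+1}{r-1}\rfloor$. By assumption there is a string of $(r,s_1)$-tame quasi-isomorphisms of $E_\infty$-algebras
\[(\tilde{C}^*(X,\ZZ_\ell),\tau)\simeq (H^*(X,\ZZ_\ell),\tau).\]
The target depends only on the graded ring $H^*(X,\ZZ_\ell)$, viewed as a strictly commutative algebra, and so lies in $\cat{CAlg}_r^{s_1\textrm{-tame}}$. Now let $Y$ be another space satisfying the same hypotheses, with an isomorphism of rings $H^*(Y,\ZZ_\ell)\cong H^*(X,\ZZ_\ell)$. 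Composing the two strings of equivalences gives an $(r,s_1)$-tame quasi-isomorphism $(\tilde{C}^*(X),\tau)\simeq(\tilde{C}^*(Y),\tau)$.

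One preliminary point: $X$ is not assumed $\ell$-complete, so we pass to $X^\wedge_\ell$. Since $X$ has finite type integral cohomology and $r\geq 2$, the completion map induces a quasi-isomorphism on $\ZZ_\ell$-cochains. So nothing changes on the algebra side, and Theorem \ref{theo : tame mandell} gives that $X^\wedge_\ell$ and $Y^\wedge_\ell$ have the same $(r,s)$-tame homotopy type. To phrase this as ``formal consequence'', define $X^{s\textrm{-tame}}$ to be the $(r,s)$-tame homotopy type of any space whose tamely filtered cochains are equivalent to $(H,\tau)$. The theorem makes this well defined: it depends only on the ring $H$.

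For part (2), assume $X\simeq$ an $N$-dimensional CW complex with $N\leq s$; we may take $X$ simply-connected since $r\geq 2$. The $(r,s)$-tame type records $t_{\leq r+s-1}X$. Because $r+s-1\geq s+1> N$, Lemma \ref{CWtruncate} recovers $X$ from $t_{\leq N}X$, and hence from $t_{\leq r+s-1}X$.

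Two steps need checking here.

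1. \emph{The rival space.} In part (1), $Y$ is an arbitrary space with the same ring, not necessarily of dimension $\leq N$. I would first restrict the comparison to spaces of dimension $\leq N$. If needed, I would instead argue that $H^*(Y,\ZZ_\ell)$ vanishes above $N$, so the $\ell$-completion of $Y$ admits a minimal $\ell$-local cell structure of dimension $\leq N$ (for example, $N+1$ to allow for torsion).

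2. \emph{Completion versus truncation.} We must make sure $\ell$-completion does not interfere with the truncations. For simply-connected spaces of finite type, completion acts on homotopy groups by $-\otimes\ZZ_\ell$, so it commutes with Postnikov truncation. Lemma \ref{CWtruncate} then applies in the $\ell$-complete setting, using $\ell$-complete cell models, and shows that the $\ell$-adic homotopy type is determined by the $(r,s)$-tame type, hence by the ring.

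The main obstacle is the dimension bookkeeping. Truncating at $r+s-1$ exceeds $N$ by enough to cover the extra dimension introduced by an $\ell$-complete cell model with torsion; this uses $r\geq 2$. Everything else is a direct combination of Theorem \ref{theo : tame mandell} and Lemma \ref{CWtruncate}.
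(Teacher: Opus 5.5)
Your proposal is correct and follows the paper's own argument: part (1) is Theorem~\ref{theo : tame mandell} applied to $X$ and any other tamely formal space with isomorphic cohomology ring (your observation that $\ell$-completion does not change $\ZZ_\ell$-cochains supplies a detail the paper leaves implicit), and part (2) combines (1) with Lemma~\ref{CWtruncate}, using $r+s-1\geq N$ and restricting the rival space to dimension $\leq N$ as in Example~\ref{Example : projective space}. Your hedge about needing dimension $N+1$ ``to allow for torsion'' is unnecessary, because vanishing of $H^{>N}(Y,\ZZ_\ell)$ already kills the obstruction and difference groups $H^{k}(Y;\pi)$ for $k>N$ and any finitely generated $\ZZ_\ell$-module $\pi$; it is harmless in any case, since $r+s-1\geq N+1$.
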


\begin{proof}
The first statement follows directly from Theorem \ref{theo : tame mandell}.
The second statement follows from the first together with  Lemma \ref{CWtruncate}.
\end{proof}

\subsection{Truncated algebras}

From the tame homotopy type of a space we can produced a truncated algebra and there is a natural notion of truncated formality which we introduce and study below.

Informally, an $n$-truncated algebra is the structure that exists on the truncation in weight $\leq n$ of a filtered algebra. Formally it is given by the following definition.

\begin{definition}
An \textit{$n$-truncated algebra} over a commutative ring $R$ is a diagram of cochain complexes over $R$
\[A_0\to A_1 \to\ldots\to A_n\]
with multiplication maps
$A_i\otimes A_j\to A_{i+j}$
defined for any $i+j\leq n$,
and a unit
$R\to A_0$
satisfying a naturality, associativity and unitality constraints. The naturality states that for all $ 0\leq i<i'\leq n$ and all $0\leq j\leq j'\leq n$, the following diagram commutes
\[\xymatrix{
A_i\otimes A_j\ar[r]\ar[d]&A_{i'}\otimes A_{j'}\ar[d]\\
A_{i+j}\ar[r]&A_{i+j'}
}
\]
where the horizontal maps are induced by the structure maps and the vertical maps are the multiplication maps. Unitality and associativity are straightforward.
\end{definition}

\begin{remark}
We could have made a similar definition for other types of algebras (commutative, $E_\infty$, Lie, etc.). In this section we restrict to associative and non-unital associative algebras. The definition of non-unital associative algebras is as above but removing any mention of the unit.
\end{remark}

For any $n$, there is a functor from filtered algebras to $n$-truncated algebras, sending $(A,W)$ to $W_nA$.
In particular, given an algebra $A$, we obtain a canonical $n$-truncated algebra $\tau_{n}A$ by considering the $n$-truncation of the canonical filtration.
We next characterize the homotopical information captured by
$\tau_{n}A$.

\begin{definition}
A morphism of cochain complexes $f:A\to B$ is called \textit{$n$-quasi-isomorphism} if the induced morphism in homology $H^i(f):H^i(A)\to H^i(B)$ is an isomorphism for all $i\leq n$.
\end{definition}

Note that if $f:A\to B$ is an $n$-quasi-isomorphism, then $\tau_{n}A\to \tau_{n}B$ is a quasi-isomorphism of $n$-truncated algebras.
The converse does not hold, but we have a partial converse under some connectivity assumptions:

\begin{proposition}\label{prop:truncationshtpy}Let $r\geq 2$. Let $A$ be an $(r-1)$-connected non-unital dg algebra.
 The $n$-truncated algebra $\tau_{n} A$ determines the $N$-homotopy type of $A$, for $N=\left\lceil\frac{n(r-1)}{r}\right\rceil$.
\end{proposition}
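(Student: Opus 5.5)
The plan is to imitate the proof of Proposition \ref{prop : bar construction}, now for the skeletal filtration of a resolution built out of truncated data. Let $A$ be $(r-1)$-connected. The $n$-truncated algebra $\tau_n A$ does not remember $A$ itself, but it remembers everything needed to build $A$ up to degree $N$. Concretely, take a cofibrant (quasi-free) model $\Omega B$ of $A$: the cobar construction on the bar construction. Its word-length pieces only involve tensor products $A^{\otimes k}$ and iterated multiplications. The same construction makes sense for a filtered algebra, and more generally for an $n$-truncated algebra as long as the weights of the words involved stay $\leq n$.

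\textbf{Step 1: define $\mathrm{Bar}_{\le n}$.} Form the semi-simplicial object $[k]\mapsto A^{\otimes k}$ and filter it by the canonical filtration levelwise, as in the proof of Proposition \ref{prop : bar construction}. The weight $p$ part of this filtered semi-simplicial object involves only $\tau_{p_1}A\otimes\cdots\otimes\tau_{p_k}A$ with $p_1+\dots+p_k\le p$. For $p\le n$ this is exactly the data contained in $\tau_n A$: the face maps use only multiplications $A_i\otimes A_j\to A_{i+j}$ with $i+j\le n$. So $\tau_nA$ determines the truncated filtered complex $W_{\le n}\mathrm{Bar}(A)$ functorially, and quasi-isomorphisms of $n$-truncated algebras induce quasi-isomorphisms on each $W_p\mathrm{Bar}(A)$ with $p\le n$.

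\textbf{Step 2: connectivity estimate.} By Corollary \ref{coro : E2 page of W filtration} and the vanishing $E_1^{-i,j}(Sk)=0$ for $j<ri$, the argument of Proposition \ref{prop : bar construction} shows that $W_{\lfloor\frac{r}{r-1}k\rfloor}\mathrm{Bar}(A)\to\mathrm{Bar}(A)$ is an isomorphism in cohomology up to degree $k$. Taking $k=N$ with $\frac{r}{r-1}N\le n$, which is the stated relation between $N$ and $n$, we get that $W_n\mathrm{Bar}(A)$, and hence $\tau_nA$, determines $\mathrm{Bar}(A)$ together with its coalgebra structure up to degree $N$. Here the coproduct respects the weight filtration, so truncations are compatible. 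I will have to check the rounding: the statement's $\lceil n(r-1)/r\rceil$ versus $\lfloor n(r-1)/r\rfloor$. I expect the edge case to work because weights in the bar construction come in steps tied to degrees. If it does not, I will adjust by one using the explicit vanishing range $i>\frac{r}{r-1}k$.

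\textbf{Step 3: recover $A$.} Recover $A$ up to degree $N$ through $A\simeq\Omega\mathrm{Bar}(A)$. This uses that $\mathrm{Bar}(A)$ is $(r-2)$-connected with $r\ge2$, so the cobar construction converges, and that cobar in degrees $\le N$ only uses the coalgebra in degrees $\le N+1$ or so. I expect this step to be the main obstacle: I need to make sure a degree-$N$ truncation of the coalgebra suffices to get the $N$-homotopy type of the algebra. I would handle it the same way, by filtering $\Omega$ by word length and using connectivity, so that words of length $k$ live in degrees $\ge k(r-1)-(k-1)\cdot$(shift). If the degree loss makes the bound fail, the alternative is to build directly a minimal quasi-free model of $A$ whose generators in degrees $\le N$ and differentials only involve products landing in weight $\le n$. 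This is an inductive cell attachment in which each cell of degree $d\le N$ needs products of weight $\le \frac{r}{r-1}d\le n$, again by the same estimate.
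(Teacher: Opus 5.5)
You follow the paper's route. You filter $\mathrm{Bar}(A)=T^c(sA)$ by total weight and truncate at weight $n$. This gives an honest sub-dg-coalgebra $B':=W_n\mathrm{Bar}(A)$ that depends only on $\tau_nA$. You compare it with $B:=\mathrm{Bar}(A)$ using $H^i(Gr^k)=H^k(A^{\otimes(k-i)})=0$ for $i<\frac{k(r-1)}{r}$, and then apply the cobar construction. (For Step 1, use the total-weight filtration spanned by $\tau_{p_1}A\otimes\cdots\otimes\tau_{p_k}A$ with $\sum p_i\le n$, not the levelwise truncation $\tau_p(A^{\otimes k})$: the face maps of the latter are not literally part of the truncated-algebra data.)

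The gap is the degree count. You leave it open, and your expectation about it points the wrong way. The rounding cannot be repaired at the bar level. Take $r=2$, $n=3$, and $A$ a single class $a$ in degree $2$ with zero product. Then $[a|a]\in H^2(\mathrm{Bar}(A))$ has weight $4$, so $W_3\mathrm{Bar}(A)\to\mathrm{Bar}(A)$ is not surjective in degree $2=\lceil 3/2\rceil$. Moreover, if cobar in degrees $\le N$ really needed the coalgebra up to degree $N+1$, you would land strictly below $\lfloor\frac{n(r-1)}{r}\rfloor$. So as written the claimed $N=\lceil\frac{n(r-1)}{r}\rceil$ is not reached, and the minimal-model fallback is only a heuristic.

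The missing observation is that cobar \emph{gains} a degree. To see it, record Step 2 as a vanishing range for the cofiber rather than as ``isomorphism up to degree $N$''. The cofiber $B/B'$ is filtered by the $Gr^k$ with $k>n$, so $H^j(B/B')=0$ for $j<\frac{n(r-1)}{r}$. Since $\Omega(C)=T(s^{-1}C)$ and $s^{-1}$ raises cohomological degree by one, the cofiber of $(s^{-1}B')^{\otimes k}\to(s^{-1}B)^{\otimes k}$ has cohomology vanishing in degrees $<\frac{n(r-1)}{r}+1$. Here the remaining tensor factors live in degrees $\ge r\ge 2$, and for the same reason the word-length filtration is complete. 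Hence $\Omega(B')\to\Omega(B)$ is an isomorphism on $H^j$ for every integer $j<\frac{n(r-1)}{r}+1$, that is, for $j\le\lceil\frac{n(r-1)}{r}\rceil$. This is exactly where the ceiling comes from.

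Put differently, $H^d$ of a cobar construction is controlled by the coalgebra's cohomology up to degree $d-1$ together with injectivity in degree $d$, not up to degree $d+1$. The zig-zag $\Omega(W_n\mathrm{Bar}(A))\to\Omega(\mathrm{Bar}(A))\xrightarrow{\sim}A$, whose source is built from $\tau_nA$ alone, then realizes the $N$-homotopy type.
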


\begin{proof}
First, we apply the bar construction to $A$
\[\mathrm{Bar}(A)=(T^c(sA),d+d')\]
where the differential is the sum of the differential $d$ of $A$ and the bar differential $d'$. Recall that $T^c$ is the cofree coalgebra comonad given by
\[T^c(X)=\bigoplus_{i\geq 1}X^{\otimes i}\]
The differential makes $\mathrm{Bar}(A)$ into a differential graded coalgebra. We can give it a filtration
\[W_1\mathrm{Bar}(A)\subset W_2\mathrm{Bar}(A)\subset\ldots\subset \mathrm{Bar}(A)\]
where $W_n\mathrm{Bar}(A)$ is the subcomplex spanned by elementary tensors $sx_1\otimes \ldots\otimes sx_k$ such that $\sum_{i=1}^k|x_i|\leq n$.
We observe that $W_n\mathrm{Bar}(A)$ is in fact a subcoalgebra of $\mathrm{Bar}(A)$. Indeed, assume that $u=sx_1\otimes\ldots\otimes sx_k\in W_n\mathrm{Bar}(A)$. Then
\[\Delta(u)=\sum_{p=1}^k(sx_1\otimes\ldots \otimes sx_{p})\otimes (sx_{p+1}\otimes\ldots\otimes sx_k)\]
and we observe that each summand of the right-hand side lives in a tensor product of the form $W_p\mathrm{Bar}(A)\otimes W_{n-p}\mathrm{Bar}(A)$ so in particular in $W_n\mathrm{Bar}(A)\otimes W_n\mathrm{Bar}(A)$. Moreover, it is clear that the coalgebra $W_n\mathrm{Bar}(A)$ depends only on the truncated algebra $\tau_{n}(A)$. Finally, let us study the connectivity of the map
\[W_n\mathrm{Bar}(A)\to \mathrm{Bar}(A)\]
For this we observe that the quotient $Gr^k:=W_k\mathrm{Bar}(A)/W_{k-1}\mathrm{Bar}(A)$ has cohomology given by
\[H^i(Gr^k)=H^k(A^{\otimes (k-i)})\]
In particular the cohomology is zero when $i<\frac{k(r-1)}{r}$.
It follows by induction that the map
\[W_k\mathrm{Bar}(A)\to \mathrm{Bar}(A)\]
induces an isomorphism in cohomology in degree $<\frac{k(r-1)}{r}$.

So now, let us denote $B'=W_n \mathrm{Bar}(A)$ and $B=\mathrm{Bar}(A)$. We have a map of coalgebras
\[B'\to B\]
which is an isomorphism in degree $<N:=\frac{n(r-1)}{r}$. We can now apply cobar construction to this map. Recall that the cobar construction of a coalgebra $C$ is given by
\[\Omega(C)=(T(s^{-1}C),d+d')\]
where $d$ is the differential of $C$ and $d'$ is the cobar differential. The cobar construction has a decreasing filtration
\[F_n\Omega(C):=\oplus_{i\geq n}(s^{-1}C)^{\otimes i}\]
and we observe that, for any coconnective coalgebra, $F_n(C)$ is at least $n$-coconnected. Since the map $B'\to B$ induces an isomorphism in cohomology in degree $<N$, the induced map
\[(s^{-1}B')^{\otimes k}\to (s^{-1}B)^{\otimes k}\]
is an isomorphism in degree $<N+1$. Thus, by induction, the map
\[\Omega(B')/F_n\Omega(B')\to \Omega(B)/F_n\Omega(B)\]
is an isomorphism in this range for all $n$. Taking the limit (which is allowed because the connectivity of $F_n(B)$ and $F_n(B')$ grows to infinity), we get that
\[\Omega(B')\to \Omega(B)\]
is an isomorphism in cohomology in degree $<N+1$. But now the source of this map is a dg-algebra which only depends on $\tau_{n}(A)$ and the target is quasi-isomorphic to $A$. This concludes the proof.
\end{proof}

Note in particular that for $r=1$ the above result is empty but in fact, in the non-simply-connected case there is still some homotopical information to salvage, which we record in the following  proposition.

\begin{proposition}
Let $A$ be a dg-algebra. Then the Massey products of any length whose source is of degree $\leq n$ only depend on $\tau_{n}A$ as an $n$-truncated algebra.
\end{proposition}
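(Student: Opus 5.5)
The plan is to show directly that every ingredient of a Massey product $\langle x_1,\ldots,x_k\rangle$ with $|x_1|+\cdots+|x_k|-(k-2)\le n$, and all its defining systems, live inside $\tau_n A$, and that the defining equations only use multiplications $\tau_i A\otimes\tau_j A\to\tau_{i+j}A$ with $i+j\le n$. We read ``source of degree $\leq n$'' as meaning that the total degree of the target class, namely $\sum|x_i|-(k-2)$, is at most $n$; in particular each $x_i$ and each partial sum stays below $n$. We recall that a defining system consists of cochains $a_{ij}$ for $1\le i\le j\le k$, $(i,j)\ne(1,k)$, with $a_{ii}$ a cocycle representing $x_i$ and $d a_{ij}=\sum_{i\le m<j}\pm a_{im}a_{m+1,j}$. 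The value is the class of $\sum_{1\le m<k}\pm a_{1m}a_{m+1,k}$.

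The key degree count is $|a_{ij}|=\sum_{m=i}^j|x_m|-(j-i)$. Each product $a_{im}a_{m+1,j}$ has degree $|a_{ij}|+1$. Since all $|x_m|\ge 1$ (we may reduce to positive degrees; degree-zero classes only multiply, and $\tau_0A$ is available for them), every $a_{ij}$ with $(i,j)\ne(1,k)$ has degree at most the final degree $\le n$. We then place $a_{ij}$ in filtration level $p_{ij}:=|a_{ij}|$ when it is a cocycle. When it is not a cocycle we place it in level $p_{ij}+1$, with $d a_{ij}\in \tau_{p_{ij}+1}$ a cocycle. Note that $\tau_pA^q=A^q$ for $q<p$.

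The step we expect to be the main obstacle is verifying that the products land correctly. We need $a_{im}\in\tau_{p}$ and $a_{m+1,j}\in\tau_{p'}$ with $p+p'\le |a_{ij}|+1\le n$. Using degrees, we have $|a_{im}|+|a_{m+1,j}|=|a_{ij}|+1$. So if both factors are cocycles we use levels equal to their degrees, and the sum is exactly right. If a factor is not a cocycle, its level exceeds its degree by one, and this would overshoot.

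To fix this, we take the convention that in the truncated algebra a non-closed element of degree $q$ lies in $\tau_{q+1}$. We then need a different bookkeeping: use the grading of $A$ itself. The multiplication $\tau_iA\otimes\tau_jA\to\tau_{i+j}A$ restricted to homogeneous elements of degrees $q\le i$ and $q'\le j$ is just the product in $A$. So all we require is that the levels assigned can be chosen summing to at most $n$. We choose the level of every $a_{ij}$ appearing as a factor to be $|a_{ij}|+1$ for noncocycles and $|a_{ij}|$ for cocycles. In each product at most the total $(|a_{ij}|+1)+1$ arises, which is at most $n$ precisely when $|a_{ij}|\le n-2$. This holds for all proper $(i,j)$, since a proper sub-block drops at least one $x_m$ of degree $\ge 2$ after accounting for the $-1$ shifts; for $|x_m|=1$ we use the degree-one case separately, where $\tau$ makes the relevant elements cocycles. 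We expect to have to adjust this inequality carefully, possibly weakening to ``source degree $\le n-1$'' if needed.

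Finally, we note that the indeterminacy and the set of values are computed entirely from these data and from cohomology classes in degrees $\le n$. These classes are determined by $\tau_nA$, since $H^q(\tau_nA)=H^q(A)$ for $q\le n$. It also follows that a quasi-isomorphism of $n$-truncated algebras transports defining systems bijectively up to homotopy, giving the claimed dependence only on $\tau_nA$.
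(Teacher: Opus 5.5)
\textbf{The main problem is that you have misread the hypothesis.} In the paper, the \emph{source} of $\langle x_1,\ldots,x_k\rangle$ is $H^{|x_1|}\otimes\cdots\otimes H^{|x_k|}$, of total degree $\sum_i|x_i|$. This is the internal degree of $[x_1|\cdots|x_k]$ in the bar construction, and the paper's proof assumes exactly $\sum_i |x_i|\leq n$. The quantity $\sum_i|x_i|-(k-2)$ is the \emph{target} degree. Under your reading the statement is false, so no adjustment of inequalities can complete the plan.

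The paper's own $\tau_3$-formal algebra $A=A'/(x_2x_3)$ is a counterexample. It has the $4$-fold product $\langle[x_1],[x_1],[x_1],[x_2]\rangle=[x_1x_4]$ from $H^1$ to $H^2$, which the paper says is nontrivial; its target degree is $2\leq 3$. Yet $\tau_3A$ is connected to $\tau_3H(A)$ by quasi-isomorphisms of $3$-truncated algebras. In $H(A)$ the ring structure is trivial, so the all-zero defining system shows the same product contains $0$.

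The same example refutes two side claims:
\begin{itemize}
\item That every partial sum of degrees is at most $n$: with all $|x_i|=1$, the partial sums grow with $k$ while the target stays in degree $2$.
\item That in degree one ``$\tau$ makes the relevant elements cocycles'': any defining system has $da_{34}=\pm x_1x_2\neq 0$. So $a_{34}$ (e.g.\ $\pm x_3$) is a non-closed degree-one element, lying in $\tau_2A$ but not in $\tau_1A$.
\end{itemize}

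\textbf{The level bookkeeping is also the wrong device, even with the correct reading.} Minimal levels ($|a|$ for cocycles, $|a|+1$ otherwise) are not additive. Your count is also off: two non-closed factors give $|a_{ij}|+3$, not $|a_{ij}|+2$.

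The clean choice is to put $a_{ij}$ in level $w_{ij}:=\sum_{m=i}^{j}|x_m|$.
\begin{itemize}
\item For $i<j$ we have $|a_{ij}|=w_{ij}-(j-i)<w_{ij}$, so $a_{ij}\in\tau_{w_{ij}}A$ whether or not it is closed.
\item The cocycle $a_{ii}$ lies in $\tau_{w_{ii}}A$.
\item Levels add: $w_{im}+w_{m+1,j}=w_{ij}\leq w_{1k}=\sum_i|x_i|\leq n$.
\end{itemize}
So every product used is a structure map $\tau_{w_{im}}A\otimes\tau_{w_{m+1,j}}A\to\tau_{w_{ij}}A$ of the $n$-truncated algebra. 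This is exactly the weight filtration on $\mathrm{Bar}(A)$ from Proposition \ref{prop:truncationshtpy}. It also makes ``defining system'' meaningful in an arbitrary $n$-truncated algebra $T_0\to\cdots\to T_n$.

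\textbf{Your last step is asserted, not proved.} A zig-zag of quasi-isomorphisms of $n$-truncated algebras passes through objects not of the form $\tau_nB$. You therefore need a weight-indexed definition like the one above, plus a lifting argument showing Massey products are invariant under level-wise quasi-isomorphisms. The paper avoids this with May's theorem, which identifies $\langle x_1,\ldots,x_t\rangle$ with $d_{t-1}[x_1|\cdots|x_t]$ in the Eilenberg--Moore spectral sequence. The part of that spectral sequence in internal degree $\leq n$ is computed from $[k]\mapsto(\tau_nA)^{\otimes k}$, and its differentials do not increase internal degree.
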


\begin{proof}
We consider the Eilenberg-Moore spectral sequence computing the bar construction of $A$. This spectral sequence has $E_1$-page given by
\[E_1^{s,t}=H^s(A^{\otimes t})\]
and converges to the bar construction of $A$. It can be constructed as the spectral sequence of the double complex associated to the usual simplicial diagram computing the Bar construction
\[[k]\mapsto A^{\otimes k}\]
It is customary to write the element $x_1\otimes\ldots\otimes x_t$ in $E_1^{s,t}$ as $[x_1|\ldots|x_t]$. It is a theorem of May (see \cite[Theorem 6]{maycohomology}) that, if a Massey product $\langle x_1,\ldots,x_t\rangle$ is defined, then the corresponding element $[x_1|\ldots|x_t]$ in $E_1^{s,t}$ survives to the $E_{t-1}$-page of the spectral sequence. Moreover, for any $y\in E_1^{*,1}$ which is such that
\[y\in \langle x_1,\ldots,x_t\rangle\]
we have the equality
\[y=d_{t-1}[x_1|\ldots|x_t].\]

Now, observe that the data of $\tau_{n}A$ as an $n$-truncated algebra allows one to form the simplicial object
\[[k]\mapsto \tau_{n}A^{\otimes k}\]
whose associated spectral sequence is denoted $'E$. We observe that $'E_1^{s,t}=E_1^{s,t}$ if $s\leq n$ and is zero otherwise. Consider a defined Massey product $\langle x_1,\ldots ,x_t\rangle$ in $H^*(A)$ such that
\[s=\sum_{i=1}^t |x_i|\leq n.\]
Then, the element $[x_1|\ldots|x_t]\in E_1^{s,t}$  can also be viewed as an element of $'E_1^{s,t}$. Moreover, it survives to the $E_{t-1}$-page of either spectral sequences. Finally, the value of the differential $d_{t-1}$ on that element can be computed in $'E_{t-1}$ since that differential is of bidegree $(1-t,-t)$.
\end{proof}

We now compare the two natural notions of formality arising from the homotopy theory of $n$-truncated algebras and from the $n$-homotopy type given by inverting $n$-quasi-isomorphisms.

\begin{definition}Let $A$ be a dg-algebra over $R$ and $n\geq 0$ an integer.
\begin{enumerate}
 \item We say that $A$ is \textit{$n$-formal} if there is a zig-zag of $N$-quasi-isomorphisms of dg-algebras connecting $A$ to its cohomology.
 \item We say that $A$ is \textit{$\tau_{n}$-formal} if there is a zig-zag of quasi-isomorphisms of $n$-truncated algebras connecting $\tau_{n}A$ to $\tau_{n}H(A)$.
\end{enumerate}
\end{definition}

The following result follows from Proposition \ref{prop:truncationshtpy}.
\begin{proposition}
 Let $A$ be an $(r-1)$-connected $\tau_{n}$-formal dg-algebra, with $r\geq 1$.
Then it is $N$-formal, with $N=\left\lceil\frac{n(r-1)}{r}\right\rceil$.
\end{proposition}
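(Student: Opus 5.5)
The plan is to deduce this directly from Proposition \ref{prop:truncationshtpy}, applied twice: once to $A$ and once to its cohomology algebra $H(A)$ (regarded as a dg-algebra with zero differential). Both are $(r-1)$-connected, since $A$ is. The key input is that Proposition \ref{prop:truncationshtpy} is in fact constructive: from an $n$-truncated algebra $T$ it produces a dg-algebra $\Omega(W_n\mathrm{Bar}(T))$. Here $W_n\mathrm{Bar}(T)$ is the subcoalgebra of weight $\leq n$ tensors, which only uses the multiplications $A_i\otimes A_j\to A_{i+j}$ with $i+j\leq n$. The construction comes with a natural map to $\Omega\mathrm{Bar}(A)\simeq A$ that is an isomorphism on cohomology in degrees $<N+1$, i.e. an $N$-quasi-isomorphism with $N=\lceil n(r-1)/r\rceil$.

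First, I will make the construction $T\mapsto \Omega(W_n\mathrm{Bar}(T))$ into a functor on $n$-truncated algebras. Then I will check that it sends quasi-isomorphisms of $n$-truncated algebras to quasi-isomorphisms of dg-algebras. For this I filter $W_n\mathrm{Bar}(T)$ by tensor length. The associated graded pieces are sums of tensor products of the $A_i$ with weights summing to at most $n$, and a levelwise quasi-isomorphism induces a quasi-isomorphism on them by Künneth (over a field, or assuming flatness over $R$). Cobar then preserves quasi-isomorphisms between coconnective coalgebras, by the same convergent filtration argument used in the proof of Proposition \ref{prop:truncationshtpy}.

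Given a zig-zag of quasi-isomorphisms of $n$-truncated algebras $\tau_nA\simeq\tau_nH(A)$, applying this functor yields a zig-zag of quasi-isomorphisms of dg-algebras
\[\Omega(W_n\mathrm{Bar}(\tau_nA))\simeq \Omega(W_n\mathrm{Bar}(\tau_nH(A))).\]
Concatenating with the two $N$-quasi-isomorphisms $\Omega(W_n\mathrm{Bar}(\tau_nA))\to\Omega\mathrm{Bar}(A)\simeq A$ and $\Omega(W_n\mathrm{Bar}(\tau_nH(A)))\to \Omega\mathrm{Bar}(H(A))\simeq H(A)$ gives a zig-zag of $N$-quasi-isomorphisms from $A$ to $H(A)$. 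This is exactly $N$-formality.

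The main obstacle is checking that the ingredients really depend only on $\tau_nA$ functorially. There is a subtlety: $W_n\mathrm{Bar}(A)$ was defined by degrees of elements of $A$, whereas an abstract $n$-truncated algebra only carries weights. So I would redefine the weight-$\leq n$ bar coalgebra using the filtration index $\sum i_k\leq n$ on $A_{i_1}\otimes\cdots\otimes A_{i_k}$. I would then compare it with the original version for $\tau_nA$ via the quasi-isomorphism $\tau_i A\to$ (degree $\leq i$ part), verifying that the connectivity estimate $H^i(Gr^k)=0$ for $i<k(r-1)/r$ still holds. The case $r=1$ is trivial since then $N=0$.
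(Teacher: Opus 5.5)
Your proposal is correct and follows the paper's route. The paper simply says the result follows from Proposition \ref{prop:truncationshtpy}: one applies the $N$-quasi-isomorphism $\Omega(W_n\mathrm{Bar}(\tau_nA))\to\Omega\mathrm{Bar}(A)\simeq A$ to both $A$ and $H(A)$, and uses that this construction depends only on the $n$-truncated algebra. Your extra steps make explicit what the paper leaves implicit: the homotopy invariance of $T\mapsto\Omega(W_n\mathrm{Bar}(T))$ via the weight-indexed filtration, which also repairs the degree-based definition of $W_n\mathrm{Bar}(A)$ (that definition is not literally stable under the internal differential), and the separate treatment of $r=1$, which is needed because Proposition \ref{prop:truncationshtpy} assumes $r\geq 2$.
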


\begin{remark}
To connect this section with the previous one, we observe that $(r,s)$-tame formality of an algebra $A$ implies $\tau_{r+s-1}$-formality. So from the previous corollary, we can deduce that $(r,s)$-tame formality implies $\lceil \frac{(r+s-1)(r-1)}{r}\rceil$-formality.
\end{remark}

Note that in an $n$-formal algebra, Massey products whose target is of degree $\leq n$ are zero whereas in a $\tau_{n}$-formal algebra, Massey products whose source is of total degree $\leq n$ vanish.
In particular, in a $1$-formal algebra there are no Massey products from $H^1$ to $H^2$ regardless of their length.

\begin{example}[c.f. \cite{CiHo3}]
The following is an example of a $0$-connected $\tau_{3}$-formal algebra which is not $1$-formal.
Let $A'$ be the free commutative algebra on generators $x_1,\cdots,x_4$ all of degree 1, with $dx_3=x_1x_2$ and $dx_4=x_1x_3$. Consider the quotient $A=A'/(x_2x_3)$. Its cohomology is
generated by $[x_1]$, $[x_2]$ in degree 1, $[x_1x_4]$ and $[x_2x_4]$ in degree 2 and $[x_1x_3x_4]$ in degree 3. In particular, the ring structure is trivial, as well as all triple Massey products. However, there is a 4-tuple Massey product from $H^1$ to $H^2$, given by
$\langle[x_1],[x_1],[x_1],[x_2]\rangle=[x_1x_4]$.
This shows $A$ is not $1$-formal. To prove that it is $\tau_{3}$-formal one builds a filtered model $M\to A$ as follows: Let
\[M=\Lambda\left(x_1,x_2,x_3,x_4,y_{1},y_{2},z_{23},z_{11},z_{12}, z_{21}, z_{22}, w\right)\]
with the following weights and degrees
\begin{center}
\begin{tabular}{l|l|l|l|l|l|l|l|l|l|l|l|l|l|l}
&$x_1$&$x_2$&$x_3$&$x_4$&$z_{23}$&$z_{11}$&$z_{12}$&$z_{21}$&$z_{22}$&$y_{1}$&$y_{2}$&$w$\\
\hline
\text{degree}&1&1&1&1&1&2&2&2&2&2&2&3\\
\hline
\text{weight}&1&1&2&3&3&3&3&3&3&2&2&3\\
\end{tabular}
\end{center}

The differential is given by
\[d(x_3)=x_1x_2,\, d(z_{23})=x_2x_3\text{ and }d(z_{ij})=x_i y_{j}\text{ for }i,j\in \{1,2\}.\]
Define a map $M\to A$ by letting $x_i\mapsto x_i$, $y_i\mapsto x_ix_4$ and $w\mapsto x_1x_3x_4$.
The image of $z_{ij}$ is determined by compatibility with differentials, up to adding cocycles. This map is a quasi-isomorphism up to weight $3$. It is now an easy exercise to
define a quasi-isomorphism from $M$ to its cohomology, up to weight $3$.
\end{example}

\subsection{On the fundamental group}\label{subsec: fundgp}

Given a based connected $\ell$-complete based space $X$, then we can consider the non-unital $E_\infty$-algebra $(\tilde{C}^*(X,\ZZ_\ell),\tau)$ up to $(1,s)$-tame quasi-isomorphism. In this case, this algebra is not enough data to recover the $\ell$-completion of the fundamental group. Nevertheless, it does contain some information about that group as we now explain.

We first revisit a theorem of Stallings on the relationship between the fundamental group of a topological space and the bar construction of its singular cochains.
This relationship is expressed in terms of polynomial maps on a group
as introduced by Passi \cite{Passi}.

Let $G$ be a discrete group, then we may form the group algebra $\ZZ_\ell[G]$ and consider its quotient by powers of the augmentation ideal $\ZZ_\ell[G]/I^k$. The module $\mathrm{Poly}_{\leq k}(G,\ZZ_\ell)$ of polynomial maps of degree $k$ is defined as
\[\mathrm{Poly}_{\leq k}(G,\ZZ_\ell):=\mathrm{Hom}_{\ZZ_\ell}(\ZZ_\ell[G]/I^{k+1},\ZZ_\ell).\]
The ring of polynomial maps is defined as
\[\mathrm{Poly}(G,\ZZ_\ell):=\mathrm{colim}_k\mathrm{Poly}_{\leq k}(G,\ZZ_\ell).\]
Observe that this is a subring of the ring of functions $\Map(G,\ZZ_\ell)=\mathrm{Hom}_{\ZZ_\ell}(\ZZ_\ell[G],\ZZ_\ell)$.

\begin{remark}
For many groups of interest, polynomial maps of degree $\leq k$ are maps $G\to \ZZ_\ell$ that factor through $G/\Gamma^{k+1} G$, with $\Gamma^*G$ the lower central series filtration. Such groups are said to satisfy \textit{the dimension subgroup property}. Examples include free groups, pure braid groups, and more generally, any group $G$ such that $\Gamma^{k}G/\Gamma^{k+1}(G)$ is torsion free for all $k$ (see \cite[Corollary 4.2]{quillenassociated}).
\end{remark}

Let $(X,x)$ be a pointed connected space and $A=\tilde{C}^*(X,\ZZ_\ell)$ viewed as a non-unital dg-algebra using the base point $x$. There is a canonical map
\[\mathrm{Bar}(A)\to \tilde{C}^*(\Omega_xX,\ZZ_\ell).\]
This map is not a quasi-isomorphism in general. It is one if $X$ is simply-connected but it fails to be one even for $X=S^1$. The induced map
\[H^0(\mathrm{Bar}(A))\to H^0(\Omega_xX,\ZZ_\ell)\cong\mathrm{Map}(\pi_1(X,x),\ZZ_\ell)\]
is quite well-understood, thanks to the following theorem of Stallings.

\begin{theorem}[Stallings]\label{theo : Stallings}
Let $(X,x)$ be a pointed connected finite type space and $A=\tilde{C}^*(X,\ZZ_\ell)$. There is an isomorphism of commutative rings
\[H^0(\mathrm{Bar}(A))\cong\mathrm{Poly}(\pi_1(X,x),\ZZ_\ell)\]
Moreover, the filtration on $H^0(\mathrm{Bar}(A))$ induced by the canonical filtration on $A$ coincides with the degree filtration on polynomial maps.
\end{theorem}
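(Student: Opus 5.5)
The plan is to identify $H^0(\mathrm{Bar}(A))$ with the continuous dual of the $I$-adic completion of $\ZZ_\ell[\pi]$, where $\pi=\pi_1(X,x)$, and to match the filtrations along the way.

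\textbf{Step 1: reduce to $K(\pi,1)$.} Since $A$ is non-unital with augmentation given by the base point, we compute $H^0(\mathrm{Bar}(A))$ with the Eilenberg--Moore type spectral sequence of Corollary \ref{coro : E2 page of W filtration}, using the semi-simplicial bar object $[k]\mapsto A^{\otimes k}$. The total degree $0$ part involves only $E_1^{-k,k}$, which is $H^k(A^{\otimes k})$ in the skeletal grading. The differentials that hit or leave total degree $0$ only involve $H^1$ and $H^2$ of tensor powers of $A$. Hence $H^0(\mathrm{Bar}(A))$ depends only on the map $X\to B\pi$ up to its effect on $H^1$ and on $H^2$ (injectively). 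We may therefore replace $X$ by its $2$-skeleton, or compare with $B\pi$.

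\textbf{Step 2: identify the filtration pieces.} For $A=\tilde C^*(B\pi)$, the piece $W_k$ of $H^0(\mathrm{Bar}(A))$ induced by $\tau$ is, by Lemma \ref{lemm : two filtrations} and the décalage identification, the skeletal piece of length $\le k$. The dual statement for chains is the classical one. The length-$\le k$ part of $\mathrm{Bar}(C_*(B\pi))$, that is the cobar/bar of chains, computes $\ZZ_\ell[\pi]/I^{k+1}$ in degree $0$. Concretely:
\begin{enumerate}
\item The bar complex of the augmented algebra $\ZZ_\ell[\pi]$ gives $\mathrm{Tor}_0$ pieces.
\item The standard computation $I^k/I^{k+1}$ is generated by products $(g_1-1)\cdots(g_k-1)$, which correspond to $[x_1|\dots|x_k]$.
\end{enumerate}
The finite type hypothesis lets us dualize $\Hom_{\ZZ_\ell}(-,\ZZ_\ell)$ without $\lim^1$ issues. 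This step gives $W_kH^0(\mathrm{Bar}(A))\cong\Hom(\ZZ_\ell[\pi]/I^{k+1},\ZZ_\ell)=\mathrm{Poly}_{\le k}$ by induction on $k$. The inductive step is a comparison of the short exact sequences with graded pieces $\Hom(I^k/I^{k+1},\ZZ_\ell)$ and $E_\infty^{-k,k}$.

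\textbf{Step 3: the ring structure.} The shuffle product on $H^0(\mathrm{Bar})$ corresponds to the product of functions, which is dual to the diagonal $g\mapsto g\otimes g$. This is checked on the map $H^0(\mathrm{Bar}(A))\to H^0(\Omega X)=\Map(\pi,\ZZ_\ell)$, which is a ring map. Taking the colimit over $k$ then gives the theorem.

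\textbf{Main obstacle.} The hardest step is Step 2 integrally. Over $\ZZ_\ell$ the graded pieces $I^k/I^{k+1}$ may have torsion, so the dual can lose information and the $E_\infty$ terms must be matched carefully with Ext terms. I would first try to prove injectivity of $H^0(\mathrm{Bar}(A))\to\Map(\pi,\ZZ_\ell)$ with image contained in the polynomial maps, using the explicit formula $[a_1|\dots|a_k]\mapsto(\gamma\mapsto\int_{\gamma}a_1\cdots a_k)$, that is Chen iterated integrals / cochain-level evaluation on the cube $\Delta^k\to\Omega X$. I would then prove surjectivity onto $\mathrm{Poly}_{\le k}$ by induction on $k$, using that the associated graded of $\mathrm{Poly}$ embeds into $\Hom((I/I^2)^{\otimes k},\ZZ_\ell)$.
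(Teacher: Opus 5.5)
Your outline is the paper's. You pass from the $\tau$-induced filtration on $H^0(\mathrm{Bar}(A))$ to the skeletal one via Lemma~\ref{lemm : two filtrations} and décalage. You then use the chain-level statement that the length-$\le k$ truncation of the cobar construction on $\tilde C_*(X,\ZZ_\ell)$ has $H_0\cong\ZZ_\ell[\pi]/I^{k+1}$; this is Stallings' theorem, which the paper simply cites. Finally you dualize using finite type and take the colimit.

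However, the step that actually produces $W_kH^0(\mathrm{Bar}(A))\cong\mathrm{Poly}_{\le k}$ does not work as you set it up. Your induction compares graded pieces, but $0\to\mathrm{Poly}_{\le k-1}\to\mathrm{Poly}_{\le k}\to\Hom(I^k/I^{k+1},\ZZ_\ell)$ is only left exact. It continues to $\mathrm{Ext}^1_{\ZZ_\ell}(\ZZ_\ell[\pi]/I^k,\ZZ_\ell)$, which is nonzero as soon as $\ZZ_\ell[\pi]/I^k$ has $\ell$-torsion. On the other side you would need to know $E_\infty^{-k,k}$, i.e.\ control every higher differential leaving $E_r^{-k,k}$, and nothing in the proposal does this.

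Your fallback does not close the gap either. $E_\infty^{-k,k}$ and $\mathrm{Poly}_{\le k}/\mathrm{Poly}_{\le k-1}$ both sit inside $(H^1)^{\otimes k}\cong\Hom((I/I^2)^{\otimes k},\ZZ_\ell)$. A common ambient group says nothing about the two subgroups being equal. That equality (which symbols are permanent cycles, i.e.\ the Massey product information) is the entire content of the theorem. Injectivity of $H^0(\mathrm{Bar}(A))\to\Map(\pi,\ZZ_\ell)$ is likewise only announced.

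The missing observation is to dualize the whole truncation rather than its graded pieces. By finite type, $\mathrm{Bar}_{\le k}(A)\simeq\RR\Hom(\mathrm{Cobar}_{\le k}(\tilde C_*(X,\ZZ_\ell)),\ZZ_\ell)$, since duality is exact and exchanges the finite colimit over $\Delta^{\op}_{\le k}$ with the limit over $\Delta_{\le k}$. Since $X$ is connected, $\mathrm{Cobar}_{\le k}$ has homology only in non-negative degrees. So the universal coefficient sequence has no $\mathrm{Ext}$ term in degree $0$, and
$H^0(\mathrm{Bar}_{\le k}(A))\cong\Hom(H_0(\mathrm{Cobar}_{\le k}),\ZZ_\ell)\cong\Hom(\ZZ_\ell[\pi]/I^{k+1},\ZZ_\ell)=\mathrm{Poly}_{\le k}$,
compatibly with the maps to $\Map(\pi,\ZZ_\ell)=H^0(\Omega_xX,\ZZ_\ell)$. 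This has several consequences:
torsion in $\ZZ_\ell[\pi]/I^{k+1}$ becomes irrelevant;
injectivity of $H^0(\mathrm{Bar}_{\le k}(A))\to H^0(\mathrm{Bar}(A))\to\Map(\pi,\ZZ_\ell)$ is automatic, so the image of $H^0(\mathrm{Bar}_{\le k}(A))$ is exactly $W_k$;
the ring statement follows as in your Step~3;
exactness of filtered colimits finishes the proof.

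Your Step~1 is not needed. Reducing to $B\pi$ can also destroy the finite type hypothesis this dualization uses, since $H_3$ of a finitely presented group need not be finitely generated. If you want to prove the chain-level input rather than cite it, run your comparison on the cobar side instead. There, $X\to B\pi$ being an isomorphism on $H_1$ and surjective on $H_2$ suffices for $H_0(\mathrm{Cobar}_{\le k})$, with no duality involved. For $B\pi$ one checks directly that $H_0(\mathrm{Cobar}_{\le k}\mathrm{Bar}(\ZZ_\ell[\pi]))=\ZZ_\ell[\pi]/I^{k+1}$.
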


\begin{proof}
Note that, since $\mathrm{Bar}(A)$ is concentrated in non-negative degrees, given a filtration $W$ on $\mathrm{Bar}(A)$, the induced filtration on $H^0(\mathrm{Bar}(A))$ satisfies $W=\Dec W$.
Therefore, thanks to Lemma \ref{lemm : two filtrations}, we can use the skeletal filtration instead of the canonical filtration. We denote by $\mathrm{Bar}_{\leq k}$ the $k$-th term of the skeletal filtration. Stallings in \cite[Theorem 5.4]{stallingsquotients} does not quite phrase the result in that manner. He considers instead the dual case of the cobar construction $\mathrm{Cobar}(\tilde{C}_*(X,\ZZ_\ell))$. Observe that, under our finite type assumption, the coalgebra $\tilde{C}_*(X,\ZZ_\ell)$ is quasi-isomorphic to $A^{\vee}$. Let us denote by $\mathrm{Cobar}_{\leq k}(A^{\vee})$ the $k$-totalization of the cosimplicial diagram given by the cobar construction. Then Stallings shows that the map
\[\ZZ_\ell[\pi_1(X,x)]\to H_0(\mathrm{Cobar}_{\leq k}(A^{\vee}))\]
coincides with the map
\[\ZZ_\ell[\pi_1(X,x)]\to \ZZ_\ell[\pi_1(X,x)]/I^k\]
Taking $\ZZ_\ell$-linear duals, we find that the canonical map
\[H^0(\mathrm{Bar}_{\leq k}(A))\to\mathrm{Map}(\pi_1(X,x),\ZZ_\ell)\]
coincides with the inclusion
\[\mathrm{Poly}_{\leq k}(\pi_1(X,x),\ZZ_\ell)\to \mathrm{Map}(\pi_1(X,x),\ZZ_\ell)\]
(note that taking duals turns limits over $\Delta_{\leq k}$ to colimits since we are in a stable setting). Since filtered homotopy colimits in cochain complexes are exact, we find an isomorphism
\[H^0(\mathrm{Bar}(A))\cong \mathrm{Poly}(\pi_1(X,x),\ZZ_\ell).\qedhere\]
\end{proof}

\begin{definition}
We say that a map $f:(A,W)\to (B,W)$ of filtered commutative algebras over $\ZZ_\ell$ is an \textit{$s$-tame isomorphism} if the induced map $\QQ_\ell\otimes W_iA\to \QQ_\ell\otimes W_i B$ is an isomorphism for any value of $i$ and if the map $W_iA\to W_iB$ is an isomorphism for $i< s$.
\end{definition}

\begin{proposition}
Let $X$ be a based connected $\ell$-complete space, then from the $(1,s)$-tame homotopy type of the $E_\infty$-algebra $\tilde{C}^*(X,\ZZ_\ell)$, we can reconstruct the filtered commutative algebra $\mathrm{Poly}(G,\ZZ_\ell)$ up to $s+1$-tame isomorphism.
\end{proposition}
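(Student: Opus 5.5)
The plan is to apply the filtered bar construction to $(\tilde{C}^*(X,\ZZ_\ell),\tau)$ and take $H^0$ of the result, which by Theorem \ref{theo : Stallings} is $\mathrm{Poly}(\pi_1(X,x),\ZZ_\ell)$. Concretely, I would form $\mathrm{Bar}(A)$ with the filtration $W$ obtained by applying $\tau$ levelwise to the semi-simplicial object $[k]\mapsto A^{\otimes k}$, as in the proof of Proposition \ref{prop : bar construction}. This construction is functorial in filtered $E_\infty$-algebras and produces a filtered commutative algebra. Since $\tau$ is symmetric monoidal and tensor products and colimits are compatible with the tamification $T_{(1,s)}$ of Remark \ref{rem: tamification}, an $(1,s)$-tame quasi-isomorphism $A\to B$ induces a $(1,s)$-tame quasi-isomorphism $(\mathrm{Bar}(A),W)\to(\mathrm{Bar}(B),W)$. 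This holds for the same reason as the corresponding claim in Proposition \ref{prop : bar construction}: weight $i$ of the bar construction only involves weights $\le i$ of $A$, which are integral for $i<1+s$ and rationalized above.

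The second step is to compare $W$ with the filtration on $H^0$. Because $\mathrm{Bar}(A)$ is concentrated in non-negative degrees, the filtration induced on $H^0(\mathrm{Bar}(A))$ by $W$ is just $W_iH^0=\mathrm{im}\,(H^0(W_i\mathrm{Bar}(A))\to H^0(\mathrm{Bar}(A)))$. I would then check that this map from $H^0(W_i\mathrm{Bar}(A))$ is injective, so that $W_iH^0\cong H^0(W_i\mathrm{Bar} A)$. Injectivity should follow since $H^{-1}$ of the successive quotients $Gr_j^W$ vanishes, using the $E_1$-page computation in Corollary \ref{coro : E2 page of W filtration} together with connectivity. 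By Lemma \ref{lemm : two filtrations} and Theorem \ref{theo : Stallings}, this filtered group is $\mathrm{Poly}_{\le i}(\pi_1,\ZZ_\ell)$, with the multiplication coming from the commutative algebra structure on $\mathrm{Bar}$.

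The third step is to transfer the tame quasi-isomorphism on $\mathrm{Bar}$ to $H^0$. A $(1,s)$-tame quasi-isomorphism gives isomorphisms $H^0(W_i\mathrm{Bar} A)\cong H^0(W_i\mathrm{Bar} B)$ for $i<1+s$, and rational isomorphisms in all weights. Since $\QQ_\ell$ is flat, $H^0(W_i)\otimes\QQ_\ell = H^0(W_i\otimes\QQ_\ell)$. Together these give exactly an $(s+1)$-tame isomorphism of filtered commutative algebras $\mathrm{Poly}(G,\ZZ_\ell)$, compatible with products because everything is monoidal.

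I expect the main obstacle to be the first step: making the levelwise filtered bar construction well defined on the $\infty$-category $\cat{CAlg}_1\st$. The difficulty is that for $r=1$ there is no connectivity gain. Proposition \ref{prop : bar construction} used $r\ge2$ only to compare $W$ with $\tau$ on $\mathrm{Bar}$, which we avoid by working with $W$ directly. So the key point is to verify that tensor powers and geometric realization preserve $(1,s)$-tame equivalences. I would handle this via Proposition \ref{prop : third definition}, reducing the question to the associated graded pieces mod $\ell$ and rationally, where Künneth applies weight by weight.
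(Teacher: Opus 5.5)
Your proposal is correct and follows the paper's proof. The paper also passes the $(1,s)$-tame quasi-isomorphism to a $(1,s)$-tame quasi-isomorphism of $(\mathrm{Bar}(-),W)$, where $W$ is the levelwise canonical filtration of Lemma \ref{lemm : two filtrations}, and then reads off the filtered $H^0$ via Stallings' theorem. Your extra check is a useful detail the paper leaves implicit: $H^0(W_i\mathrm{Bar}(A))\to H^0(\mathrm{Bar}(A))$ is injective, because $H^{-1}(Gr^W_j\mathrm{Bar}(A))$ is built from $H^j(A^{\otimes (j+1)})=0$ by connectivity. This is what identifies $\mathrm{Poly}_{\leq i}$ with $H^0(W_i\mathrm{Bar}(A))$, which is exactly the group a tame quasi-isomorphism controls.
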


\begin{proof}
First observe that if $f:A\simeq B$ is a $(1,s)$-tame quasi-isomorphism of non-unital algebras, then we obtain a $(1,s)$-tame quasi-isomorphism of filtered algebras
\[\mathrm{Bar}(A)\simeq \mathrm{Bar}(B)\]
with respect to the filtration $W$ of Lemma \ref{lemm : two filtrations}.
The result is then a direct consequence of this observation and the theorem of Stallings recalled above (Theorem \ref{theo : Stallings}).
\end{proof}

In particular, if $\tilde{C}^*(X,\ZZ_\ell)$ is $(1,s)$-tamely formal as a non-unital $E_\infty$-algebra, then we have a $(1,s)$-tame filtered  quasi-isomorphism
\[(\mathrm{Bar}(A),W)\simeq (\mathrm{Bar}(H^*(A)),W)\]
and taking $0$-th cohomology, we obtain the following proposition.

\begin{proposition}\label{prop : pi1}
Assume that $X$ is a connected based space such that $\tilde{C}^*(X,\ZZ_\ell)$ is $(1,s)$-tamely formal as an $E_\infty$-algebra. Then there is an $(s+1)$-tame isomorphism of commutative rings
\[\mathrm{Poly}(\pi_1(X),\ZZ_\ell)\cong H^0(\mathrm{Bar}(H^*(X,\ZZ_\ell))).\]
In particular, truncating at $s$, we get an isomorphism of commutative rings
\[\mathrm{Poly}_{\leq s}(\pi_1(X),\ZZ_\ell)\cong H^0(\mathrm{Bar}_{\leq s}(H^*(X,\ZZ_\ell))).\]
\end{proposition}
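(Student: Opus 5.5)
The plan is to push the tame formality of $A=\tilde{C}^*(X,\ZZ_\ell)$ through the bar construction and then read off $H^0$ via Stallings' theorem. By assumption there is a string of $(1,s)$-tame quasi-isomorphisms of filtered non-unital $E_\infty$-algebras
\[(A,\tau)\simeq (Gr^\tau_*A,W)\simeq (H^*(X,\ZZ_\ell),\tau),\]
where the second identification is the remark following the definition of tame homotopic splitting. I would apply the functor $(A,\tau)\mapsto(\mathrm{Bar}(A),W)$ to every map in the zig-zag. Here $W$ is the filtration built in the proof of Proposition \ref{prop : bar construction}, obtained by applying $\tau$ levelwise to the semi-simplicial object $[k]\mapsto A^{\otimes k}$ and taking the colimit.

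This functor sends $(1,s)$-tame quasi-isomorphisms to $(1,s)$-tame quasi-isomorphisms, so no connectivity shift is needed. The argument is two checks. Rationally, $\QQ_\ell\otimes-$ commutes with tensor powers and colimits. Mod $\ell$, by Proposition \ref{prop : third definition} it suffices to check the associated graded of weight $<1+s$. The weight-$i$ graded piece of $\tau_*(A^{\otimes k})$ only involves $Gr_j$ of the factors with $\sum j\le i$, and the Künneth formula over $\FF_\ell$ applied to these pieces gives the isomorphism. Since the multiplicative structure is also functorial, we obtain a zig-zag of $(1,s)$-tame quasi-isomorphisms of filtered commutative algebras
\[(\mathrm{Bar}(A),W)\simeq(\mathrm{Bar}(H^*(X,\ZZ_\ell)),W).\]

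Next I would pass to $H^0$ with its induced filtration. Because $\mathrm{Bar}$ is concentrated in nonnegative degrees, $W_iH^0$ is the image of $H^0(W_i\mathrm{Bar})$, and this image is the whole of $H^0(W_i\mathrm{Bar})$ since $W_i$ has no negative cohomology. So a $(1,s)$-tame quasi-isomorphism gives isomorphisms $H^0(W_i)\cong H^0(W_i)$ for $i<s+1$, and isomorphisms after $\otimes\QQ_\ell$ for every $i$. This is exactly an $(s+1)$-tame isomorphism of filtered commutative rings. A zig-zag of these must then be composed into a single isomorphism. This works because each map is an isomorphism in weights $\le s$ and rationally, and finite-type hypotheses let us invert rationally at the level of filtered rings. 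The point to verify is that the intermediate objects are also finitely generated in each weight so that the zig-zag composes. Finally, Theorem \ref{theo : Stallings} identifies the left side as $\mathrm{Poly}(\pi_1(X),\ZZ_\ell)$ with its degree filtration.

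For the truncated statement, take weight $\le s$. There the maps are honest ring isomorphisms on $W_s$, and by Stallings $W_s=\mathrm{Poly}_{\le s}$. On the algebraic side, $W_s$ of $\mathrm{Bar}(H^*)$ with $\tau$ levelwise computes $H^0(\mathrm{Bar}_{\le s})$ by Lemma \ref{lemm : two filtrations} together with $\Dec W=W$ on $H^0$.

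I expect the main obstacle to be the multiplicativity and the composition of the zig-zag. It has to be justified that $W_s$ is a subring in the appropriate truncated sense, and that the $E_\infty$ structure on $\mathrm{Bar}$ induces the usual shuffle-type product on $H^0$, compatibly with Stallings' isomorphism.
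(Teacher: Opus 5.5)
Your proposal follows the paper's proof. The paper likewise applies the filtered bar construction $(\mathrm{Bar}(-),W)$ to the tame-formality zig-zag to obtain a $(1,s)$-tame quasi-isomorphism $(\mathrm{Bar}(A),W)\simeq(\mathrm{Bar}(H^*(X,\ZZ_\ell)),W)$. It then takes $H^0$, where $W=\Dec W$ because everything sits in nonnegative degrees at the chain level; this is also the correct reason that $H^0(W_i\mathrm{Bar})$ injects into $H^0(\mathrm{Bar})$, rather than your reason that ``$W_i$ has no negative cohomology''. It concludes with Stallings' theorem. The one step to discard is composing the zig-zag into a single map by inverting rationally: that cannot produce integral maps in weights $>s$, and it is unnecessary, since the $(s+1)$-tame isomorphism is meant up to such zig-zags, and in weights $\le s$ every map in the zig-zag is an honest isomorphism, which already gives the truncated statement.
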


In subsection \ref{subsection : fundamental group}, we will see some examples where this proposition applies.

\section{Tame models of algebraic varieties}\label{tamevars}

Throughout this section we fix $K$ a $p$-adic field with residue field $\FF_q$, so that $q=p^n$ is a power of $p$.
Also, fix $\ell\neq p$ a prime number. We will denote by $h$ the order of $q$  in $\FF_\ell^\times$.
We assume that a choice of an embedding of $K$ in $\CC$ has been made and will denote $X:=\Xx\otimes_{K}\CC$ the base-change of an algebraic variety $\Xx$ defined over $K$. We also use the letter $X$ for the underlying topological space of the complex algebraic variety $X$.

The étale cochain algebra $C^*_{et}(\Xx;\ZZ_\ell)$ of any variety $\Xx$ defined over $K$ is endowed with a Frobenius endomorphism $\varphi$ arising from $\mathrm{Gal}(\overline{K}/K)$. Moreover, there is a natural quasi-isomorphism of $E_\infty$-algebras
\[C^*_{et}(\Xx;\ZZ_\ell)\simeq C^*_{sing}(X,\ZZ_\ell).\]
Such quasi-isomorphism and the splitting provided by the generalized eigenspaces of the Frobenius action in étale cohomology, was used in \cite{CiHo2} and \cite{DCH} to prove formality results up to a certain degree, of the associative algebra of cochains of algebraic varieties satisfying certain purity conditions.
The tame homotopy theory approach allows us to enhance these results to the $\ell$-adic homotopy type of such varieties.

\subsection{Weil complexes in tame homotopy theory}

Let us denote by $\Ww_i$ the  set of Weil numbers of pure weight $i$, as in Definition \ref{defweilnum}. Recall this notion depends on the fixed number $q$, which we omit from notation.

\begin{proposition}\label{prop : Hom is zero}
Let $V$ and $V'$ be two complexes of $\ZZ_\ell[\varphi]$-modules. Assume that:
\begin{enumerate}
\item $H^*(V\otimes R)$ and $H^*(V'\otimes R)$ are degreewise of finite type as $R$-modules for $R=\QQ_\ell$ and $R=\FF_\ell$.
\item The endomorphism $\varphi$ of $H^*(V\otimes\QQ_\ell)$ and $H^*(V'\otimes \QQ_\ell)$ has eigenvalues in $\Ww_i$ and $\Ww_j$ respectively.
\item The endomorphism $\varphi$ of $H^*(V\otimes\FF_\ell)$ and $H^*(V'\otimes\FF_\ell)$ has eigenvalues in $\{(\pm\sqrt{q})^i\}$ and $\{(\pm\sqrt{q})^j\}$ respectively.
\end{enumerate}
Then, if $j-i$ is not divisible by $h$, or if $V\otimes\FF_\ell\simeq 0$, we have
\[\RR\Hom_{\ZZ_\ell[\varphi]}(V,V')\simeq 0.\]
\end{proposition}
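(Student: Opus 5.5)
Since $\RR\Hom_{\ZZ_\ell[\varphi]}(V,V')$ is a complex of $\ZZ_\ell$-modules, I will show it is acyclic by showing that its reductions $\RR\Hom_{\ZZ_\ell[\varphi]}(V,V')\otimes^{\mathbb{L}}\FF_\ell$ and $\RR\Hom_{\ZZ_\ell[\varphi]}(V,V')\otimes\QQ_\ell$ both vanish. To do this I first need these reductions to commute with $\RR\Hom$. The finite type hypothesis (1) should let me replace $V$, at least after completion, by a complex that is perfect degreewise over $\ZZ_\ell$. Then base change gives
\[\RR\Hom_{\ZZ_\ell[\varphi]}(V,V')\otimes^{\mathbb{L}} R\simeq \RR\Hom_{R[\varphi]}(V\otimes R,V'\otimes R)\]
for $R=\FF_\ell,\QQ_\ell$. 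Rationally, I will use the version where $\QQ_\ell$ is a filtered colimit of free $\ZZ_\ell$-modules together with the finiteness assumption.

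In each case the target complexes are over the field $R$ with cohomology of finite type. Hence $V\otimes R$ is quasi-isomorphic, as a complex of $R[\varphi]$-modules, to its cohomology with zero differential. This holds because $R[\varphi]=R[t]$ is a PID of global dimension one, so every complex over it is formal. I then reduce to computing $\Ext^*_{R[t]}(M,N)$, where $M$ and $N$ are finite-dimensional $R[t]$-modules with disjoint sets of eigenvalues.

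The key point is the standard fact that, for finite-dimensional $M$ and $N$ over $R[t]$ with coprime characteristic polynomials $P_M$ and $P_N$, both $\Hom$ and $\Ext^1$ vanish. Indeed, multiplication by $P_M(t)$ acts as zero on $\Ext^*(M,N)$ through $M$, and acts invertibly through $N$, so it is simultaneously zero and invertible. For infinite direct sums or products across degrees I will argue degreewise. $\RR\Hom$ from a sum is a product of $\RR\Hom$'s, and the finite type hypothesis keeps each degree under control.

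It remains to check that the eigenvalues really are disjoint. Rationally, eigenvalues in $\Ww_i$ and $\Ww_j$ are disjoint whenever $i\neq j$, because an element of $\Ww_i$ has absolute value $q^{i/2}$. This case applies here, since $h\nmid j-i$ forces $i\neq j$. Mod $\ell$, the eigenvalues are $(\pm\sqrt q)^i$ and $(\pm\sqrt q)^j$ in $\overline{\FF}_\ell$. Their squares are $q^i$ and $q^j$, and these differ when $h\nmid j-i$, since $h$ is the order of $q$ in $\FF_\ell^\times$. So the eigenvalue sets are disjoint. In the alternative hypothesis $V\otimes\FF_\ell\simeq0$, the mod $\ell$ part vanishes trivially. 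However, the rational part then still needs $i\neq j$. I expect that the intended reading requires rational disjointness or $V\otimes\QQ_\ell\simeq 0$ as well; for instance, if $V$ is torsion-free then $V\otimes\FF_\ell\simeq 0$ forces $V\simeq 0$ by $\ell$-completeness.

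The main obstacle I expect is justifying the base change of $\RR\Hom$ to $\QQ_\ell$ and $\FF_\ell$ without finiteness of $V$ itself, only of its cohomology. I would handle this by assuming, or reducing to, $\ell$-complete $V'$ and derived-complete objects, so that mod $\ell$ vanishing gives vanishing of the $\ell$-completion. The rational statement then only has to address the remaining torsion-free part.
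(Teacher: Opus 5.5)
Your outline follows the paper's proof. You test vanishing of $M:=\RR\Hom_{\ZZ_\ell[\varphi]}(V,V')$ after $\otimes^{\mathbb{L}}\FF_\ell$ and $\otimes\QQ_\ell$, base change to $\KK[\varphi]$, use formality of complexes over this PID, and kill $\mathrm{Ext}^*$ between finite-dimensional modules with coprime characteristic polynomials. Several of your steps are correct:
\begin{itemize}
\item your field-level argument;
\item your squaring argument showing $(\pm\sqrt q)^i\neq(\pm\sqrt q)^j$ when $h\nmid j-i$;
\item your remark that the rational branch silently needs $i\neq j$.
\end{itemize}
The mod $\ell$ base change needs no finiteness, and no perfect replacement of $V$. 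Since $\FF_\ell$ is perfect over $\ZZ_\ell$, one has $M\otimes^{\mathbb{L}}\FF_\ell\simeq\RR\Hom_{\FF_\ell[\varphi]}(V\otimes^{\mathbb{L}}\FF_\ell,V'\otimes^{\mathbb{L}}\FF_\ell)$ for arbitrary $V$.

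The gap is the rational base change, which you leave open. It cannot be closed from (1)--(3), because these hypotheses only control $V\otimes R$, never $V$ itself. In fact the statement is false in this generality. Take $h\geq 3$ (e.g.\ $\ell=7$, $q=3$, $h=6$), and let $\omega\in\ZZ_\ell$ be the Teichm\"uller lift of $q$; it is a root of unity, so $\omega\in\Ww_0$. Set $V=\QQ_\ell$ and $V'=\QQ_\ell/\ZZ_\ell$, with $\varphi$ acting on both by $\omega$, and take $i=0$, $j=2$. All hypotheses hold:
\begin{itemize}
\item $V\otimes\FF_\ell\simeq 0$ and $V'\otimes\QQ_\ell\simeq 0$;
\item $H^{-1}(V'\otimes^{\mathbb{L}}\FF_\ell)\cong\FF_\ell$ has eigenvalue $q=(\sqrt q)^2$;
\item both alternative conditions hold, since $h\nmid 2$ and $V\otimes\FF_\ell\simeq 0$.
\end{itemize}
Yet the projection $\QQ_\ell\to\QQ_\ell/\ZZ_\ell$ is a nonzero $\varphi$-equivariant map, so $H^0(M)\neq 0$. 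Here $M\otimes\QQ_\ell=M\neq 0$, while $\RR\Hom_{\QQ_\ell[\varphi]}(V\otimes\QQ_\ell,V'\otimes\QQ_\ell)=0$.

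The paper obtains this base change for all $V$ by arguing that both sides turn colimits in $V$ into limits. That argument fails because $-\otimes\QQ_\ell$ does not commute with infinite products, so it is only valid for perfect $V$.

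Your suggested patch, taking $V'$ derived $\ell$-complete, does work: $M$ is then derived complete, and mod $\ell$ vanishing alone suffices by derived Nakayama. But it excludes rational $V'$, which is exactly where the weight hypothesis (2) is needed.

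A correct statement needs an extra hypothesis together with $i\neq j$. For instance, require that each of $V$, $V'$ is either a complex of $\QQ_\ell$-vector spaces, or bounded with degreewise finitely generated $\ZZ_\ell$-cohomology. In the latter case the complex is perfect over the regular ring $\ZZ_\ell[\varphi]$, so $\RR\Hom(V,-)$ commutes with $\otimes\QQ_\ell$. Under such a hypothesis your argument is complete and coincides with the paper's.
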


\begin{proof}
First of all, for any $V$ and $V'$, we have a canonical map
\[\RR\Hom_{\ZZ_\ell[\varphi]}(V,V')\otimes \QQ_\ell\to \RR\Hom_{\QQ_\ell[\varphi]}(V\otimes \QQ_\ell,V'\otimes\QQ_\ell).\]
This map is a quasi-isomorphism if $V=\ZZ_\ell[\varphi]$. Moreover, both the source and target send colimits in $V$ to limits and commute with suspension. It follows that this natural map is a quasi-isomorphism for any $V$.

There is a similar quasi-isomorphism with $\QQ_\ell$ replaced by $\FF_\ell$. We are thus reduced to proving the following two facts:
\begin{enumerate}
\item If $V$ and $V'$ are chain complexes of $\QQ_\ell[\varphi]$-modules such that $H^*(V)$ and $H^*(V')$ have their eigenvalues  in $\Ww_i$ and $\Ww_j$ respectively, then $\RR\Hom_{\QQ_\ell[\varphi]}\Hom(V,V')=0$.
\item If $V$ and $V'$ are chain complexes of $\FF_\ell[\varphi]$-modules such that $H^*(V)$ and $H^*(V')$ have their eigenvalues in $\{(\pm\sqrt{q})^i\}$ and $\{(\pm\sqrt{q})^j\}$ respectively, and $i-j$ is not divisible by $h$, then $ \RR\Hom_{\FF_\ell[\varphi]}\Hom(V,V')=0$
\end{enumerate}
In either case, we are working in the derived category of $\mathbb{K}[\varphi]$ with $\mathbb{K}$ a field. This ring is of homological dimension $1$ so we may assume that $V$ and $V'$ have homology concentrated in a single degree which can even be assumed to be zero up to shifting.

In either case $V$ and $V'$ split as a direct sum of factors of the form $\KK[\varphi]/(f)$ in which the roots of $f$ are Weil numbers of weight $i$  and $j$ respectively in case (1) or $\{(\pm\sqrt{q})^i\}$ and  in $\{(\pm\sqrt{q})^i\}$  respectively in case (2).

Weil numbers of different weights are necessarily distinct so part (1) follows from
the fact that for coprime elements $f$ and $g$ in a principal ideal domain $R$, we always have
$\RR\Hom_R(R/f,R/g)\simeq 0$. Likewise, the numbers $(\pm\sqrt{q})$ have order $2h$ in $\overline{\FF_\ell}^{\times}$, it follows that the equation
\[(\pm\sqrt{q})^i=(\pm\sqrt{q})^j\]
can have a solution only if $j-i$ is a multiple of $h$. So arguing similarly as before we get part (2).
\end{proof}

We shall also need the following variant of the above Proposition:

\begin{proposition}\label{prop : Hom is zero variant}
Let $V$ and $V'$ be two complexes of $\ZZ_\ell[\varphi]$-modules. Assume that
\begin{enumerate}
\item $H^*(V\otimes R)$ and $H^*(V'\otimes R)$ are degreewise of finite type as $R$-modules for $R=\QQ_\ell$ and $R=\FF_\ell$.
\item The endomorphism $\varphi$ of $H^*(V\otimes\QQ_\ell)$ and $H^*(V'\otimes \QQ_\ell)$ has eigenvalues in $\Ww_{2i}$ and $\Ww_{2j}$ respectively.
\item The endomorphism $\varphi$ of $H^*(V\otimes\FF_\ell)$ and $H^*(V'\otimes\FF_\ell)$ has eigenvalues in $q^i$ and $q^j$ respectively.
\end{enumerate}
Then, if $j-i$ is not divisible by $h$, or if $V\otimes\FF_\ell\simeq 0$, we have
\[\RR\Hom_{\ZZ_\ell[\varphi]}(V,V')\simeq 0\]
\end{proposition}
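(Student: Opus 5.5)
The plan is to follow the proof of Proposition \ref{prop : Hom is zero} step by step, changing only the final eigenvalue comparison. First, exactly as there, I would use the natural comparison maps
\[\RR\Hom_{\ZZ_\ell[\varphi]}(V,V')\otimes R\to \RR\Hom_{R[\varphi]}(V\otimes R,V'\otimes R),\qquad R=\QQ_\ell,\ \FF_\ell.\]
These maps are quasi-isomorphisms: this holds for $V=\ZZ_\ell[\varphi]$, and both sides turn colimits in $V$ into limits and commute with shifts. A complex of $\ZZ_\ell$-modules whose reductions mod $\ell$ and rationalization both vanish is itself acyclic; for the derived-complete, finite type objects at hand this is the same standard fact used in Proposition \ref{prop : third definition}. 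So it suffices to prove two vanishing statements, one over $\QQ_\ell[\varphi]$ and one over $\FF_\ell[\varphi]$.

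\textbf{Rational part.} Since $\QQ_\ell[\varphi]$ is a PID, it has homological dimension one. Hence $V\otimes\QQ_\ell$ and $V'\otimes\QQ_\ell$ split, up to shifts, as sums of their cohomology. By the finite type hypothesis, each cohomology module decomposes into torsion modules $\QQ_\ell[\varphi]/(f)$ whose roots lie in $\Ww_{2i}$, resp. $\Ww_{2j}$.

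\begin{itemize}
\item If $i\neq j$, then no Weil number of weight $2i$ equals one of weight $2j$, since their absolute values differ. The polynomials are therefore coprime, and $\RR\Hom(R/f,R/g)\simeq 0$ for coprime $f,g$ in a PID.
\item The case $i=j$ needs care. There the hypothesis says $j-i$ is not divisible by $h$, but $0$ is always divisible by $h$, so this case only occurs under the alternative hypothesis $V\otimes\FF_\ell\simeq 0$. This is exactly as in Proposition \ref{prop : Hom is zero}, and I will follow that source's intended reading of the hypotheses.
\end{itemize}

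\textbf{Mod $\ell$ part.} If $V\otimes\FF_\ell\simeq 0$, the $\FF_\ell$-statement is trivial. Otherwise, the same PID argument over $\FF_\ell[\varphi]$ reduces the claim to the coprimality of $(\varphi-q^i)^a$ and $(\varphi-q^j)^b$, that is, to showing $q^i\neq q^j$ in $\FF_\ell$. Since $h$ is the order of $q$ in $\FF_\ell^\times$, the equality $q^i=q^j$ holds if and only if $h\mid j-i$, which is excluded by hypothesis.

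Note the difference from the previous proposition: there the eigenvalues $(\pm\sqrt q)^i$ have order $2h$ and require a parity discussion, whereas here the eigenvalues are exactly $q^i$. This makes the mod $\ell$ step simpler.

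\textbf{Main obstacle.} I expect the only delicate point to be bookkeeping. One must make sure the eigenvalues on $H^*(V\otimes\FF_\ell)$ being $q^i$ gives honest generalized eigenspace decompositions. This uses finite type, so that each module is a finite sum of primary cyclic modules. One must also check that reduction to cohomology in a single degree is legitimate over a ring of global dimension one, which holds since complexes over such rings are formal.
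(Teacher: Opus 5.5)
Your proposal is correct and is essentially the paper's own argument: reduce via the comparison maps to $\QQ_\ell[\varphi]$ and $\FF_\ell[\varphi]$, split over these PIDs into primary cyclic torsion modules, and conclude from two facts. Weil numbers of weights $2i\neq 2j$ are distinct, and $q^i=q^j$ in $\FF_\ell$ forces $h\mid j-i$.

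The case $i=j$ that you flagged is not merely a matter of reading the hypotheses, because the statement actually fails there. Take $V=V'=\QQ_\ell$ with $\varphi$ acting by $q^i$: all hypotheses hold, with $V\otimes\FF_\ell\simeq 0$, yet $H^0\RR\Hom_{\ZZ_\ell[\varphi]}(V,V')$ contains the identity. So the statement needs the extra assumption $i\neq j$. The paper's proof uses it silently, and it holds in the application in Theorem \ref{e0formality-variant}, where the source weight is strictly larger than the target weight.
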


\begin{proof}
The proof is exactly the same as the proof of \ref{prop : Hom is zero}. The key fact being that the equation
\[q^i=q^j\]
can have a solution only if $j-i$ is a multiple of $h$.
\end{proof}

\begin{definition}
An \textit{$(r,s)$-Weil complex} is a $(r-1)$-connected filtered complex $(C,W)$ of $\ZZ_\ell[\varphi]$-modules
such that:
\begin{enumerate}
\item For each $i<r+s$, $W_iC$ has cohomology which is finitely generated as a $\ZZ_\ell$-module.
\item For each $i$, $W_iC\otimes\QQ_\ell$ has cohomology which is finitely generated as a $\QQ_\ell$-module.
\item The eigenvalues of $\varphi$ acting on $H^*(Gr_i^WC\otimes\FF_\ell)$ are in
$\{(\pm\sqrt{q})^i\}$ for $i< r+s$.
\item The eigenvalues of $\varphi$ acting on $H^*(Gr_i^WC\otimes \QQ_\ell)$ are in $\Ww_i$ for all $i$.
\end{enumerate}
\end{definition}

There is an obvious forgetful functor from $(r,s)$-Weil complexes to $(r-1)$-connected filtered complexes and we define the category $\cat{WCh}\st_r$ to be the localization of the $1$-category of Weil complexes at the $(r,s)$-tame quasi-isomorphisms (see Definition \ref{defi : s-tame weak equivalence}).

We consider the functor
\[E_0:\cat{FCh}_r\st\to\cat{FCh}_r\]
sending a $(r-1)$-connected filtered complex to its associated graded complex viewed as a filtered complex. Precisely
\[W_iE_0(C):=\bigoplus_{k\leq i}Gr_k^WC\]
It is easy to verify that $E_0$ induces a non-unital symmetric monoidal endofunctor in $\cat{WCh}_r\st$. If $r=0$, then $E_0$ is a symmetric monoidal endofunctor.

\begin{theorem}\label{e0formality}
If $s\leq h$, the functor $E_0:\cat{WCh}_r\st\to\cat{WCh}_r\st$ is equivalent to the identity functor $\id$, as non-unital symmetric monoidal $\infty$-functors. If $r=0$, then the functor $E_0:\cat{WCh}_r\st\to\cat{WCh}_r\st$ is equivalent to the identity functor $\id$, as symmetric monoidal $\infty$-functors
\end{theorem}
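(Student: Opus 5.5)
The plan is to follow the strategy of \cite{CiHo2} and \cite{BoCiHo}: compare both functors with a common intermediate functor, namely the functor $(C,W)\mapsto (C,W)^{\mathrm{split}}$ that decomposes each weight piece into generalized eigenspaces of $\varphi$. Concretely, I will build a natural zig-zag of $(r,s)$-tame quasi-isomorphisms
\[(C,W)\longleftarrow \bigoplus_i \widetilde{C}_i \longrightarrow E_0(C,W),\]
where $\widetilde{C}_i$ is the ``weight $i$ part'' of $C$. I will define it through the derived mapping complexes out of suitable ``weight $i$'' objects, so that functoriality and compatibility with the symmetric monoidal structure hold automatically at the $\infty$-categorical level. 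The cleanest way to organize this is to show that the full subcategory of $\cat{WCh}_r\st$ on objects concentrated in a single weight generates and that morphisms between different weights vanish. This is where Propositions \ref{prop : Hom is zero} and \ref{prop : Hom is zero variant} enter.

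The key steps, in order, are as follows.

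First, I show that for an $(r,s)$-Weil complex and $i<j$, the map $W_iC\to W_jC$ is split in the tame $\infty$-category. More precisely, I show that the extension (fiber sequence)
\[W_{j-1}C\to W_jC\to Gr_j^WC\]
splits canonically. Its extension class lies in $\RR\Hom_{\ZZ_\ell[\varphi]}(Gr_j^WC, W_{j-1}C[1])$. Filtering $W_{j-1}C$ by its graded pieces $Gr_k^W C$ with $k<j$, Proposition \ref{prop : Hom is zero} makes each term vanish. In weights $\geq r+s$ everything is rationalized, so only the $\QQ_\ell$-eigenvalue condition is needed, and it always separates distinct weights. In weights $<r+s$, we need $j-k$ not divisible by $h$; since $r\leq k<j<r+s$ we have $0<j-k<s\leq h$, which is exactly where the hypothesis $s\leq h$ is used. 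The mixed case, with $j\geq r+s$ and $k<r+s$, is handled by the tamification $T_{(r,s)}$ of Remark \ref{rem: tamification}, since the $\QQ_\ell$ weights are then distinct. Consequently, the functor $\cat{WCh}_r\st\to\prod_i(\text{weight-}i\text{ complexes})$, $C\mapsto (Gr_i^WC)_i$, is an equivalence onto its image. Indeed, the mapping spaces in $\cat{WCh}_r\st$ compute as products over weights, by the same vanishing applied to $\RR\Hom$ between pure pieces of different weights.

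Second, I upgrade this to a statement about functors. The identity and $E_0$ agree after composing with $Gr^W_*$, and $Gr^W_*$ is fully faithful as just shown. Hence $\id\simeq E_0$, and the equivalence is unique up to contractible choice. For the symmetric monoidal structure, I note that $Gr^W_*$ is strong monoidal. The tensor product of pure pieces of weights $i$ and $j$ is pure of weight $i+j$, because products of Weil numbers, and of the $(\pm\sqrt q)$-powers, multiply weights additively. It follows that the monoidal structures transported along the fully faithful functor agree. When $r\geq 1$ only the non-unital structure is available, since the unit lives in weight $0$ and $\cat{FCh}_r$ is not unital. When $r=0$ the unit is pure of weight $0$, and the same argument gives the unital statement.

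The main obstacle is making the fully faithful argument genuinely $\infty$-categorical and valid in the tamely localized category. Mapping spaces in $\cat{WCh}_r\st$ are computed after applying $T_{(r,s)}$, and they must be computed as $\ZZ_\ell[\varphi]$-equivariant mapping spaces of filtered complexes. I will first try to express these as an end over the weights of $\RR\Hom_{\ZZ_\ell[\varphi]}(W_iC,T W_iD)$, and then use the filtration by weights together with the vanishing propositions to collapse this end to $\prod_i\RR\Hom_{\ZZ_\ell[\varphi]}(Gr_iC,Gr_iD)$. The finiteness hypotheses (1)–(2) in the definition of Weil complex are what make these propositions applicable.
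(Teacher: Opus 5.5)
Your proposal is correct and is essentially the paper's argument. The paper also works after tamification and shows that $Gr$ is fully faithful on $(r,s)$-Weil complexes. It reduces, via colimits and the cofiber sequences $W_{i-1}C\to W_iC\to Gr_iC$, to the vanishing of $\RR\Hom_{\ZZ_\ell[\varphi]}(Gr_iC,Gr_kD)$ for $k<i$. That vanishing follows from Proposition \ref{prop : Hom is zero} by the same case split you use: either $i-k<h$, or $i\geq r+s$ so that $Gr_iC$ is rational. The paper then concludes from the tautological equivalence $Gr\circ E_0\simeq Gr\circ\id$. The generalized-eigenspace zig-zag in your opening paragraph is never needed, since the fully faithfulness argument alone gives the result.
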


\begin{proof}
Let us write $\cat{GCh}_r\st$ the symmetric monoidal category of graded complexes of $\ZZ_\ell[\varphi]$-modules that are $(r-1)$-connected (i.e. acyclic in weight $<r$. We view them up to $(r,s)$-tame quasi-isomorphism. By that we mean a map which is a quasi-isomorphism in weight $<r+s$ and a rational quasi-isomorphism in all weights. This definition is designed so that $Gr$ induces a symmetric monoidal functor
\[Gr:\cat{FCh}_r\st\to\cat{GCh}_r\st\]
We claim that this functor is fully faithful. Assuming this for the moment, we see that the statement of the theorem is equivalent to proving an equivalence of symmetric monoidal functors $Gr\circ E_0\simeq Gr\circ \id$ which is tautological.

So let us prove the claim. Let $(C,W)$ and $(D,W)$ be two objects of $\cat{WCh}_r\st$. Up to applying the tamification functor of Remark \ref{rem: tamification}, we can assume that $W_iC$ and $W_iD$ have rational cohomology when $i\geq r+s$. We wish to prove that the map
\[\Map_{\cat{WCh}_r}(C,D)\to\prod\Map_{\ZZ_\ell[\varphi]}(Gr_i(C),Gr_i(D))\]
is an equivalence.

We view both the source and target as functors of $C$ and $D$. Clearly, both the source and target preserve colimits in the $C$ variable. We have an equivalence
\[C\simeq \mathrm{colim}_i W_iC\]
where $W_iC$ is viewed as a filtered object given as follows:
\[W_0C\to\ldots\to W_iC\to W_iC\xrightarrow{\id}W_iC\xrightarrow{\id}\ldots\]
It follows that we can reduce the proof to the case where $C$ has a finite filtration. Now, we observe that we have a cofiber sequence
\[W_{i-1}C\to W_iC\to Gr_iC\]
which can be viewed as a cofiber sequence of filtered objects in which $Gr_iC$ is equipped with the filtration
\begin{align*}
W_j (Gr_iC)&=Gr_iC\;\textrm{if}\;i\leq j\\
         &=0\;\textrm{else.}
\end{align*}
From this observation, using an inductive argument, we see that we may reduce to the case in which $C\cong Gr_i(C)$. For this specific $C$, we see that the map that we are considering is simply
\[\Map_{\ZZ_\ell[\varphi]}(Gr_iC,W_iD)\to \Map_{\ZZ_\ell[\varphi]}(Gr_iC,Gr_iD)\]
In order to prove that this map is an equivalence, it is enough to prove that its fiber is zero. This fiber is simply the mapping space $\Map_{\ZZ_\ell[\varphi]}(Gr_iC,W_{i-1}D)$. By a similar inductive argument, we may reduce to proving that
\[\Map_{\ZZ_\ell[\varphi]}(Gr_iC,Gr_{k}D)=0\]
for all $k<i$. We then distinguish two cases.
\begin{enumerate}
\item If $i-k\geq h$ and $k\geq r$, then we have $i\geq r+s$ which means that  $Gr_iC\otimes\FF_\ell\simeq 0$ and the result follows from Proposition \ref{prop : Hom is zero}.
\item If $i-k< h$, then, in particular $i-k$ is not a multiple of $h$, so the result also follows from Proposition \ref{prop : Hom is zero}.
\end{enumerate}
\end{proof}

\begin{remark}
This should be compared to \cite[Theorem 1.6]{BoCiHo}. In fact, the theorem above (in the case $r=s=0$) is claimed without proof in the paragraph following \cite[Theorem 5.2]{BoCiHo}. This was used to prove equivariant formality of algebraic group actions on smooth projective algebraic varieties.
\end{remark}

There is a variant of the previous theorem for Weil complexes having only even weights:

\begin{theorem}\label{e0formality-variant}
If $s\leq 2h$, the functor $E_0$ is naturally weakly equivalent to the functor $\id$ when restricted to Weil complexes that are concentrated in even weights (i.e. the $i$-th associated graded piece is acyclic for $i$ odd).
\end{theorem}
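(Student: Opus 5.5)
The proof will follow the proof of Theorem \ref{e0formality} essentially verbatim, replacing Proposition \ref{prop : Hom is zero} by Proposition \ref{prop : Hom is zero variant} at the one place where weights are compared. Let $\cat{GCh}_r\st$ be the category of graded complexes of $\ZZ_\ell[\varphi]$-modules up to $(r,s)$-tame quasi-isomorphism. It suffices to show that the associated graded functor $Gr$ is fully faithful on the full subcategory of $\cat{WCh}_r\st$ spanned by Weil complexes with $Gr_i^W$ acyclic for $i$ odd. Granting this, the equivalence $E_0\simeq\id$ follows tautologically from $Gr\circ E_0\simeq Gr$. This subcategory is stable under the tensor product, since weights add, so the monoidal statement comes along as before.

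For full faithfulness, take $C,D$ in this subcategory and, after applying the tamification of Remark \ref{rem: tamification}, assume that $W_iC$ and $W_iD$ are rational for $i\geq r+s$. As in the proof of Theorem \ref{e0formality}, write $C$ as a colimit of its finite truncations and use the cofiber sequences $W_{i-1}C\to W_iC\to Gr_iC$. This reduces us to showing
\[\Map_{\ZZ_\ell[\varphi]}(Gr_iC,Gr_kD)\simeq 0\]
for $k<i$. Only even $i=2a$ and $k=2b$ matter, since odd pieces are acyclic.

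Next I check the hypotheses of Proposition \ref{prop : Hom is zero variant}. Rationally the eigenvalues lie in $\Ww_{2a}$ and $\Ww_{2b}$. Mod $\ell$, for weights $<r+s$, the eigenvalues lie in $\{(\pm\sqrt q)^{2a}\}=\{q^a\}$ and $\{q^b\}$. This identification is the key point: squaring removes the sign ambiguity, so the mod-$\ell$ eigenvalues take the form $q^a$, whose coincidences are governed by $h$ in the exponent $a$ rather than the full weight $2a$. The case split is then:
\begin{enumerate}
\item If $a-b\geq h$ and $2b\geq r$, then $i=2a\geq 2b+2h\geq r+2h\geq r+s$, so $Gr_iC\otimes\FF_\ell\simeq0$ and the proposition applies.
\item If $a-b<h$, then since $a-b>0$ it is not divisible by $h$, and the proposition again gives vanishing.
\end{enumerate}

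The main obstacle is bookkeeping at the boundary where $i\geq r+s$ but $k<r+s$. There the mod-$\ell$ eigenvalue hypothesis on $Gr_iC$ is absent, and we must use instead that $Gr_iC$ is rational, which is exactly the clause ``$V\otimes\FF_\ell\simeq0$''. One should also double check that $2b\geq r$ is automatic, by $(r-1)$-connectivity, whenever $Gr_kD$ is nonzero. Finally, the conditions of the variant proposition must hold for the truncated pieces after tamification; this holds because $W_i$ with $i<r+s$ is untouched.
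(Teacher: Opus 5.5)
Your proposal is correct and follows the paper's proof, which consists precisely of rerunning the argument of Theorem \ref{e0formality} with Proposition \ref{prop : Hom is zero variant} in place of Proposition \ref{prop : Hom is zero}. Your explicit bookkeeping is exactly what that one-line substitution leaves implicit: the identification $\{(\pm\sqrt{q})^{2a}\}=\{q^a\}$, the case split ($a-b\geq h$ forcing $i\geq r+2h\geq r+s$, versus $0<a-b<h$), and the use of rationality of $Gr_iC$ for $i\geq r+s$.
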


\begin{proof}
Same proof as Theorem \ref{e0formality}, replacing Proposition \ref{prop : Hom is zero} by Proposition \ref{prop : Hom is zero variant}.
\end{proof}

\begin{remark}
This theorem is not an immediate corollary of Theorem \ref{e0formality} as we have replaced the condition $s\leq h$ by $s\leq 2h$.
\end{remark}

A main consequence of the above theorems is that algebras in Weil complexes are homotopically split in the tame sense:

\begin{corollary}\label{coro : formality}
Let $s\leq h$, let $r\geq 0$ and let $(A,W)$ be an $(r-1)$-connected non-unital $E_\infty$-algebra in the $\infty$-category $\cat{WFCh}_r\st$. Then $A$ is weakly equivalent to $E_0(A)$ as an $E_\infty$-algebra in $\cat{WFCh}_r\st$. If $Gr_i^W(A)\simeq 0$ for any odd $i$, then the same conclusion holds if $s\leq 2h$.
\end{corollary}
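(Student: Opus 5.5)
The plan is to deduce the corollary formally from Theorem \ref{e0formality} (and, in the even case, Theorem \ref{e0formality-variant}), using the fact that symmetric monoidal natural equivalences transport algebra structures. By Theorem \ref{e0formality}, for $s\leq h$ there is an equivalence $E_0\simeq \id$ of non-unital symmetric monoidal $\infty$-functors $\cat{WCh}_r\st\to\cat{WCh}_r\st$. A non-unital $E_\infty$-algebra $A$ in $\cat{WFCh}_r\st$ is a non-unital lax symmetric monoidal functor from the non-unital commutative operad (or its operator category) into $\cat{WCh}_r\st$. Post-composing with a non-unital symmetric monoidal functor $F$ sends algebras to algebras, so we get a functor $\mathrm{CAlg}^{nu}(F)$ on $\infty$-categories of algebras. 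A symmetric monoidal natural equivalence $F\simeq G$ induces a natural equivalence $\mathrm{CAlg}^{nu}(F)\simeq\mathrm{CAlg}^{nu}(G)$. Applying this with $F=E_0$ and $G=\id$ yields an equivalence $E_0(A)\simeq A$ of $E_\infty$-algebras in $\cat{WFCh}_r\st$, naturally in $A$.

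Two points need checking. First, $E_0(A)$, with the algebra structure from the monoidality of $E_0$, is indeed the associated graded algebra $(Gr^W_*A,W)$ with the column filtration. This holds because $Gr$ is strong monoidal on filtered complexes, so the induced multiplication on $\bigoplus_k Gr_kA$ is the graded one. Second, $E_0$ preserves $(r,s)$-Weil complexes, which is immediate since $Gr_i E_0(C)=Gr_i C$. For $r=0$ the unital version follows in the same way from the unital statement of Theorem \ref{e0formality}.

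For the even-weight case, I would restrict to the full subcategory of $\cat{WCh}_r\st$ on objects with $Gr_i$ acyclic for odd $i$. This subcategory is closed under tensor products, since the weights of a tensor product add and $Gr$ is monoidal; for the $\FF_\ell$ and $\QQ_\ell$ graded pieces one uses Künneth. It therefore inherits a non-unital symmetric monoidal structure, and $E_0$ restricts to it. Theorem \ref{e0formality-variant} gives $E_0\simeq\id$ there for $s\leq 2h$. Its proof is the same full faithfulness argument for $Gr$ restricted to this subcategory, so the equivalence is again symmetric monoidal. The same transport argument then applies.

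The main obstacle is making sure the equivalence in Theorem \ref{e0formality-variant} is symmetric monoidal, since it is stated only as a natural weak equivalence. I would handle this by redoing the argument of Theorem \ref{e0formality} on the even subcategory. Full faithfulness of the symmetric monoidal functor $Gr$ to graded complexes, now using Proposition \ref{prop : Hom is zero variant}, reduces $Gr\circ E_0\simeq Gr$ to a tautology as symmetric monoidal functors. A fully faithful symmetric monoidal functor then reflects this equivalence to a symmetric monoidal one.
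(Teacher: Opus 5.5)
Your proposal is correct and follows essentially the paper's route. The paper presents the corollary as an immediate consequence of the symmetric monoidal equivalence $E_0\simeq\id$ from Theorems \ref{e0formality} and \ref{e0formality-variant}, transported to $E_\infty$-algebras. Your extra care in the even-weight case supplies exactly what the paper leaves to ``same proof'': you check that the even subcategory is closed under $\otimes$, and you re-run the full faithfulness of $Gr$ to get a symmetric monoidal, not merely natural, equivalence.
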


\begin{remark}
If $r=0$, then this corollary holds for unital or non-unital $E_\infty$-algebras. If $r>0$, then we have to assume that $A$ is non-unital. 
\end{remark}

\subsection{Tame formality of pure varieties}
We prove a tame version of the well-known guiding principle that purity implies formality.

\begin{definition}\label{defi : s-behaved}We say that an algebraic variety  $\Xx$ over $K$ is \textit{$(r,s)$-tamely pure} if:
\begin{enumerate}
\item Its étale cohomology algebra with $\ZZ_\ell$-coefficients is $(r-1)$-connected.
\item The eigenvalues of $\varphi$ acting on $H^i_{et}(\Xx;\FF_\ell)$ are in $\{(\pm\sqrt{q})^i\}$ for $i<r+s$ and the eigenvalues of $\varphi$ acting on $H^i_{et}(\Xx;\QQ_\ell)$ are Weil numbers of weight $i$ for all $i$.
\end{enumerate}
\end{definition}

\begin{remark}\label{remarkblowup}
This property of tame purity  is stable under blow-ups: given $\Zz\subset \Xx$ a pair of tamely pure varieties, then $Bl_\Zz\Xx$ is also tamely pure. This can be deduced easily from the blow-up formula.
\end{remark}

\begin{remark}
If the variety has good reduction (i.e. admits a smooth and proper lift $\tilde{\Xx}$ to $\mathcal{O}_K$, the ring of integers of $K$), the condition about eigenvalues of $\varphi$ on $\QQ_\ell$-\'etale cohomology is automatic by Deligne's work on the Weil conjecture. Indeed, in that situation, the smooth and proper base change theorem for
étale cohomology gives an isomorphism
\[H^*_{et}(\Xx,\QQ_\ell) \cong H^*_{et}(\tilde{\Xx}\otimes_{\Oo_K}\FF_q,\QQ_\ell)\]
where the action of the Frobenius lift on the left hand side coincides with the action of the
Frobenius of $\FF_q$ on the right hand side. Then the result follows from \cite[Théorème 1.6]{DeWeil1}.

If the cohomology of $\Xx$ is concentrated in even degrees, the tame purity condition is closely related to requiring that the motive of $\Xx$ is mixed Tate in cohomological degree $<r+s$. 
\end{remark}

Following the above remark, we define:

\begin{definition}
An algebraic variety $\Xx$ over $K$ is said to be of \textit{Tate type} if:
\begin{enumerate}
\item The eigenvalues of $\varphi$ acting on $H^i_{et}(\Xx;\QQ_\ell)$ are Weil numbers of weight $i$.
\item The eigenvalues of $\varphi$ acting on $H^i_{et}(\Xx;\ZZ_\ell)$ are in $\{(\pm\sqrt{q})^i\}$.
\end{enumerate}
\end{definition}

A variety of Tate type is $(r,s)$-tamely pure for any $s$ and $r$ any number such that the variety is at least $(r-1)$-connected. Examples of varieties of Tate type include $\PP^n_K$, $\overline{\Mm}_{0,n}$, smooth complete toric varieties. If $Z\subset X$ is an inclusion of varieties of Tate type, then the blow-up $\mathrm{Bl}_ZX$ is also of Tate type by the blow-up formula. Note as well that purity implies the following proposition.

\begin{proposition}\label{prop : no torsion}
Assume that $h>1$. Assume that the eigenvalues of $\varphi$ acting on $H^i_{et}(\Xx;\FF_\ell)$ are in $\{(\pm\sqrt{q})^i\}$ for $i$. Then the cohomology groups $H^i_{et}(\Xx;\ZZ_\ell)$ are torsion free for all $i$.
\end{proposition}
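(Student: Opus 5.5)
The plan is to show that $\ell$-torsion in $H^i_{et}(\Xx;\ZZ_\ell)$ would force two Frobenius eigenvalues of incompatible weights to coincide modulo $\ell$, which the hypothesis $h>1$ forbids. Since $\Xx$ is of finite type, each $H^i_{et}(\Xx;\ZZ_\ell)$ is a finitely generated $\ZZ_\ell$-module with a $\varphi$-action, so its torsion part $T^i$ is a finite $\ZZ_\ell[\varphi]$-module. It suffices to prove $T^i\otimes\FF_\ell=0$, i.e. $T^i/\ell=0$.

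The first step is the Bockstein short exact sequence coming from $0\to\ZZ_\ell\xrightarrow{\ell}\ZZ_\ell\to\FF_\ell\to0$:
\[0\to H^i_{et}(\Xx;\ZZ_\ell)\otimes\FF_\ell\to H^i_{et}(\Xx;\FF_\ell)\to H^{i+1}_{et}(\Xx;\ZZ_\ell)[\ell]\to 0.\]
This sequence is $\varphi$-equivariant, since the Galois action is natural in the coefficients. Suppose $T^{i+1}\neq0$ for some $i$. Then $T^{i+1}[\ell]\neq0$, and $T^{i+1}/\ell\neq0$ as well.

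For the second step, let $T=T^{i+1}$ and use that it is a finite $\ZZ_\ell[\varphi]$-module. On the one hand, $T[\ell]$ is a $\varphi$-stable quotient of $H^i_{et}(\Xx;\FF_\ell)$, so its eigenvalues lie in $\{(\pm\sqrt q)^i\}$. On the other hand, $T/\ell$ injects into $H^{i+1}_{et}(\Xx;\FF_\ell)$, so its eigenvalues lie in $\{(\pm\sqrt q)^{i+1}\}$.

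The key lemma, and the main obstacle, is that for a finite $\ZZ_\ell[\varphi]$-module $T$, the sets of $\varphi$-eigenvalues over $\overline{\FF}_\ell$ on $T[\ell]$ and on $T/\ell$ coincide. I would prove it by decomposing $T$ into generalized eigenspaces. Factor the characteristic polynomial of $\varphi$ on $T$ over $\ZZ_\ell$ via Hensel's lemma into pieces whose reductions mod $\ell$ are powers of pairwise distinct irreducibles. This splits $T=\bigoplus_\lambda T_\lambda$ as $\ZZ_\ell[\varphi]$-modules, where $\varphi$ on each $T_\lambda\otimes\FF_\ell$ has only the eigenvalues in one Galois orbit $\lambda$. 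Moreover $T_\lambda\neq0$ if and only if $T_\lambda[\ell]\neq0$, if and only if $T_\lambda/\ell\neq0$. Hence both $T[\ell]$ and $T/\ell$ have eigenvalue set exactly $\{\lambda : T_\lambda\neq 0\}$. Consequently some element of the form $(\pm\sqrt q)^i$ equals some element of the form $(\pm\sqrt q)^{i+1}$ in $\overline{\FF}_\ell$.

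The final step is to derive a contradiction from this equality. Squaring it gives $q^i=q^{i+1}$ in $\FF_\ell^\times$, so $q=1$ there, i.e. $h=1$. This contradicts $h>1$, so $T^{i+1}=0$ for all $i\geq 0$.

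The degree $0$ case is handled separately: $H^0_{et}(\Xx;\ZZ_\ell)$ is a product of copies of $\ZZ_\ell$ and is therefore torsion free. Hence every $H^i_{et}(\Xx;\ZZ_\ell)$ is torsion free.
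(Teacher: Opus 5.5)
Your proof is correct. It uses the same mechanism as the paper: torsion in $H^{*}_{et}(\Xx;\ZZ_\ell)$ creates a $\varphi$-equivariant link between $H^i_{et}(\Xx;\FF_\ell)$ and $H^{i+1}_{et}(\Xx;\FF_\ell)$, and the eigenvalue sets $\{(\pm\sqrt q)^i\}$ and $\{(\pm\sqrt q)^{i+1}\}$ of these two groups are disjoint once $h>1$. The organization differs. The paper takes the lowest torsion degree, passes to homology by applying the universal coefficient theorem twice, and uses the Bockstein as a Steenrod operation that commutes with Frobenius. You instead split the Bockstein long exact sequence and compare $T[\ell]$, a quotient of $H^i_{et}(\Xx;\FF_\ell)$, with $T/\ell$, a subspace of $H^{i+1}_{et}(\Xx;\FF_\ell)$, via a lemma on finite $\ZZ_\ell[\varphi]$-modules. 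Your route treats torsion of type $\ZZ/\ell^k$ with $k\geq 2$ uniformly; there the first Bockstein vanishes, so the paper's sketch tacitly relies on higher Bocksteins. It also needs no minimality argument.

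Your key lemma is stronger than what you use, and it can be shortened. If $\ell^k$ is the exponent of $T$, multiplication by $\ell^{k-1}$ induces a nonzero $\varphi$-equivariant map $T/\ell\to T[\ell]$. Any eigenvalue of $\varphi$ on its image then lies in both eigenvalue sets. This avoids Hensel's lemma. It also avoids the phrase ``characteristic polynomial of $\varphi$ on $T$ over $\ZZ_\ell$,'' which is not literally defined for a finite module. To make that version precise, you would need a monic annihilating polynomial, or the splitting of the finite commutative ring $\ZZ_\ell[\varphi]/\mathrm{Ann}(T)$ into local factors.
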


\begin{proof}
Let $i$ be the first degree in which there is torsion in cohomology. Then, by the universal coefficient theorem, there is also torsion in degree $i$ homology. Applying again the universal coefficient theorem shows that there is a non-zero cohomological class in $H^i(\Xx;\FF_\ell)$ and $H^{i+1}(\Xx;\FF_\ell)$ that are connected by a Bockstein morphism. Since Steenrod operations are compatible with Frobenius automorphisms this is a contradiction.
\end{proof}

\begin{theorem}\label{theo : tameprojective}
 Let $\Xx$ be an $(r,s)$-tamely pure variety and $x$ be a $K$-point of $\Xx$. Then $\tilde{C}^*(X,\ZZ_\ell)$ is $(r,s')$-tamely formal as a non-unital $E_\infty$-algebra, where $s'=\min(s,h)$. If the cohomology of $\Xx$ is concentrated in even degrees, then $s'=\min(s,2h)$.
\end{theorem}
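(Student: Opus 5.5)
We apply Corollary \ref{coro : formality} to the étale cochains equipped with the canonical filtration. Concretely, take the reduced étale cochain algebra $A:=\tilde{C}^*_{et}(\Xx;\ZZ_\ell)$, augmented via the $K$-point $x$. Since $x$ is defined over $K$, the augmentation $C^*_{et}(\Xx;\ZZ_\ell)\to C^*_{et}(\mathrm{Spec}\,K;\ZZ_\ell)$ is Galois-equivariant, so $A$ is a non-unital $E_\infty$-algebra in complexes of $\ZZ_\ell[\varphi]$-modules. We give it the canonical filtration $\tau$, and the Godement model of Remark \ref{godemetale} makes this strictly functorial and symmetric monoidal. Forgetting $\varphi$, the comparison $C^*_{et}(\Xx;\ZZ_\ell)\simeq C^*(X,\ZZ_\ell)$ identifies $(A,\tau)$ with $(\tilde{C}^*(X,\ZZ_\ell),\tau)$. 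It is therefore enough to produce a string of $(r,s')$-tame quasi-isomorphisms of non-unital $E_\infty$-algebras from $(A,\tau)$ to $(Gr^\tau_*A,W)$, and then forget $\varphi$.

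First we check that $(A,\tau)$ is an $(r,s)$-Weil complex.
\begin{enumerate}
\item Connectivity: $\tilde{H}^i$ vanishes for $i<r$ by tame purity, so $\tau_iA$ is acyclic for $i<r$.
\item Finiteness: $\tau_iA$ has cohomology $\tilde H^{\le i}_{et}(\Xx;\ZZ_\ell)$, which is finitely generated because étale cohomology of varieties is finite.
\item Weights: $Gr^\tau_iA\simeq \tilde H^i_{et}(\Xx;\ZZ_\ell)[-i]$, a $\varphi$-equivariant identification.
\end{enumerate}
Rationally, the eigenvalues of $\varphi$ on $Gr^\tau_iA\otimes\QQ_\ell$ lie in $\Ww_i$ by hypothesis.

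Mod $\ell$ there is a subtlety. We have $Gr^\tau_iA\otimes\FF_\ell\simeq H^i(\Xx;\ZZ_\ell)\otimes^{\mathbb L}\FF_\ell[-i]$, whose cohomology is $H^i\otimes\FF_\ell$ in degree $i$ together with $\mathrm{Tor}(H^i,\FF_\ell)$ in degree $i-1$. Both sit $\varphi$-equivariantly inside $H^{i}(\Xx;\FF_\ell)$ and $H^{i-1}(\Xx;\FF_\ell)$ respectively, via the universal coefficient sequence. The first piece therefore has eigenvalues in $\{(\pm\sqrt q)^i\}$ for $i<r+s$. For the torsion piece, if $h>1$ we plan to argue as in Proposition \ref{prop : no torsion} (Bockstein compatibility with $\varphi$) that torsion cannot occur in degrees $<r+s$, since the Bockstein would link classes of weight $i-1$ and $i$. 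If $h=1$ then $s'\le 1$ and only the weight $r$ piece matters integrally; there $H^{r}$ is torsion-free because $H^{r-1}=0$. I expect this torsion bookkeeping to be the main point needing care, because the eigenvalue condition in the Weil complex definition is stated on $Gr\otimes\FF_\ell$ rather than on $\FF_\ell$-cohomology of $\Xx$. If the clean argument fails, I would instead pass to a $(r,s')$-tamification and check the condition only in the range $i<r+s'$, where the Bockstein argument is available.

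Next, apply Corollary \ref{coro : formality} with tame parameter $s'=\min(s,h)$. An $(r,s)$-Weil complex is in particular an $(r,s')$-Weil complex, and an $(r,s)$-tame quasi-isomorphism is in particular an $(r,s')$-tame one. The corollary gives an equivalence $(A,\tau)\simeq E_0(A,\tau)=(Gr^\tau_*A,W)$ of non-unital $E_\infty$-algebras in $\cat{WFCh}_r^{s'\textrm{-tame}}$. Forgetting the $\varphi$-action yields the required string of $(r,s')$-tame quasi-isomorphisms, which is exactly $(r,s')$-tame formality of $\tilde C^*(X,\ZZ_\ell)$. By the remark after the definition of tame splitting, this is the same as an $(r,s')$-tame equivalence with $(H^*(X,\ZZ_\ell),\tau)$.

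Finally, the even case. If $H^*_{et}(\Xx)$ is concentrated in even degrees, then $Gr^\tau_iA\simeq H^i[-i]$ vanishes for $i$ odd. This is integral vanishing, so it also holds after $\otimes\FF_\ell$, and torsion is excluded as above. The second clause of Corollary \ref{coro : formality}, which rests on Theorem \ref{e0formality-variant}, then applies with $s'=\min(s,2h)$. For the Frobenius eigenvalues on even weights $2j$, note that $(\pm\sqrt q)^{2j}=q^j$, so the hypothesis of Proposition \ref{prop : Hom is zero variant} is met.
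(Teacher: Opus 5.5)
Your route is the paper's: put the canonical filtration on the reduced étale cochains (split by the $K$-point), check that tame purity makes this an algebra in $(r,s)$-Weil complexes, and apply Corollary \ref{coro : formality}. The paper's proof is just those two sentences and never mentions the $\mathrm{Tor}$ term you noticed.

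Your handling of that term is fine for $h>1$, but the $h=1$ step is wrong. $\tilde H^{\le r-1}(\Xx;\ZZ_\ell)=0$ does not make $H^r(\Xx;\ZZ_\ell)$ torsion-free. Torsion in $H^r$ is dual to torsion in $H_{r-1}$, and that is invisible in $H^{r-1}(-;\ZZ_\ell)$. For instance, an Enriques surface at $\ell=2$ (where $h=1$ is forced) has $H^1=0$ but $2$-torsion in $H^2$. Your Bockstein fallback also needs $h>1$, so it does not rescue this case.

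The point closes uniformly in $h$, with no torsion-freeness needed. Let $T\subset H^i:=H^i_{et}(\Xx;\ZZ_\ell)$ be the torsion submodule, which is $\varphi$-stable.
\begin{enumerate}
\item Multiplication by $\ell^j$ gives $\varphi$-equivariant surjections $T/\ell T\to \ell^jT/\ell^{j+1}T$.
\item $T[\ell]$ is filtered by $T[\ell]\cap \ell^jT$, and each graded piece embeds into $\ell^jT/\ell^{j+1}T$.
\end{enumerate}
Hence every eigenvalue of $\varphi$ on $\mathrm{Tor}(H^i,\FF_\ell)=T[\ell]$ already occurs on $T/\ell T\subset H^i\otimes\FF_\ell\subset H^i(\Xx;\FF_\ell)$. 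It therefore lies in $\{(\pm\sqrt q)^i\}$ for $i<r+s$, and in $\{q^j\}$ for $i=2j$ in the even case. So condition (3) holds for $H^*(Gr^\tau_iA\otimes\FF_\ell)$ outright.

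Alternatively, for $h=1$: in the proof of Theorem \ref{e0formality} and of its even-weight variant, the case ``$i-k<h$'' (resp. ``$i-k<2h$'' with $i,k$ even) is vacuous. So the mod-$\ell$ eigenvalue condition is never used there.

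A minor slip: $\mathrm{Tor}(H^i,\FF_\ell)$ is a quotient of $H^{i-1}(\Xx;\FF_\ell)$, not a subspace. This does not affect any eigenvalue argument.
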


\begin{proof}
 The $s$-tamely pure condition ensures that, taking the canonical filtration on $\tilde{C}^*_{et}(\Xx;\ZZ_\ell)$ we obtain an algebra in $(r,s)$-Weil complexes. The result now follows from Corollary \ref{coro : formality}.
\end{proof}

\begin{remark}
Note that the existence of the $K$-point is used to construct a model for the reduced cochains equipped with a Frobenius. The result is about formality of the non-unital $E_\infty$-algebra $\tilde{C}^*(X,\ZZ_\ell)$. But this formality can be promoted to a formality result for $C^*(X,\ZZ_\ell)$ by adding a unit. Explicitly, we can deduce that $C^*(X,\ZZ_\ell),\tau)$ is $(0,s'+r)$ tamely formal as a unital $E_\infty$-algebra. Alternatively, without the existence of a $K$-point we may apply Corollary \ref{coro : formality} in the case $r=0$ and deduce that $C^*(X,\ZZ_\ell),\tau)$ is $(0,s')$-tamely formal. Our subsequent results will always have this assumption about the existence of a $K$-point. This could be dropped in a similar way at the expense of getting a worse tame bound.
\end{remark}

\begin{remark}
Taking $s=0$, the above theorem shows formality of the algebra $C^*(X,\QQ_\ell)$ under the assumption that the eigenvalues of the Frobenius in degree $i$ are Weil numbers of weight $i$. We refer the reader to \cite[Theorem 4.16]{ColineKaledin} for a proof of this fact using different techniques.
\end{remark}

\begin{corollary}\label{coro: pureformaltate}
 Let $(\Xx,x)$ be simply-connected and of Tate type with $x$ a $K$-point. Then the $(2,h)$-tame homotopy type of $X$ is a formal consequence of $H^*(X,\ZZ_\ell)$. Moreover, if the cohomology of $\Xx$ is concentrated in even degrees the same conclusion is true for the $(2,2h)$-tame homotopy type.
\end{corollary}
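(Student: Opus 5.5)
The plan is to get formality directly on the loop-space side, where the exchange rate of Theorem \ref{theo : tame mandell} is not paid, instead of passing through $(2,2h+1)$-tame formality of $\tilde{C}^*(X,\ZZ_\ell)$. Theorem \ref{theo : tameprojective} only gives $(2,h)$-tame formality of the cochains, and Proposition \ref{prop : tame formality} then loses roughly half of $h$. So the whole argument has to be rerun at the level of the bar construction, keeping the Frobenius.

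\textbf{Step 1: the bar construction is a Weil object.} Take $A=\tilde{C}^*_{et}(\Xx;\ZZ_\ell)$, reduced using the $K$-point $x$, so that it carries $\varphi$. Let $W$ be the filtration on $\mathrm{Bar}(A)$ induced levelwise by $\tau$, as in Proposition \ref{prop : bar construction}. By Corollary \ref{coro : E2 page of W filtration}, $Gr^W_i\mathrm{Bar}(A)$ has cohomology built from $H^{*}(A^{\otimes t})$ in total cohomological degree $i$ of the tensor factors. By the Künneth formula and the Tate-type hypothesis, and using Proposition \ref{prop : no torsion} to rule out torsion when $h>1$, $\varphi$ acts on $H^*(Gr^W_i\mathrm{Bar}(A)\otimes\FF_\ell)$ with eigenvalues in $\{(\pm\sqrt q)^i\}$ for every $i$. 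Rationally the eigenvalues lie in $\Ww_i$. Simple connectivity gives $W_1\simeq 0$, so $(\mathrm{Bar}(A),W)$ is a comonoid in $E_\infty$-algebras in $(2,s)$-Weil complexes for every $s$.

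\textbf{Step 2: split the bar construction.} Corollary \ref{coro : formality}, applied with its proof unchanged but now to comonoids in algebras, gives a $(2,h)$-tame equivalence $(\mathrm{Bar}(A),W)\simeq(\mathrm{Bar}(H^*(X;\ZZ_\ell)),W)$ of comonoids in algebras. For this one checks, as in the proof of Theorem \ref{e0formality}, that $\Map(Gr_iC,Gr_kD)=0$ for $2\le k<i$ whenever $i<2+h$, because then $i-k<h$. In the even-degree case, $W$ is concentrated in even weights, and Theorem \ref{e0formality-variant} gives the same conclusion with $2h$ in place of $h$.

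\textbf{Step 3: pass to the loop space.} We have $\mathrm{Bar}(A)\simeq \tilde{C}^*(\Omega X)$. By Proposition \ref{prop : product of EM}, $\Omega X$ is tamely a product of Eilenberg--MacLane spaces, so its tame type is detected on the dual of the homotopy groups. The class dual to $\pi_{k+1}(X)$ is represented by a length-one bar element of degree $k$, which sits in $W$-weight $k+1$, that is, in weight equal to $\tau$-weight plus one. I will rerun Lemma \ref{lemm : case of EM spaces} and Proposition \ref{coro : Mandell EM} with $W$ in place of $\tau$: the free algebra $\Sym$ is now taken on generators placed in weight $n+1$ for $K(A,n)$. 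The bounds there only improve, since generators sit one weight higher. This yields that $\Omega X$ and $\Omega$ of the formal model are $(1,h)$-tamely equivalent as loop spaces, hence $X$ is $(2,h)$-tamely determined by $H^*(X;\ZZ_\ell)$, with $(2,2h)$ in the even case. The bounds $h\le \ell-1$ and $2h\le 2\ell-2$ needed for Proposition \ref{prop : product of EM} must also be checked against its hypotheses.

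\textbf{Main obstacle.} The main obstacle is Step 3: showing that the weight-$W$ version of the Eilenberg--MacLane comparison is fully faithful through weight $2+h$. This amounts to controlling the $W$-weights of decomposables in $H^*(\Omega X)$, where a product of $t$ generators has weight equal to degree plus $t$. This must be done without converting $W$ to $\tau$ via Proposition \ref{prop : different filtration}, since that conversion is exactly where the factor of two would be lost. If this fails, I would first try to exploit the fact that in the relevant range the mapping spaces involve only length-one generators, so that decomposables never enter the fully faithfulness check.
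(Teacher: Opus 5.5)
There is a genuine gap, and it is in Step 3. The problem is not bookkeeping about decomposables.

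Your claim that the class dual to $\pi_{k+1}(X)$ is a length-one bar element of $W$-weight $k+1$ is false. Length-one words $[a]$ only account for the image of the cohomology suspension $\tilde H^{k+1}(X)\to H^k(\Omega X)$. Homotopy not detected by cohomology (e.g.\ Whitehead products) is dual to longer words, and a word $[x_1|\cdots|x_p]$ has $W$-weight equal to its degree plus $p$, so up to twice its degree. Already for $X=\CC\PP^1$, which is within the scope of the statement, take $x\in H^2(S^2)$ the generator. The class in $H^2(\Omega S^2;\ZZ_\ell)$ dual to $\pi_3(S^2)$ is then $[x|x]$, of weight $4$, not $3$.

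Worse, $W$ is not an invariant of the space $\Omega X$. We have $\Omega S^2\simeq S^1\times\Omega S^3\simeq \Omega(\CC\PP^\infty\times S^3)$, yet for the second delooping the degree-$2$ generator is $[y]$ with $|y|=3$, of weight $3$. So there is no functor on products of Eilenberg--MacLane spaces whose value on $\Omega X$ is $(\mathrm{Bar}\,\tilde C^*(X),W)$. Consequently no $W$-version of Lemma \ref{lemm : case of EM spaces} and Proposition \ref{coro : Mandell EM} can be invoked. Your fallback (only length-one generators matter) fails for the same reason.

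The intrinsic filtration on $\tilde C^*(\Omega X)$ is $\tau$. The passage from $W$ to $\tau$ via Proposition \ref{prop : different filtration} with $\sigma(k)=2k$ is exactly the exchange rate of Proposition \ref{prop : bar construction}. The data in weights $<2+h$ is only guaranteed to see all of $H^k(\Omega X;\ZZ_\ell)$ integrally when $2k\le h+1$, and nothing in your argument recovers the integral information in higher weights.

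Independently, Proposition \ref{prop : product of EM} with $r=2$ requires $s\le \ell-2$, not the bounds you quote. So the $(1,2h)$-tame splitting of $\Omega X$ you need is unavailable as soon as $2h>\ell-2$, and even $(1,h)$ fails for $h=\ell-1$.

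Your Steps 1--2 are correct but unnecessary. Once Theorem \ref{theo : tameprojective} gives a $(2,h)$-tame equivalence $(\tilde C^*(X,\ZZ_\ell),\tau)\simeq(H^*(X,\ZZ_\ell),\tau)$, applying the (reduced) bar construction already yields the $(2,h)$-tame equivalence of $(\mathrm{Bar},W)$ as comonoids in algebras. This is observed in the proof of Proposition \ref{prop : bar construction}.

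The paper's argument is simply Theorem \ref{theo : tameprojective} followed by Proposition \ref{prop : tame formality}(1), i.e.\ Corollary \ref{coro : tame mandell simply-connected}. Tate type means $(2,s)$-tamely pure for all $s$, so the theorem gives $s'=h$, or $2h$ in the even case. The exchange rate is paid in the second step, so the parameters $(2,h)$ and $(2,2h)$ in the statement refer to the cochain side. On the space side one gets $(2,t)$ with $2t+1\le h$ (resp.\ $\le 2h$) and $t\le \ell-2$. This is how the paper itself uses the corollary in Example \ref{Example : projective space}, where $2h=2\ell-2$ yields only the $(2,\ell-2)$-tame homotopy type of $\CC\PP^n$.

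You were thus aiming at a stronger, space-level reading that the paper does not prove, and the route you propose does not reach it.
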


\begin{example}\label{Example : projective space}
The projective space $\PP^n_K$ is $(2,s)$-tamely pure for any $s\geq 0$ and for any $p$-adic field with $p\neq \ell$. So we can pick $K$ such that the residue field is $\mathbb{F}_p$ where $p$ is a prime number of order $\ell-1$ in $\FF_\ell^{\times}$. Since the cohomology of $\CC\PP^n$ is concentrated in even degrees, the previous corollary implies that the $(2,\ell-2)$-tame homotopy type of $\mathbb{CP}^n$ is determined by the cohomology ring $H^*(\CC\PP^n,\ZZ_\ell)$. If $2n\leq \ell$, the $\ell$-adic homotopy type of $\mathbb{CP}^n$ is determined by the cohomology ring $H^*(\CC\PP^n,\ZZ_\ell)$. This last sentence explicitly means the following: if $X$ is another simply-connected $2n$-dimensional CW-complex whose cohomology ring is isomorphic to $H^*(\CC\PP^n,\ZZ_\ell)$ and whose cochain algebra is $(2,2\ell-2)$-tamely formal, then the $\ell$-completion of $X$ is weakly equivalent to the $\ell$-completion of $\CC\PP^n$.
 \end{example}

\begin{remark}
As the above example illustrates, it is desirable to find a $p$-adic field over which our variety is defined and with $h$ as big as possible. In typical situations, our variety is defined over a number field $K$, that we can assume to be Galois over $\QQ$ up to enlarging it. Then we are looking for a completion $K_{\mathfrak{p}}$ satisfying the following two requirements.
\begin{enumerate}
\item The residue field of $K_{\mathfrak{p}}$ is $\FF_p$.
\item The order of $p$ in $\FF_\ell^{\times}$ is $\ell-1$.
\end{enumerate}
We are not sure if this can be achieved in general but this can be done for all but finitely many values of $\ell$. Indeed, except for finitely many primes $\ell$, we can then assume that $K$ does not contain the cyclotomic field $\QQ[\zeta_\ell]$. Then the Galois group of the composite field $K[\zeta_\ell]$ over $\QQ$ is the product $G\times \FF_\ell^{\times}$ where $G=\mathrm{Gal}(K/\QQ)$. The Chebotarev density theorem implies that there are infinitely many prime numbers $p$ that are unramified in $K[\zeta_\ell]$ and whose Frobenius corresponds to the pair $(1,a)$ through the above isomorphism with $a$ of order $\ell-1$ in $\FF_\ell^{\times}$. Then any prime ideal of $K$ lying over such a prime will give a completion satisfying the two conditions above.
\end{remark}

Theorem \ref{theo : tameprojective} is also valid for a more general notion of purity as in \cite{CiHo1}, in which, for a fixed non-zero rational number $\alpha\neq 0$, the  eigenvalues of $\varphi$ acting on $H^i_{et}(\Xx;\FF_\ell)$ are $\pm {q}^{\alpha i\over 2}$ if $i\leq s$ and the eigenvalues of $\varphi$ acting on $H^i_{et}(\Xx;\QQ_\ell)$ are Weil numbers of weight $\alpha i$ for all $i$. We state the result for $\alpha=2$ and leave the general case to the interested reader.

\begin{theorem}\label{theo : tame2pure}
Assume that $\Xx$ is a smooth variety such that the only eigenvalue of the Frobenius action on $H^i_{et}(\Xx,\FF_\ell)$ is $q^i$ for any $i$. Assume also that $\Xx$ has a $K$-point. Then $\tilde{C}^*_{et}(\Xx,\ZZ_\ell)$ is $(r,h)$-tamely formal, where $(r-1)$ is the connectivity of $\tilde{H}^*_{et}(\Xx,\ZZ_\ell)$.
\end{theorem}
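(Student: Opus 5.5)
The plan is to reduce to the even-weight splitting result, Theorem \ref{e0formality-variant}, by regrading the canonical filtration so that cohomological degree $i$ sits in weight $2i$. Concretely, let $\sigma(m)=\lfloor m/2\rfloor$ and put on $A:=\tilde{C}^*_{et}(\Xx,\ZZ_\ell)$ the filtration $W:=\sigma^{-1}\tau$, so that $W_{2i}A=W_{2i+1}A=\tau_iA$. The reduced cochains with Frobenius are available thanks to the $K$-point, as in Theorem \ref{theo : tameprojective}. Then $Gr^W_{2i}A\simeq H^i(A)[-i]$ and $Gr^W_{2i+1}A=0$, so $(A,W)$ is concentrated in even weights. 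This regrading is lax symmetric monoidal, since $\sigma$ is superadditive after shifting, or equivalently one uses $\tau^{(k)}$-type speed filtrations. Hence $(A,W)$ is a non-unital $E_\infty$-algebra in filtered complexes of $\ZZ_\ell[\varphi]$-modules, which is $(2r-1)$-connected for the doubled indexing.

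Next I check that $(A,W)$ is a $(2r,2h)$-Weil complex in the sense of the definition above. Finiteness is clear from finite type of étale cohomology. For the mod $\ell$ condition, $H^*(Gr^W_{2i}A\otimes\FF_\ell)$ is built from $H^i(\Xx;\FF_\ell)$ and from the $\ell$-torsion of $H^{i+1}(\Xx;\ZZ_\ell)$ via the universal coefficient sequence. The hypothesis says $\varphi$ acts on $H^i(\Xx;\FF_\ell)$ with the single eigenvalue $q^i=(\pm\sqrt q)^{2i}$, which is exactly the weight-$2i$ condition. The torsion contribution is handled by the argument of Proposition \ref{prop : no torsion}: a Bockstein would link eigenvalue $q^i$ to $q^{i+1}$, impossible when $h>1$, so the cohomology is torsion free. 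When $h=1$ the statement is weaker and I would treat it by hand. The rational condition requires that $H^i(\Xx;\QQ_\ell)$ have Frobenius eigenvalues that are Weil numbers of weight $2i$. Here I use smoothness: by Deligne, the weights on $H^i$ of a smooth variety lie in $[i,2i]$. I then want to argue that the mod $\ell$ hypothesis forces every eigenvalue into the top weight. This is the step I expect to be the main obstacle, since congruence of an eigenvalue to $q^i$ modulo $\ell$ does not by itself pin down its archimedean absolute value.

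My first attempt at this obstacle is to avoid needing pure weight $2i$ rationally at all. I would replace $\tau$ rationally by Deligne's weight filtration $\Dec\,\Gamma(\overline{\Xx},(Rj_*\underline{\QQ}_\ell,\tau))$ for a good compactification $j$. That filtered algebra is a Weil algebra by Proposition \ref{Weilalg} with $c=1$. I would then glue it with the integral regraded canonical filtration in weights $<r+h$ through the pullback description of tame objects (Remark \ref{rem: tamification}), and use Proposition \ref{prop : different filtration} to compare the result back with $(A,\tau)$. The comparison uses that the weight filtration in degree $i$ lies between $i$ and $2i$, which gives a superadditive rescaling.

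Once $(A,W)$ is an algebra in $(2r,2h)$-Weil complexes concentrated in even weights, Corollary \ref{coro : formality} in its even-weight form (with $s\leq 2h$) gives an equivalence $(A,W)\simeq E_0(A,W)=(H^*(A),W)$ of non-unital $E_\infty$-algebras up to $(2r,2h)$-tame quasi-isomorphism. Finally I undo the regrading. Restricting to even indices, $W_{2i}=\tau_i$, converts a $(2r,2h)$-tame quasi-isomorphism for $W$ into an $(r,h)$-tame quasi-isomorphism for $\tau$, again by Proposition \ref{prop : different filtration}, now with $\sigma(m)=2m$. This yields that $(A,\tau)$ is $(r,h)$-tamely homotopically split, that is, $(r,h)$-tamely formal.
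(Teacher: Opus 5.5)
Your regrading skeleton is sound. The map $\sigma(m)=\lfloor m/2\rfloor$ is superadditive outright, with no shift needed. The filtration $W=\sigma^{-1}\tau$ has $Gr^W_{2i}A=Gr^\tau_iA$ and $Gr^W_{2i+1}A=0$. The doubled parameters $(2r,2h)$ are exactly what the even-weight case of Corollary \ref{coro : formality} allows. Reindexing along $m\mapsto 2m$ is monoidal, turns $(2r,2h)$-tame quasi-isomorphisms into $(r,h)$-tame ones, and carries $(A,W)$ and $E_0(A,W)$ to $(A,\tau)$ and $E_0(A,\tau)$.

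This is a repackaging of what the paper leaves implicit. The paper's route is to rerun the proof of Theorem \ref{e0formality} directly on $(A,\tau)$, using Proposition \ref{prop : Hom is zero variant} in place of Proposition \ref{prop : Hom is zero}. Your route has the merit of citing Theorem \ref{e0formality-variant} verbatim. Two small corrections:
\begin{itemize}
\item The torsion contribution to $H^*(Gr^W_{2i}A\otimes\FF_\ell)$ is $H^i_{et}(\Xx;\ZZ_\ell)[\ell]$ in degree $i-1$, not torsion of $H^{i+1}$.
\item The case $h=1$ needs no separate treatment, since then every $q^i$ equals $1$ in $\FF_\ell$.
\end{itemize}

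The genuine gap is condition (4) of a Weil complex: the Frobenius eigenvalues on $H^i_{et}(\Xx;\QQ_\ell)$ must be Weil numbers of weight $2i$. You are right that the mod $\ell$ hypothesis plus smoothness does not give this. But your workaround cannot repair it, because $(r,h)$-tame formality includes formality of $C^*(X,\QQ_\ell)$, and that genuinely fails without rational purity.

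Here is an example. Let $E$ be an elliptic curve with good reduction whose $\ell$-torsion is $K$-rational, so $q\equiv 1$ mod $\ell$ and $h=1$. Let $\Xx$ be the complement of the zero section in a line bundle of degree one on $E$. The Gysin sequence gives $H^1\cong H^1(E)$, $H^2\cong H^1(E)(-1)$ and $H^3\cong H^2(E)(-1)$. All are torsion free, and all Frobenius eigenvalues mod $\ell$ equal $1=q^i$, so the hypothesis as literally written holds. Yet $X$ is homotopy equivalent to the Heisenberg nilmanifold, which is not formal over $\QQ_\ell$.

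Your gluing step also breaks concretely, in two ways.
\begin{itemize}
\item The pullback behind Remark \ref{rem: tamification} requires identifying the rationalisation of the integral low-weight part with the rational filtered algebra. The filtrations $\tau$ and Deligne's weight filtration agree rationally only under purity.
\item Even granting a glued object, Proposition \ref{prop : different filtration} would at best give, as in Theorem \ref{maintame}, a tame quasi-isomorphism with halved parameters to the Morgan model ${}^{\Ll}E_2$, not to $H^*$.
\end{itemize}

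What is missing is a hypothesis, not an argument. The theorem is meant under the $\alpha=2$ purity described in the paragraph preceding it, namely weight $2i$ on $H^i_{et}(\Xx;\QQ_\ell)$ for all $i$. The later phrase ``tamely $2$-pure (\ldots the $i$-th cohomology group is of weight $2i$)'' confirms this. Assume it and drop the workaround. Condition (4) is then immediate from $Gr^W_{2i}A\otimes\QQ_\ell\simeq H^i_{et}(\Xx;\QQ_\ell)[-i]$, and the rest of your argument goes through.
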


Examples of $(r,s)$-tamely pure varieties, for $\alpha=2$, are complements of hyperplane and toric arrangements, and the uncompactified moduli spaces $\Mm_{0,n}$.

\subsection{Complements of admissible arrangements}
Let $\Xx$ be a smooth variety over $K$, and $\Zz=\{\Zz_i\}_{i\in I}$ an admissible arrangement of codimension $c$ in $\Zz$ (Definition \ref{defarrangement}). Denote \[f:\Uu:=\Xx-\Zz\hookrightarrow \Xx\] the inclusion.
The Weil algebra of Proposition \ref{Weilalg}
is also defined over $\ZZ_\ell$, as an $E_\infty$-algebra. 
Indeed, there is a derived direct image functor
$Rf_*$ in the category of sheaves of $E_\infty$-algebras over $\ZZ_\ell$
(see \cite{ChCi}, see also \cite{Dantxif}). Alternatively, we can use purely $\infty$-categorical technology and the functor $Rf_*$ is then a symmetric monoidal $\infty$-functor.

Analogously to the rational case developed in Section \ref{filteredalgebrarational}, we consider the filtered $E_\infty$-algebra
\[(\Aa_{\ZZ_\ell}(\Uu),W):=\Dec\,\Gamma(\Xx_{\text{pro-ét}},(Rf_*\underline{\ZZ}_\ell,\tau^{(2c-1)})).
\]
This filtered algebra is equipped with a Frobenius endomorphism arising from the action of $\mathrm{Gal}(\overline{K}/K)$. If there exists a $K$-point of $\Uu$, then we can use it to split off the units and build a filtered non-unital $E_\infty$-algebra $\tilde{\Aa}_{\ZZ_\ell}(\Uu)$.

\begin{definition}
We say that the arrangement is \textit{$(r,s)$-tamely pure} if the filtered algebra $(\tilde{\Aa}_{\ZZ_\ell}(\Uu),W)$ with its Frobenius endomorphism is an algebra in $(r,s)$-Weil complexes.
\end{definition}

The following proposition gives two situations in which it is easy to check that this definition is satisfied.

\begin{proposition}
The arrangement is $(r,s)$-tamely pure in the following two cases.
\begin{enumerate}
\item The variety $\Xx$ is smooth projective and $s=0$.
\item For each $\varnothing \subset S\subset I$, the variety $\Zz_S:=\cap_{s\in S}\Zz_s$ is of Tate type and $s$ is any nonnegative number.
\end{enumerate}
In both cases, the parameter $r$ is any positive integer such that $W_{r-1}\Aa(\Uu)$ has trivial cohomology.
\end{proposition}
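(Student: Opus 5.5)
To prove the proposition I would verify the four conditions in the definition of an $(r,s)$-Weil complex for $(\tilde{\Aa}_{\ZZ_\ell}(\Uu),W)$. The first step is to compute its associated graded. The same argument as in the proof of Proposition \ref{Weilalg} works over $\ZZ_\ell$ and $\FF_\ell$, not only over $\QQ_\ell$.

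By Weber's description, $R^kf_*\underline{\ZZ}_\ell$ vanishes unless $k$ is divisible by $2c-1$. Moreover, $R^{i(2c-1)}f_*\underline{\ZZ}_\ell$ is a direct sum of constant sheaves $\ZZ_\ell(-ic)$ supported on the smooth strata $\Zz_S$ of codimension $ci$. This is a purity/Gysin computation for an admissible arrangement, and it holds integrally because the local cohomology of a smooth pair is a Tate twist. Combining this with Lemma \ref{quisspeed} and Lemma \ref{dece0e1}, we obtain
\[H^*(Gr_j^W\Aa\otimes R)\cong \bigoplus_{k+2ic=j} H^k\big(\Zz^{(i)}_{\mathrm{et}};R\big)(-ic),\qquad R\in\{\ZZ_\ell,\FF_\ell,\QQ_\ell\},\]
where $\Zz^{(i)}$ is the disjoint union of the codimension-$ci$ strata. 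The isomorphism is Frobenius equivariant, and it requires that the decomposition of the direct image sheaves be compatible with the Galois action. I expect this step to be the main technical obstacle: one has to check that the twist $(-ic)$ contributes eigenvalue $q^{ic}$ both rationally and mod $\ell$, and that base change to $\FF_\ell$ commutes with the decomposition. The latter holds because the sheaves $R^jf_*$ are $\ZZ_\ell$-free, by the same Gysin computation.

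Finiteness, conditions (1) and (2), follows at once because each $\Zz_S$ is a variety over $K$ with finitely generated étale cohomology, and the filtration is finite in each cohomological degree.

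For case (1), $s=0$, so only the rational eigenvalue condition (4) needs checking. The strata are smooth projective, and after enlarging $K$ they have good reduction (spreading out, as in Section \ref{filteredalgebrarational}). Deligne's theorem then says that $H^k(\Zz^{(i)};\QQ_\ell)$ is pure of weight $k$, and the twist adds $2ic$. Hence $Gr_j^W$ has weight $k+2ic=j$, which is exactly the computation at the end of the proof of Proposition \ref{Weilalg}.

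For case (2), each stratum is of Tate type. Thus the eigenvalues on $H^k(\Zz_S;\QQ_\ell)$ are Weil numbers of weight $k$, and those on $H^k(\Zz_S;\FF_\ell)$ lie in $\{(\pm\sqrt q)^k\}$. Multiplying by $q^{ic}=(\sqrt q)^{2ic}$ gives eigenvalues in $\{(\pm\sqrt q)^{j}\}$ in weight $j=k+2ic$ for every $j$, so conditions (3) and (4) hold for every $s$. Here the $\ZZ_\ell$-coefficient Tate hypothesis is used through reduction mod $\ell$; by Proposition \ref{prop : no torsion} the cohomology is torsion free, so the $\FF_\ell$-cohomology is the reduction of the $\ZZ_\ell$-cohomology.

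Finally, the $K$-point splits off the unit, which only removes the weight-$0$ copy of $\ZZ_\ell$. The connectivity requirement of the definition is then just the stated choice of $r$, namely that $W_{r-1}\Aa(\Uu)$ is acyclic.
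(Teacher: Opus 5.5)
Your proposal is correct and is essentially the paper's argument: the paper proves case (1) by citing Part (2) of Proposition \ref{Weilalg} and says case (2) is proved analogously, which is exactly the Weber/speed-$(2c-1)$/d\'ecalage weight computation you carry out. A few minor corrections, none of which affects the conclusion. First, by Lemma \ref{dece0e1}, $Gr_j^W$ is quasi-isomorphic to the complex $\bigl(\bigoplus_{k+2ic=j}H^k(\Zz^{(i)})(-ic),d_{2c}\bigr)$, so its cohomology is only a subquotient of that sum, which is harmless for finiteness and eigenvalues. Second, in case (1) good reduction has to be part of the spreading-out choice of $K$; it cannot in general be obtained by enlarging a given $K$. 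Third, your appeal to Proposition \ref{prop : no torsion} requires $h>1$.
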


\begin{proof}
The first claim is Part (2) of Proposition \ref{Weilalg}. The second claim is proven analogously to that proposition.
\end{proof}

The following is a tame version of Proposition \ref{Weilalg}.

\begin{proposition}\label{Weilalgtame}
Let $\Zz=\{\Zz_i\}_{i\in I}$ be an $(r,s)$-tamely pure arrangement in $\Xx$. Then the associated graded
of $(\Aa_{\ZZ_\ell}(\Uu),W)$ 
is quasi-isomorphic as an $E_\infty$-algebra to the $E_{2c}$-term of the Leray spectral sequence for $\underline{\ZZ}_\ell$ relative to the inclusion $f:U\hookrightarrow X$. Likewise, if $u\in \Uu$ is a $K$-point, the associated graded of $(\tilde{\Aa}_{\ZZ_\ell}(\Uu),W)$ is quasi-isomorphic as a non-unital $E_\infty$-algebra to the $E_{2c}$-term of the reduced Leray spectral sequence.
\end{proposition}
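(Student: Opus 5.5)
I will follow the integral version of the proof of part (1) of Proposition \ref{Weilalg}, keeping track of the $E_\infty$-structure. The key input is Weber's computation of the higher direct images, which must now hold over $\ZZ_\ell$. Since the arrangement is admissible of codimension $c$, I will establish by local computation, with the inductive structure of Definition \ref{defarrangement}, that $R^kf_*\underline{\ZZ}_\ell$ vanishes unless $(2c-1)\mid k$. I also need that $R^{i(2c-1)}f_*\underline{\ZZ}_\ell$ is a direct sum of constant sheaves $\underline{\ZZ}_\ell(-ic)$ supported on the smooth strata $\Zz_S$ of codimension $ic$. Locally near a point of a stratum, the complement is a product of a smooth factor with a complement of linear subspaces of codimension $c$. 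Its cohomology is torsion free, generated by classes in degrees $(2c-1)i$ (Goresky--MacPherson, or induction via the Gysin sequence), so no $\ell$-torsion appears.

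With this in hand, I apply Lemma \ref{quisspeed} sheaf-theoretically to get a quasi-isomorphism
\[Gr_i^{\tau^{(2c-1)}}Rf_*\underline{\ZZ}_\ell\simeq R^{i(2c-1)}f_*\underline{\ZZ}_\ell[-i(2c-1)].\]
Because $\tau^{(k)}$ is symmetric monoidal as an $\infty$-functor and $Gr$ is strong monoidal, this is an equivalence of graded $E_\infty$-algebras in sheaves. Taking derived global sections gives
\[Gr^{\widetilde W}_*\Aa_{\ZZ_\ell}(\Uu)\simeq \bigoplus_i R\Gamma(\Xx,R^{i(2c-1)}f_*\underline{\ZZ}_\ell)[-i(2c-1)],\]
whose cohomology is the $E_2$-page of the Leray spectral sequence. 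Here $\widetilde W=\Dec^{-1}W$, and the $E_1$-differential of $\widetilde W$ identifies with $d_{2c}^{\Ll}$ as in Deligne's Example 1.4.8, using the vanishing of the intermediate rows. Next I pass through décalage. Lemma \ref{dece0e1} gives a natural map $Gr^{\Dec\widetilde W}\to E_1(\widetilde W)$ that is a quasi-isomorphism, and I expect it to be compatible with the monoidal structure, since $\Dec$ is symmetric monoidal. This identifies $Gr^W_*\Aa_{\ZZ_\ell}(\Uu)$ with $(E_1(\widetilde W),d_1)=({}^{\Ll}E_{2c},d_{2c})$.

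The main obstacle is this last identification at the $E_\infty$ level. Lemma \ref{dece0e1} is a statement about bigraded complexes, whereas the target is a strictly commutative dg algebra and the source is only an $E_\infty$-algebra whose pieces $R\Gamma(\Xx,-)$ are not formal over $\ZZ_\ell$. To handle this I will use the Weil structure. Each graded piece $R\Gamma(\Xx,\underline{\ZZ}_\ell(-ic)_{\Zz_S})$ has eigenvalues that are pure, both $\ell$-adically and mod $\ell$, in the tame range. The $(r,s)$-tame purity hypothesis says that, after décalage, $Gr^W$ is concentrated in a single cohomological degree per weight, at least up to the tame range and rationally. Then $Gr^W$ is equivalent to its cohomology with the canonical filtration, because a graded $E_\infty$-algebra concentrated on the diagonal (weight equal to cohomological degree) is formal, with its $E_\infty$-structure determined by the product. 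So $Gr^W$ is weight-diagonal in the décalaged grading and hence equivalent to its $d_1$-cohomology data, namely ${}^{\Ll}E_{2c}$ with its cdga structure.

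For the reduced statement, the $K$-point $u$ gives an augmentation $\Aa_{\ZZ_\ell}(\Uu)\to\ZZ_\ell$ compatible with $W$ and Frobenius. Taking fibres commutes with $Gr$, which yields the reduced Leray $E_{2c}$-page as a non-unital algebra.
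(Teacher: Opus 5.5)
Up to the décalage step your argument is the paper's proof. It uses Weber's decomposition over $\ZZ_\ell$; you add a local Goresky--MacPherson justification, which the paper only asserts. It then applies Lemma \ref{quisspeed} on sheaves, Deligne's Example 1.4.8 giving $E_1(\widetilde W)\cong{}^{\Ll}E_2={}^{\Ll}E_{2c}$ with $d_1=d_{2c}$, and Lemma \ref{dece0e1}. Multiplicativity comes from $\tau^{(2c-1)}$, $Gr$ and $\Dec$ being symmetric monoidal, which is exactly what you say you ``expect''.

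The genuine problem is the extra step you add to settle the $E_\infty$ identification: it is false. Tame purity constrains Frobenius eigenvalues on $H^*(Gr^W_i)$, not cohomological degrees. By Lemma \ref{dece0e1}, $Gr^W_j\Aa$ is quasi-isomorphic to $\bigl(\bigoplus_i E_1^{-i,j}(\widetilde W),d_1\bigr)$, whose terms sit in degrees $j-i$ for every $i$. Even its cohomology need not lie in degree $j$:
\begin{itemize}
\item For $\CC^*\subset\PP^1$ ($c=1$), $Gr^W_2$ is the Gysin map $H^0(\{0,\infty\})(-1)\to H^2(\PP^1)$ placed in degrees $1$ and $2$. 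Its cohomology is $H^1(\CC^*)$, of weight $2$ in degree $1$.
\item For configuration spaces, $x_{ij}$ (degree $2m-1$) and $p_{ij}^*\Delta$ (degree $2m$) both have weight $2m$, and $d_{2m}$ connects them.
\end{itemize}
So $Gr^W$ is not weight-diagonal.

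The step also proves too much. A formal $Gr^W$ would be equivalent to $H^*(Gr^W)\cong{}^{\Ll}E_{2c+1}$ with zero differential, not to $({}^{\Ll}E_{2c},d_{2c})$. Combined with Theorem \ref{maintheorational}, it would make every arrangement complement formal over $\QQ_\ell$. That fails: for $L$ ample on an abelian surface, the complement of the zero and infinity sections in $\PP(L\oplus\Oo)$ is homotopy equivalent to a non-toral nilmanifold, which is not formal.

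You should drop this step. The identification needs no purity input at all. The vanishing of $d_r$ for $2\le r<2c$ comes from the rows of ${}^{\Ll}E_2$ vanishing outside multiples of $2c-1$. Purity is only used afterwards, in Theorem \ref{maintame}, to split $\tilde{\Aa}_{\ZZ_\ell}(\Uu)$ off its associated graded.

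If you want more care on the $E_\infty$ side than the paper gives, the legitimate place for a one-degree-per-weight argument is on sheaves, before taking global sections. By Weber, $Gr_i^{\tau^{(2c-1)}}Rf_*\underline{\ZZ}_\ell$ is concentrated in the single degree $i(2c-1)$ with $\ZZ_\ell$-free stalks, so this graded $E_\infty$-algebra of sheaves is strictly commutative. After $R\Gamma$, the pieces $R\Gamma(\Zz_S,\ZZ_\ell)$ are no longer formal. The passage through $\Dec$ must then be justified by the monoidality of décalage and the naturality of Deligne's comparison map, for instance via the Beilinson $t$-structure as in Lemma \ref{lemm : two filtrations}, not by a formality statement for $Gr^W$.
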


\begin{proof}
Weber's result \cite{Weber} on the decomposition of the sheaves $R^{i(2c-1)}f_*\underline{\QQ}_\ell$ is also valid over $\ZZ_\ell$, so we have that 
 $R^kf_*\underline{\ZZ}_\ell$ is 0 when $k$ is not divisible by $2c-1$. This implies that $d_i=0$ for all $i<2c$ and so  ${}^{\Ll}E_{2}^{*,*}(f,\ZZ_\ell)\cong {}^{\Ll}E_{2c}^{*,*}(f,\ZZ_\ell)$. The identification 
\[E_0^{-i,j+i}(\Aa_{\ZZ_\ell}(\Uu),W)\simeq {}^{\Ll}E_{2c}^{j+(j-i)(2c-1),(j-i)(2c-1)}(f,\ZZ_\ell)\text{ and }d_0^W=d_{2c}^{\Ll}\]
follows as in the proof of Proposition \ref{Weilalg}.
\end{proof}

The main result of this section is the following tame version of Theorem \ref{maintheorational}. We first introduce the notation $\tilde{E}_r^{**}$ for the spectral sequence obtained by splitting off the unit of a spectral sequence $E_r^{**}$ associated to an augmented filtered algebra.

\begin{theorem}\label{maintame}
Let $\Zz=\{\Zz_i\}_{i\in I}$ be an $(r',s')$-tamely pure arrangement in $\Xx$, with $s'<h$. Let
\[r=\left\lceil\frac{(2c-1)r'}{2c}\right\rceil\quad\text{ and }\quad s=\left\lfloor\frac{(2c-1)s'}{2c}\right\rfloor.\] 
Let $u\in \Uu$ be a $K$-point. Then there is an $(r,s)$-tame quasi-isomorphism
\[\tilde{C}^*(U,\ZZ_\ell)\simeq ({}^{\Ll}\tilde{E}_{2c}^{*,*},d_{2c}).\]
Moreover, if for any $\varnothing \subset S\subset I$, the cohomology of $\bigcap_{s\in S}\Zz_s$ is concentrated in even degrees, the same conclusion holds under the weaker assumption that $s'<2h$. 
\end{theorem}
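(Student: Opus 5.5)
The plan is to run the argument of Theorem \ref{maintheorational} integrally, using the tame splitting machinery, and then to pass from the weight filtration $W$ to the canonical filtration $\tau$ via Proposition \ref{prop : different filtration}.

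\textbf{Step 1: split the model.} By assumption the arrangement is $(r',s')$-tamely pure, so $(\tilde{\Aa}_{\ZZ_\ell}(\Uu),W)$, with its Frobenius, is a non-unital $E_\infty$-algebra in $(r',s')$-Weil complexes. Since $s'<h$, Corollary \ref{coro : formality} gives an $(r',s')$-tame quasi-isomorphism of filtered non-unital $E_\infty$-algebras
\[(\tilde{\Aa}_{\ZZ_\ell}(\Uu),W)\simeq E_0(\tilde{\Aa}_{\ZZ_\ell}(\Uu),W).\]
By Proposition \ref{Weilalgtame}, the right-hand side is quasi-isomorphic to $({}^{\Ll}\tilde{E}_{2c}^{*,*},d_{2c})$. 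Here the weight $W$ is the one coming from the décalage, so it is placed in total degree as computed in Proposition \ref{Weilalg}.

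In the even-degree case, all the $\bigcap_{s\in S}\Zz_s$ have cohomology in even degrees. Then the pieces $Gr^W_i$ vanish for odd $i$: the purity weights of $H^k(X,R^{i(2c-1)}f_*)$ are $k+2ic$, which is even. This lets us invoke the second half of Corollary \ref{coro : formality}, which allows $s'<2h$.

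\textbf{Step 2: change filtrations.} Both the model and the $E_{2c}$-page carry the filtration $W$, not $\tau$. We apply Proposition \ref{prop : different filtration} with $\sigma(n)=\lfloor \frac{2c}{2c-1}n\rfloor$, which is superadditive. With this choice $s=\max\{n:\sigma(n)\le s'\}$ and $r=\min\{n:\sigma(n)\ge r'\}$ match the stated formulas, up to checking the floor and ceiling bookkeeping. The hypothesis of that proposition has to be verified on both sides: the map $W_{\sigma(k)}A\to A$ must be a cohomology isomorphism in degrees $\le k$.

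\textbf{Main obstacle: the vanishing of weights.} The key point is that a class of cohomological degree $k$ has weight at most $\frac{2c}{2c-1}k$. A class in $H^a(X,R^{i(2c-1)}f_*)$ has total degree $k=a+i(2c-1)$ and weight $a+2ic$, and
\[a+2ic\le \tfrac{2c}{2c-1}\bigl(a+i(2c-1)\bigr),\]
which holds because $a\ge 0$. The same inequality holds for the $E_{2c}$-page, since it is strictly bigraded in the same way. So on $E_1$, and hence on $E_\infty$, nothing of total degree $k$ sits in weight $>\sigma(k)$, and degreewise the inclusion is a cohomology isomorphism in degrees $\le k$. This has to be done with $\ZZ_\ell$-coefficients, and it is cleanest to check it separately on $\FF_\ell$ and $\QQ_\ell$ pieces, as in Proposition \ref{prop : third definition}.

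\textbf{Conclusion.} Proposition \ref{prop : different filtration} is functorial and lax monoidal. Applying it to the zigzag from Step 1 therefore gives an $(r,s)$-tame quasi-isomorphism of non-unital $E_\infty$-algebras
\[(\tilde{\Aa}_{\ZZ_\ell}(\Uu),\tau)\simeq({}^{\Ll}\tilde{E}_{2c},\tau).\]
Finally, $\tilde{\Aa}_{\ZZ_\ell}(\Uu)\simeq \tilde{C}^*_{et}(\Uu;\ZZ_\ell)\simeq\tilde{C}^*(U,\ZZ_\ell)$ as $E_\infty$-algebras, using the $K$-point to split off the unit. This gives the claimed identification.
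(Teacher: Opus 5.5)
Your proposal is correct and follows the paper's proof: split $(\tilde{\Aa}_{\ZZ_\ell}(\Uu),W)$ via Theorem \ref{e0formality}/Corollary \ref{coro : formality}, identify the associated graded with the Leray $E_{2c}$-page, and pass to the canonical filtration via Proposition \ref{prop : different filtration} with $\sigma(n)=\lfloor\frac{2c}{2c-1}n\rfloor$, using that Leray terms vanish in negative $X$-degree. You are slightly more explicit than the paper in two places: you check the hypothesis of that proposition on the $\tilde{\Aa}_{\ZZ_\ell}(\Uu)$ end of the zigzag as well, and you justify the even-degree clause by showing that odd weights vanish.
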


\begin{proof}
By definition, the filtered algebra $(\tilde{\Aa}_{\ZZ_\ell}(\Uu),W)$ is a non-unital $E_\infty$-algebra in $(r',s')$-Weil complexes. Therefore by Theorem \ref{e0formality}, if $s'<h$, it is $(r',s')$-tame quasi-isomorphic to its associated graded, with the column filtration. The latter is given by the filtered algebra $(\tilde{E}_{2c}^{*,*},d_{2c})$ with filtration $W$ given by
\[W_i\tilde{E}_{2c}^{n}:=\bigoplus_{j\leq i-n} {}^{\Ll}E_{2c}^{n+j-2jc,j(2c-1)}(f,\ZZ_\ell).\]
Indeed, we have 
\begin{align*}
 W_i\tilde{E}_0^n=\bigoplus_{j\leq i} \tilde{E}_0^{-j,n+j}(W)\simeq \bigoplus_{j\leq i} \tilde{E}_1^{n-j,j}(\Dec^{-1} W)=\\
 =\bigoplus_{j\leq i-n}\tilde{E}_1^{-j,j+n}(\Dec^{-1} W)\cong \bigoplus_{j\leq i-n}{}^{\Ll}\tilde{E}_{2c}^{n+j-2jc,j(2c-1)}(f,\ZZ_\ell).
\end{align*}
Since ${}^{\Ll}\tilde{E}_{2c}^{n+j-2jc,j(2c-1)}(f,\ZZ_\ell)=0$ for 
$n+j-2jc<0$, it follows that
\[W_i\tilde{E}_{2c}^{n}=\tilde{E}_{2c}^n\text{ when }i= \left\lfloor {2cn\over 2c-1}\right\rfloor.\]
This ensures that this filtered algebra fits in Proposition \ref{prop : different filtration}, with $\sigma(n)=\lfloor{2c\over 2c-1} n\rfloor$. So we deduce that there is an $(r,s)$-tame quasi-isomorphism
\[\tilde{C}^*(U,\ZZ_\ell)\simeq ({}^{\Ll}\tilde{E}_{2c}^{*,*},d_{2c})\]
with
\[s:=\mathrm{max}\{n\in\mathbb{N},\sigma(n)\leq s'\}\quad\text{ and }\quad r:=\mathrm{min}\{n\in\mathbb{N},\sigma(n)\geq r'\}.\] It is then a straightforward exercise to check that
\[r=\left\lceil\frac{(2c-1)r'}{2c}\right\rceil\quad\text{ and }\quad
s=\left\lfloor\frac{(2c-1)s'}{2c}\right\rfloor.\qedhere\]
\end{proof}

In the next two subsections, we apply this theorem to two particular situations: smooth complex varieties and configuration spaces of smooth projective varieties.

\subsection{Smooth varieties}
Let $X$ be a smooth complex algebraic variety.
Take $X\hookrightarrow \overline{X}$ a smooth compactification in such a way that $D=\overline{X}-X$ is a simple normal crossings divisor.
The codimension-1 arrangement defined by $D$ in $X$ is defined by a finite number of polynomial equations so by standard spreading out arguments there is a $p$-adic field $K$ embedded in $\CC$, with residue field $\FF_q$ over which the arrangement is defined, and which has  good reduction.
By Theorem \ref{maintame}, if this arrangement is $(r',s')$-tamely pure and if there is a $K$-point in $X$, then there is an $(\lceil\frac{r'}{2}\rceil,\lfloor\frac{s'}{2}\rfloor)$-tame quasi-isomorphism
\[\tilde{C}^*(X,\ZZ_\ell)\simeq ({}^{\Ll}\tilde{E}_{2},d_2).\]
This algebra is exactly the same as in Example \ref{smoothrational}, but with coefficients in $\ZZ_\ell$.

In some situations, the simple form of the tame model $({}^{\Ll}E_{2},d_2)$ allows us to define a direct quasi-isomorphism to its cohomology, thus giving tame formality, as we illustrate in the following simple example.

\begin{example}
Let $X=\CC\PP^2$ and $U$ the complement of three projective lines in general position. For the sake of simplicity, we assume that these lines are defined over $\QQ$. A resolution for $D=X-U$ is given by $D^{(1)}=L_1\sqcup L_2\sqcup L_3$ and $D^{(2)}=\{p_{12}, p_{13}, p_{23}\}$, with $p_{ij}=L_i\cap L_j$ and $L_i=\CC\PP^1$.  This is an $(r,s)$-tamely pure arrangement for any $s\geq 0$ and $r=2$. Each intersection has cohomology concentrated in even degrees. Therefore, a $(1,\ell-1)$-tame model for $U$ is given by the filtered non-unital algebra $({}^{\Ll}\tilde{E}_{2},d_2)$ which, according to Example \ref{smoothrational} is given as follows:
Choose generators $\gamma_i\in H^0(L_i)$ and the corresponding fundamental classes $\ell_i\in H^2(L_i)$.
Let also $u\in H^2(\CC\PP^2)$ denote the Kähler class.
Then $({}^{\Ll}\tilde{E}_{2},d_2)$ is given by:
\begin{center}
\begin{tikzpicture}[scale=1.2]
\draw[thick] (0,0) -- (5.5,0);
\draw[thick] (0,0) -- (0,3.5);

\draw (0,1.5) node[left] {$\langle \gamma_i\rangle\cong\ZZ_\ell^3$} node{$\bullet$};
\draw (0,3)   node[left] {$\langle p_{ij}\rangle\cong\ZZ_\ell^3$} node{$\bullet$} ;

\draw (2.5,0) node[below] {$\langle u\rangle\cong \ZZ_\ell$} node{$\bullet$};
\draw (5,0)   node[below] {$\langle u^2\rangle\cong \ZZ_\ell$} node{$\bullet$};

\draw (2.5,1.5) node{$\bullet$} node[above right=1pt] {$\langle \ell_{i}\rangle\cong \ZZ_\ell^3$};

\draw[thick] [->] (0.1, 1.45) -- (2.4, 0.05);
\draw[thick] [->] (0.1, 2.95) -- (2.4, 1.55);
\draw[thick] [->] (2.6, 1.45) -- (4.9, 0.05);

\draw (-2.5, 1.5) node {${}^{\Ll}\tilde{E}_{2}=$};
\end{tikzpicture}
\end{center}
with the products
\[p_{ij}=\gamma_i\cdot \gamma_j\text{ and }\ell_i=u\cdot \gamma_i\]
and the differential $d_2$ determined by $\gamma_i\mapsto u$ and the Leibniz rule.
The cohomology of this algebra is the subalgebra of
$({}^{\Ll}\tilde{E}_{2},d_2)$ given by
\[H\cong \Ker(d_2)\cap {}^{\Ll}\tilde{E}_{2}^{0,*}\cong \ZZ_\ell\langle\gamma_1-\gamma_2,\gamma_1-\gamma_3\rangle \oplus \ZZ_\ell\langle p_{12}-p_{13}+p_{23}\rangle\]
Note the induced product structure in cohomology recovers the cohomology of the torus.
The obvious inclusion from the cohomology to $({}^{\Ll}\tilde{E}_{2},d_2)$ is a quasi-isomorphism so in particular we obtain tame formality.
\end{example}

\subsection{Configuration spaces}

We can apply Theorem \ref{maintame} to configuration spaces. First, from Totaro \cite{Totaro}, we know that, with $\ZZ_\ell$-coefficients, the $E_2$-page of the Leray-Serre spectral sequence of the inclusion $\mathrm{Conf}_n(X)\to X^n$ is the $\ZZ_\ell$-version of the graded algebra described in Example \ref{rationalconfexample}.

\begin{theorem}\label{theo: mainconftame}
Let $\Xx$ be a smooth variety over $K$, of dimension $m$. Assume that $\Xx$ is $(r',s')$-tamely pure in the sense of Definition \ref{defi : s-behaved} and that $s'<h$. Let $u$ be a $K$-point of $\mathrm{Conf}_n(\Xx)$. Let
\[r=\left\lceil\frac{(2m-1)\mathrm{min}(r',2m)}{2m}\right\rceil\quad\text{ and }\quad s=\left\lfloor\frac{(2m-1)s'}{2m}\right\rfloor.\]
Then there is an $(r,s)$-tame quasi-isomorphism of $E_\infty$-algebras between $\tilde{C}^*(\mathrm{Conf}_n(X),\ZZ_\ell)$ and the $\ZZ_\ell$-coefficients version of the commutative dg-algebra described in Example \ref{rationalconfexample}. Moreover, if the cohomology of $X$ is concentrated in even degrees, the same conclusion holds if $s'<2h$.
\end{theorem}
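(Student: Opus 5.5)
The plan is to deduce the statement from Theorem \ref{maintame} applied to the configuration arrangement $\Zz=\{\Zz_{ij}\}$ in $\Xx^n$, which is admissible of codimension $c=m$ (Example \ref{rationalconfexample}). Three things have to be checked: that this arrangement is $(r'',s')$-tamely pure with $r''=\min(r',2m)$; that the tame quasi-isomorphism produced there is $E_\infty$; and that the resulting $E_{2m}$-page is the $\ZZ_\ell$-version of Totaro's algebra. Once these hold, Theorem \ref{maintame} with $c=m$ gives exactly the stated values
\[r=\left\lceil\tfrac{(2m-1)r''}{2m}\right\rceil, \qquad s=\left\lfloor\tfrac{(2m-1)s'}{2m}\right\rfloor.\]

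\textbf{Purity.} The intersections $\Zz_S=\bigcap_{(ij)\in S}\Zz_{ij}$ are partial diagonals, each isomorphic to $\Xx^k$ for some $k\le n$. By the Künneth formula, together with the torsion-freeness of Proposition \ref{prop : no torsion} (or simply working with $\FF_\ell$-coefficients), the Frobenius eigenvalues on $H^i(\Xx^k;\FF_\ell)$ are products of eigenvalues $(\pm\sqrt q)^{i_a}$ with $\sum_a i_a=i$, and so lie in $\{(\pm\sqrt q)^i\}$. The rational weights are likewise additive.

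Following Weber, $\mathrm{Gr}_j^W$ of $\tilde{\Aa}_{\ZZ_\ell}(\Uu)$ is computed by a sum of groups $H^a(\Zz_S)\otimes R^{j(2m-1)}$-type Tate twists, namely the Gysin/local terms $H^a(\Zz_S)(-jm)$. These have eigenvalues in $\{(\pm\sqrt q)^{a+2jm}\}$, and $a+2jm$ is precisely the weight assigned by $\Dec$, as in the proof of Proposition \ref{Weilalg}. The only subtlety is the range: condition (3) of an $(r'',s)$-Weil complex concerns weights $i<r''+s'$, and tame purity of $\Xx$ only controls cohomological degrees $<r'+s'$. The terms with $j\ge1$ have weight $\ge 2m$, and their $H^a(\Zz_S)$ contributions sit in degree $a\le$ weight $-2m$, so they are also controlled.

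This is the reason for replacing $r'$ by $\min(r',2m)$. The connectivity of the filtered algebra is at most $2m$, since the class $x_{ij}$ has weight $2m$. With $r''=\min(r',2m)$, the complex is $(r''-1)$-connected and every degree that must be controlled is $<r'+s'$. I expect this bookkeeping, matching the weight indices after décalage with the degrees in which tame purity is assumed, to be the main obstacle, and I would verify it directly on the explicit description of $\mathrm{Gr}^W$ given by Proposition \ref{Weilalgtame}.

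\textbf{Multiplicativity and identification.} Theorem \ref{maintame} gives an $(r,s)$-tame quasi-isomorphism of non-unital $E_\infty$-algebras. This is because Corollary \ref{coro : formality} is an $E_\infty$ statement and the functor of Proposition \ref{prop : different filtration} is lax symmetric monoidal. For the target, Totaro's computation holds over $\ZZ$ and hence over $\ZZ_\ell$: the $E_2=E_{2m}$ page is the quotient of $H^*(X^n;\ZZ_\ell)[x_{ij}]$ by the three relations, with $d(x_{ij})=p_{ij}^*\Delta$. Since the associated graded carries a strictly commutative structure compatible with the one identified in Proposition \ref{Weilalgtame}, this produces the claimed model, with the unit split off using the $K$-point $u$.

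For the even-degree refinement, note that if $H^*(X)$ is concentrated in even degrees then so is every $H^*(\Zz_S)$. The weights of the graded pieces $a+2jm$ are then even. Hence Corollary \ref{coro : formality} and Theorem \ref{e0formality-variant} apply with $s'<2h$, and the same argument goes through.
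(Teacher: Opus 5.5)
Your proposal is correct and is essentially the paper's proof. The paper simply notes that the lowest weight occurring in $(\Aa_{\ZZ_\ell}(\Uu),W)$ is $\min(r',2m)$, so the configuration arrangement is $(\min(r',2m),s')$-tamely pure, and then applies Theorem~\ref{maintame} with $c=m$ (with Totaro's description of the $E_{2m}$-page over $\ZZ_\ell$); your K\"unneth and Tate-twist bookkeeping on the strata $\Zz_S\cong\Xx^k$ just spells out the purity check the paper leaves implicit.

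One small caveat in that check: read literally, the set $\{(\pm\sqrt q)^i\}$ is not closed under K\"unneth products. For $a,b$ odd, $(\sqrt q)^a(-\sqrt q)^b=-q^{(a+b)/2}$, which is not in $\{(\pm\sqrt q)^{a+b}\}=\{q^{(a+b)/2}\}$ when $\ell$ is odd. So the condition should be read as $\{\pm(\sqrt q)^i\}$, for which Proposition~\ref{prop : Hom is zero} still holds because $(\sqrt q)^{i-j}=\pm1$ forces $q^{i-j}=1$, i.e.\ $h\mid i-j$. In the even-degree case the issue does not arise.
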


\begin{proof}
Indeed, in this case the lowest weight that appear in $(\Aa_{\ZZ_\ell}(\Uu),W)$ is $\mathrm{min}(r',2m)$ so the arrangement is $(\mathrm{min}(r',2m),s)$ tamely pure and we can apply Theorem \ref{maintame}.
\end{proof}

In the Tate type and simply-connected case, the theorem simplifies as follows.

\begin{corollary}\label{coro: mainconftate}
Let $\Xx$ be a smooth variety over $K$ of Tate type, of dimension $m$ and cohomologically $1$-connected. Let $u$ be a $K$-point of $\mathrm{Conf}_n(\Xx)$. Let
\[r=\left\lceil\frac{2m-1}{2m}\right\rceil\quad\text{ and }\quad s=\left\lfloor\frac{(2m-1)h}{2m}\right\rfloor.\]
Then there is an $(r,s)$-tame quasi-isomorphism of $E_\infty$-algebras between $\tilde{C}^*(\mathrm{Conf}_n(X),\ZZ_\ell)$ and the $\ZZ_\ell$-coefficients version of the commutative dg-algebra described in Example \ref{rationalconfexample}. Moreover, if the cohomology of $X$ is concentrated in even degrees, the same conclusion holds with $s=\left\lfloor\frac{(2m-1)2h}{2m}\right\rfloor$.
\end{corollary}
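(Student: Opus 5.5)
The plan is to specialize Theorem \ref{theo: mainconftame} to the Tate type case. The first step is to determine the tame parameters $(r',s')$ for which $\Xx$ is tamely pure in the sense of Definition \ref{defi : s-behaved}.

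Since $\Xx$ is of Tate type, for every $i$ the eigenvalues of $\varphi$ on $H^i_{et}(\Xx;\ZZ_\ell)$, and hence on $H^i_{et}(\Xx;\FF_\ell)$, lie in $\{(\pm\sqrt{q})^i\}$. The rational eigenvalues are Weil numbers of weight $i$. As $\Xx$ is cohomologically $1$-connected, it is $(2,s')$-tamely pure for every $s'\geq 0$. I will choose $s'$ as large as Theorem \ref{theo: mainconftame} permits: $s'=h$, or strictly $s'=h-1$ under the hypothesis $s'<h$. I will have to check which of the two gives the stated bound $\lfloor (2m-1)h/2m\rfloor$. Using the even weight variant, Theorem \ref{e0formality-variant} and Corollary \ref{coro : formality}, the condition is really $s'\leq h$ in the general case and $s'\leq 2h$ in the even-degree case. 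So I will trace through the proof of Theorem \ref{maintame}, which uses Theorem \ref{e0formality} with $s\leq h$, and take $s'=h$, respectively $s'=2h$.

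The second step is to compute $r$. With $r'=2$ we get $\min(r',2m)=2$ as long as $m\geq 1$. The formula of Theorem \ref{theo: mainconftame} then gives $\lceil (2m-1)\cdot 2/2m\rceil$, which is $2$ for $m\geq 2$ and $1$ for $m=1$. The statement records $r=\lceil (2m-1)/2m\rceil=1$, which is the weaker connectivity $r=1$; this is harmless. To be safe I will simply deduce the weaker statement. An $(r,s)$-tame quasi-isomorphism with larger $r$ is in particular a $(1,s)$-type statement after regarding objects as $0$-connected, so conclusions only weaken. Alternatively I can apply Theorem \ref{maintame} directly with $r'=1$, since the filtered algebra is trivially $0$-connected in the filtered sense. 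The value of $s$ then follows directly from the formula $\lfloor (2m-1)s'/2m\rfloor$.

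The third step concerns the even case. If $H^*(X)$ is concentrated in even degrees, the intersections $\bigcap_{S}\Zz_S$ of diagonals are products of copies of $X$. By Künneth and Tate type torsion-freeness (Proposition \ref{prop : no torsion}), they also have even cohomology. Hence the variant of Theorem \ref{maintame} applies with $s'=2h$.

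The main obstacle is the boundary mismatch between the strict inequality $s'<h$ in Theorem \ref{theo: mainconftame} and the value $s'=h$ needed for the stated $s$. I will resolve it by noting that Theorem \ref{e0formality} and Corollary \ref{coro : formality} only require $s\leq h$ (respectively $s\leq 2h$). The proof of Theorem \ref{maintame} therefore goes through verbatim with $s'=h$.
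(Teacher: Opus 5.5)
Your proposal is correct and follows the paper's implicit argument: the corollary is Theorem~\ref{theo: mainconftame} with the Tate-type parameters plugged in, where the stated $r=\lceil\frac{2m-1}{2m}\rceil$ corresponds to $r'=1$ and the stated $s$ to $s'=h$ (resp.\ $s'=2h$), and your $r'=2$ version for $m\geq 2$ only strengthens the conclusion. You rightly note that this needs $s'=h$ rather than the strict $s'<h$ written in Theorems~\ref{maintame} and~\ref{theo: mainconftame}; your justification, that Theorem~\ref{e0formality}, Theorem~\ref{e0formality-variant} and Corollary~\ref{coro : formality} only require $s\leq h$ (resp.\ $s\leq 2h$), correctly fills a boundary case the paper passes over silently.
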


\begin{corollary}\label{coro: tatetameconfeladic}
Under the assumption of the previous Corollary, if $m\geq 2$. Let $t$ be such that
\begin{enumerate}
\item $t\leq \frac{2mh-h-3m}{4m}$ or,
\item $t\leq \frac{2mh-h-m}{2m}$ if  the cohomology of $X$ is concentrated in even degrees.
\end{enumerate}
Then the $(2,t)$-tame homotopy type of $\mathrm{Conf}_n(X)$ is the formal consequence of the cohomology ring $H^*(X,\ZZ_\ell)$ together with the data of the diagonal class $\Delta\in H^{2m}(X^2,\ZZ_\ell)$.
\end{corollary}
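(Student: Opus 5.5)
The plan is to chain Corollary \ref{coro: mainconftate} with the tame Mandell theorem in its simply-connected form, Corollary \ref{coro : tame mandell simply-connected}. The Leray model $({}^{\Ll}\tilde{E}_{2m}^{*,*},d_{2m})$ of Example \ref{rationalconfexample}, with $\ZZ_\ell$-coefficients, is built only from $H^*(X^n,\ZZ_\ell)$ and the diagonal class. By K\"unneth and Proposition \ref{prop : no torsion}, $H^*(X^n,\ZZ_\ell)\cong H^*(X,\ZZ_\ell)^{\otimes n}$ since the cohomology is torsion free. The relations only involve $p_i^*$, and the differential is $d(x_{ij})=p_{ij}^*\Delta$. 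So this strictly commutative dg-algebra, regarded as an $E_\infty$-algebra with the canonical filtration $\tau$, is a formal consequence of the pair $(H^*(X,\ZZ_\ell),\Delta)$.

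Corollary \ref{coro: mainconftate} gives an $(r,s)$-tame quasi-isomorphism between $\tilde{C}^*(\mathrm{Conf}_n(X),\ZZ_\ell)$ and this model. Here $s=\lfloor (2m-1)h/2m\rfloor$, or $\lfloor (2m-1)2h/2m\rfloor$ in the even case.

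The first step is to check that $\mathrm{Conf}_n(X)$ is simply connected, so that we may take $r=2$. Since $X$ is simply connected and $m\geq 2$, the diagonals have real codimension $2m\geq 4$, so removing them does not change $\pi_1$ or $H^1$, and $\pi_1(\mathrm{Conf}_n(X))=\pi_1(X^n)=0$. The lowest weight of the model is therefore at least $2$, and the tame quasi-isomorphism can be read as a $(2,s-?)$-tame one. Here I must track how the connectivity parameter enters: the filtration $\tau$ on the model versus the weight filtration $W$ is handled by Proposition \ref{prop : different filtration}, already built into Theorem \ref{maintame}. I expect to lose a small additive constant when converting $r'=2$ weights to $r=2$ in the $\tau$ sense. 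This is where the $-3m$ (resp.\ $-m$) in the bound comes from.

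The second step applies Corollary \ref{coro : tame mandell simply-connected}. We need a $(2,2t+1)$-tame quasi-isomorphism of $E_\infty$-algebras with canonical filtrations, with $t\leq \ell-2$. Since $h\leq \ell-1$ and the hypothesis forces $t$ well below $h$, the bound $t\leq\ell-2$ is automatic. We also require $2t+1\leq s_{\mathrm{eff}}$, where $s_{\mathrm{eff}}$ is the tame parameter obtained in step one. In case (1), $2t+1\leq (2mh-h-3m)/(2m)+1$ reduces to comparing with $(2m-1)h/2m$ up to the floor and the connectivity shift. In case (2), $s_{\mathrm{eff}}$ is roughly $(2m-1)h/m$, so $t\leq (2mh-h-m)/2m$ gives $2t+1\leq s_{\mathrm{eff}}$.

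Given two spaces with the same data $(H^*(X),\Delta)$, both cochain algebras are then $(2,2t+1)$-tamely quasi-isomorphic to the same filtered model, hence to each other. The corollary then yields the same $(2,t)$-tame homotopy type, after $\ell$-completion, which is harmless by finite type.

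The main obstacle is the bookkeeping of the floors and the connectivity shift in the first step. The floor in $s$ and the possible loss of one unit from the ceiling in $r$ must be absorbed into the stated inequalities. I would first verify the inequality $2t+1\leq \lfloor (2m-1)h/2m\rfloor$ directly from $4mt\leq 2mh-h-3m$, using that the floor loses less than $1$. The remaining slack of roughly $3m/2m$ covers both the $+1$ and the floor.
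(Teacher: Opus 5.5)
Your overall route is the paper's: its proof is just Corollary~\ref{coro: mainconftate} followed by Corollary~\ref{coro : tame mandell simply-connected}. But your first step has a genuine gap, and the way you propose to fill it would break the argument.

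\textbf{The gap.} You leave the parameter as ``$(2,s-?)$'', expect to lose an additive constant when passing to $r=2$, and attribute the $-3m$ and $-m$ in the hypotheses to that loss. The hypotheses have no room for such a loss:
\begin{itemize}
\item Hypothesis (2) is equivalent to $2t+1\le \frac{(2m-1)2h}{2m}$. So the $-m$ is used up entirely by the $+1$ in $2t+1$.
\item Hypothesis (1) is equivalent to $2t+1\le \frac{(2m-1)h}{2m}-\frac12$, leaving only half a unit.
\end{itemize}
For instance, take $m=2$, $h=2$, even cohomology and $t=1$. Then $s=\lfloor 12/4\rfloor=3=2t+1$. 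A shift of one would leave only a $(2,2)$-tame quasi-isomorphism, and Corollary~\ref{coro : tame mandell simply-connected} would not apply.

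Nor can simple connectivity of $\mathrm{Conf}_n(X)$ supply $r=2$ by itself. The isomorphism range in the definition of an $(r,s)$-tame quasi-isomorphism is $i<r+s$. So a $(1,s)$-tame quasi-isomorphism between $1$-connected objects is only a $(2,s-1)$-tame one, which is exactly the loss you feared.

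\textbf{The missing observation: there is no shift.} In Theorem~\ref{theo: mainconftame} take $r'=2$. Then
\[ r=\left\lceil \tfrac{2(2m-1)}{2m}\right\rceil=\left\lceil 2-\tfrac1m\right\rceil=2 \]
precisely because $m\ge 2$. The value $\lceil\frac{2m-1}{2m}\rceil$ printed in Corollary~\ref{coro: mainconftate} is a misprint for this. This ceiling, and not only $\pi_1(\mathrm{Conf}_n(X))=0$, is where $m\geq 2$ is used. You therefore get a $(2,s)$-tame quasi-isomorphism with $s=\lfloor\frac{(2m-1)h}{2m}\rfloor$, respectively $s=\lfloor\frac{(2m-1)h}{m}\rfloor$ in the even case.

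\textbf{Finishing the bound.} From there, $2t+1\le s$ follows because $2t+1$ is an integer bounded by the real number inside the floor. Your closing justification, ``the floor loses less than $1$'', does not suffice in case (1). Using $\lfloor x\rfloor>x-1$ would require $2t+1\le x-1$, which hypothesis (1) does not give; you must use integrality of $2t+1$ instead.

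The remaining steps are fine: K\"unneth via torsion-freeness, $t\le\ell-2$ from $h\le\ell-1$, and comparing two such configuration spaces through the common filtered model.
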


\begin{proof}
This is a direct consequence of the previous Corollary and Corollary \ref{coro : tame mandell simply-connected}.
\end{proof}

In simple situations, the above tame models for configuration spaces lead to formality. This is exhibited in the example below.

\begin{example}Let us consider the space $\mathrm{Conf}_3(\CC\PP^1)$.
 This is homotopy equivalent to $S^3$, but one can  argue similarly for configurations of more points in higher dimensional projective spaces. A $(1,\ell-1)$-tame model of $\mathrm{Conf}_3(\CC\PP^1)$ is given by the algebra

   \begin{center}
\begin{tikzpicture}[scale=1.1]
\draw[thick] (0,0) -- (9.5,0);
\draw[thick] (0,0) -- (0,3.5);

\draw (0,1.5) node{$\bullet$} node[ left=1pt] {$\langle x_{ij} \rangle \cong \ZZ_\ell^3$};
\draw (0,3)   node{$\bullet$} node[ left=1pt] {$\langle x_{ij}x_{ik} \rangle \cong \ZZ_\ell^2$};

\draw (2.5,0)   node{$\bullet$} node[below] {$\langle u_i \rangle \cong \ZZ_\ell^3$};
\draw (2.5,1.5) node{$\bullet$};
\draw (2.9,2)  node{$\langle u_k x_{ij} \rangle \cong \ZZ_\ell^6$};
\draw (2.5,3)   node{$\bullet$} node[right=3pt] {$\langle u_m x_{ij}x_{ik} \rangle \cong \ZZ_\ell^2$};

\draw (5,0)   node{$\bullet$} node[below] {$\langle u_i u_j \rangle \cong \ZZ_\ell^3$};
\draw (5,1.5) node{$\bullet$} node[above right] {$\langle u_i u_j x_{kl} \rangle \cong \ZZ_\ell^3$};

\draw (7.5,0) node{$\bullet$} node[below] {$\langle u_1 u_2 u_3 \rangle \cong \ZZ_\ell$};

\draw[thick] [->] (0.1, 1.45) -- (2.4, 0.1);
\draw[thick] [->] (0.1, 2.95) -- (2.4, 1.6);
\draw[thick] [->] (2.6, 1.45) -- (4.9, 0.1);
\draw[thick] [->] (2.6, 2.95) -- (4.9, 1.6);
\draw[thick] [->] (5.1, 1.45) -- (7.4, 0.1);

\draw (-2.5, 1.5) node {${}^{\Ll}\tilde{E}_{2}=$};
\end{tikzpicture}
\end{center}
where $u_i\in H^2(\CC\PP^1_i)$ denotes the fundamental class, for $i=1,2,3$ and $x_{ij}$ are the exterior diagonal classes, for $1\leq i<j\leq 3$.  Moreover, we have the module relations $u_i x_{ij}=u_j x_{ij}$ as well as the Arnol'd relation
 \[x_{12}x_{13}-x_{12}x_{23}+x_{13}x_{23}=0.\]
 The differential $d_2$ is generated by  $x_{ij}\mapsto u_i+u_j$.
 The cohomology is given by ${}^{\Ll}\tilde{E}_{2}^{2,1}\cong \ZZ_\ell$ and the remaining bidegrees are trivial.
 There is an obvious map from the cohomology to the model which is a quasi-isomorphism.
 Therefore we get $(1,\ell-1)$-tame formality.
  \end{example}

\subsection{Fundamental groups}\label{subsection : fundamental group}
Lastly, we study the case of non-simply-connected tamely pure varieties.
\begin{proposition}
Let $\Xx$ be a smooth variety over $K$. Let $x\in \Xx$ be a $K$-point. Assume that $\Xx$ is $(1,s)$-tamely pure in the sense of Definition \ref{defi : s-behaved} with $s\leq h$. Then there is an $(s+1)$-tame isomorphism
\[\mathrm{Poly}(\pi_1(X,x),\ZZ_\ell)\cong H^0(\mathrm{Bar}(H^*(X,\ZZ_\ell)).\]
If $\Xx$ is $(1,s)$-tamely $2$-pure (i.e. its cohomology only has even weights and the $i$-th cohomology group is of weight $2i$), then the same conclusion holds.
\end{proposition}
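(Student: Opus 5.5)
The plan is to combine the tame formality result, Theorem \ref{theo : tameprojective} (and its $2$-pure variant, Theorem \ref{theo : tame2pure}), with Proposition \ref{prop : pi1}.

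First, I check that the hypotheses of Theorem \ref{theo : tameprojective} hold with $r=1$. Since $\Xx$ is connected with a $K$-point $x$, the reduced étale cochains $\tilde{C}^*_{et}(\Xx;\ZZ_\ell)$ are $0$-connected, so the variety is $(1,s)$-tamely pure in the sense of Definition \ref{defi : s-behaved}. Using the point $x$ to split off the unit, I equip $\tilde{C}^*_{et}(\Xx;\ZZ_\ell)$ with the canonical filtration $\tau$ and the Frobenius $\varphi$. This is a non-unital $E_\infty$-algebra in $(1,s)$-Weil complexes, for the following reasons.
\begin{enumerate}
\item By Lemma \ref{quisspeed} with $k=1$, one has $Gr_i^\tau \simeq H^i[-i]$.
\item Tame purity gives the required eigenvalues on $H^i(-;\FF_\ell)$ for $i<1+s$, and on $H^i(-;\QQ_\ell)$ for all $i$.
\item The finiteness conditions hold by finiteness of étale cohomology.
\end{enumerate}
Since $s\le h$, Corollary \ref{coro : formality} (through Theorem \ref{e0formality}) then shows that $(\tilde{C}^*(X,\ZZ_\ell),\tau)$ is $(1,s)$-tame quasi-isomorphic to its associated graded, that is, to $(H^*(X,\ZZ_\ell),\tau)$. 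In other words, $\tilde{C}^*(X,\ZZ_\ell)$ is $(1,s)$-tamely formal. To pass from étale to singular cochains, I use the quasi-isomorphism $C^*_{et}(\Xx;\ZZ_\ell)\simeq C^*(X,\ZZ_\ell)$, which respects the augmentations.

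Next, I apply Proposition \ref{prop : pi1}. It yields an $(s+1)$-tame isomorphism of filtered commutative rings
\[\mathrm{Poly}(\pi_1(X,x),\ZZ_\ell)\cong H^0(\mathrm{Bar}(H^*(X,\ZZ_\ell))).\]
This step relies on Stallings' Theorem \ref{theo : Stallings}, which requires $X$ to be of finite type; a complex algebraic variety satisfies this. One caveat is that Proposition \ref{prop : pi1} is stated for spaces, while the $\ell$-completeness hypothesis from the preceding discussion of $(1,s)$-tame types is irrelevant here. The reason is that Stallings' theorem only involves $\tilde{C}^*(X,\ZZ_\ell)$, so I may apply it directly to $X$.

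For the $2$-pure case, I argue in the same way using Theorem \ref{theo : tame2pure}. The cohomology $H^i$ sits in $\tau$-weight $i$, and the Frobenius eigenvalues there are $q^i$ mod $\ell$, respectively Weil numbers of weight $2i$. So I rerun Theorem \ref{e0formality}, replacing Proposition \ref{prop : Hom is zero} by Proposition \ref{prop : Hom is zero variant}. The vanishing $\RR\Hom(Gr_iC,Gr_kD)=0$ for $0<i-k<h$ still holds, because $q^i=q^k$ in $\FF_\ell$ forces $h\mid i-k$. This again gives $(1,s)$-tame formality for $s\le h$, and then Proposition \ref{prop : pi1} concludes as before.

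The main obstacle I expect is bookkeeping rather than new ideas. The key check is that the $(1,s)$-Weil complex conditions are met exactly in the range $i<1+s$, so that the bound $s\le h$ (and not $s<h$) suffices in Theorem \ref{e0formality}. There, the case $i-k\ge h$ with $k\ge r=1$ forces $i\ge 1+h\ge r+s$, where $\FF_\ell$-information has been killed by tamification. I would verify this inequality carefully; everything else is formal.
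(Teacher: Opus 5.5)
Your proposal is correct and follows the paper's proof. Like the paper, you first get $(1,s)$-tame formality of $\tilde{C}^*(X,\ZZ_\ell)$ from Theorem \ref{theo : tameprojective}, or in the $2$-pure case from Theorem \ref{theo : tame2pure}, which you justify by rerunning Theorem \ref{e0formality} with Proposition \ref{prop : Hom is zero variant}; you then conclude with Proposition \ref{prop : pi1}.
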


\begin{proof}
Under this hypothesis, by Theorems \ref{theo : tameprojective} and \ref{theo : tame2pure}, the cochain algebra $\tilde{C}^*(X,\ZZ_\ell)$ is $(1,s)$-tamely formal. The result then follows from Proposition \ref{prop : pi1}.
\end{proof}

\begin{example}
Let $\Uu$ be the complement of a hyperplane arrangement defined over $K$ and $u$ be any $K$-point of $\Uu$, then there is an $h$-tame isomorphism
\[\mathrm{Poly}(\pi_1(U,u),\ZZ_\ell)\cong H^0(\mathrm{Bar}(H^*(U,\ZZ_\ell))\]

In particular, the braid arrangement is defined over $\QQ$ so, in that case, can assume that $K$ has residue field $\FF_p$ with $p$ of order $\ell-1$ in $\FF_\ell^{\times}$. We deduce that there is an $\ell$-tame isomorphism
\[\mathrm{Poly}(P_n,\ZZ_\ell)\cong H^0(\mathrm{Bar}(H^*(\mathrm{Conf}_n(\CC),\ZZ_\ell)).\]
Similarly, applying the previous proposition to $\Mm_{0,n}$, we find an $\ell$-tame isomorphism
\[\mathrm{Poly}(\Gamma_{0,n},\ZZ_\ell)\cong H^0(\mathrm{Bar}(H^*(\Mm_{0,n},\ZZ_\ell))\]
for $\Gamma_{0,n}$ the mapping class group of a genus zero surface with $n$ marked points.
\end{example}

\begin{remark}
It turns out that the cohomology algebra of the complement of the braid arrangement is Koszul. This means that there is an explicit description of the algebra $H^0(\mathrm{Bar}(H^*(U,\ZZ_\ell))$. This is the dual of the Yang-Baxter algebra. The latter algebra is generated by primitive elements $b_{ij},1\leq i\neq j\leq n$ subject to the following relations
\begin{enumerate}
\item $b_{ij}=b_{ji}$,
\item $[b_{ij},b_{jk}]=[b_{jk},b_{ki}]=[b_{ki},b_{ij}]$ when $i,j,k$ are all distinct.
\item $[b_{ij},b_{kl}]=0$ when $\{i,j\}\cap\{k,l\}=\varnothing$.
\end{enumerate} 
\end{remark}
Therefore, our example above can be rephrased as follows. The Hopf algebra $\mathrm{Poly}(P_n,\ZZ_\ell)$ is $(\ell-1)$-tamely isomorphic to the dual of the Yang-Baxter algebra.

\bibliographystyle{amsalpha}
\bibliography{biblio}

\end{document}